\definecolor{Chocolat}{rgb}{0.36, 0.2, 0.09}
\definecolor{BleuTresFonce}{rgb}{0.215, 0.215, 0.36}
\DeclareFontFamily{U}{shuffle}{} 
\DeclareFontShape{U}{shuffle}{m}{n}{<5-8>shuffle7  <8->shuffle10}{}
\DeclareSymbolFont{Shuffle}{U}{shuffle}{m}{n}
\DeclareMathSymbol\shuffle{\mathbin}{Shuffle}{"001} 
\DeclareMathSymbol\cshuffle{\mathbin}{Shuffle}{"002}
\DeclareMathAlphabet{\mathbbold}{U}{bbold}{m}{n}
\def\k{\mathbbold{k}}
\DeclareMathAlphabet{\pazocal}{OMS}{zplm}{m}{n}
\def\calA{\pazocal{A}}
\def\calB{\pazocal{B}}
\def\calD{\pazocal{D}}
\def\calF{\pazocal{F}}
\def\calG{\pazocal{G}}
\def\calH{\pazocal{H}}
\def\calI{\pazocal{I}}
\def\calL{\pazocal{L}}
\def\calM{\pazocal{M}}
\def\calN{\pazocal{N}}
\def\calO{\pazocal{O}}
\def\calP{\pazocal{P}}
\def\calQ{\pazocal{Q}}
\def\calS{\pazocal{S}}
\def\calT{\pazocal{T}}
\def\calZ{\pazocal{Z}}
\DeclareMathOperator{\Ord}{Ord}
\DeclareMathOperator{\Hom}{Hom}
\DeclareMathOperator{\End}{End}
\DeclareMathOperator{\id}{id}
\DeclareMathOperator{\Diff}{ncDiff}
\DeclareMathOperator{\ncW}{ncW}
\DeclareMathOperator{\As}{As}
\DeclareMathOperator{\BV}{BV}
\DeclareMathOperator{\ncBV}{ncBV}
\DeclareMathOperator{\qncBV}{qncBV}
\DeclareMathOperator{\Gerst}{Gerst}
\DeclareMathOperator{\ncGerst}{ncGerst}
\DeclareMathOperator{\twoncGerst}{2\text{-}ncGerst}
\DeclareMathOperator{\Grav}{Grav}
\DeclareMathOperator{\ncGrav}{ncGrav}
\DeclareMathOperator{\HyperCom}{HyperCom}
\DeclareMathOperator{\ncHyperCom}{ncHyperCom}
\DeclareMathOperator{\tAs}{tAs}
\DeclareMathOperator{\pAs}{pAs}
\DeclareMathOperator{\IBL}{IBL}
\DeclareMathOperator{\ncFrob}{ncFrob}
\DeclareMathOperator{\IIB}{IIB}
\DeclareMathOperator{\myspan}{span}
\DeclareMathOperator{\Gap}{Gap}
\DeclareMathOperator{\D}{D}
\def\Im{\mathop{\mathrm{Im}}}
\def\Ker{\mathop{\mathrm{Ker}}}
\def\C{\pazocal{C}}
\def\qi{\xrightarrow{\sim}}
\def\g{\mathfrak{g}}
\newcommand{\MC}{\mathrm{MC}}
\def\1{\mathbb{1}}
\def\E{\varepsilon}
\def\Sy{\mathbb{S}}
\def\V{\mathop{\mathrm{Var}}} 
\theoremstyle{plain}
\newtheorem{theorem}{Theorem}[subsection]
\newtheorem {corollary}{Corollary}[subsection]
\newtheorem {conjecture}{Conjecture}[subsection]
\newtheorem {proposition}{Proposition}[subsection]
\theoremstyle{definition}
\newtheorem {definition}{Definition}[subsection]
\newtheorem {remark}{Remark}[subsection]
\newtheorem {example}{Example}[subsection]
\newcommand{\ac}{\scriptstyle \text{\rm !`}}
\subjclass[2010]{Primary 18D50; Secondary 14M25, 53D45, 81T45}
\keywords{Operads, cohomological field theories, toric varieties, wonderful models, Givental action}
\thanks{S.S. was supported by the Netherlands Organisation for Scientific Research. B.V. was supported by the ANR  SAT grant.}
\begin{document}

\title[Noncommutative cohomological field theories]{Toric varieties of Loday's associahedra\\ and noncommutative cohomological field theories}

\author{Vladimir Dotsenko}
\address{School of Mathematics, Trinity College, Dublin 2, Ireland}
\email{vdots@maths.tcd.ie}

\author{Sergey Shadrin}
\address{Korteweg-de Vries Institute for Mathematics, University of Amsterdam, P. O. Box 94248, 1090 GE Amsterdam, The Netherlands}
\email{s.shadrin@uva.nl}

\author{Bruno Vallette}
\address{Laboratoire Analyse, G\'eom\'etrie et Applications, Universit\'e Paris 13, Sorbonne Paris Cit\'e, CNRS, UMR 7539, 93430 Villetaneuse, France.}
\email{vallette@math.univ-paris13.fr}

\begin{abstract}
We introduce and study several new topological operads that should be regarded as nonsymmetric analogues of the operads of little $2$-disks, framed little $2$-disks, and Deligne--Mumford compactifications of moduli spaces of genus zero curves with marked points. These operads exhibit all the remarkable algebraic and geometric features that their classical analogues possess; in particular, it is possible to define a noncommutative analogue of the notion of cohomological field theory with similar Givental-type symmetries. This relies on rich geometry of the analogues of the Deligne--Mumford spaces, coming from the fact that they admit several equivalent interpretations: as the toric varieties of Loday's realisations of the associahedra, as the brick manifolds recently defined by Escobar, and as the De Concini--Procesi wonderful models for certain subspace arrangements. 
\end{abstract}

\maketitle

\setcounter{tocdepth}{1}

\tableofcontents

\section{Introduction}
In topology, operads emerged in the context of recognition of loop spaces; in particular, the little $2$-disks operad $\calD_2$ has been one of the protagonists from the early years of topological operads. Together with the operad of framed little $2$-disks $f\calD_2$ and the Deligne--Mumford operad of compactifications $\overline{\calM}_{0,n+1}$ of moduli spaces of genus zero curves with marked points, it fits into a remarkable diagram 
\begin{equation}\label{eq:ComDiagM0nGeom}
 \xymatrix@M=6pt{
 	\left\{\overline{\calM}_{0,n+1}\right\}\ar@{<<--}[rr]  & &  f\calD_2=\calD_2\rtimes S^1\   \\ 
 	\calD_2 / S^1=\left\{{\calM}_{0,n+1}\right\}\ar@{^{(}->}[u]\ar@{<<-}[rr]  & & \calD_2 \ar@{^{(}->}[u]     
 }
\end{equation}
expressing the following facts. First, one can obtain the operad $f\calD_2$ as the extension of $\calD_2$ by the $S^1$-action: $f\calD_2\cong\calD_2\rtimes S^1$ \cite{Markl1999}. Then, the homotopy quotient of the latter operad by the circle action is represented by the Deligne--Mumford operad $\big\{\overline{\calM}_{0,n+1}\big\}$ \cite{DC2014}. Finally, 
the quotient of $\calD_2$ by the circle action is equivalent to the collection of open moduli spaces $\big\{\calM_{0,n+1}\big\}$, which is the open piece of the Deligne--Mumford operad \cite{Get94} that has a normal crossing divisor as its complement. 

The homology of the Deligne--Mumford operad $\big\{\overline{\calM}_{0,n+1}\big\}$ is an algebraic operad that controls the algebraic structure known, in different contexts and with small differences in definitions, under the names of  hypercommutative algebras, formal Frobenius manifolds, or genus zero reductions of Gromov--Witten theory \cite{KontsevichManin94}. It is related to the classical operads of Gerstenhaber algebras, Batalin--Vilkovisky algebras and gravity algebras, fitting into a diagram of algebraic operads 
\begin{equation}\label{eq:ComDiagM0nAlg}
 \xymatrix@M=6pt{
\HyperCom\ar@{<<--}[rr] \ar@{<->}[d] & &  \BV   \\ 
 \Grav\ar@{^{(}->}[rr]  & & \Gerst \ar@{^{(}->}[u]     
 }
\end{equation}
obtained by computing the homology of the above-mentioned topological operads. This diagram expresses the following facts. First, the operads $\HyperCom$ and (the suspension of) $\Grav$ are related by Koszul duality \cite{Get95}. Next,  the operad $\Grav$ is a suboperad of the operad $\Gerst$ which in turn is a suboperad of the operad $\BV$ \cite{Get94}. Finally, the operad $\HyperCom$ is a representative for the  homotopy quotient of the operad $\BV$ by $\Delta$ \cite{DCV2013,KMS2013}.

 In addition, algebra structures over the operad $\HyperCom$ --- genus zero cohomological field theories --- admit an unexpectedly rich group of symmetries due to Givental \cite{Giv01} coming from different trivialisations of the circle action of the homotopy quotient \cite{DSV2013b}. The latter group of symmetries can also be used to encode Koszul braces on commutative algebras \cite{Koszul85} and a particular type of homotopy $\BV$-algebras, so called commutative homotopy $\BV$-algebras \cite{Kra00}. Last but not least, each space $\overline{\calM}_{0,n+1}$ can be realised as a De Concini--Procesi wonderful model \cite{DCP95} of the Coxeter hyperplane arrangement of type $A_{n-1}$, and the operad structure on these spaces comes from a general operad-like structure that exists for wonderful models \cite{Rains10}. 

In this paper, we define analogues of all these objects (the little $2$-disks operad, the framed little $2$-disks operad, the Deligne--Mumford operad) that ultimately lead to a noncommutative analogue of the notion of a cohomological field theory. This new notion does not appear to be related to various notions of a noncommutative topological field theory (e.g. \cite{AlNat06,Nat04}); in fact, it shares many more common properties with its commutative counterpart. Notably, the corresponding operad is not cyclic (the output of the operation $\nu_n$ cannot be put on the same ground as its inputs). Therefore, this operad cannot be extended to include higher genera; somehow, in the noncommutative world only genus zero seems to be ``visible''.

Remarkably, our analogues possess all the expected algebraic and geometric properties of the classical picture we just described. The only aspect that one has to sacrifice is the definition as a moduli space; we do not use any ``noncommutative curves of genus zero with marked points''  as such in the construction, although some geometric intuition of this sort is present. More precisely, the simplest way to implement noncommutativity for curves with marked points is to consider configurations of points on the real projective line, which leads to the definition of Stasheff associahedra; we however do not consider the collection of associahedra themselves, but rather the collection of toric varieties associated to Loday's realisations of associahedra~\cite{JLL2004}. We denote these spaces by $\calB(n)$ to emphasize their remarkable interpretation as ``brick manifolds'' \cite{Escobar2014}. The first step in unravelling the geometry of noncommutative moduli spaces is to endow these brick manifolds with a nonsymmetric (ns) operad structure. 

The geometric properties of the ns operad of brick manifolds generalising~\eqref{eq:ComDiagM0nGeom} are summarised by the diagram 
\begin{equation}\label{eq:ComDiagNCM0nGeom}
\xymatrix@M=6pt{
	\calB\ar@{<<--}[rr]  & &  \As_{S^1}\rtimes S^1\   \\ 
	 \As_{S^1} / S^1\ar@{^{(}->}[u]\ar@{<<-}[rr]  & & \As_{S^1} \ar@{^{(}->}[u]     
}
\end{equation}
where $\As_{S^1}$ is a noncommutative counterpart of the operad of little $2$-disks, 
which we introduce in this paper, where $\As_{S^1}\rtimes S^1$ is its extension by the $S^1$-action, and 
where the quotients of components of the operad $\As_{S^1}$ by the circle action are equivalent to open strata $(\mathbb{C}^\times)^{n-2}$ of the brick manifolds. We also conjecture that the brick operad is equivalent to the homotopy quotient of the operad $\As_{S^1}\rtimes S^1$ by the circle action; to that end we exhibit a different operad,  homotopy equivalent to it,  which resembles the construction of Kimura--Stasheff--Voronov and Zwiebach in the classical case \cite{KSV,Zwi93}. 

On the algebraic level, the homology ns operad of the ns operad of brick manifolds, which we denote $\ncHyperCom$, admits a presentation
which is very much reminiscent of Getzler's presentation of the operad $\HyperCom$ \cite{Get95}. A ``noncommutative hypercommutative algebra'' is a graded vector space $A$ equipped with operations $\nu_n\colon A^{\otimes n}\to A$ of degree $2n-4$ ($n\ge 2$) satisfying, for each $n\ge 3$ and each $i=2,\ldots,n-1$, the  identity
\begin{multline*}
\sum_{k+l=n-1} \nu_k(a_1,\ldots,a_{i-l},\nu_l(a_{i-l+1},\ldots,a_{i-1},a_i),a_{i+1},\ldots,a_n)=\\=
\sum_{k+l=n-1} \nu_k(a_1,\ldots,a_{i-1},\nu_l(a_i,a_{i+1},\ldots,a_{i+l-1}),a_{i+l},\ldots,a_n)\ .
\end{multline*}
The algebraic properties of this operad generalising \eqref{eq:ComDiagM0nAlg} are expressed by the diagram
\begin{equation}\label{eq:ComDiagNCM0nAlg}
 \xymatrix@M=6pt{
 	\ncHyperCom\ar@{<<--}[rr] \ar@{<->}[d] & &  \ncBV   \\ 
 	\ncGrav\ar@{^{(}->}[rr]  & & \ncGerst \ar@{^{(}->}[u]     
 }
\end{equation}
with the exact same properties. We also define an analogue of the Givental group action on noncommutative cohomological field theories using the intersection theory for brick manifolds, which we develop in detail. It can be used, on the one hand, to construct an explicit identification of $\ncHyperCom$ with the homotopy quotient $\ncBV / \Delta$, similarly to \cite{KMS2013}, and, on the other hand, to provide a conceptual framework for  B\"orjeson products. Finally, we are able to show that the toric varieties of Loday associahedra are De Concini--Procesi wonderful models of certain hyperplane arrangements which we call ``noncommutative braid arrangements''. The toric varieties of the permutahedra \cite{Pro90}, also known as Losev--Manin moduli spaces \cite{LosevManin}, are, in fact, wonderful models of the same subspace arrangements but a different ``building set''.\footnote{Losev--Manin moduli spaces are  examples of moduli spaces of curves with weighted marked points introduced by Hassett~\cite{Has03}. Hence, it is most natural to ask whether the toric varieties of the associahedra are examples of Hassett spaces as well. It turns out that the answer to this question is negative; this follows from a recent paper \cite{FJR14} where all graph associahedra for which the toric varieties are Hassett spaces are classified. }

The interpretation of the moduli spaces of genus zero curves with marked points as De Concini--Procesi wonderful models was behind  the result of \cite{EHKR10} on the homology operad of the real loci of the moduli spaces of curves. We demonstrate that their result admits a natural noncommutative counterpart as well: we introduce a noncommutative version of $2$-Gerstenhaber algebras and prove that the homology ns operad of the real brick operad is isomorphic to the operad encoding them. 

On the algebraic level, small hints as to what types of algebras should arise as such analogues are contained in two recent papers. 
On the one hand, a noncommutative version of Koszul brackets  defined by B\"orjeson in \cite{Bor13} leads to 
the notion of noncommutative Batalin--Vilkovisky algebras. On the other hand, the noncommutative deformation theory developed by  Ginzburg and Schedler in \cite{GS} contains an appropriate deformation complex whose  structure  may  naturally be viewed as a noncommutative version of a Gerstenhaber algebra.

\subsection*{Layout} 
The paper is organised as follows. In Section \ref{sec:recollections}, we recall some  definitions used throughout the paper; for notions that are specific to particular sections, we  defer the recollection until the corresponding section on some occasions.  In Section \ref{sec:nc-gerst-bv}, we define noncommutative versions of the operads of Gerstenhaber algebras and Batalin--Vilkovisky algebras. In Section \ref{sec:minimal-model-ncBV}, we compute the minimal model of the operad $\ncBV$ of noncommutative Batalin--Vilkovisky algebras; at this stage the operad of noncommutative gravity algebras enters the game. In Section \ref{sec:three-ncHyperCom}, we present three constructions of the operad $\ncHyperCom$, the algebraic definition mentioned above and two geometric definitions, via brick manifolds and via wonderful models of subspace arrangements. In Section \ref{sec:Givental}, we develop the intersection theory on brick manifolds, define a noncommutative version of the notion of a cohomological field theory, and explain that noncommutative cohomological field theories have a rich Givental-like group of symmetries. In Section \ref{sec:applications}, we discuss several applications and further questions prompted by our results. First, we obtain a definition of B\"orjeson products via the Givental action and discuss a connection between those products and noncommutative Frobenius bialgebras. Second, we utilise the Givental action to prove that the operad $\ncHyperCom$ represents the homotopy quotient of the operad $\ncBV$ by $\Delta$, and set up a framework in which we can conjecture a precise geometric version of that statement in the spirit of \cite{KSV,Zwi93}. In Section \ref{sec:RealCase}, we use the viewpoint of wonderful model of subspace arrangements to compute the homology operad of the real brick operad, obtaining noncommutative counterparts of the key results of \cite{EHKR10}. Finally, in the Appendix, we provide an alternative proof of a result of Escobar \cite{Escobar2014}, who established that brick manifolds are toric varieties of Loday polytopes.

\subsection*{Acknowledgements} This paper was completed during the authors\rq{} stays at University of Amsterdam, Trinity College Dublin, and Universit\'e Nice Sophia Antipolis. The authors would like to thank these institutions for the excellent working conditions enjoyed during their stays there. 
The authors would also like to acknowledge useful discussions with Raf Bocklandt, Mikhail Kapranov, Yuri Ivanovich Manin, Arkady Vaintrob, and Thomas Willwacher during the final stages of the preparation of this paper. 

\section{Background, notations, and recollections}\label{sec:recollections}

Unless otherwise specified, all vector spaces and (co)chain complexes throughout this paper are defined over an arbitrary field~$\k$. 

\subsection{Ordinals}

Most objects defined throughout this paper depend functorially on a choice of a totally ordered finite set. In many cases, it is enough to consider the set $\underline{n}=\{1,\ldots,n\}$ with its standard ordering, but on some occasions a more functorial view would be much more beneficial. 
To this end, we shall consider the category $\Ord$ whose objects are finite ordinals (totally ordered sets), and morphisms are order-preserving bijections. This is a monoidal category where the monoidal structure is given by $+$, the ordinal sum.

For every pair of disjoint ordinals $I,J$, and every $i\in I$, we may consider the set $I\sqcup_i J:=I\sqcup J\setminus\{i\}$ with the ordering given by
\[
a<b \text{ \ \ if\  \  }
\begin{cases}
a,b\in I, \text{ and } a<b,\\
a,b\in J, \text{ and } a<b,\\ 
a\in I, b\in J, \text{ and } a<i,\\
a\in J, b\in I, \text{ and } i<b.
\end{cases}  
\]
In other words, we can present $I\sqcup_i J$ as an ordinal sum
 \[
I\sqcup_i J:=\left(\sum_{k\in I, k<i}\underline{1}\right)+J+\left(\sum_{k\in I, k>i}\underline{1}\right)\ .  
 \]

For a finite ordinal $I$, we shall be using the two partial functions $p,s\colon I\to I$. The element $p(i)$ is the \emph{predecessor} of $i$, i.e. the element immediately before $i$ relative to the given order on $I$ (if exists), and the element $s(i)$ is the \emph{successor} of $i$, i.e the element immediately after $i$ relative to the given order on $I$ (if exists). 

\begin{definition}
Let $I$ be a finite ordinal. The \emph{gap set} $\Gap(I)$ is the set of all pairs $(p(i),i)$ for $\min(I)<i\in I$; this set has a natural total order induced by the product order (in fact, in this case it is merely the order of first components).
\end{definition}

\begin{proposition}\label{prop:operad-gap}
There is a natural bijection
	\[
\Gap(I)\sqcup\Gap(J)  \to \Gap(I\sqcup_i J)\ , 
	\]
which sends each pair $(p(k),k)$ from the left-hand side set to the same pair of the right-hand side set with the exception of the pairs where one of the elements is $i$: the pair $(p(i),i)$, if exists, is sent to $(p(i),\min(J))$, and the pair $(i,s(i))$, if exists, is sent to $(\max(J),s(i))$.
\end{proposition}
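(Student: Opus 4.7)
The plan is to establish the bijection by directly computing the predecessor function on $I\sqcup_i J$ and reading off its gap set.

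First, I would exploit the presentation of $I\sqcup_i J$ as the ordinal sum $\bigl(\sum_{k\in I,\, k<i}\underline{1}\bigr)+J+\bigl(\sum_{k\in I,\, k>i}\underline{1}\bigr)$ given just above the proposition. From this one reads off the predecessor of every non-minimal element of $I\sqcup_i J$: for $k\in I$ with $k<i$, it is $p(k)$ computed in $I$; for $k=s(i)$ in $I$ (when it exists), the predecessor becomes $\max(J)$; for $k\in I$ strictly greater than $s(i)$, it is again $p(k)$ in $I$; for $k=\min(J)$, it is $p(i)$ in $I$ when the latter exists (otherwise $\min(J)$ is itself minimal); and for $k\in J$ with $k>\min(J)$, it is $p(k)$ computed in $J$.

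Next, I would enumerate the gaps of $I\sqcup_i J$ using this recipe and match them with sources in $\Gap(I)\sqcup\Gap(J)$: a gap $(p(k),k)\in\Gap(I)$ with $k\ne i$ and $p(k)\ne i$ is sent to the same pair; the gap $(p(i),i)\in\Gap(I)$ becomes $(p(i),\min(J))$; the gap $(i,s(i))\in\Gap(I)$ becomes $(\max(J),s(i))$; and each gap $(p(k),k)\in\Gap(J)$ is sent to itself. The explicit description of predecessors above shows that this list exhausts every gap of $I\sqcup_i J$ exactly once, so the assignment stated in the proposition is indeed a bijection.

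The only real subtlety, though a minor one, is a tidy treatment of the boundary cases: when $i=\min(I)$, the gap $(p(i),i)$ is absent from $\Gap(I)$ and $\min(J)$ becomes the global minimum of $I\sqcup_i J$, so no new gap is produced there; when $i=\max(I)$, the gap $(i,s(i))$ is absent and $\max(J)$ becomes the global maximum; and when $J$ is a singleton the two special substitutions still produce two distinct new gaps since they differ in their second coordinates. In each case the relevant summand on the left-hand side simply disappears and matches the corresponding phenomenon on the right. Naturality with respect to order-preserving bijections of $I$ and $J$ is immediate from the construction, since only the intrinsic predecessor and successor functions and the ordinal-sum structure are used.
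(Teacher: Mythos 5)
Your proof is correct and is, in effect, the ``direct inspection'' that the paper simply asserts: you read off the predecessor function on $I\sqcup_i J$ from the ordinal-sum decomposition, enumerate the resulting gaps, and check boundary cases, which is exactly what the one-line proof in the paper is gesturing at.
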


\begin{proof}
Direct inspection.
\end{proof}

\subsection{Nonsymmetric operads}

In this paper, we predominantly deal with nonsymmetric operads. The notion of a nonsymmetric operad builds upon the notion of a nonsymmetric collection, which is defined as follows.  A \emph{nonsymmetric collection} is a functor from the category $\Ord$ to the category of vector spaces, which we usually assume graded, with the Koszul sign rule for symmetry isomorphisms of tensor products. 
The category of nonsymmetric collections admits a monoidal structure, called the \emph{nonsymmetric composite product}, and denoted $\circ$, which is defined by the following formula:
 \[
(\calF\circ\calG)(I):=\bigoplus_{k\geqslant 1}\calF(\underline{k})\otimes\bigoplus_{I_1+I_2+\cdots+I_k=I}\calG(I_1)\otimes\cdots\otimes\calG(I_k) .   
 \] 
This operation is associative, and the nonsymmetric collection $\calI$ defined as
 \[
\calI(I)=\begin{cases}
\k\, \id, \quad\# I=1,\\
0, \quad\text{ otherwise}, 
\end{cases} 
 \] 
is the unit for this operation, that is, $\calF\circ\calI\cong\calI\circ\calF\cong\calF$ for each nonsymmetric collection $\calF$. A monoid in the monoidal category of nonsymmetric collections with respect to $\circ$ is called a \emph{nonsymmetric operad}. Throughout this paper, we use the abbreviation `ns' instead of the word `nonsymmetric'.  

An equivalent way to present operads is via \emph{infinitesimal compositions}. When $\calP$ is a ns operad,  there exists a map, called the \emph{infinitesimal composition at the slot $i$}, defined by 
\begin{multline*}
\circ_{I,i}^J\colon\calP(I)\otimes\calP(J)\cong\calP(I)\otimes\left(\bigotimes_{k\in I, k<i}\calI\right)\otimes\calP(J)\otimes\left(\bigotimes_{k\in I, k>i}\calI\right)\hookrightarrow\\ \hookrightarrow  \calP(I)\otimes\left(\bigotimes_{k\in I, k<i}\calP(\underline{1})\right)\otimes\calP(J)\otimes\left(\bigotimes_{k\in I, k>i}\calP(\underline{1})\right)
\hookrightarrow (\calP \circ \calP) (I\sqcup_i J)
\to\calP(I\sqcup_i J).   
\end{multline*}
Conversely, from infinitesimal compositions, one can recover all the structure maps of a ns operad.

It is common to work with nonsymmetric operads in a slightly different way, restricting one's attention to the components $\calP(n):=\calP(\underline{n})$, and denoting the corresponding structure maps by $\circ_i$, making the set $J$ implicit. We shall use this notation on some occasion where the categorical definition makes the definition too technical and does not add clarity.

To handle suspensions of chain complexes, we introduce an element~$s$ of degree~$1$, and define, for a graded vector space~$V$, its suspension $sV$ as $\k s\otimes V$. The endomorphism operad $\End_{\k s}$ is denoted by $\calS$. For a nonsymmetric collection $\calP$, its operadic suspension $\calS\calP$ is the Hadamard tensor product $\calS\underset{\mathrm{H}}{\otimes}\calP$.\\

For information on operads in general, we refer the reader to the book~\cite{LV}, for information on Gr\"obner bases for operads in not necessarily quadratic case --- to the book~\cite{BD}. There exists a notion dual to operad called \emph{cooperad}, which can be relaxed up to homotopy to define the notion of \emph{homotopy cooperad}. Homotopy cooperads, and in particular their homotopy transfer theorem, are discussed at length in~\cite{DCV2013}.

\subsection{Toric varieties}

In this section, we briefly summarise basic information on toric varieties. We refer the reader to \cite{Danilov78,Fulton93} for more details.

Throughout the paper, we denote by $\mathbb{G}_m$ the algebraic group $\mathop{\mathrm{Spec}}\left(\k[x,x^{-1}]\right)$, i.e. the multiplicative group $\k^\times$ of the ground field, whenever we would like to emphasize that we deal with algebro-geometric constructions that do not depend on the ground field.

An \emph{algebraic torus} is a product of several copies of $\mathbb{G}_m$. 
A \emph{toric variety} is a normal algebraic variety $X$ that contains a dense open subset $U$ isomorphic to an algebraic torus, for which the natural torus action on $U$ extends to an action on $X$. 

Toric varieties may be constructed from some combinatorial data, namely a \emph{lattice} (free finitely generated Abelian group) $N$ and a \emph{fan} (collection of strongly convex rational polyhedral cones closed under taking intersections and faces) $\Phi$ in $N\otimes_{\mathbb{Z}}\mathbb{R}$. Each cone in a fan gives rise to an affine variety, the affine spectrum of the semigroup algebra of the dual cone. Gluing these affine varieties together according to face maps of cones gives an algebraic variety denoted $X(\Phi)$ and called the toric variety associated to the fan $\Phi$. 
	
It is known that a toric variety $X(\Phi)$ is projective if and only if $\Phi$ is a normal fan of a convex polytope $P$. In such situation, we also use the notation $X(P)$ instead of $X(\Phi)$. The variety $X(P)$ is smooth if and only if $P$ is a \emph{Delzant polytope}, that is a polytope for which the slopes of the edges of the vertex form a basis of the lattice $N$. 

\begin{proposition}[{\cite{Fulton93}}]\label{prop:h-vector}
For an $n$-dimensional Delzant polytope, the Betti numbers of $X(P)$ are given by the components $h_i$ of the \emph{$h$-vector} of $P$, i.e. the integers defined by 
 \[
\sum_{i=0}^n h_it^i=\sum_{i=0}^n f_i(t-1)^i , 	
 \]
where $f_i$ denotes the number of faces of $P$ of dimension $i$.
\end{proposition}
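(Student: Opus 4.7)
The plan is to exhibit a cell decomposition of $X(P)$ via the Bialynicki--Birula construction and then match the cell counts with the $h$-vector via a standard combinatorial identity for simple polytopes.

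First I would choose a generic one-parameter subgroup $\lambda\colon\mathbb{G}_m\to T$ of the dense torus $T$ acting on $X(P)$; genericity ensures that the fixed locus $X(P)^\lambda$ coincides with the isolated $T$-fixed locus, whose points correspond bijectively to the vertices of $P$. For each vertex $v$ the attracting cell $C_v:=\{x\in X(P)\colon\lim_{t\to 0}\lambda(t)\cdot x=v\}$ is, by the Bialynicki--Birula theorem in the smooth projective setting, isomorphic to an affine space $\mathbb{A}^{d(v)}$, where $d(v)$ is the number of positive weights of $\lambda$ on $T_vX(P)$. Since $P$ is Delzant the weights of $T$ on $T_vX(P)$ correspond to the primitive edge directions of $P$ at $v$, and one checks that $d(v)$ equals the number of those edges along which $\lambda$ acts by a positive weight, equivalently, the number of edges at $v$ that become inward-oriented under the orientation of the one-skeleton of $P$ induced by a generic linear functional dual to $\lambda$.

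The cells $C_v$ assemble into a cellular decomposition of $X(P)$ by affine spaces, all of even real dimension; consequently the cellular cochain complex has vanishing differentials and
\[
b_{2i}(X(P))=\#\{v\in V(P)\colon d(v)=i\},\qquad b_{2i+1}(X(P))=0.
\]
It therefore remains to prove the purely combinatorial identity $\sum_v t^{d(v)}=\sum_{i=0}^n f_i(t-1)^i$ for the simple polytope $P$. This is the step I anticipate as the main obstacle, although the underlying bijection is classical.

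To establish it, associate to each face $F$ of $P$ its top vertex $v_F$ with respect to the chosen linear functional. Since $P$ is simple, the faces with $v_F=v$ correspond bijectively to subsets of the set of inward-oriented edges at $v$: a subset $S$ spans the unique face of dimension $|S|$ having $v$ as its top vertex. Grouping the count of faces by top vertex gives
\[
\sum_{i=0}^n f_it^i\ =\ \sum_F t^{\dim F}\ =\ \sum_v(1+t)^{d(v)},
\]
and substituting $t\mapsto t-1$ yields $\sum_{i=0}^n f_i(t-1)^i=\sum_v t^{d(v)}$, which combined with the definition of $h_i$ in the statement completes the computation of the Betti numbers.
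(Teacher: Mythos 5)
The paper does not prove this statement — it is cited from~\cite{Fulton93} — and your Bia\-\l{}ynicki--Birula argument is precisely the standard one given there: attracting cells of a generic one-parameter subgroup are affine spaces indexed by vertices of $P$, forcing odd cohomology to vanish, and the resulting Morse indices are matched to the $h$-vector through the ``top vertex'' correspondence for faces of a simple polytope. Your proof is correct; the only point you leave tacit is that the Bia\-\l{}ynicki--Birula decomposition of a smooth projective variety is filterable (so that the even-dimensional cells really do compute Betti numbers), and the sign conventions relating positive $\lambda$-weights to inward- versus outward-oriented edges, both of which are routine and do not affect the count since the $h$-vector of a simple polytope is symmetric.
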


\subsection{Associahedra and Loday polytopes}

\emph{Associahedra}, or \emph{Stasheff polytopes}, go back to works of Tamari and Stasheff \cite{Stasheff63,Stasheff12,Tamari51}. The $n$-th associahedron $K_n$ is a (possibly curvilinear) polytope of dimension $n-2$ where vertices correspond to different ways of putting parentheses in a word of $n$ letters, and each edge corresponds to a single application of the associativity rule. Historically, associahedra were mainly viewed as abstract CW-complexes, but in the past decades many different realisations of them as lattice polytopes have been obtained, see e.g.~\cite{CSZ14} and references therein. 

We recall the realisation of associahedra from \cite{JLL2004}, which we shall denote $L_n$ and call the \emph{Loday polytopes} for brevity. For each $n\ge 2$, the polytope $L_n$ is a convex span of $\frac1n\binom{2n-2}{n-1}$ points (its vertices) in an affine hyperplane in $\mathbb{Z}^{n-1}$. The vertices are in one-to-one correspondence with the trivalent rooted planar trees with $n$ leaves ${l}_i$, for $i=1,\dots,n$, and labelled by $1,\dots,n$ from left to right. 

Consider such a tree $T$. It has  $n-1$ internal vertices ${v}_1, \dots, {v}_{n-1}$, where the vertex ${v}_i$ is the one that separated the leaves ${l}_i$ and ${l}_{i+1}$. That is, denote by $D^l({v}_i)$ (respectively, $D^r({v}_i)$) the set of leaves that are the left (respectively, right) descendants of ${v}_i$. Then we require that ${l}_i\in D^l({v}_i)$ and ${l}_{i+1}\in D^r({v}_i)$; this specifies ${v}_i$ uniquely. We associate to a tree $T$ the point $p_T\in\mathbb{Z}^{n-1} $, whose $i$ coordinate is equal to $|D^l({v}_i)|\cdot |D^r({v}_i)|$, that is,
\[
p_T:=\left( |D^l({v}_1)|\cdot | D^r({v}_1)|, \, |D^l({v}_2)|\cdot | D^r({v}_2)|, \, \dots,\,  |D^l({v}_{n-1})|\cdot | D^r({v}_{n-1})|\right) 
\]
It is easy to check that $\sum_{i=1}^{n-1} |D^l({v}_i)|\cdot |D^r({v}_i)| = \binom{n}{2}$. Therefore, we associate to every trivalent planar rooted tree $T$ a point $p_T\in \mathbb{Z}^{n-1}$ that belongs to the affine hyperplane in $\mathbb{Z}^{n-1}$ given in coordinates $(x_1,\dots,x_{n-1})$ by the equation $\sum_{i=1}^{n-1} x_i = \binom{n}2$.

\begin{proposition}[\cite{JLL2004}]
The points $\{p_T\}$, where $T$ ranges over trivalent planar rooted trees with $n$ leaves, are the vertices of their convex hull $L_n$, which is a convex polytope of dimension $n-2$. This lattice polytope is combinatorially equivalent to the associahedron $K_n$. 	
\end{proposition}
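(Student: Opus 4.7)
My plan is to prove all three assertions simultaneously by identifying $L_n$ with the polytope in $\mathbb{R}^{n-1}$ cut out by the hyperplane $H=\{\sum_i x_i=\binom{n}{2}\}$ together with an explicit family of facet inequalities indexed by proper consecutive subintervals of $[1,n-1]$, and then matching the resulting face poset with the face lattice of the Stasheff associahedron $K_n$. The text already verifies that every $p_T$ lies on $H$, which is $(n-2)$-dimensional; the dimension claim for $L_n$ then falls out of the face-poset match.

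For each proper consecutive interval $[i,j]\subsetneq[1,n-1]$, I set
\[
\ell_{i,j}(x) := \sum_{k=i}^{j} x_k - \binom{j-i+2}{2}.
\]
The crucial combinatorial lemma reads: for every trivalent planar rooted tree $T$, one has $\ell_{i,j}(p_T)\geq 0$, with equality if and only if the window $W=\{i,\dots,j+1\}$ is exactly the leaf set of a subtree of $T$. I would prove this via a lowest-common-ancestor interpretation: since $v_k$ separates leaves $k$ and $k+1$ and its descendant leaves form a consecutive interval, the product $|D^l(v_k)|\cdot|D^r(v_k)|$ equals the number of ordered pairs of leaves $(a,b)$, $a<b$, with $\mathrm{LCA}(a,b)=v_k$. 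Therefore $\sum_{k=i}^{j}|D^l(v_k)|\cdot|D^r(v_k)|$ counts those pairs of leaves whose LCA lies among $v_i,\dots,v_j$. This set of pairs always contains all $\binom{j-i+2}{2}$ pairs with both members in $W$, giving the inequality; and it contains no further pairs precisely when no $v_k$ with $k\in[i,j]$ has a descendant leaf outside $W$, which is equivalent to $W$ being the leaf set of a subtree.

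With the lemma in hand, let $P$ be the polytope in $H$ cut out by all the $\ell_{i,j}\geq 0$. I would build a bijection between planar rooted trees $S$ with $n$ leaves (internal vertices of arity $\geq 2$) and the faces of $P$ by sending $S$ to the face $F_S$ on which $\ell_{i,j}(x)=0$ whenever $\{i,\dots,j+1\}$ is the leaf set of a subtree of $S$. The family of subtree leaf sets of a planar tree is laminar (pairwise nested or disjoint), so the defining equations of $F_S$ are linearly independent inside $H$, making $\dim F_S$ equal to the number of internal edges of $S$. Inclusion $F_S\subseteq F_{S'}$ corresponds to $S$ being obtained from $S'$ by expanding internal vertices into binary subtrees, i.e.\ the edge-contraction order on planar trees, which is exactly the face poset of $K_n$. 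In particular $P=L_n$ has dimension $n-2$, and the $0$-dimensional faces are the $F_T$ for trivalent $T$; by the equality clause of the lemma each $F_T$ is the single point $\{p_T\}$.

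The main obstacle I anticipate is the sharp combinatorial lemma of the second step, in particular the converse direction of the equality criterion: one must exhibit an explicit pair of leaves witnessing a strict inequality whenever $W$ is not a subtree, which relies on a careful analysis of how subtrees of a planar binary tree interact with a consecutive leaf window. Once this is settled, identifying the face lattice of $P$ with that of $K_n$ via laminar interval families is a standard piece of associahedron combinatorics, and non-emptiness of each $F_S$ follows by a routine induction on the number of internal edges of $S$.
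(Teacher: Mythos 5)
The paper does not prove this proposition: it is quoted as background from Loday's article [JLL2004], so there is no in-house argument to compare against. The route you take is nevertheless close to Loday's original one: his realization theorem is established, as here, by exhibiting the facet inequalities $\sum_{k=i}^{j} x_k \geq \binom{j-i+2}{2}$ over proper consecutive intervals of $\underline{n-1}$ and identifying the face lattice of the resulting polytope with that of $K_n$. What is genuinely pleasant in your write-up is the lowest-common-ancestor reading of the coordinate $x_k(p_T) = |D^l(v_k)|\cdot|D^r(v_k)|$: once one sees it as the number of leaf pairs $(a,b)$ with $a<b$ and $\mathrm{LCA}(a,b)=v_k$, both the inequality (the $\binom{j-i+2}{2}$ in-window pairs all have LCA in $\{v_i,\dots,v_j\}$) and the equality criterion (if some $v_k$ with $k\in[i,j]$ has a descendant leaf outside the window, one of the pairs $(l_k,k+1)$ or $(k,r_k)$ is an explicit witness of strictness) fall out in one sweep, and the hyperplane identity $\sum_k x_k=\binom{n}{2}$ is just the case $[i,j]=[1,n-1]$. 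That is a clean packaging of the key bookkeeping.

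Two points to tidy before this would stand as a complete proof. First, you write that linear independence of the laminar constraints makes "$\dim F_S$ equal to the number of internal edges of $S$"; that count is the \emph{codimension} of $F_S$ inside the ambient hyperplane $H$, so $\dim F_S = (n-2) - \#\{\text{internal edges of } S\}$ — otherwise trivalent trees, with $n-2$ internal edges, would index the top cell rather than vertices. Second, matching the face lattice of $P=\{x\in H: \ell_{i,j}(x)\ge 0\ \forall [i,j]\}$ with that of $K_n$ requires not only that each laminar interval family yields a nonempty face $F_S$, but also that the set of inequalities tight along \emph{any} face of $P$ is automatically laminar (two crossing windows cannot be simultaneously tight without forcing a larger tight family). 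You flag this as "standard associahedron combinatorics," which it is, but it carries the substantive content of the converse inclusion $P\subseteq L_n$ and should be either proved via the usual supermodularity argument for nestohedra or cited explicitly.
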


In particular, the correspondence between the vertices of $L_n$ and the trivalent planar rooted trees explained above descends to the standard combinatorial correspondence in Stasheff's description of the associahedra.

\begin{remark}
Let us note here that $\underline{n-1}\cong\Gap(\underline{n})$ as ordered sets, and that by the very nature of the construction of Loday polytopes, each polytope $L_n$ may be naturally viewed as a lattice polytope in $\mathbb{Z}^{\Gap(\underline{n})}$. This leads to a functorial definition of a lattice polytope $L_I$ for every finite ordinal $I$ that is in fact behind the key constructions of this paper.
\end{remark}

Let us also recall a useful equivalent definition of Loday polytopes due to Postnikov~\cite{Post09}.

\begin{proposition}[\cite{Post09}]\label{prop:Minkowski}
The polytope $L_n$ is the Newton polytope of the polynomial 
	\[
\prod_{1\leqslant i\leqslant j\leqslant n-1}(t_i+t_{i+1}+\cdots+t_j) ,  
	\]
and hence is equal to the Minkowski sum of simplexes corresponding to intervals of $\underline{n-1}\cong\Gap(\underline{n})$.	
\end{proposition}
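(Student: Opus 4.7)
The plan is to first dispose of the Newton polytope / Minkowski sum equivalence, which is essentially a general fact, and then to identify the resulting Minkowski sum with the Loday polytope $L_n$ by matching their vertex sets, using a Cartesian tree construction.

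First, I would recall the standard fact that the Newton polytope of a product of polynomials with positive coefficients equals the Minkowski sum of the Newton polytopes of the factors. Since the Newton polytope of the linear form $t_i+t_{i+1}+\cdots+t_j$ is exactly the simplex
\[
\Delta_{[i,j]}:=\mathrm{conv}\{e_i,e_{i+1},\ldots,e_j\}\subset\mathbb{R}^{n-1},
\]
this immediately gives that the Newton polytope of $\prod_{1\leq i\leq j\leq n-1}(t_i+\cdots+t_j)$ equals $\sum_{1\leq i\leq j\leq n-1}\Delta_{[i,j]}$. This settles one half of the statement, and reduces the proposition to proving that this Minkowski sum $M_n$ coincides with $L_n$.

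Next, I would compare the vertices of $M_n$ and $L_n$ using generic linear functionals $\ell=(\ell_1,\ldots,\ell_{n-1})$ with pairwise distinct coordinates. For such $\ell$, the vertex of $M_n$ maximising $\ell$ is obtained by summing, over each interval $[i,j]$, the unique $e_{k([i,j])}$ with $k([i,j])\in[i,j]$ maximising $\ell$ on $\{\ell_i,\ldots,\ell_j\}$. The coefficient of $e_k$ in this vertex is thus the number of intervals $[i,j]$ for which $k$ is the argmax. The key step is to match this combinatorial description with a trivalent planar rooted tree $T$ via the \emph{Cartesian tree} construction: set the root to be $v_K$ where $\ell_K=\max_k\ell_k$, and recurse on the values $\{\ell_1,\ldots,\ell_{K-1}\}$ (left subtree on leaves $l_1,\ldots,l_K$) and $\{\ell_{K+1},\ldots,\ell_{n-1}\}$ (right subtree on leaves $l_{K+1},\ldots,l_n$). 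By a direct induction one checks that for this $T$, the lowest common ancestor of $l_a$ and $l_b$ (with $a<b$) is precisely $v_k$ where $k=\arg\max_{i\in[a,b-1]}\ell_i$.

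Combining these observations, the interval $[i,j]$ contributes to the $k$-th coordinate of the $\ell$-maximising vertex of $M_n$ exactly when $v_k$ is the LCA of $l_i$ and $l_{j+1}$ in $T$, which in turn happens exactly when $l_i\in D^l(v_k)$ and $l_{j+1}\in D^r(v_k)$. Summing over all such intervals gives $|D^l(v_k)|\cdot|D^r(v_k)|$, so the $\ell$-maximising vertex of $M_n$ equals $p_T$. Conversely, every trivalent planar rooted tree $T$ arises as a Cartesian tree: it suffices to choose any assignment of distinct real numbers to $v_1,\ldots,v_{n-1}$ which decreases from root to leaves along the tree. Hence the vertex sets of $M_n$ and $L_n$ coincide, and since both are convex hulls of their vertex sets, $M_n=L_n$.

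The main conceptual obstacle, and the only non-formal step, is the correct identification of the combinatorics of maximising $\ell$ over the Minkowski sum with the LCA structure of a planar binary tree; everything else is bookkeeping. Once the Cartesian tree / LCA dictionary is in place, the counting identity $|D^l(v_k)|\cdot|D^r(v_k)|=\#\{[i,j]:k=\arg\max_{[i,j]}\ell\}$ is transparent and the proposition follows.
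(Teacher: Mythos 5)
The paper records this as a cited fact from~\cite{Post09} and supplies no proof of its own, so there is nothing in the text to compare your argument against; I can only assess it on its own terms, and it is correct and self-contained. The reduction of the Newton polytope to the Minkowski sum $M_n:=\sum_{1\le i\le j\le n-1}\Delta_{[i,j]}$ is standard, and your Cartesian-tree dictionary then does exactly what is needed: for a generic $\ell$, an induction on the root shows that the lowest common ancestor in the Cartesian tree of $(\ell_1,\ldots,\ell_{n-1})$ of leaves $l_a,l_b$ (with $a<b$) is $v_k$ with $k=\arg\max_{m\in[a,b-1]}\ell_m$, and by planarity this LCA is $v_k$ precisely when $l_a\in D^l(v_k)$ and $l_b\in D^r(v_k)$; hence the coefficient of $e_k$ in the $\ell$-optimal vertex of $M_n$ is $\lvert D^l(v_k)\rvert\cdot\lvert D^r(v_k)\rvert$, which is the $k$-th coordinate of $p_T$. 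Combined with the converse (every planar binary tree is the Cartesian tree of any labelling that decreases away from the root), this identifies the vertex set of $M_n$ with $\{p_T\}$, so $M_n=\mathrm{conv}\{p_T\}=L_n$; as a byproduct you even re-derive that each $p_T$ is a vertex, which the paper otherwise takes from~\cite{JLL2004}. For context, Postnikov's own argument in~\cite{Post09} obtains this as an instance of his general theory of nestohedra and building sets (here the building set of intervals of $\underline{n-1}$), whereas your vertex-by-vertex matching via Cartesian trees is a more elementary route to the same identification and does not need that general machinery.
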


\section{Noncommutative versions of little 2-disks and framed little 2-disks operads}\label{sec:nc-gerst-bv}

In this section, we propose noncommutative versions of the topological operads of little $2$-disks and framed little $2$-disks that lead, respectively, to the algebraic operads of  noncommutative Gerstenhaber and noncommutative Batalin--Vilkovisky algebras controlling the algebraic structures arising in \cite{Bor13,GS}.

\subsection{Noncommutative \texorpdfstring{$\calD_2$}{LD2} and noncommutative Gerstenhaber algebras}

\subsubsection{The M-associative operad, and its linear version}

In this section, we define a functor $\As_M$ from the category of sets to the category of set-theoretic ns operads; it generalises the associativity property in the sense that $\As_{\lbrace * \rbrace}$ is the usual ns associative operad $\As$. 

\begin{definition}
For a set $M$, we define the \emph{$M$-associative operad $\As_M$} as the set-theoretical  ns  operad whose $n$-th component is $M^{n-1}$, for any $n\ge 1$, and $\As_M(0)=\emptyset$. Its operadic compositions are given by 
 \[
\circ_i\colon \As_M(n_1)\times\As_M(n_2)=M^{n_1-1}\times M^{n_2-1}\to M^{n_1+n_2-2}=\As_M(n_1+n_2-1) 
 \]
are given by the formula 
 \[ 
(m_1,\ldots,m_{n_1-1})\circ_i(m_{n_1},\ldots,m_{n_1+n_2-2}):=
(m_1,\ldots,m_{i-1},m_{n_1},\ldots,m_{n_1+n_2-2},m_i,\ldots,m_{n_1}). 
 \]
\end{definition}

\begin{remark}
In a more invariant way, we can define the $M$-associative operad $\As_M$ as the ns operad whose component $\As_M(I)$ is $M^{\Gap(I)}$, and the operadic compositions 
\[
\circ_{I,i}^J\colon\As_M(I)\times\As_M(J)=M^{\Gap(I)}\times M^{\Gap(J)}\to 
M^{\Gap(I\sqcup_i J)}
=
\As_M(\Gap(I\sqcup_i J)) 
\]
arise from the natural bijection $\Gap(I\sqcup_i J)\cong\Gap(I)\sqcup\Gap(J)$ of Proposition~\ref{prop:operad-gap}. 
\end{remark}

\begin{proposition}\label{prop:AsMRelations}
The operations defined above give the collection $\As_M$ a structure of a set-theoretic ns operad. This operad can presented via generators and relations, $\As_M=\calT(V)/(R)$, where $V:=\{(m)\}_{m\in M}=\As_M(2)$ and $R\in\As_M(3)$ is the set of relations
\begin{equation}\label{M-as}
(m)\circ_1 (m') =(m')\circ_2 (m) .  
\end{equation}
\end{proposition}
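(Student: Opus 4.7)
My plan is to address the proposition in three stages.

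First, the operad axioms are most naturally verified in the Gap-set presentation $\As_M(I)=M^{\Gap(I)}$ from the remark preceding the statement. There the infinitesimal composition $\circ_{I,i}^J$ is obtained by transporting structure along the bijection $\Gap(I\sqcup_i J)\cong \Gap(I)\sqcup\Gap(J)$ of Proposition~\ref{prop:operad-gap}. Associativity then follows at once from the compatibility of these bijections under a doubly iterated ordinal composition, and unitality is trivial since $\Gap(\underline{1})$ is empty.

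Second, I establish the presentation by producing a surjective morphism of set-theoretic ns operads $\varphi\colon\calT(V)/(R)\twoheadrightarrow\As_M$. On the one hand, the relation \eqref{M-as} holds in $\As_M$: a direct substitution in the defining formula gives $(m)\circ_1 (m') = (m',m) = (m')\circ_2 (m)$ in $\As_M(3)=M^2$. On the other hand, iterated right-comb compositions exhaust $\As_M$, since
\[
(m_1)\circ_2\bigl((m_2)\circ_2\bigl(\cdots\circ_2(m_{n-1})\bigr)\bigr)=(m_1,\ldots,m_{n-1})
\]
as one checks by induction on $n$ from the formula.

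To prove that $\varphi$ is also injective, I invoke the Gr\"obner basis formalism for ns operads developed in \cite{DK,DV2013}. Orienting the relation as the rewrite rule $(m)\circ_1(m')\Rightarrow(m')\circ_2(m)$, the normal tree monomials are exactly the right combs --- planar binary trees in which every internal vertex has a leaf as its left child --- of which there are precisely $|M|^{n-1}$ in arity $n$. Checking confluence reduces to a single arity-4 critical pair, supported on the left comb $((m)\circ_1(m'))\circ_1(m'')$: both reduction sequences converge to the right comb $(m'')\circ_2((m')\circ_2(m))$. Hence the right combs provide a complete system of representatives for the classes of $\calT(V)/(R)$ whose cardinality matches $|\As_M(n)|=|M|^{n-1}$, so $\varphi$ is an isomorphism.

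The main obstacle in this plan is the confluence check: it is the only step that requires any genuine case analysis, while everything else is either structural (the operad axioms via Gap sets) or a one-line formula verification.
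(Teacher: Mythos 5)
Your proof is correct, and at two points it takes a genuinely different route from the paper. For the operad axioms, the paper simply writes down the parallel and sequential identities for the $\circ_i$'s in the $\underline{n}$-indexing and checks them by substitution; you instead work in the $\Gap$-set presentation and derive them from the coherence of the bijections $\Gap(I\sqcup_i J)\cong\Gap(I)\sqcup\Gap(J)$, which is more structural (though you should spell out what "compatibility under a doubly iterated ordinal composition" means if this is to replace a computation). For the presentation, the paper is satisfied with a one-sided rewriting argument: it observes that every element of $\calT(V)/(R)$ can be rewritten as a right comb, so the component of the quotient in arity $n$ has cardinality at most $|M|^{n-1}$, and since the surjection onto $\As_M(n)=M^{n-1}$ forces equality, the map is bijective. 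You instead fully verify confluence of the rewriting system via the single critical pair on the arity-$4$ left comb and invoke the Gr\"obner basis machinery to conclude that right combs form a basis before comparing cardinalities. The confluence check is correct (both reduction paths of $((m)\circ_1(m'))\circ_1(m'')$ do converge to $(m'')\circ_2((m')\circ_2(m))$) but it is strictly more work than necessary: the counting argument renders it redundant. Your approach buys you the knowledge that $\{(m)\circ_1(m')-(m')\circ_2(m)\}$ is an actual Gr\"obner basis, which the paper does not claim; the paper's approach buys brevity.
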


\begin{proof}
For $S_1\in\As_M(n_1)$, $S_2\in\As_M(n_2)$, $S_3\in\As_M(n_3)$, and $1\le i<j\le n_1$, $1\le k\le n_2$, the 
parallel and sequential axioms
\begin{gather*}
(S_1\circ_j S_2)\circ_i S_3=(S_1\circ_i S_3)\circ_{j+n_3-1} S_2,\\
S_1\circ_i(S_2\circ_k S_3)=(S_1\circ_i S_2)\circ_{k-i+1}S_3
\end{gather*}
are checked by direct inspection. It is clear that the operations $(m)$, $m\in M$, generate $\As_M$, since
\[
(m_1,\ldots,m_{n-1})=(\cdots(((m_1)\circ_2(m_2))\circ_3(m_3))\cdots)\circ_{n-1}(m_{n-1}). 
\]
Since $(m)\circ_1 (m')=(m',m)=(m')\circ_2 (m)$, we see that Equation~\eqref{M-as} holds in $\As_M$.  Finally, there are no extra relations in $\As_M$, because already using Equation~\eqref{M-as} alone, we can rewrite every element in $\As_M$ as ``right comb'', that is an iteration of compositions only using the last slot of operations as above, and the set of such right combs of arity $n$ is precisely $M^{n-1}=\As_M(n)$.  
\end{proof}

The definition of the operad $\As_M$ makes sense when $M$ is an object of any symmetric monoidal category. In particular, we shall use it for when $M$ is a topological space, a chain complex, or simply a graded vector space; in the two latter cases, this will produce appropriate Koszul signs because those are involved in symmetry isomorphisms of the corresponding categories. 
If $M$ is a topological space, the collection $\As_M$ is a ns topological operad, and its homology is a ns operad whose components are graded vector spaces. 

\begin{proposition}\label{prop:presentationAsM}
For every topological space $M$, we have an isomorphism	
 \[
H_\bullet(\As_M)\cong\As_{H_\bullet(M)} . 
 \]
For any basis $B$ of $H_\bullet(M)$, the linear ns operad spanned by the set-theoretic ns operad $\As_B$ is canonically isomorphic to $\As_{H_\bullet(M)}$. Hence, this latter ns operad admits for presentation 
$\calT(V)/(R)$, with $V:=\{(b)\}_{b\in B}$ and $R$ as in Proposition~\ref{prop:AsMRelations}.  
\end{proposition}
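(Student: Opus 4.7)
The proof proposal proceeds in three steps, following the three assertions of the proposition.

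First, I would establish the isomorphism $H_\bullet(\As_M) \cong \As_{H_\bullet(M)}$ at the level of ns collections. Since $\As_M(n) = M^{n-1}$ as a topological space and we are working over a field $\k$, the K\"unneth formula gives a natural isomorphism
\[
H_\bullet(\As_M(n)) = H_\bullet(M^{n-1}) \cong H_\bullet(M)^{\otimes(n-1)} = \As_{H_\bullet(M)}(n).
\]
To upgrade this to an isomorphism of ns operads, I would verify that it intertwines the infinitesimal compositions $\circ_i$. Concretely, the composition in $\As_M$ is simply the insertion of tuples of points in $M$ at a given slot (equivalently, the natural bijection of gap sets from Proposition~\ref{prop:operad-gap} applied to $M$-indexed collections), and passing to homology this translates, via K\"unneth, into exactly the same insertion of tensors that defines the composition in $\As_{H_\bullet(M)}$. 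The Koszul sign rule is inherited automatically because the K\"unneth isomorphism already carries the Koszul signs governing the symmetric monoidal structure on graded vector spaces.

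Second, given a basis $B$ of $H_\bullet(M)$, the set $B^{n-1}$ is a basis of $H_\bullet(M)^{\otimes(n-1)}$, so the linear ns operad spanned by the set-theoretic operad $\As_B$ has the same underlying graded collection as $\As_{H_\bullet(M)}$. The set-theoretic composition formula of $\As_B$ is the $B$-version of the formula that, upon linear extension, defines the composition of $\As_{H_\bullet(M)}$; thus the canonical map identifying bases extends to a canonical isomorphism of ns operads.

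Finally, for the presentation, I would simply invoke Proposition~\ref{prop:AsMRelations}, which already established (for any object $M$ in a symmetric monoidal category) that $\As_M = \calT(V)/(R)$ with $V = \{(m)\}_{m \in M}$ and relation $(m) \circ_1 (m') = (m') \circ_2 (m)$; applying this to $M = H_\bullet(M)$ (or equivalently to the basis $B$ once the previous step is in place) yields the claimed presentation.

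The only real subtlety, and what I would expect to be the main thing to double-check, is the naturality of the K\"unneth isomorphism with respect to the operadic structure, together with the bookkeeping of Koszul signs when transporting the insertion formula through $H_\bullet(M^{n-1}) \cong H_\bullet(M)^{\otimes(n-1)}$; everything else is essentially formal once this compatibility is verified.
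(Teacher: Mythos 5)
Your proposal is correct and follows essentially the same route as the paper, which simply notes that the first claim is a direct consequence of the homology functor being monoidal (i.e.\ the K\"unneth isomorphism) and leaves the remaining two assertions implicit; you spell out the same idea in more detail, including the verification of compatibility with the $\circ_i$ maps and the basis/presentation bookkeeping.
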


\begin{proof}
The first point is a direct consequence of the fact that the homology functor  is monoidal. 
\end{proof}

In what follows, the corresponding operads for $M=S^1$ will receive the special attention; those operads provide a sensible noncommutative analogue of the topological operad of little disks and the algebraic operad of Gerstenhaber algebras.  

\subsubsection{Noncommutative Gerstenhaber algebras}

\begin{definition}
The ns operad $\ncGerst$ of noncommutative Gerstehnaber algebras is the homology of the $S^1$-associative operad:
 \[
\ncGerst :=  H_\bullet(\As_{S^1}) .
 \] 
\end{definition}

\begin{proposition}\label{prop:ncGerstRelations}
 The ns operad $\ncGerst$ is isomorphic to the ns operad with two binary generators $m$ and $b$ of respective degree $0$ and $1$, satisfying the relations
 \begin{gather}\label{ncGerst-rel}
 m\circ_1m-m\circ_2m=0,\\
 m\circ_1b-b\circ_2m=0,\\
 b\circ_1m-m\circ_2b=0,\\
 b\circ_1b+b\circ_2b=0.\label{eq:shifted-as}
 \end{gather}
\end{proposition}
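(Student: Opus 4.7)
The plan is to derive the presentation directly by combining Proposition~\ref{prop:presentationAsM}, which identifies $\ncGerst := H_\bullet(\As_{S^1})$ with $\As_{H_\bullet(S^1)}$, and Proposition~\ref{prop:AsMRelations}, which describes any $\As_M$ by generators and relations. The graded vector space $H_\bullet(S^1)$ has a canonical basis $\{1,\epsilon\}$ with $|1|=0$ and $|\epsilon|=1$, so setting $m := (1)$ and $b := (\epsilon)$ produces the two generators in arity~$2$ with the correct degrees. The remaining task is to translate the generic relation $(x)\circ_1 (y) = (y)\circ_2 (x)$ from Proposition~\ref{prop:AsMRelations} (stated in the set-theoretic setting) into the graded world.

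The main point of care will be the Koszul signs. Unwrapping the definition of $\As_M$, the arity-two composition $\circ_1\colon\As_M(2) \otimes \As_M(2) \to \As_M(3) = M^{\otimes 2}$ is exactly the symmetry isomorphism $x\otimes y \mapsto (-1)^{|x||y|}\, y\otimes x$ of the graded tensor product, whereas $\circ_2$ is the identity on $M^{\otimes 2}$. The relation of Proposition~\ref{prop:AsMRelations} therefore lifts, in the graded category, to
\[
(x)\circ_1 (y) \;=\; (-1)^{|x||y|}\, (y)\circ_2 (x), \qquad x, y \in \{m,b\}.
\]
Evaluating this at the four ordered pairs in $\{m,b\}^2$ yields the four stated identities: three of them are sign-free because at least one generator has degree zero, while the remaining case $(x,y)=(b,b)$ acquires the sign $(-1)^{|b|^2}=-1$, which is precisely what produces $b\circ_1 b + b\circ_2 b = 0$.

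The last item to verify is that no further relations are needed, and this will go through verbatim as in the proof of Proposition~\ref{prop:AsMRelations}: the four relations above allow every tree monomial to be rewritten as a right comb, and the right combs of arity~$n$ on $\{m,b\}$ are in natural bijection with the monomial basis of $\As_{H_\bullet(S^1)}(n) = H_\bullet(S^1)^{\otimes(n-1)}$, leaving no room for additional identities.
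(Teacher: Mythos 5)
Your proof is correct and follows essentially the same route the paper takes: the paper's own proof is a one-line invocation of Proposition~\ref{prop:presentationAsM} applied to $H_\bullet(S^1)=\k m\oplus\k b$. You have usefully made explicit the Koszul-sign bookkeeping (that $\circ_1$ is the symmetry isomorphism, forcing $x\circ_1 y=(-1)^{|x||y|}\,y\circ_2 x$, which produces the sign in $b\circ_1b+b\circ_2b=0$) that the paper leaves implicit in that deduction.
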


\begin{proof}
This a direct corollary of Proposition~\ref{prop:presentationAsM} with $H_\bullet{S^1}=\k m \oplus \k b$, where $|m|=0$ and $|b|=1$. 
\end{proof}

The simplest nontrivial example of a noncommutative Gerstenhaber algebra that we are aware of is the following one. 

\begin{example}\label{ex:bar-ncGerst}
Let $(A,m_A)$ be an associative algebra. 
Recall that its \emph{bar construction} \cite[Section~$2.2$]{LV} is made up of the (non-unital) free associative algebra $\big(\overline{T}\left(s^{-1}A\right),m\big)$ on the desuspension of $A$.
This associative algebra structure $m$  extends to a structure of a $\ncGerst$-algebra by putting 
 \[
b:=\id^{\otimes(p-1)}\otimes(s^{-1}m_A\circ(s\otimes s))\otimes\id^{\otimes(q-1)}\ ,
 \]
 on $\left(s^{-1}A\right)^{\otimes p}\otimes \left(s^{-1}A\right)^{\otimes q}$.
\end{example}

The following proposition shows how noncommutative Gerstenhaber algebras  arise naturally in the context of noncommutative deformation theory of Ginzburg and Schedler \cite{GS}, that is deformation theory where the parameter of deformation is not required to commute with the algebra elements. In that formalism, one associates to an associative algebra $A$ the algebra $A_t$, the $t$-adic completion of the coproduct of unital algebras $A\sqcup\k[t]$, and considers associative products $A\otimes A\to A_t$ which agree with the product on $A$ under the projection $A_t/A_ttA_t\cong A$.  

\begin{proposition}[\cite{GS}]\label{prop:def-ncGerst}
The Hochschild cochain complex 
 \[
C^\bullet(A,\overline{T}(A)):=\bigoplus_{p,k\geqslant 1}C^p(A,A^{\otimes k})=\bigoplus_{p,k\geqslant 1}\Hom((sA)^{\otimes p},A^{\otimes k}) 
 \]	
with coefficients in the  ``outer'' $A$-bimodule structure on $\overline{T}(A)$ 
$$(a'\otimes a'').(a_1\otimes a_2\otimes\cdots \otimes a_k):=(a'a_1)\otimes a_2\otimes\cdots \otimes (a_ka'')$$ 
admits a noncommutative Gerstenhaber algebra structure defined by 
\begin{equation}
m(f,g):=\left(\id^{\otimes(k-1)}\otimes m_A\otimes\id^{\otimes(l-1)}\right)\circ(f\otimes g)\in C^{p+q}(A,A^{\otimes(k+l-1)}) \ ,\label{eq-m-for-def}
\end{equation}
\begin{multline}
b(f,g):=\left(f\otimes\id^{\otimes(l-1)}\right)\circ\left(\id^{\otimes(p-1)}\otimes s\otimes\id^{\otimes(l-1)}\right)\circ\left(\id^{\otimes(p-1)}\otimes g\right)+\\
+\left(\id^{\otimes (k-1)}\otimes g\right)\circ\left(\id^{\otimes(k-1)}\otimes s\otimes\id^{\otimes(q-1)}\right)\circ\left(f \otimes \id^{\otimes(q-1)}\right) \in C^{p+q-1}(A,A^{\otimes(k+l-1)})\ , \label{eq:b-for-def}
\end{multline}
for $f\in C^p(A,A^{\otimes k})$ and $g\in C^q(A,A^{\otimes l})$. 

The equivalence classes of noncommutative deformations of $A$ are in one-to-one correspondence with gauge equivalence classes of solutions to the Maurer--Cartan equation
 \[
d_{\mathrm{Hoch}}\omega+b(\omega,\omega)=0 \ .
 \]
\end{proposition}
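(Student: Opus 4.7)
The plan is to prove the proposition in two stages. First, verify that the operations $m$ and $b$ from \eqref{eq-m-for-def} and \eqref{eq:b-for-def} satisfy the four defining relations of $\ncGerst$ from Proposition~\ref{prop:ncGerstRelations}; second, identify Maurer--Cartan elements with equivalence classes of noncommutative deformations in the sense of Ginzburg--Schedler.

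The associativity $m\circ_1 m=m\circ_2 m$ is immediate from the associativity of $m_A$: both sides insert $m_A$ twice at consecutive positions between the outputs of three given cochains. The two mixed relations $m\circ_1 b=b\circ_2 m$ and $b\circ_1 m=m\circ_2 b$ express the commutation of the outer concatenation by $m_A$ with the inner insertions used to define $b$; since the corresponding structure maps act at disjoint tensor-factor positions, the verification reduces to a direct matching of terms once each of the two summands of $b$ is treated separately.

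The shifted associativity $b\circ_1 b+b\circ_2 b=0$ is the most delicate relation. The strategy is to decompose $b=\alpha+\beta$ according to the two summands of \eqref{eq:b-for-def}: $\alpha(f,g)$ plugs the first output of $g$ into the last input of $f$, while $\beta(f,g)$ plugs the last output of $f$ into the first input of $g$. Expanding $b\circ_1 b+b\circ_2 b$ gives eight compositions, and one checks that they organise into four pairs that realise the very same combinatorial nesting of $f$, $g$ and $h$: the pairs $\{\alpha\circ_1\alpha,\,\alpha\circ_2\alpha\}$ and $\{\beta\circ_1\beta,\,\beta\circ_2\beta\}$, together with the mixed pairs $\{\alpha\circ_1\beta,\,\beta\circ_2\alpha\}$ and $\{\beta\circ_1\alpha,\,\alpha\circ_2\beta\}$. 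Within each pair, the two terms carry opposite Koszul signs arising from the reordering of the degree-$1$ shift element $s$ past the other tensor factors, and hence cancel.

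For the Maurer--Cartan correspondence, the natural approach is to decompose a noncommutative deformation $\tilde m_A\colon A\otimes A\to A_t$ of $m_A$ according to the alternating $A$- and $t$-segments in its image: separating out the terms with exactly $k$ intermediate factors of $t$ identifies $\tilde m_A$ with a cochain $\omega=\sum_{k\geqslant 1}\omega_k$ where $\omega_k\in C^2(A,A^{\otimes(k+1)})$. Associativity of $\tilde m_A$ in $A_t$, read off from the $t$-position pattern, splits into a linear contribution which is exactly $d_{\mathrm{Hoch}}\omega$ for the outer bimodule structure, and a quadratic contribution which is exactly $b(\omega,\omega)$. Gauge equivalence between such deformations corresponds to conjugation by inner automorphisms of $A_t$ that fix $A$ modulo the two-sided ideal generated by $t$. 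The main obstacle throughout is the Koszul-sign bookkeeping in the proof of the shifted associativity of $b$; the decomposition $b=\alpha+\beta$ combined with the standard sign conventions for degree-$1$ operations makes this manageable.
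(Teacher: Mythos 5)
The paper gives no proof of this proposition at all: it is attributed to Ginzburg--Schedler--Tsygan \cite{GS} and simply cited, with the only further comment being a Remark after Proposition~\ref{prop:KoszulGerst} acknowledging a sign discrepancy between \eqref{eq:shifted-as}, \eqref{eq:b-for-def} and the formulas in \cite{GS}. So your attempt to give a direct verification is a genuinely different route from what the paper does.

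The heart of your argument, the eight-term expansion of $b\circ_1 b + b\circ_2 b$ into the pairs $\{\alpha\circ_1\alpha,\alpha\circ_2\alpha\}$, $\{\beta\circ_1\beta,\beta\circ_2\beta\}$, $\{\alpha\circ_1\beta,\beta\circ_2\alpha\}$, $\{\beta\circ_1\alpha,\alpha\circ_2\beta\}$, has a real gap. The claim that the two members of each pair realise the same combinatorial nesting is correct when the \emph{middle} cochain $g\in C^q(A,A^{\otimes l})$ has $q\geq 2$ and $l\geq 2$, but it fails in the boundary cases. Concretely, take $q=1$: for $\alpha\bigl(\beta(f,g),h\bigr)$ the last input of $\beta(f,g)$ is the $p$-th input of $f$ (since $g$ has no free inputs left), so $h$'s first output is inserted into $f$, while $f$'s last output goes into $g$; for $\beta\bigl(f,\alpha(g,h)\bigr)$ the first input of $\alpha(g,h)$ is the first input of $h$, so $f$'s last output is inserted into $h$, while $h$'s first output goes into $g$. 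These two compositions are genuinely different as maps (one is a chain $h\to f\to g$, the other is $f\to h\to g$), so no Koszul sign can make them cancel pairwise. The same failure occurs for the pair $\{\beta\circ_1\alpha,\alpha\circ_2\beta\}$ when $l=1$. A complete proof therefore has to treat these degenerate arities separately, and cannot rely on the stated pairing alone. I would also caution that even in the generic case the justification you give for the cancellation (``opposite Koszul signs arising from the reordering of $s$ past the other tensor factors'') is too quick: for the pairs $\{\alpha\circ_1\alpha,\alpha\circ_2\alpha\}$ and $\{\beta\circ_1\beta,\beta\circ_2\beta\}$ the two compositions are \emph{literally} the same expression in the shifts $s$ and the cochains, so the only source of a relative sign is the Koszul sign $(-1)^{|b||f|}$ coming from evaluating $\circ_2$, and you would need to show how this interacts with the internal signs of $\alpha$ and $\beta$ to yield a uniform cancellation; as written the sign bookkeeping is asserted rather than carried out.

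The plan for the Maurer--Cartan correspondence (decomposing $\tilde m_A$ by the number of $t$-factors, identifying the linear term with $d_{\mathrm{Hoch}}$ and the quadratic one with $b(\omega,\omega)$, and matching gauge equivalence with inner automorphisms) is the expected outline and is consistent with what \cite{GS} does, but it too is a sketch: to be complete it must track the outer bimodule structure carefully when identifying the linear piece, and must verify that the quadratic piece really is $b(\omega,\omega)$ and not, say, $\tfrac12[\omega,\omega]$ for a different bracket.
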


It turns out that, like the classical Gerstenhaber operad, the operad $\ncGerst$ behaves very well with respect to homotopical properties like operadic Koszul duality. 
 This will be useful for us at later stages of the paper.

\begin{proposition}\label{prop:KoszulGerst}
The operad $\ncGerst$ is obtained from the operads $\As$ and  $\As_1:=\calS^{-1}\As$ by a distributive law. In particular, the underlying ns collection of this operad is 
$$\ncGerst\cong\As\circ\As_1$$
 and the operad $\ncGerst$ is
Koszul. The Koszul dual operad  of $\ncGerst$ is isomorphic to the suspension of $\ncGerst$: 
 \[
\ncGerst^!\cong \calS\ncGerst \ .  
 \]	
\end{proposition}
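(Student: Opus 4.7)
The plan is to exhibit a distributive law between the ns operads $\As$ (on the generator $m$) and $\As_1 = \calS^{-1}\As$ (on the generator $b$), and then apply the standard Koszul duality machinery for distributive laws from \cite{LV,DK}. The four relations in Proposition~\ref{prop:ncGerstRelations} split naturally into (i) the associativity of $m$, which is the defining relation of $\As$; (ii) the shifted associativity of $b$, which is the defining relation of $\As_1$; and (iii) the two mixed relations $m\circ_1 b = b\circ_2 m$ and $b\circ_1 m = m\circ_2 b$. Read as a rewriting rule, the mixed relations define a morphism of ns collections $\lambda\colon \As_1 \circ_{(1)} \As \to \As \circ_{(1)} \As_1$, sending $b\circ_1 m \mapsto m\circ_2 b$ and $b\circ_2 m \mapsto m\circ_1 b$; this is my candidate distributive law.

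The main technical step is to verify that $\lambda$ is a genuine distributive law, which I would do using operadic Gr\"obner bases \cite{DK,DV2013}. Endow planar trees with the path-lexicographic monomial order arising from $b > m$ and $\circ_1 > \circ_2$. The four relations have leading terms $m\circ_1 m$, $b\circ_1 b$, $b\circ_1 m$, $b\circ_2 m$, while the complementary quadratic normal monomials are $m\circ_2 m$, $b\circ_2 b$, $m\circ_1 b$, $m\circ_2 b$. A normal monomial in arity $n$ is thus a tree where no $b$-vertex sits above an $m$-vertex and where the pure $m$-subtree and the pure $b$-subtrees are right combs; this exactly enumerates the ns collection $\As\circ\As_1$. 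Confluence reduces to a finite check of S-polynomials coming from arity-$4$ overlaps of leading terms; the potential critical pairs fall into a handful of cases (two successive $m$'s or $b$'s combined with a third generator via an associativity overlap, or a $b$ sitting above an $m$-composition that can be rewritten in two orders), and each reduces to zero using associativity of $m$, shifted associativity of $b$, and the swap rules. This establishes the quadratic Gr\"obner basis property.

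Once $\lambda$ is a distributive law, one immediately obtains $\ncGerst \cong \As\circ\As_1$ as ns collections, and Koszulness of $\ncGerst$ follows from the existence of a quadratic Gr\"obner basis \cite{DK}, or equivalently from the theorem in \cite{LV} that a distributive law between Koszul operads yields a Koszul operad. For the Koszul dual, I would compute $R^\perp$ directly under the natural pairing on the quadratic part of $\calT(V^\vee)$: a short calculation shows that $R^\perp$ is spanned by analogous expressions with signs compatible with the operadic suspension, and the resulting presentation matches that of $\calS\ncGerst$. Conceptually this is the expected outcome because $\As^! \cong \As$ and $\As_1^! \cong \calS\As$, and the dual distributive law assembles these into $\calS\ncGerst$. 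The main obstacle is the bookkeeping of the Gr\"obner confluence and the sign matching in the Koszul dual presentation; both are elementary, but do require patience.
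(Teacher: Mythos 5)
Your proof is correct, and it reaches the conclusion by a genuinely different route from the paper. The paper's argument is shorter because it exploits the fact that $\ncGerst$ is defined as the homology $H_\bullet(\As_{S^1})$, so the graded dimensions $\dim\ncGerst_k(n)=\binom{n-1}{k}$ are known a priori; the paper simply matches these with $\dim(\As\circ\As_1)_k(n)$ and invokes the standard criterion (the natural surjection $\As\circ\As_1\twoheadrightarrow\calT(V)/(R)$ coming from a candidate rewriting rule, as in \cite[Section~8.6]{LV}, is an isomorphism if and only if it is injective in weight~$3$, which follows at once from the equality of dimensions). Your Gr\"obner-basis argument instead verifies confluence of the rewriting system directly. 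This is more work — you must actually run through the weight-$3$ overlaps of the leading terms $m\circ_1 m$, $b\circ_1 b$, $b\circ_1 m$, $b\circ_2 m$ — but it is self-contained, in the sense that it does not rely on already knowing the dimensions of $\ncGerst$, and it simultaneously produces a PBW/Gr\"obner basis, which gives an alternative proof of Koszulness independent of the distributive-law theorem. Your description of the normal monomials (the $m$-comb at the root with $b$-combs grafted on its leaves, in bijection with $\As\circ\As_1$) and the identification $\ncGerst^!\cong\calS\ncGerst$ via the annihilator of the relations both agree with the paper. The only thing you have deferred rather than carried out is the explicit reduction of the S-polynomials in weight~$3$; this is a finite check of half a dozen overlaps and does reduce, but since it is the whole content of the confluence claim, a careful write-up should include it.
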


\begin{proof}
The first statement is proved by a  calculation of the dimensions of the two collections 
$$\dim \ncGerst_k(n)=\binom{n-1}{k}=\dim  \left(\As\circ\As_1\right)_k(n)$$
 and by the standard statements on distributive laws \cite[Section~$8.6$]{LV}. It directly implies the two claims that follow. Inspecting the definitions immediately proves the last statement. 
\end{proof}

\begin{remark}
Note a slight difference of signs between \eqref{eq:shifted-as}, \eqref{eq:b-for-def} and the respective formulas from \cite{GS}. It comes from the fact that the operation $b$ has homological degree $1$, so some signs arise from evaluating this operation on elements. These signs are ignored in \cite{GS}; it did not affect the validity of their main results. However, it is absolutely crucial that Equation \eqref{eq:shifted-as} holds as we state it for many purposes. (In particular, for the other choice of signs the corresponding operad fails to be Koszul and so  does not possess the same neat homotopical properties).
\end{remark}

\subsection{Noncommutative \texorpdfstring{$f\calD_2$}{fLD2} and noncommutative Batalin--Vilkovisky algebras}

\subsubsection{Extensions of operads by groups and bialgebra actions}
\label{sec:g-action-operadic-composition}

If $G$ is a topological group and $\calA$ is a ns operad in the monoidal category of topological $G$-spaces, or $G$-modules, one can define, following \cite{Markl1999}, their \emph{semidirect product $\calA\rtimes G$}, which is a ns operad in the category of ordinary topological spaces or modules. 
It satisfies the property that the category of $\calA$-algebras in $G$-spaces is isomorphic to the category of $\calA\rtimes G$-algebras in topological spaces. 
Its components are described by the formula
 \[
(\calA\rtimes G)(n):=\calA(n)\times G^n, 
 \]  
with composition maps
 \[
\gamma_{\calA\rtimes G}\colon (\calA\rtimes G)(k)\times  (\calA\rtimes G)(n_1)\times\cdots\times  (\calA\rtimes G)(n_k)\to (\calA\rtimes G)(n_1+\cdots+n_k)
 \]
given by  
 \[
\gamma_{\calA\rtimes G}\big((a,\underline{g}),(b_1,\underline{h}^1),\ldots,(b_k,\underline{h}^k)\big):=
\big(\gamma_{\calA}(a,g_1b_1,\ldots,g_kb_k),g_1\underline{h}^1,\ldots,g_k\underline{h}^k\big) \ ,
 \]
where we denote $\underline{g}=(g_1,\ldots,g_k)$, $\underline{h}^i=(h_1^i,\ldots,h_{n_i}^i)$, and $g_i\underline{h}^i=\big(g_ih_1^i,\ldots,g_ih_{n_i}^i\big)$.\\

The definition of the semidirect product makes sense when $\calA$ is a ns operad in the monoidal category of $H$-modules for an associative bialgebra $H$, see~\cite{Bellier14}. In that case, 
 \[
 (\calA\rtimes H)(n):=\calA(n)\otimes H^{\otimes n}\ , 
 \]  
with appropriate composition maps. 

If $G$ is a topological group and $\calA$ is a ns operad in the category of $G$-spaces, then $\calA\rtimes G$ is a ns topological operad, and its homology is a ns operad whose components are graded vector spaces. With the associative bialgebra structure on $H_\bullet(G)$, one can obtain the following result, by a direct inspection.

\begin{proposition}\label{prop:semidirect-Kunneth}
For every topological group $G$ and every ns operad $\calA$ in $G$-spaces, we have an isomorphism  of algebraic ns operads
 \[
H_\bullet(\calA\rtimes G)\cong H_\bullet(\calA)\rtimes H_\bullet(G)\ . 
 \]
\end{proposition}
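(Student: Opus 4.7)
The plan is to establish the isomorphism component by component and then check compatibility with the operadic composition. First, since we work over a field, the Künneth cross product provides natural isomorphisms of graded vector spaces
$$H_\bullet\big((\calA\rtimes G)(n)\big)=H_\bullet(\calA(n)\times G^n)\cong H_\bullet(\calA(n))\otimes H_\bullet(G)^{\otimes n}=\big(H_\bullet(\calA)\rtimes H_\bullet(G)\big)(n),$$
where the right-hand side uses the associative bialgebra $H_\bullet(G)$ whose product is the Pontryagin product, coming from the group multiplication $G\times G\to G$ through the cross product, and whose coproduct is induced by the diagonal $G\to G\times G$.

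Next I would verify that these isomorphisms assemble into a morphism of ns operads. The composition map in $\calA\rtimes G$ is built out of four pieces of data: (i) the operadic composition in $\calA$, (ii) the action of $G$ on each $\calA(n)$, (iii) the multiplication on $G$, and (iv) the implicit diagonal of $G$ reflecting the fact that each $g_i$ is used in several places of the formula for $\gamma_{\calA\rtimes G}$: once to act on $b_i$, and $n_i$ times to be multiplied with the coordinates of $\underline{h}^i$. Applying $H_\bullet$ together with the Künneth isomorphism to each of these pieces recovers, respectively, the operadic composition of $H_\bullet(\calA)$, the $H_\bullet(G)$-module structure on $H_\bullet(\calA(n))$, the Pontryagin product on $H_\bullet(G)$, and its coproduct. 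By naturality and associativity of the cross product, one can transport the topological composition formula to the homological side, where it matches, term by term, the formula defining $\gamma_{H_\bullet(\calA)\rtimes H_\bullet(G)}$ from Section~\ref{sec:g-action-operadic-composition}.

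The principal obstacle, which is more a matter of bookkeeping than of genuine difficulty, is keeping track of the Koszul signs that arise when odd-degree classes in $H_\bullet(G)$ are permuted or duplicated through the diagonal. Since the cross product is symmetric up to the Koszul sign rule and the bialgebra semidirect product is defined in the graded setting, these signs match automatically with those prescribed by the formula for $H_\bullet(\calA)\rtimes H_\bullet(G)$, provided one is careful about the order in which tensor factors are rearranged before applying $H_\bullet(\gamma_\calA)$ and the iterated Pontryagin product. Once the signs are in place, the proposition reduces to the commutativity of a finite diagram relating iterated cross products to operadic and group structure maps, which is then the announced direct inspection.
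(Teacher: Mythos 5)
Your proof is correct and takes the same approach as the paper, which merely asserts the result ``by a direct inspection.'' Your unpacking via the K\"unneth isomorphism over a field, decomposing the composition of $\calA\rtimes G$ into the four structural pieces (operadic composition in $\calA$, the $G$-action, the group multiplication giving the Pontryagin product, and the implicit diagonal giving the coproduct) and tracking the Koszul signs, is exactly what that inspection amounts to.
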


We shall mainly use this construction in the special case where $\calA=\As_{S^1}$ and $G=S^1$ acting diagonally on $\As_{S^1}(n)=(S^1)^{n-1}$. The operad $\As_{S^1}$ should be viewed as a noncommutative analogue of the little $2$-discs operad, while the operad $\As_{S^1}\rtimes S^1$ should be viewed as a noncommutative analogue of the framed little $2$-disks operad. This intuitive understanding is supported by a wide range of results completely parallel to the classical ones that are proved in this paper.  

\subsubsection{Noncommutative Batalin--Vilkovisky algebras}

\begin{definition}
The ns operad $\ncBV$ of \emph{noncommutative Batalin--Vilkovisky algebras} is the ns operad 
$$\ncBV:=H_\bullet(\As_{S^1}\rtimes S^1)\ .$$
\end{definition}

\begin{proposition}\label{prop:ncBV-ql}
The operad $\ncBV$ is generated by two binary generators $m$ and $b$ of respective degree $0$ and $1$, and a unary generator $\Delta$ of degree $1$, satisfying the relations
\begin{gather}
\Delta\circ_1\Delta=0\ ,\label{DeltaSquare}\\
m\circ_1m-m\circ_2m=0\ ,\label{firstG}\\
m\circ_1b-b\circ_2m=0\ ,\\
b\circ_1m-m\circ_2b=0\ ,\\
b\circ_1b+b\circ_2b=0\ ,\label{lastG}\\
\Delta\circ_1 m-m\circ_1\Delta-m\circ_2\Delta=b\ ,\label{action}\\
\Delta\circ_1 b+b\circ_1\Delta+b\circ_2\Delta=0\ .\label{auxiliaryaction}
\end{gather}	
\end{proposition}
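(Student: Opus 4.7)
The plan is to reduce the statement to a purely algebraic computation via Proposition~\ref{prop:semidirect-Kunneth}, and then to describe the semidirect product of $\ncGerst$ with the homology Hopf algebra $H_\bullet(S^1)$ by generators and relations.

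First, Proposition~\ref{prop:semidirect-Kunneth} gives $\ncBV \cong \ncGerst \rtimes H_\bullet(S^1)$. The Hopf algebra $H_\bullet(S^1)$ is the exterior algebra $\Lambda(\Delta)$ on a primitive generator $\Delta$ of degree $1$, so $\Delta^2 = 0$ and the coproduct reads $\Delta \mapsto \Delta \otimes 1 + 1 \otimes \Delta$. Combined with the presentation of $\ncGerst$ coming from Proposition~\ref{prop:ncGerstRelations}, this already accounts for the relations \eqref{DeltaSquare} (from $\Delta^2 = 0$) and \eqref{firstG}--\eqref{lastG} (from $\ncGerst$).

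Second, the only remaining relations in $\ncGerst \rtimes \Lambda(\Delta)$ are the commutation relations expressing how the unary operator $\Delta$ passes through each generator of $\ncGerst$. From the general formula for composition in a semidirect product combined with primitivity of $\Delta$, one gets, for every $v \in \ncGerst(k)$:
\begin{equation*}
\Delta \circ_1 v \;=\; \sum_{i=1}^{k}(-1)^{|v|}\, v \circ_i \Delta \;+\; (\Delta \cdot v),
\end{equation*}
where $\Delta \cdot v$ denotes the action of $\Delta \in H_\bullet(S^1)$ on $v \in \ncGerst(k) = H_\bullet(\As_{S^1}(k))$ induced by the diagonal $S^1$-action on $\As_{S^1}(k) = (S^1)^{k-1}$. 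To compute this action on generators, observe that $\As_{S^1}(2) = S^1$ with $S^1$ acting by multiplication, so that $m$ (the class of a point) and $b$ (the fundamental class) satisfy $\Delta \cdot m = b$ and $\Delta \cdot b = 0$. Substituting $v = m$ (with $|m| = 0$) and $v = b$ (with $|b| = 1$) into the formula above yields precisely \eqref{action} and \eqref{auxiliaryaction}.

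Third, I would establish that these relations suffice. Let $\calP$ denote the ns operad with the presentation stated in the proposition. There is an obvious surjection $\calP \twoheadrightarrow \ncBV$, and the claim is that it is an isomorphism. The commutation relations \eqref{action} and \eqref{auxiliaryaction}, together with $\Delta^2 = 0$, form a confluent rewriting system that pushes every occurrence of $\Delta$ down to the leaves of any tree composition; a standard distributive-law/diamond-lemma argument, analogous to the one behind Proposition~\ref{prop:KoszulGerst}, then identifies the underlying collection of $\calP$ with $\ncGerst \circ \Lambda(\Delta)^{\otimes \bullet}$. Its arity-$n$ dimension is $2^{n-1} \cdot 2^n$, which matches the dimension of $\ncBV(n) = H_\bullet\bigl(\As_{S^1}(n)\bigr) \otimes H_\bullet\bigl((S^1)^n\bigr)$ computed by Künneth. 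The main obstacle will be the second step: carefully extracting the Leibniz-type formula from the definition of the operadic semidirect product (with the correct Koszul signs) and verifying that $\Delta \cdot m = b$ exactly as stated; once those two sign issues are settled, the remaining arguments are routine.
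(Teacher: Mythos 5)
Your argument matches the paper's own: reduce to algebra via Proposition~\ref{prop:semidirect-Kunneth}, check that the relations hold in $\ncBV$ to get a surjection $\calP\twoheadrightarrow\ncBV$ from the presented operad, and then use the fact that the relations give a terminating rewriting system pushing $\Delta$'s to the leaves, so that the normal-form count $2^{n-1}\cdot 2^n$ matches $\dim\ncBV(n)$ and the surjection is an isomorphism. Your explicit derivation of \eqref{action} and \eqref{auxiliaryaction} from the primitive-element Leibniz rule and the computation $\Delta\cdot m = b$, $\Delta\cdot b = 0$ usefully fills in a step the paper states without proof (``$b$ is obtained from $m$ by the infinitesimal circle action''), but the overall strategy is the same.
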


\begin{proof}
Let us first note that, by Proposition~\ref{prop:semidirect-Kunneth}, we have an isomorphism of ns collections
 \[
\ncBV\cong H_\bullet(\As_{S^1})\rtimes H_\bullet(S^1)\cong \ncGerst\circ H_\bullet(S^1) ,
 \]
where $H_\bullet(S^1)$ is concentrated in arity $1$. 
In particular, this ns operad is generated by the generators $m$, $b$ of $\ncGerst$ and the generator $\Delta$ of $H_\bullet(S^1)$. Therefore, the relation \eqref{DeltaSquare}, as well as relations \eqref{firstG}--\eqref{lastG} are satisfied in the ns operad $\ncBV$ because they are satisfied in the respective ns suboperads. Furthermore, the relation \eqref{action} expresses the fact that the operation $b$ is obtained from the operation $m$ by the (infinitesimal) circle action, and the relation \eqref{auxiliaryaction} follows formally from the relations \eqref{DeltaSquare} and \eqref{action}. Therefore, there is a surjective map of ns operads $g\colon\calO\twoheadrightarrow\ncBV$, if when denote by $\calO$ the ns operad defined by the above mentioned presentation. 
Finally, we note that the relations \eqref{DeltaSquare}-\eqref{auxiliaryaction} define a rewriting rule according to \cite[Section~$8.6.2$]{LV}. Therefore, there is an epimorphism 
$\ncBV\cong  \ncGerst\circ H_\bullet(S^1) \twoheadrightarrow \calO $. So the map $g$ is an isomorphism, which concludes the proof.
\end{proof}

Similarly to the case of the usual operad BV, where the Lie bracket is a redundant generator, the operad $\ncBV$ also admits a presentation that does not require the operation $b$. This presentation is described as follows.

\begin{proposition}
The ns operad $\ncBV$ is isomorphic to the ns operad with a binary generator $m$ of degree $0$ and a unary generator $\Delta$ of degree $1$ satisfying the relations 
\begin{gather}
\Delta\circ_1\Delta=0\ ,\\
m\circ_1 m-m\circ_2 m=0\ ,\\
\Delta\circ_1m\circ_2m-m\circ_1(\Delta\circ_1m)-m\circ_2(\Delta\circ_1m)+m\circ_2(m\circ_1\Delta)=0\ .\label{4term-relation}
\end{gather}
\end{proposition}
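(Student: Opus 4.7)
The plan is to show that the operation $b$ is redundant in the presentation of $\ncBV$ given in Proposition~\ref{prop:ncBV-ql}, since relation \eqref{action} expresses it as $b = \Delta\circ_1 m - m\circ_1\Delta - m\circ_2\Delta$. Accordingly, I would let $\calO'$ denote the ns operad with the new presentation, and exhibit mutually inverse morphisms $\ncBV\leftrightarrows\calO'$ that are the identity on $m$ and $\Delta$, with $b$ going to $\Delta\circ_1 m - m\circ_1\Delta - m\circ_2\Delta$.

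To build a morphism $\ncBV\to\calO'$, I would verify that all relations of Proposition~\ref{prop:ncBV-ql} hold in $\calO'$ when $b$ is defined as above. Relations \eqref{DeltaSquare}, \eqref{firstG}, and \eqref{action} are immediate; \eqref{auxiliaryaction} follows by applying $\Delta\circ_1-$ to \eqref{action} and using $\Delta\circ_1\Delta=0$ (as already noted in the proof of Proposition~\ref{prop:ncBV-ql}). Expanding $b\circ_1 m - m\circ_2 b$ via the definition of $b$ and invoking associativity of $m$ together with $\Delta\circ_1\Delta=0$ yields exactly the four-term relation \eqref{4term-relation}, while a short associativity manipulation shows that the companion ``Leibniz'' relation $m\circ_1 b - b\circ_2 m$ equals $-(b\circ_1 m - m\circ_2 b)$ modulo the other relations, hence vanishes as well.

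For the reverse morphism $\calO'\to\ncBV$, I would verify the four-term relation \eqref{4term-relation} inside $\ncBV$: substituting $b=\Delta\circ_1 m - m\circ_1\Delta - m\circ_2\Delta$ into the $\ncBV$ relation $b\circ_1 m = m\circ_2 b$ and simplifying by means of associativity and $\Delta\circ_1\Delta=0$ gives precisely \eqref{4term-relation}. The two morphisms so obtained compose to the identity on the generators $m$ and $\Delta$ (and $b\in\ncBV$ is returned to itself thanks to \eqref{action}), so they are mutually inverse isomorphisms. The main technical obstacle will be the verification of the Jacobi-type relation $b\circ_1 b + b\circ_2 b = 0$ inside $\calO'$. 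Conceptually this is the noncommutative analogue of the classical fact that $\Delta^2=0$ forces the derivation defect $b$ to satisfy a Jacobi identity, but the actual check requires expanding $b$ in both summands and carefully cancelling several terms using all of the previously established relations together with associativity; this is the longest, though still mechanical, piece of bookkeeping in the argument.
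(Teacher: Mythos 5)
Your proposal is correct and takes essentially the same route as the paper. The paper's proof is terse: it simply asserts that, given only $\Delta\circ_1\Delta=0$ and associativity of $m$, the operation $b:=\Delta\circ_1 m-m\circ_1\Delta-m\circ_2\Delta$ satisfies the three relations $m\circ_1 b-b\circ_2 m=0$, $b\circ_1 m-m\circ_2 b=0$, $b\circ_1 b+b\circ_2 b=0$ if and only if the four-term relation holds. Your proposal unpacks this by exhibiting mutually inverse morphisms, identifies $b\circ_1 m-m\circ_2 b$ as $\pm$ the four-term relation (which is also Remark in Example~\ref{ex:borjeson}, where the relation is rephrased as $b_3^\Delta=0$), records that $m\circ_1 b-b\circ_2 m=-(b\circ_1 m-m\circ_2 b)$ by associativity, derives \eqref{auxiliaryaction} from $\Delta\circ_1\Delta=0$ exactly as the proof of Proposition~\ref{prop:ncBV-ql} already notes, and correctly singles out the Jacobi-type identity $b\circ_1 b+b\circ_2 b=0$ as the remaining bookkeeping step---which neither you nor the paper writes out in full. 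That step does indeed go through: with $\Delta^2=0$ and $b_3^\Delta=0$ assumed, it is the arity-$3$ instance of the claim underlying Proposition~\ref{prop:LieAlgHomomorphism} (applied with $D_1=D_2=\Delta$), so you may want to either carry out the eighteen-term cancellation explicitly or point to that conceptual explanation; as written, your plan is sound and at the same level of detail as the paper's own ``easy computation.''
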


\begin{proof}
An easy computation shows that in any operad with a binary generator $m$, $|m|=0$, and a unary generator $\Delta$, $|\Delta|=1$, satisfying only the first two relations
\begin{gather*}
\Delta\circ_1\Delta=0\ ,\\ 
m\circ_1 m-m\circ_2 m=0\ ,
\end{gather*}
the operation $b$ defined by the formula 
 \[
b:=\Delta\circ_1 m-m\circ_1\Delta-m\circ_2\Delta	
 \]
satisfies the relations
\begin{gather*}
m\circ_1b-b\circ_2m=0\ ,\\
b\circ_1m-m\circ_2b=0\ ,\\
b\circ_1b+b\circ_2b=0
\end{gather*}
if and only if Equation \eqref{4term-relation} holds. 
\end{proof}

\begin{example}\label{ex:borjeson}
The simplest example of a noncommutative Batalin--Vilkovisky algebra comes from the work of B\"orjeson~\cite{Bor13}. Namely, for every associative algebra $(A, m_A)$, the noncommutative Gerstenhaber algebra structure on its bar construction of Example~\ref{ex:bar-ncGerst} is actually a noncommutative BV-algebra structure with
  \[
\Delta:=
 \sum_{1\leqslant i<n}\id^{\otimes(i-1)}\otimes\big(s^{-1} m_A\circ(s\otimes s)\big)\otimes\id^{\otimes(n-1-i)}
  \]
being the differential of the bar construction. This is a direct consequence of \cite[Th.~6]{Bor13}, and is related to the fact that \eqref{4term-relation} can be rewritten in the form 
\[
b_3^\Delta=0\ , 
\]
where $b_3^\Delta$ is the third B\"orjeson product defined for any associative algebra; see Section~\ref{sec:DifOp} for details. One can view this as a noncommutative version of the classical observation \cite{Koszul85} that the Chevalley--Eilenberg differential on the bar construction of a Lie algebra $\mathfrak{g}$ is a differential operator of order at most $2$ with respect to the \emph{algebra} structure on the bar construction.
\end{example}

Similarly to a construction of $\BV$-algebras from involutive bialgebras Lie \cite[Sec.~2.10]{Ginz06}, it is possible to construct $\ncBV$-algebras from their noncommutative analogues; see Example \ref{ex:IIB-BV} in Section~\ref{subsubsec:BarHoInvInfBi}, where we also present a conceptual reason for this class of examples to exist. 

\section{Algebraic structures of noncommutative cohomological field theories}\label{sec:minimal-model-ncBV}

In this section, we adapt various definitions and results of \cite{DCV2013,GTV09,Get94,Get95} to the noncommutative case; the operad $\ncHyperCom$ of noncommutative hypercommutative algebras, or noncommutative cohomological field theories arises, in this setting, from homotopy theory of noncommutative Batalin--Vilkovisky algebras. We also consider the Koszul dual ns operad $\ncGrav$ of the operad $\ncHyperCom$, which provides us with a noncommutative version of the duality between the gravity operad and the operad of hypercommutative algebras coming from geometry of the moduli spaces of curves with marked points. In the subsequent sections, we shall see that this new  Koszul dual pair of ns operads is also the noncommutative algebraic counterpart of a remarkable geometric structure. 

\subsection{The Koszul model of the operad \texorpdfstring{$\ncBV$}{ncBV}}

This section follows the same general strategy of \cite{GTV09}, see \cite[Section~$7.8$]{LV} for more details. More precisely, the operad $\ncBV$ presented by quadratic-and-linear relations \eqref{DeltaSquare}--\eqref{auxiliaryaction} is an inhomogeneous Koszul operad, which can be used to construct a particular (non-minimal) model of it. \\

Let us denote by $L$ the space of generators of the ns operad $\ncGerst$, and by $M=s\ L$ the suspension of that space. We put $\mu=s\, m$, $\beta=s\, b$. An important ingredient of our computations is the Koszul dual cooperad $\ncGerst^{\ac}$ of the operad $\ncGerst$. It is a subcooperad of the cofree ns cooperad $\calT^c(M)$.  The subspace of quadratic elements of $\calT^c(M)$ that belong to the cooperad $\ncGerst^{\ac}$ is spanned by the elements 
 \[
\mu\circ_1\mu-\mu\circ_2\mu,\quad  \mu\circ_1\beta+\beta\circ_2\mu,\quad  \beta\circ_1\mu+\mu\circ_2\beta,\quad  \beta\circ_1\beta+\beta\circ_2\beta . 
 \]
 
The homogeneous quadratic analogue of the operad $\ncBV$ is the operad $\qncBV$ defined by the same set of generators, but with relations
\begin{gather*}
\Delta\circ_1\Delta=0\ ,\\
m\circ_1m=m\circ_2m\ ,\\
m\circ_1b=b\circ_2m\ ,\\
b\circ_1m=m\circ_2b\ ,\\
b\circ_1b+b\circ_2b=0\ ,\\
\Delta\circ_1m-m\circ_1\Delta-m\circ_2\Delta=0\ ,\\
\Delta\circ_1b+b\circ_1\Delta+b\circ_2\Delta=0\ .
\end{gather*}
Let us denote by 
$$\D:=H_\bullet(S^1)= \k[\Delta]/(\Delta^2)\ ,$$
the algebra of dual numbers, which we shall view as a ns operad concentrated in arity $1$.

\begin{proposition}\label{prop:qncBVDistLaw}
The relations of the ns operad $\qncBV$ define a distributive law between the ns operads 
$\ncGerst$ and $\D$.
In particular, the underlying ns collection of the operad $\qncBV$ is isomorphic to 
$$\qncBV\cong \ncGerst \circ \D\cong \As\circ \As_1\circ \D\ , $$ where $\As_1:=\calS^{-1}\As$, and the ns operad $\qncBV$ is Koszul. 
\end{proposition}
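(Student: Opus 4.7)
The plan is to apply the theorem on distributive laws between Koszul operads from \cite[Section~8.6]{LV}. The ns operad $\qncBV$ is presented as the free ns operad on generators $m$, $b$, $\Delta$ modulo three disjoint sets of relations: the four quadratic $\ncGerst$-relations (Proposition~\ref{prop:ncGerstRelations}), the unary relation $\Delta \circ_1 \Delta = 0$ defining $\D$, and the two mixed quadratic relations $\Delta \circ_1 m = m \circ_1 \Delta + m \circ_2 \Delta$ and $\Delta \circ_1 b = -b \circ_1 \Delta - b \circ_2 \Delta$. These latter relations are precisely the graph of the linear map
\[
\lambda \colon \D \circ_{(1)} \ncGerst \to \ncGerst \circ_{(1)} \D, \quad \lambda(\Delta \circ_1 m) := m \circ_1 \Delta + m \circ_2 \Delta, \quad \lambda(\Delta \circ_1 b) := -(b \circ_1 \Delta + b \circ_2 \Delta).
\]
Since both $\ncGerst$ (by Proposition~\ref{prop:KoszulGerst}) and $\D = \k[\Delta]/(\Delta^2)$ are quadratic Koszul operads, the theorem on distributive laws \cite[Theorem~8.6.5]{LV} reduces both claims --- the isomorphism $\qncBV \cong \ncGerst \circ \D$ of ns collections and the Koszulity of $\qncBV$ --- to a coherence check on elements of weight three.

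Using the rewriting method of \cite[Section~8.6.2]{LV}, I would orient the rules so that the normal forms are exactly the $\ncGerst$-trees decorated by $0$ or $1$ copy of $\Delta$ at each leaf, and verify confluence at the two families of critical monomials of weight three. First, for $\Delta \circ_1 \Delta \circ_1 \mu$ with $\mu \in \{m, b\}$, one reduction applies $\Delta \circ_1 \Delta = 0$ at the top and yields $0$; the other rewrites the lower $\Delta \circ_1 \mu$ via $\lambda$ and then the two newly exposed $\Delta \circ_1 \mu$'s at the top. After invoking the sequential and parallel axioms of operadic composition --- the latter producing a minus sign because $|\Delta|=1$ --- the four resulting summands pair up and cancel to $0$, as required. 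Second, for $\Delta \circ_1 (\mu \circ_i \mu')$ with $\mu, \mu' \in \{m, b\}$ and $i \in \{1, 2\}$, one path applies a quadratic $\ncGerst$-relation to the subtree and then pushes $\Delta$ down through each generator via $\lambda$; the other pushes $\Delta$ past $\mu$ first, then through $\mu'$ and through the surviving copy of $\mu$. Both paths produce the same normal-form expression in $\ncGerst \circ \D$, because $\lambda$ acts on $\ncGerst$-trees as a signed derivation with respect to the generators $m$ and $b$.

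The main obstacle is the case analysis in this second family, which must be carried out for each of the four quadratic $\ncGerst$-relations \eqref{ncGerst-rel}--\eqref{eq:shifted-as} and each slot $i \in \{1, 2\}$; the arithmetic is tedious but entirely parallel to the classical proof of Koszulity of $\BV$ via distributive laws in \cite{GTV09}. Once the diamond closes, \cite[Theorem~8.6.5]{LV} delivers $\qncBV \cong \ncGerst \circ \D$ as ns collections together with the Koszulity of $\qncBV$. Combining this isomorphism with $\ncGerst \cong \As \circ \As_1$ from Proposition~\ref{prop:KoszulGerst} yields the iterated decomposition $\qncBV \cong \As \circ \As_1 \circ \D$.
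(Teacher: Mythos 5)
Your proposal and the paper's proof both pass through \cite[Theorem~8.6.5]{LV}, and both correctly reduce the whole statement (isomorphism of ns collections and Koszulity) to a coherence check in weight~$3$. The routes diverge precisely at that check. The paper verifies injectivity of $\ncGerst \circ \D \to \qncBV$ in weight~$3$ by a direct dimension count: using the tri-grading by the number of occurrences of $\Delta$, $b$, $m$, it computes the dimension of the relevant component of the free operad and of its intersection with the ideal of relations, and matches the resulting quotient dimension against that of $\ncGerst\circ\D$, case by case. You instead set up the rewriting system from \cite[Section~8.6.2]{LV} and argue confluence at the two families of weight-$3$ critical monomials ($\Delta\circ_1\Delta\circ_1\mu$, and $\Delta\circ_1(\mu\circ_i\mu')$). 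Both verifications establish the same condition, and your identification of the critical pairs and the sign cancellation in the $\Delta^2$-family are correct (the two terms killed by $\Delta^2=0$, plus the two that annihilate each other via the parallel-composition Koszul sign). The trade-off is essentially computational taste: the paper's dimension count is a linear-algebra calculation that avoids sign-chasing but is opaque, whereas your confluence check makes the compatibility of the relations visible but demands careful bookkeeping through the four $\ncGerst$-relations and the two slots; your appeal to $\lambda$ acting as a signed derivation on $\ncGerst$-trees is the right heuristic for closing those diamonds, though you would need to spell it out to turn the plan into a proof. Neither account fully writes out the case analysis, so the two arguments are at a comparable level of detail.
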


\begin{proof}
Since the ns operads $\ncGerst$ and $\D$ are Koszul, we can apply \cite[Theorem~$8.6.5$]{LV}.
In this case, it is enough to prove that the map $\ncGerst\circ \D \to \qncBV$ is injective in weight $3$. Since the relations of the ns operad $\qncBV$ are homogeneous with respect to the numbers $a_1$, $a_2$, and $a_3$ of the generators $\Delta$, $b$ and $m$ respectively, it is enough to check this property on each components labelled by
$(a_1, a_2, a_3)$ with   $a_1+a_2+a_3=3$. The two cases $a_1=0$ and $a_1=3$ are trivial. For $a_1=1$, we can treat the cases $(1,0,2)$, $(1,1,1)$ and $(1,2,0)$ separately for degree reason. For instance, in the case $(1,2,0)$, the dimension of the component $\calT(b, \Delta)$ with two $b$'s and one $\Delta$ is equal to $10$ and the computation of the dimension of its intersection with ideal $(R)$ generated by the relations is equal to $7$. Therefore, the dimensions of the components of weight $(1,2,0)$ of the ns operads $\qncBV$ and $\ncGerst$ are both equal to $3$. The other cases are checked in the same way. 
\end{proof}

This proposition implies that the underlying ns collection of its Koszul dual cooperad $\qncBV^{\ac}$ is isomorphic to  $T^c(\delta)\otimes\ncGerst^{\ac}$, where $\delta=s\, \Delta$.  By a direct corollary of Proposition~\ref{prop:qncBVDistLaw}, the two conditions $(ql_1)$, minimality of the space of generators, and $(ql_2)$, maximality of the space of relations, are satisfied.
Therefore,  the ns cooperad $\qncBV^{\ac}$ admits a square-zero coderivation $d_\varphi$, which extends the map $\varphi$ sending the element 
\[
\delta\circ_1\mu+\mu\circ_1\delta+\mu\circ_2\delta
\] 
to $\beta$ and all other quadratic relations of $\qncBV$ to zero. Equipped with this coderivation, the ns cooperad $\qncBV^{\ac}$ is the Koszul dual
dg ns cooperad $\ncBV^{\ac}$ of the ns operad $\ncBV$ with its quadratic-linear presentation from Proposition \ref{prop:ncBV-ql}.

\begin{proposition}\label{prop:KoszulModel}
The cobar construction $\Omega(\ncBV^{\ac})$ is a resolution of the ns operad $\ncBV$. 
\end{proposition}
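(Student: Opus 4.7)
The plan is to appeal to the inhomogeneous Koszul duality machinery for operads with quadratic-linear presentations, as developed in \cite{GTV09} and systematized in \cite[Section~7.8]{LV}. The general theorem asserts that for a quadratic-linear presentation $\calP = \calT(V)/(R)$, if the three conditions $(ql_1)$ (minimality of generators, i.e.\ $R\cap V=0$), $(ql_2)$ (maximality of relations), and $(ql_3)$ (Koszulness of the quadratic analogue $q\calP$) all hold, then the cobar construction of the Koszul dual dg cooperad $\calP^{\ac}$ is a quasi-free resolution of $\calP$.

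For the presentation of $\ncBV$ given in Proposition~\ref{prop:ncBV-ql}, the conditions $(ql_1)$ and $(ql_2)$ are immediate from inspection of the seven defining relations \eqref{DeltaSquare}--\eqref{auxiliaryaction}, as already noted in the paragraph preceding the statement: none of the relations lies in $V$, and the quadratic parts generate no additional linear consequences inside $(R)$. Condition $(ql_3)$ is precisely the content of Proposition~\ref{prop:qncBVDistLaw}: the relations of $\qncBV$ define a distributive law between the Koszul ns operads $\ncGerst$ and $\D$, yielding the isomorphism $\qncBV \cong \ncGerst \circ \D$ of underlying ns collections, and the standard result \cite[Theorem~8.6.5]{LV} that a distributive law between two Koszul operads produces a Koszul operad.

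With the three conditions in hand, the square-zero coderivation $d_\varphi$ identified immediately above the proposition endows $\qncBV^{\ac}$ with the differential that turns it into the Koszul dual dg ns cooperad $\ncBV^{\ac}$. The inhomogeneous Koszul duality theorem then delivers the desired quasi-isomorphism $\Omega(\ncBV^{\ac}) \xrightarrow{\sim} \ncBV$ of dg ns operads, which is by construction a quasi-free resolution. The only substantive obstacle — establishing $(ql_3)$ via a distributive law argument — has already been dispatched by Proposition~\ref{prop:qncBVDistLaw}, so the present statement reduces to a formal invocation of the general machinery.
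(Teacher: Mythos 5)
Your argument matches the paper's proof: both appeal to the inhomogeneous Koszul duality theorem of \cite{GTV09} (as in \cite[Section~7.8]{LV}), with the conditions $(ql_1)$, $(ql_2)$ noted to follow from the preceding discussion and $(ql_3)$ supplied by Proposition~\ref{prop:qncBVDistLaw} via the distributive law. The reasoning and the reduction to prior results are essentially identical.
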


\begin{proof}
It is a particular case of the general \cite[Theorem~$38$]{GTV09} since the ns operad $\qncBV$ (the ``quadratic analogue'' of $\ncBV$) is homogeneous Koszul by Proposition~\ref{prop:qncBVDistLaw}.
\end{proof}

This resolution $\Omega(\ncBV^{\ac})$ is called the \emph{Koszul resolution} of the ns operad $\ncBV$.

\subsection{Noncommutative gravity algebras}

In computation of the minimal model of the operad BV in~\cite{DCV2013}, the linear dual cooperad of the operad of gravity algebras of Getzler \cite{Get94} plays a prominent role. For the ns operad $\ncBV$, there is an ns  operad that plays a similar role in the picture. We shall call this operad the ns operad of \emph{noncommutative gravity algebras}. 

\begin{definition}
The ns  operad $\ncGrav$ of \emph{noncommutative gravity algebras} is the ns suboperad of $\ncGerst$ generated by the operations 
 \[
\lambda_k:=\sum_{i=1}^{k-1} m^{(k-2)}\circ_i b\ , \quad \text{for}\  \ k\ge 2\ ,  
 \]
where $m^{(0)}=\id\in\ncGerst(1)$, and for $k\ge 1$, $m^{(k)}:=m\circ_2 m^{(k-1)}\in\ncGerst(k+1)$ is the $(k+1)$-fold associative product.	
\end{definition}

\begin{theorem}\label{th:GravPresentation}
The ns operad $\ncGrav$ is isomorphic to the ns operad 
with generators $\lambda_k$ of arity $k$ and degree $1$, for $k\ge 2$, satisfying the relations
\begin{gather}
\sum_{j=r}^{r+k-2}\lambda_{n-1}\circ_j\lambda_2=\lambda_{n-k+1}\circ_r\lambda_k, \quad \text{for}\ \ n\ge 4, \ \  3\le k<n, \ \ 1\le r\le n-k+1\ ,\label{eq:ncGrav1}\\
\sum_{j=1}^{n-1}\lambda_{n-1}\circ_j\lambda_2=0, \quad \text{for}\ \  n\ge 3\ .\label{eq:ncGrav2}
\end{gather}
Moreover, these relations form a quadratic Gr\"obner basis for the ns operad $\ncGrav$ for a certain admissible ordering.
\end{theorem}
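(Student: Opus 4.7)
My approach has three stages: verify the relations hold in $\ncGerst$, bound the presented operad from above by a Gröbner basis argument, and match this with a lower bound from the embedding into $\ncGerst$. Denote by $\widetilde{\ncGrav}$ the ns operad with the presentation given in the statement; the goal is to prove that the natural map $\widetilde{\ncGrav}\twoheadrightarrow\ncGrav$ is an isomorphism.

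First, I would show by direct computation that the concrete elements $\lambda_k=\sum_{i=1}^{k-1} m^{(k-2)}\circ_i b$ in $\ncGerst$ satisfy identities \eqref{eq:ncGrav1} and \eqref{eq:ncGrav2}. Using the ns Gerstenhaber relations of Proposition \ref{prop:ncGerstRelations}, in particular $m\circ_1 b = b\circ_2 m$, $b\circ_1 m = m\circ_2 b$, and $b\circ_1 b + b\circ_2 b = 0$, every expression of the form $\lambda_p\circ_r\lambda_q$ can be expanded in $\ncGerst$ as a linear combination of ``right-comb'' monomials of shape $m^{(n-2)}$ with two $b$'s inserted at specified slots, using the basis coming from $\ncGerst\cong\As\circ\As_1$ of Proposition \ref{prop:KoszulGerst}. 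Identity \eqref{eq:ncGrav1} then reduces to a combinatorial count of pairs of $b$-slot positions, and \eqref{eq:ncGrav2} reduces to $b\circ_1 b + b\circ_2 b = 0$ summed over positions. This produces the surjection $\widetilde{\ncGrav}\twoheadrightarrow\ncGrav$.

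Second, I would equip the free ns operad on $\{\lambda_k\}_{k\ge 2}$ with a path-lex admissible ordering in the sense of \cite{DK,DV2013}, chosen so that the leading term of \eqref{eq:ncGrav1}, rewritten as $\lambda_{n-k+1}\circ_r\lambda_k=\sum_j\lambda_{n-1}\circ_j\lambda_2$, is the ``tall'' left-hand side, and so that the leading term of \eqref{eq:ncGrav2} is one extremal $\lambda_{n-1}\circ_{j_0}\lambda_2$. I would then verify that every S-polynomial between two of these quadratic relations reduces to zero: the overlap analysis is uniform in $n$, $k$, $r$ and reduces to a fixed combinatorial pattern that can be checked once and for all. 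This bounds $\dim\widetilde{\ncGrav}(n)$ from above by the number of normal monomials under this ordering.

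Third, to upgrade the surjection into an isomorphism, I would show that the images of these normal monomials are linearly independent inside $\ncGerst$. Since $\ncGerst\cong\As\circ\As_1$ as a ns collection and each $\lambda_k$ contributes exactly one copy of $b$, the leading term of the expansion of each normal monomial under this ns collection isomorphism is a distinct element of the $\As\circ\As_1$-basis, forcing linear independence. Matching the resulting lower bound on $\dim\ncGrav(n)$ with the upper bound from the previous step forces the surjection to be an isomorphism and simultaneously confirms that no further Gröbner basis elements are needed. The hard part is the second step: designing an ordering that treats \eqref{eq:ncGrav1} and \eqref{eq:ncGrav2} coherently, since one rewrites ``tall'' compositions in terms of ``wide'' ones while the other imposes an additional dependence purely among the ``wide'' ones, and then uniformly controlling the infinite family of S-polynomial reductions indexed by $(n,k,r)$.
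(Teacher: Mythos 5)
Your overall strategy mirrors the paper's: verify the relations in $\ncGerst$ to obtain a surjection from the presented operad onto $\ncGrav$, bound the presented operad's dimensions from above by counting normal monomials with respect to the leading terms of the relations, and match this with a lower bound from linear independence of the images in $\ncGerst$ via the $\As\circ\As_1$ decomposition. Even your choice of leading terms and ordering hints agree with the paper's concrete ordering (compare the ``$2$-weight'' of $\lambda_2$ versus higher $\lambda_k$ first, then path sequences with $\lambda_p>\lambda_q$ for $p<q$).

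Where you diverge is the proposed S-polynomial verification, and this is where your plan has a genuine gap --- one that is also unnecessary. You correctly identify it as ``the hard part'' and do not carry it out, leaving the argument incomplete as written. But the paper avoids it entirely: the normal monomials with respect to the leading terms of the stated relations span $\widetilde{\ncGrav}(n)$ automatically (no Gr\"obner property needed), they number exactly $2^{n-2}$, and since their images in $\ncGerst(n)$ are linearly independent (your step three), both sides of the surjection have dimension exactly $2^{n-2}$. The Gr\"obner basis property is then a free corollary: if an S-polynomial failed to reduce, the reduced Gr\"obner basis would contain extra leading terms, fewer normal monomials would span, and one would get a nontrivial linear relation among the $2^{n-2}$ elements you just proved independent. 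So you should simply drop the S-polynomial step; it is an unexecuted detour whose conclusion comes for free from the dimension count. A minor imprecision in your step one: the $\As\circ\As_1$ basis of $\ncGerst(n)$ is $\{m^{(l-1)}(b^{(k_1-1)},\ldots,b^{(k_l-1)}): k_1+\cdots+k_l=n\}$, not ``$m^{(n-2)}$ with two $b$'s inserted at specified slots''; the relation $b\circ_1 b + b\circ_2 b = 0$ does not let the $b$'s commute past each other into a flat comb, so blocks $b^{(k_i-1)}$ of varying length appear and must be used in both the verification of \eqref{eq:ncGrav1}--\eqref{eq:ncGrav2} and the leading-term analysis.
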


\begin{proof}
First of all, a direct computation shows that the operations $\lambda_k$ of the operad $\ncGrav$ satisfy the relations \eqref{eq:ncGrav1} and \eqref{eq:ncGrav2}. Therefore, there exists a surjection
 \[
f\colon\calP\twoheadrightarrow\ncGrav \ , 
 \]
where $\calP$ is the ns operad with the above presentation. 
Our goal is then to show that this surjection is an isomorphism. 

\smallskip

Let us first show that $\dim\calP(n)\le 2^{n-2}$ for all $n\ge 2$.  We shall use Gr\"obner bases for ns operads. Let us define an ordering of planar tree monomials in the free ns operad generated   operations $\lambda_k$ of degree $1$ and arity $k$, one generator for each $k\ge 2$. For that, we define the $2$-weight of $\lambda_2$ to be equal to $0$, the $2$-weight of all other $\lambda_i$ to be equal to $1$, and the $2$-weight of a planar tree monomial to be equal to the sum of the $2$-weights of the operations labelling its vertices. 
To compare two planar tree monomials, we first compare their $2$-weights, and if their $2$-weights are equal, compare their path sequences \cite{BD} using the lexicographic ordering of sequences (for that, we define $\lambda_p>\lambda_q$, for $p<q$).  It is straightforward to check that this order is an admissible ordering, that is  it makes operadic compositions into increasing maps.

Note that from the definition of the ordering, it is immediate that the leading terms of relations of $\calP$ in arity $n$ are $\lambda_{n-k+1}\circ_r\lambda_k$, where $3\le k<n$ and $1\le r\le n-k+1$, for \eqref{eq:ncGrav1}, and $\lambda_{n-1}\circ_1\lambda_2$, for \eqref{eq:ncGrav2}. The set $\calN$ of normal monomials with respect to these leading terms can be described as follows. The root vertex of such a monomial can be any $\lambda_p$, the planar subtree grafted at the first child of that vertex must be a leaf, and the planar subtrees grafted at all other children of that vertex are either leaves or right combs made up of vertices labelled $\lambda_2$. Therefore, the number of elements of $\calN$ with $n$ leaves is equal to the number of compositions of $n-1$ into several positive parts. The usual ``stars and bars'' enumeration method identifies those with subsets of a set consisting of $n-2$ elements. This number is equal to $2^{n-2}$, and since every Gr\"obner basis of $\calP$ contains the given relations, and the normal monomials with respect to a Gr\"obner basis form a linear basis for a ns operad, it follows that the dimension of the component $\calP(n)$ is at most $2^{n-2}$.

\smallskip

Next, we shall show that the elements of the set $\calN$ described above are linearly independent in $\calP$. For that, it is enough to show that their images under the surjection $f$ are linearly independent in $\ncGrav\subset\ncGerst$. From Proposition \ref{prop:KoszulGerst}, the underlying collection of $\ncGerst$ is isomorphic to that of $\As\circ\As_1$. In particular, $\ncGerst(n)$ has a basis  consisting of all compositions \[m^{(l-1)}(b^{(k_1-1)},\ldots,b^{(k_l-1)}) \quad \text{ with }\quad  k_1+\cdots+k_l=n,\] where the iterations of the product $b$ are defined similarly to iterations of the product $m$ above. We define an ordering of these basis elements by comparing the associated sequences $(k_1,\ldots,k_l)$ degree-lexicographically. From the definition of operations $\lambda_k$, it is immediate to see that the image of the planar tree monomial
\[
\lambda_l(\id,\lambda_2^{(k_1-1)},\ldots,\lambda_2^{(k_{l-1}-1)})  
\]
under the surjection $f$ has the leading term (with respect to the ordering we just defined)
\[
\pm m^{(l-2)}(b^{(k_1)},b^{(k_2-1)},\ldots,b^{(k_{l-1}-1)})\ ,    
\]
so the images of these monomials are linearly independent. 

Our results so far imply that the defining relations of $\calP$ form a Gr\"obner basis (otherwise, extra relations in the reduced Gr\"obner basis will be linear dependencies between elements of $\calN$), and the  elements of $\calN$ form a basis of $\calP$. This, in turn, implies that the surjection $f$ is an isomorphism, since it maps linearly independent elements to linearly independent elements. 
\end{proof}

\begin{corollary}\label{cor:GravKoszul}
The ns operad $\ncGrav$ is Koszul.
\end{corollary}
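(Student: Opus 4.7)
The plan is to deduce Koszulness directly from the structural result of Theorem~\ref{th:GravPresentation}. That theorem produces a presentation of $\ncGrav$ whose defining relations~\eqref{eq:ncGrav1}--\eqref{eq:ncGrav2} form a quadratic Gröbner basis with respect to a concrete admissible order on planar tree monomials (the one comparing the $2$-weight first, and then the path sequences lexicographically). Koszulness is then not a further computation but rather the invocation of a general principle.

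More precisely, I would cite the standard Gröbner-basis criterion for Koszulness of (nonsymmetric) operads established in~\cite{DK,DV2013}: whenever a ns operad admits a presentation by a Gröbner basis consisting of quadratic elements (for some admissible ordering of planar tree monomials), the operad is Koszul. The intuition is the familiar one from associative algebras: the existence of a quadratic Gröbner basis produces a PBW-type basis of normal monomials, and the bar complex decomposes into acyclic pieces indexed by these normal monomials, exactly as in the PBW argument for quadratic algebras and its operadic extension.

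Applying this criterion is immediate. Theorem~\ref{th:GravPresentation} furnishes a presentation of $\ncGrav$ by quadratic relations, and its proof shows that these very relations, taken with their displayed leading terms $\lambda_{n-k+1}\circ_r\lambda_k$ and $\lambda_{n-1}\circ_1\lambda_2$, already form a Gröbner basis (any additional element of a reduced Gröbner basis would yield a linear dependence among the monomials of the combinatorially described set $\calN$, which was shown to be linearly independent in $\ncGrav$). Since this Gröbner basis is quadratic, the criterion applies and $\ncGrav$ is Koszul.

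The only non-routine ingredient in the whole chain is the verification, already carried out in Theorem~\ref{th:GravPresentation}, that the relations form a Gröbner basis; once that is in hand, there is no real obstacle here, so the proof of the corollary is essentially a one-line appeal to~\cite{DK,DV2013}. In writing it up, I would take care only to make the admissible ordering explicit, so that the citation of the Gröbner-basis Koszulness criterion is unambiguous.
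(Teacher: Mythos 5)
Your proof is correct and follows the same approach as the paper: both deduce Koszulness directly from the quadratic Gr\"obner basis established in Theorem~\ref{th:GravPresentation}, citing the general principle that a ns operad with a quadratic Gr\"obner basis is Koszul. The only cosmetic difference is the choice of references for that criterion (the paper cites \cite{DK,Hof10,LV}).
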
	

\begin{proof}
We just checked that $\ncGrav$ is isomorphic to the quadratic ns operad $\calP$ that admits a quadratic Gr\"obner basis. It is well known~\cite{LV} that an operad with a quadratic Gr\"obner basis is Koszul. 
\end{proof}

Finally, let us obtain an homological interpretation of the ns operad $\ncGrav$ similar to that of the gravity operad  \cite{Get94}. By direct inspection, there exist unique square-zero derivations $D_1$ and $H_1$ of the operad $\ncGerst$ such that 
\begin{gather*}
D_1(m)=b,\quad  D_1(b)=0\ ,\\
H_1(b)=m, \quad H_1(m)=0\ . 
\end{gather*}
In particular, for each arity $n$, we obtain a cochain complex $(\ncGerst(n),D_1)$. 

\begin{proposition}\label{prop:ncGrav=Ker}
For each $n>1$, the cochain complex $(\ncGerst(n),D_1)$ is acyclic, and we have \[\Ker D_1=\mathop{\mathrm{Im}} D_1=\ncGrav(n).\]
\end{proposition}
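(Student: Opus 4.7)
My plan is to prove acyclicity via a contracting homotopy built from $H_1$, and then to identify the common kernel/image with $\ncGrav(n)$ through an explicit description of its generators $\lambda_k$ as $D_1$-boundaries, closed off with a dimension count based on Theorem~\ref{th:GravPresentation} and Proposition~\ref{prop:KoszulGerst}.

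The decisive general fact I will use is that the graded anticommutator $\{D_1, H_1\} := D_1 H_1 + H_1 D_1$ of two derivations of $\ncGerst$ is itself a derivation, here of degree zero. Evaluating it on the binary generators gives
\[
\{D_1, H_1\}(m) = H_1(b) = m \quad \text{and} \quad \{D_1, H_1\}(b) = D_1(m) = b,
\]
so $\{D_1, H_1\}$ agrees with the weight-counting derivation on $\ncGerst$ (the unique derivation acting as the identity on the space of generators). Because every element of $\ncGerst(n)$ is built from exactly $n-1$ binary operations, this weight-counting derivation equals $(n-1)\,\id$ on $\ncGerst(n)$. For $n>1$, the map $\tfrac{1}{n-1}H_1$ is therefore a contracting homotopy for $D_1$, proving that $(\ncGerst(n), D_1)$ is acyclic and that $\Ker D_1 = \Im D_1$.

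For the identification with $\ncGrav(n)$, I will first establish by induction on $k \geq 2$ the identity $\lambda_k = D_1(m^{(k-1)})$ in $\ncGerst(k)$. The base case $\lambda_2 = b = D_1(m)$ is immediate. For the inductive step, applying the Leibniz rule to $m^{(k-1)} = m \circ_2 m^{(k-2)}$ gives
\[
D_1(m^{(k-1)}) = b \circ_2 m^{(k-2)} + m \circ_2 D_1(m^{(k-2)}) = b \circ_2 m^{(k-2)} + m \circ_2 \lambda_{k-1};
\]
the sequential composition axiom then rewrites each summand $m \circ_2 (m^{(k-3)} \circ_i b)$ of the second term as $m^{(k-2)} \circ_{i+1} b$, and the iterated relation $b \circ_2 m^{(k-2)} = m^{(k-2)} \circ_1 b$ (an induction consequence of the $\ncGerst$-relation $b \circ_2 m = m \circ_1 b$) turns the first term into $m^{(k-2)} \circ_1 b$, reassembling $\sum_{i=1}^{k-1} m^{(k-2)} \circ_i b = \lambda_k$. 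Since $D_1^2 = 0$, every $\lambda_k$ lies in $\Im D_1 \subseteq \Ker D_1$; because $D_1$ is a derivation, $\Ker D_1$ is a sub-operad of $\ncGerst$, and so contains the whole of $\ncGrav$. Equality is forced by a dimension count: acyclicity together with $\Ker D_1 = \Im D_1$ yields $\dim \Ker D_1 = \tfrac{1}{2}\dim \ncGerst(n) = 2^{n-2}$ (using $\dim \ncGerst_k(n) = \binom{n-1}{k}$ from Proposition~\ref{prop:KoszulGerst}), which matches $\dim \ncGrav(n) = 2^{n-2}$ from Theorem~\ref{th:GravPresentation}.

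The main obstacle I anticipate is the induction step of the preceding paragraph: although morally clear, the identity $\lambda_k = D_1(m^{(k-1)})$ requires careful tracking of the parallel and sequential operadic composition axioms together with the quadratic $\ncGerst$-relations in order to correctly identify the resulting tree monomials. Once this identity is verified, everything else reduces to formal properties of graded derivations, the equation $D_1^2 = 0$, and the dimension data already available in the paper.
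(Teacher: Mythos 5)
Your proof is correct and follows essentially the same route as the paper: the anticommutator $\{D_1,H_1\}$ is computed on generators to identify it with $(n-1)\,\id$ on $\ncGerst(n)$, giving the contracting homotopy $\frac{1}{n-1}H_1$; containment $\ncGrav\subset\Ker D_1$ is established via the generators $\lambda_k$ and the fact that $D_1$ is a derivation; and equality follows from the dimension count $\dim\ncGrav(n)=2^{n-2}=\tfrac12\dim\ncGerst(n)$. The one small enhancement over the paper is that you prove the explicit identity $\lambda_k=D_1\bigl(m^{(k-1)}\bigr)$ (which is correct, as one can verify by the induction you sketch), whereas the paper simply asserts that ``a simple computation shows'' the $\lambda_k$ are killed by $D_1$; your version is slightly more informative, exhibiting the $\lambda_k$ as boundaries rather than merely cycles, but the logical skeleton is the same.
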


\begin{proof}
In the same way as in the classical theory of the operad $\Gerst$, we observe that the cochain complex $(\ncGerst(n), \allowbreak D_1)$ is isomorphic to the Koszul complex of an operad, in this case the ns operad $\As$. Therefore, it is  acyclic since the ns operad $\As$ is Koszul. However, we need a stronger statement here.
Since the commutator of two derivations is a derivation, and since a derivation is characterized by the image of the generators, the formula 
 \[
 D_1H_1+H_1D_1=\id_{\ncGerst(2)}, 
 \]
implies that 
$$D_1H_1+H_1D_1=(n-1)\id_{\ncGerst(n)}\ ,$$ for any $n>1$. Thus, for $n>1$, the map $\frac{1}{n-1}H_1$ is a contracting homotopy between the identity and the zero map, which proves once again the acyclicity of the cochain complex $(\ncGerst(n), D_1)$. 
	
To check that $\Ker D_1$ coincides with $\ncGrav$, we first note that $\ncGrav\subset \Ker D_1$, since a simple computation shows that the generators $\lambda_k$  of $\ncGrav$ are annihilated by $D_1$. Also, the dimension of the component $\ncGrav(n)$, that is $2^{n-2}$, is precisely one half of $2^{n-1}$, the dimension of $\ncGerst(n)=H_\bullet(S^1)^{\otimes(n-1)}$, and therefore there can be no further elements in the kernel (since that would contradict the acyclicity of the cochain complex $(\ncGerst(n),D_1)$).
\end{proof}

\begin{remark}
When $G$ be a topological group, the ns operad $\As_G$ is an ns operad in the category of $G$-spaces, where the $G$-action on each component is defined by the diagonal action. Therefore, the homology operad $H_\bullet(\As_G)$ is an algebraic operad in the category of modules over the Hopf algebra $H_\bullet(G)$. 
Since here, the action of $S^1$ is free, we have, as in the classical case \cite{Get94}:
 \[
\ncGrav= \Ker D_1\cong H^{S^1}_\bullet(\As_{S^1})\cong H_{\bullet+1}(\As_{S^1}/{S^1})\ ,
 \]
where $D_1$ denotes for once the generator of  the homology algebra $H_\bullet(S^1)$. 
\end{remark}

\subsection{The minimal model of the operad $\ncBV$.}

This section follows the same general strategy of \cite{DCV2013}. Namely, in order to describe the minimal model of the ns operad $\ncBV$, one first computes the homology of the space of generators of the Koszul resolution of the operad $\ncBV$ from Proposition~\ref{prop:KoszulModel} with respect to the differential $d_\varphi$. Then one transfers the ns dg cooperad structure on that space of generators to a ns homotopy cooperad structure on this homology, thus obtaining a complete description of the minimal model of the operad $\ncBV$.\\

By Proposition~\ref{prop:KoszulGerst}, we know that $\calS\ncGerst\cong\ncGerst^!=(\calS\ncGerst^{\ac})^*\cong\calS^{-1}(\ncGerst^{\ac})^*$,
so 
 \[
\ncGerst^{\ac}\cong(\calS^2\ncGerst)^*\ . 
 \]
This means that the square-zero derivations $D_1$ and $H_1$ of the ns operad $\ncGerst$ that we defined to state and prove Proposition~\ref{prop:ncGrav=Ker} give rise to square-zero coderivations $(\calS^2D_1)^*$ and $(\calS^2H_1)^*$ on the ns cooperad $\ncGerst^{\ac}$. To emphasize the parallel with \cite{DCV2013}, we denote by $d_\psi$ the map $(\calS^2H_1)^*$, and, by $H$ the map  $\frac{1}{n-1}(\calS^2D_1)^*$, when restricted to elements of arity $n$.  From the proof of Proposition \ref{prop:ncGrav=Ker}, it is clear that $d_\psi H+Hd_\psi =\id$ on elements of arity greater than~$1$, and that $(\overline{\calS^2\ncGrav})^*=\Im\, Hd_\psi$.

\begin{proposition}
Under the isomorphism $\qncBV^{\ac}\cong T^c(\delta)\otimes\ncGerst^{\ac}$, the differential $d_\varphi$ corresponds to the differential $\delta^{-1}\otimes d_\psi$. 
\end{proposition}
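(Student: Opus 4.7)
The strategy is to view both operators as square-zero coderivations of the dg ns cooperad $\qncBV^{\ac}$ and compare them via their corestrictions to the space of cogenerators $M=\k\mu\oplus\k\beta\oplus\k\delta$. Since $\qncBV^{\ac}$ sits inside the cofree ns cooperad $\calT^c(M)$ as a subcooperad, any coderivation on $\qncBV^{\ac}$ is uniquely determined by the composite of itself with the projection $\qncBV^{\ac}\to M$, so it is enough to show that the two corestrictions agree.

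First I would unpack the corestriction of $d_\varphi$: by the definition recalled from \cite[Section~$7.8$]{LV}, it is precisely the map $\varphi$, which vanishes on the cogenerators $\mu$, $\beta$, $\delta$ and on every quadratic cogenerator except on $\delta\circ_1\mu+\mu\circ_1\delta+\mu\circ_2\delta$, where it takes the value $\beta$; and which vanishes on all elements of weight $\geqslant 3$. Next I would analyse $d_\psi=(\calS^2 H_1)^*$. Since $H_1$ is a derivation of $\ncGerst$ with $H_1(m)=0$ and $H_1(b)=m$, it preserves weight, so its dual $d_\psi$ preserves weight on $\ncGerst^{\ac}$. Consequently the corestriction of $d_\psi$ to the cogenerator space $\k\mu\oplus\k\beta$ of $\ncGerst^{\ac}$ is the dual of $H_1\vert_{\ncGerst(2)}$: it sends $\mu\mapsto\beta$ on weight-$1$ elements and is zero on elements of any other weight.

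Now I would compute the corestriction of $\delta^{-1}\otimes d_\psi$ under the isomorphism $\qncBV^{\ac}\cong T^c(\delta)\otimes\ncGerst^{\ac}$ coming from the distributive-law Proposition~\ref{prop:qncBVDistLaw}. A general element of the right-hand side has the form $\delta^k\otimes x$, and $\delta^{-1}\otimes d_\psi$ sends it to $\delta^{k-1}\otimes d_\psi(x)$ (with $\delta^{-1}$ interpreted as the degree $-2$ map that removes one $\delta$, annihilating $1$). Projecting to the cogenerator space of $\qncBV^{\ac}$, under the identifications $\mu\leftrightarrow 1\otimes\mu$, $\beta\leftrightarrow 1\otimes\beta$, $\delta\leftrightarrow\delta\otimes 1$, the output is nonzero only when simultaneously $k-1=0$ and $d_\psi(x)$ lands in $\k\mu\oplus\k\beta$, forcing $k=1$ and $x=\mu$ and producing $\beta$. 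Under the distributive-law isomorphism the element $\delta\otimes\mu$ corresponds precisely to the weight-$2$ cogenerator $\delta\circ_1\mu+\mu\circ_1\delta+\mu\circ_2\delta$ of $\qncBV^{\ac}$, i.e.\ to the suspension of the relation $\Delta\circ_1 m-m\circ_1\Delta-m\circ_2\Delta$. Thus the corestriction of $\delta^{-1}\otimes d_\psi$ agrees with $\varphi$, and the two coderivations coincide.

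The main subtlety is the identification of $\delta\otimes\mu$ with $\delta\circ_1\mu+\mu\circ_1\delta+\mu\circ_2\delta$ inside $\calT^c(M)$, which requires unravelling how the distributive law between $\ncGerst$ and $\D$ translates, after passing to Koszul duals and suspending, into an isomorphism of the underlying ns collections. The verification is straightforward case by case: for each of the other quadratic cogenerators of $\qncBV^{\ac}$ (namely those coming purely from relations of $\ncGerst$, from $\Delta\circ_1\Delta$, or from the purely quadratic relation $\Delta\circ_1 b+b\circ_1\Delta+b\circ_2\Delta$), the corresponding element on the $T^c(\delta)\otimes\ncGerst^{\ac}$ side either has no $\delta$ (so is killed by $\delta^{-1}$) or has its $\ncGerst^{\ac}$-component equal to $\beta$ or to an element of weight $\neq 1$ (so is killed by the corestriction of $d_\psi$). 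This matches the vanishing of $\varphi$ on these elements and completes the comparison.
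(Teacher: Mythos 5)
Your argument is correct and follows the same strategy that the paper's proof (which simply refers to \cite[Lemma~2.17]{DCV2013}) relies on: you compare the two square-zero coderivations of $\qncBV^{\ac}\subset\calT^c(M)$ via their corestrictions to the cogenerator space $M$, and verify that each vanishes except on the quadratic cogenerator $\delta\circ_1\mu+\mu\circ_1\delta+\mu\circ_2\delta\leftrightarrow\delta\otimes\mu$, on which both produce $\beta$. The write-up is a valid unpacking of the cited reference; the one step you flag yourself, the identification of $\delta\otimes\mu$ with $\delta\circ_1\mu+\mu\circ_1\delta+\mu\circ_2\delta$ under the distributive-law isomorphism, is indeed the only place where one must unwind how the Koszul dual of $\ncGerst\circ\D$ reorganises into $\D^{\ac}\circ\ncGerst^{\ac}$, and your weight-counting resolves it correctly.
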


\begin{proof}
Analogous to \cite[Lemma~2.17]{DCV2013}.
\end{proof}

\begin{theorem}\label{th:BVDefRetract}
There exists a deformation retract
  \[
 \xymatrix{ *{\quad \qquad\qquad \qquad \big(\overline{\qncBV^{\ac}}, \delta^{-1}\otimes d_\psi\big)\ } \ar@(dl,ul)[]^{\delta\otimes H}\ \ar@<0.5ex>[r]^-{\mathrm{P}} & *{\
 		 \big(\overline{T}^c(\delta)\oplus(\overline{\calS^2\ncGrav})^*, 0\big) \ . }  \ar@<0.5ex>[l]^-{\mathrm{I}}} 
  \]
\end{theorem}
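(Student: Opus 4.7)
My plan is to follow the general strategy used in \cite{DCV2013} for the classical $\BV$ case, building the deformation retract explicitly out of the operadic homotopy $H$ introduced just before the theorem statement. First I would split $\overline{\qncBV^{\ac}}$ into the arity $1$ piece and the arity $>1$ piece. Under the identification $\qncBV^{\ac}\cong T^c(\delta)\otimes\ncGerst^{\ac}$, the arity $1$ piece is exactly $T^c(\delta)$, since $\ncGerst^{\ac}(1)=\k$; the differential $\delta^{-1}\otimes d_\psi$ therefore vanishes there automatically, and this piece already matches its counterpart $\overline{T}^c(\delta)$ in the target, with $\mathrm{I}$, $\mathrm{P}$ the obvious identifications and the homotopy trivially zero.

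The real work lies in the arity $>1$ piece $T^c(\delta)\otimes\overline{\ncGerst^{\ac}}$. On this piece I would take the homotopy $K:=\delta\otimes H$. A direct computation on a generic tensor $\delta^k\otimes x$, using that $\delta^{-1}\delta=\id$ and $\delta\delta^{-1}$ is the projection onto the $k\geq 1$ summand, gives
$$(d_\varphi K+Kd_\varphi)(\delta^k\otimes x)=\delta^k\otimes(d_\psi H+Hd_\psi)(x)=\delta^k\otimes x\quad\text{for }k\geq 1,$$
by the key identity $d_\psi H+Hd_\psi=\id$ from Proposition~\ref{prop:ncGrav=Ker}, while for $k=0$ the same computation yields $1\otimes d_\psi Hx=1\otimes(x-Hd_\psi x)$. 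Hence $\id-(d_\varphi K+Kd_\varphi)$ is zero on every summand with $k\geq 1$ and equals $1\otimes Hd_\psi$ on the $k=0$ summand, so its image is exactly $1\otimes\Im(Hd_\psi)=1\otimes(\overline{\calS^2\ncGrav})^*$ by the identification stated in the excerpt. This naturally defines $\mathrm{P}$, with $\mathrm{I}$ the obvious inclusion.

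Verifying the deformation retract then amounts to the standard identities $\mathrm{P}\mathrm{I}=\id$ and $\mathrm{I}\mathrm{P}-\id=-(d_\varphi K+Kd_\varphi)$, along with the side conditions $K^2=0$, $\mathrm{P}K=0$, $K\mathrm{I}=0$. The identity $\mathrm{P}\mathrm{I}=\id$ on the $(\overline{\calS^2\ncGrav})^*$ summand follows from showing that $Hd_\psi$ is idempotent, which I would prove by expanding
$$(Hd_\psi)^2=H(d_\psi H)d_\psi=H(\id-Hd_\psi)d_\psi=Hd_\psi-H^2d_\psi^2=Hd_\psi,$$
using $H^2=0$, which follows from $D_1^2=0$ on generators of $\ncGerst$ together with the fact that the square of a derivation (or coderivation) of a ns operad (or cooperad) is itself a derivation (or coderivation), hence vanishes as soon as it does on generators. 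The side conditions are immediate: $K^2=\delta^2\otimes H^2=0$; $\mathrm{P}K=0$ because $K$ lands in the $\delta^{k\geq 1}$ summand where $\mathrm{P}$ vanishes; $K\mathrm{I}=0$ trivially on the arity $1$ summand and on elements of the form $1\otimes Hd_\psi y$ because $K(1\otimes Hd_\psi y)=\delta\otimes H^2d_\psi y=0$.

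The main potential obstacle is the bookkeeping of signs and conventions arising from the operadic suspension $\calS^2$, which must be handled carefully so that the explicit formula $H=\frac{1}{n-1}(\calS^2D_1)^*$ and the differential $\delta^{-1}\otimes d_\psi$ from the preceding proposition are compatible in the sense required by the deformation retract identities. Once these conventions are fixed, the remainder of the argument is formal and closely parallels the corresponding computation in \cite{DCV2013}.
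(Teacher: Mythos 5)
Your argument is correct and reconstructs precisely what the paper intends: the paper's proof simply cites the analogy with \cite[Th.~2.1]{DCV2013}, and your explicit splitting into the arity-$1$ piece $\overline{T}^c(\delta)$ and the piece $T^c(\delta)\otimes\overline{\ncGerst^{\ac}}$, the computation of $d_\varphi K + K d_\varphi$ on $\delta^k\otimes x$ via $d_\psi H + H d_\psi = \id$, the idempotence of $Hd_\psi$ from $H^2=0$, and the verification of the side conditions are exactly the steps one extracts from that reference. The caveat about signs from the operadic suspension is reasonable but turns out to be harmless here, since $\delta$ has even degree so no Koszul signs appear when commuting $d_\psi$ or $H$ past powers of $\delta$.
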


\begin{proof}
The proof is analogous to \cite[Th.~2.1]{DCV2013} with the identifications $\qncBV^{\ac}\cong T^c(\delta)\otimes\ncGerst^{\ac}$ and $(\overline{\calS^2\ncGrav})^*=\Im\, Hd_\psi$, and the results we already proved.
\end{proof}

By the homotopy transfer theorem for dg ns cooperads, the ns collection \[\calH:=\overline{T}^c(\delta)\oplus(\overline{\calS^2\ncGrav})^*\] acquires a structure of a homotopy ns cooperad. 

\begin{corollary}
The $\infty$-cobar construction $\Omega_\infty(\calH)$ is the minimal model of the ns operad~$\ncBV$. 	
\end{corollary}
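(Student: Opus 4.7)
The plan is to deduce the corollary as a direct application of the homotopy transfer theorem for dg ns cooperads (see \cite{DCV2013}) to the deformation retract produced in Theorem~\ref{th:BVDefRetract}, combined with the Koszul-type resolution of Proposition~\ref{prop:KoszulModel}.

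First, I would identify the source of the deformation retract with the Koszul dual dg ns cooperad $\ncBV^{\ac}$. Indeed, by the proposition immediately preceding Theorem~\ref{th:BVDefRetract}, the Koszul differential $d_\varphi$ on $\qncBV^{\ac}\cong T^c(\delta)\otimes\ncGerst^{\ac}$ agrees with $\delta^{-1}\otimes d_\psi$. Hence the cobar construction of $(\overline{\qncBV^{\ac}},\delta^{-1}\otimes d_\psi)$ is exactly $\Omega(\ncBV^{\ac})$, which by Proposition~\ref{prop:KoszulModel} is a (non-minimal) resolution of $\ncBV$.

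Next, I would invoke the homotopy transfer theorem for dg ns cooperads, which, applied to the deformation retract of Theorem~\ref{th:BVDefRetract}, endows $\calH=\overline{T}^c(\delta)\oplus(\overline{\calS^2\ncGrav})^*$ with the structure of a homotopy ns cooperad, and produces an $\infty$-quasi-isomorphism $\calH\rightsquigarrow(\overline{\qncBV^{\ac}},\delta^{-1}\otimes d_\psi)$. Passing to the $\infty$-cobar construction, this yields a quasi-isomorphism of dg ns operads
\[
\Omega_\infty(\calH)\xrightarrow{\ \sim\ }\Omega(\ncBV^{\ac})\ ,
\]
and composing with the resolution map $\Omega(\ncBV^{\ac})\xrightarrow{\sim}\ncBV$ shows that $\Omega_\infty(\calH)$ is itself a resolution of $\ncBV$.

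It then remains to check minimality. This is immediate: the deformation retract of Theorem~\ref{th:BVDefRetract} places $\calH$ in degree zero (i.e.\ with trivial internal differential), so by construction the differential of $\Omega_\infty(\calH)$ is built entirely from the transferred homotopy cooperad structure maps, all of which land in the decomposable part of the free ns operad. Consequently the linear part of the differential of $\Omega_\infty(\calH)$ vanishes, which is precisely the minimality condition. The only point that requires care — and is arguably the main conceptual input rather than an obstacle — is that one must genuinely work with homotopy cooperads (not strict ones), since the deformation retract does not in general transfer the strict cooperad structure; this is precisely why the $\infty$-cobar construction, rather than the usual cobar construction, appears in the statement.
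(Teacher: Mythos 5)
Your proposal is correct and takes essentially the same route as the paper's (very terse) proof: both rely on the homotopy transfer theorem for dg ns cooperads applied to the retract of Theorem~\ref{th:BVDefRetract}, the resulting $\infty$-quasi-isomorphism from $\calH$ to $\overline{\ncBV^{\ac}}$, and the Koszul resolution of Proposition~\ref{prop:KoszulModel}. You make explicit the minimality check (the transferred internal differential $\Delta_1$ vanishes because the target of the retract has zero differential, so $\Omega_\infty(\calH)$ has no linear part in its differential), which the paper leaves implicit in its one-line argument.
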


\begin{proof}
The transferred homotopy ns cooperad structure on $\calH$ is related to the dg ns (non-unital) cooperad $\overline{\ncBV^{\ac}}$ by an $\infty$-quasi-isomorphism, which proves the result.
\end{proof}

\subsection{The operad $\ncHyperCom$ as the Koszul dual of the suspension of the ns operad $\ncGrav$}

This section is inspired by the work of Getzler \cite{Get95} who established the Koszul duality between the operad $\HyperCom$ and the operad $\calS\Grav$. 

\begin{definition}
The operad of \emph{noncommutative hypercommutative algebras} is the Koszul dual of the suspension $\calS\ncGrav$ of the operad  of noncommutative gravity algebras, with respect to the presentation of Theorem~\ref{th:GravPresentation}. We denote this operad $\ncHyperCom$. 
\end{definition}

\begin{proposition}
The operad $\ncHyperCom$ is generated by operations $\nu_k\in\ncHyperCom(k)$ of degree $2k-4$, for $k\ge 2$, satisfying the following identities:
\begin{gather}\label{eq:ncWDVV}
\sum_{j=2}^{i}\nu_{n-j+1}\circ_{i-j+1}\nu_j=\sum_{k=2}^{n-i+1}\nu_{n-k+1}\circ_i\nu_k, \quad \text{for} \ \ n\ge 3, \ \ 2\le i\le n-1.
\end{gather}
\end{proposition}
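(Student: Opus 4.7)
The plan is to exploit the Koszul-dual definition $\ncHyperCom = (\calS\ncGrav)^!$ together with the explicit quadratic presentation of $\ncGrav$ provided by Theorem~\ref{th:GravPresentation}. First I would identify the space of generators of $\ncHyperCom$. Since $\ncGrav$ has a single generator $\lambda_k$ in each arity $k \ge 2$, of degree $1$, the operadic suspension $\calS\ncGrav$ has one generator per arity $k$; tracking the homological shifts through the conventions for $\calS$ and for quadratic Koszul duality in \cite[Ch.~7]{LV} yields a generator $\nu_k$ of the Koszul dual in arity $k$ and degree $2k-4$, matching the claim.

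Next I would compute the space of quadratic relations of $\ncHyperCom$ as the orthogonal complement $R^\perp$ (with respect to the canonical quadratic pairing between $\calT(V)_{(2)}$ and $\calT(V^\vee)_{(2)}$) of the relation space $R$ of $\calS\ncGrav$. Concretely, denote the relations of $\ncGrav$ in arity $n$ by
\[
\rho_{k,r} := \lambda_{n-k+1}\circ_r \lambda_k - \sum_{j=r}^{r+k-2}\lambda_{n-1}\circ_j\lambda_2 \qquad (3\le k\le n-1,\ 1\le r\le n-k+1)
\]
and $\rho_0 := \sum_{j=1}^{n-1}\lambda_{n-1}\circ_j\lambda_2$, and denote by $\sigma_i$ the proposed WDVV-like element $\sum_{j=2}^{i}\nu_{n-j+1}\circ_{i-j+1}\nu_j - \sum_{k=2}^{n-i+1}\nu_{n-k+1}\circ_i\nu_k$ for $2\le i\le n-1$. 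I would verify by direct inspection of the pairing $\langle \lambda_p\circ_s\lambda_q,\ \nu_{p'}\circ_{s'}\nu_{q'}\rangle = \delta_{p,p'}\delta_{q,q'}\delta_{s,s'}$ (up to the Koszul signs built into $\calS$) that $\langle \rho_{k,r},\sigma_i\rangle = 0$ and $\langle \rho_0,\sigma_i\rangle = 0$. The key point is a telescoping cancellation: the contribution from $\lambda_{n-k+1}\circ_r\lambda_k$ detects only $i = r$ (from the right sum of $\sigma_i$) and $i = r+k-1$ (from the left sum), while the contribution from $\sum_{j=r}^{r+k-2}\lambda_{n-1}\circ_j\lambda_2$ evaluates to $\mathbf 1[i-1\in [r,r+k-2]] - \mathbf 1[i\in [r,r+k-2]]$, which telescopes to exactly the same two boundary indicators.

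Finally, a dimension count finishes the argument. The space $\calT(V^\vee)_{(2)}(n)$ of weight-$2$ compositions has dimension $\sum_{p=2}^{n-1}p = \tfrac{(n-2)(n+1)}{2}$; the weight-$2$ part of $\ncGrav(n)$ has dimension $n-2$ (by inspection of the normal forms of weight $2$ identified in the proof of Theorem~\ref{th:GravPresentation}, all of which have root $\lambda_{n-1}$ and a single $\lambda_2$ in one of the $n-2$ right-most slots), so $\dim R = \tfrac{(n-2)(n-1)}{2}$ and hence $\dim R^\perp = n-2$. The $n-2$ elements $\sigma_2,\ldots,\sigma_{n-1}$ are linearly independent, as already the coefficients of $\nu_{n-1}\circ_s\nu_2$ span a $(n-2)$-dimensional subspace. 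Hence the $\sigma_i$ form a basis of $R^\perp$, proving the stated presentation.

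I expect the main obstacle to be the careful bookkeeping of Koszul signs: $\ncGrav$ sits in odd cohomological degree, so the operadic suspension and the duality pairing both introduce sign factors that must be reconciled with the symmetric-looking relations~\eqref{eq:ncWDVV}. These sign subtleties are however routine in the framework of \cite[Ch.~7]{LV}, and the underlying pairing identity is the clean telescoping described above.
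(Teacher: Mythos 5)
Your proof is correct and follows the same strategy as the paper's: check orthogonality of the two relation spaces under the planar-tree pairing, then count dimensions. Your worry about Koszul signs is addressed in the paper by the observation that since Theorem~\ref{th:GravPresentation} gives a presentation of $\ncGrav$ rather than of $\calS\ncGrav$, the sign coming from the operadic suspension cancels against the sign in the Koszul-dual pairing, so the ``naive'' sign-free pairing you use in the telescoping computation is exactly correct.
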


\begin{remark}
In plain words, the relations \eqref{eq:ncWDVV} mean that for each triple of consecutive elements $i-1,i,i+1$, the sum of all trees with one internal edge carrying $i-1,i$ and not $i+1$ on the top level is equal to the sum of all trees with one internal edge carrying $i,i+1$ and not $i-1$ on the top level. This is indeed reminiscent of the defining relations for the operad $\HyperCom$ \cite{Get95}.
\end{remark}

\begin{proof}
Let us denote by $\calQ$ the quadratic ns operad defined by these generators and relations. 
By a direct inspection, the defining relations of this operad are orthogonal to the ones of $\calS\ncGrav$ with respect to $\nu_k=\lambda_k^*$ and the induced ``naive'' pairings of planar tree monomials.
This is  easy to check, since we exhibited a presentation for $\ncGrav$, not $\calS\ncGrav$, so there are no extra signs arising from the operadic suspension. Also, we notice that the number of quadratic relations of $\calQ$ in arity $n$ is $n-2$, and the number of quadratic relations of $\calS\ncGrav(n)$ is $1+2+3+\cdots+n-2=\binom{n}{2}-1-(n-2)$, and that both of these groups of relations are manifestly linearly independent. Since the total dimension of the space of quadratic elements of arity $n$ in the free operad with one generator of each arity $k\ge 2$ is equal to $\binom{n}{2}-1$, the annihilator of the space of quadratic relations of $\calQ$ is the space of quadratic relations of $\ncGrav$. Therefore, $\calQ=(\calS\ncGrav)^!=\ncHyperCom$.
\end{proof}

\begin{theorem}\label{th:HyperCom-dim} The operad $\ncHyperCom$ is Koszul. All its components are concentrated in even non-negative degrees, and the dimension of the graded component of degree $2k$ of  $\ncHyperCom(n)$ is equal to the Narayana number $\frac{1}{n-1}\binom{n-1}{k}\binom{n-1}{k+1}$.  In particular, the dimension of $\ncHyperCom(n)$ is equal to the Catalan number $\frac{1}{n}\binom{2n-2}{n-1}$. 
\end{theorem}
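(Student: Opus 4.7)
The plan has three parts. \textbf{Koszulness} follows by a short abstract argument: $\ncGrav$ is Koszul by Corollary~\ref{cor:GravKoszul}; operadic suspension preserves the Koszul property, so $\calS\ncGrav$ is Koszul; and the Koszul dual of a Koszul operad is Koszul, whence $\ncHyperCom=(\calS\ncGrav)^!$ is Koszul. The \textbf{even-degree concentration} is a direct consequence of the presentation: any monomial in $\ncHyperCom(n)$ is a composition of $l\geqslant 1$ generators $\nu_{k_1},\dots,\nu_{k_l}$ with $\sum_i(k_i-1)=n-1$, and hence lies in homological degree $\sum_i(2k_i-4)=2(n-1-l)$, a non-negative even integer. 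In particular, the weight grading (number of generators used in a monomial) refines the homological grading, with weight $l$ corresponding to degree $2(n-1-l)$, so a dimension count refined by weight is the same as one refined by homological degree.

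For the \textbf{dimension formula} I would compute the bigraded Hilbert series of $\ncHyperCom$ by Koszul duality. Reading off from the basis of normal monomials exhibited in the proof of Theorem~\ref{th:GravPresentation}, one obtains $\dim\ncGrav(n)^{(l)}=\binom{n-2}{l-1}$ for $1\leqslant l\leqslant n-1$, and hence the weight generating series
\[
F_\ncGrav(t,z):=\sum_{n,l}\dim\ncGrav(n)^{(l)}\,z^l\,t^n=\frac{t(1-t)}{1-(1+z)t}\ .
\]
Since operadic suspension preserves the number of generators in a monomial, $F_{\calS\ncGrav}=F_\ncGrav$. Acyclicity of the Koszul complex of the Koszul operad $\ncHyperCom$, analysed componentwise by arity and total weight via the Euler characteristic with respect to the cooperadic weight, yields the functional equation
\[
F_{\calS\ncGrav}\bigl(F_\ncHyperCom(t,z),\,-z\bigr)=t\ .
\]
Solving this quadratic equation in $F_\ncHyperCom(t,z)$ and taking the root vanishing at $t=0$ produces
\[
F_\ncHyperCom(t,z)=\frac{1+(1-z)t-\sqrt{1-2(1+z)t+(1-z)^2t^2}}{2}\ .
\]

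Finally I would recognise this closed form as (a reindexing of) the classical generating series for the Narayana polynomials, yielding $\dim\ncHyperCom(n)^{(l)}=\tfrac{1}{n-1}\binom{n-1}{l-1}\binom{n-1}{l}$. The substitution $l=n-1-k$ converts this into the Narayana number $\tfrac{1}{n-1}\binom{n-1}{k}\binom{n-1}{k+1}$ in homological degree $2k$, and summing on $k$ recovers the Catalan total dimension $\tfrac{1}{n}\binom{2n-2}{n-1}$. The \textbf{main obstacle} is the Koszul-duality functional equation at the level of the bigraded Hilbert series: because the generators $\nu_k$ of $\ncHyperCom$ and $\lambda_k$ of $\ncGrav$ span all arities $k\geqslant 2$ rather than only $k=2$, one cannot invoke the familiar binary-quadratic Koszul-duality formula, and the weight grading on both the Koszul dual cooperad and operad sides must be tracked carefully through the acyclic Koszul complex. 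Once that equation is established, identifying its solution with the Narayana generating series is a standard computation.
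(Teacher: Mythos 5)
Your proof is correct, and it takes a genuinely different route from the paper's. The paper never writes down a Koszul-duality functional equation on Hilbert series: instead it exploits the quadratic Gr\"obner basis for $\ncGrav$ from Theorem~\ref{th:GravPresentation}. Since a quadratic Gr\"obner basis for $\calP$ yields one for $\calP^!$ (for the opposite ordering), with leading terms the normal quadratic monomials of $\calP$, one reads off the leading terms $\nu_j\circ_p\nu_2$ (for $2\le p\le j$) for $\ncHyperCom$, and then enumerates the normal planar tree monomials directly by a recursion on generating functions $f_k(q,z)$ indexed by the label of the root vertex, arriving at the Narayana quadratic. You instead read off $\dim\ncGrav(n)^{(l)}=\binom{n-2}{l-1}$ from the same normal-monomial basis, assemble the weight-refined Hilbert series $F_{\ncGrav}(t,z)=t(1-t)/(1-(1+z)t)$ (which is right, with the arity-one identity included), and invert it via the Euler-characteristic identity $F_{\calS\ncGrav}(F_{\ncHyperCom}(t,z),-z)=t$ coming from acyclicity of the Koszul complex $\ncHyperCom^{\ac}\circ\ncHyperCom$. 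That identity does hold for the reason you sketch: the Koszul differential preserves arity and the total weight $w_{\mathrm{coop}}+w_{\mathrm{op}}$ while lowering the cooperadic weight by one, and the internal homological degree of $\ncHyperCom$ is determined by arity and weight, so the weight-refined Euler characteristic vanishes in each arity $\ge 2$; solving the resulting quadratic $F^2-F\bigl(1+(1-z)t\bigr)+t=0$ gives your closed form and the Narayana numbers after relabelling $l=n-1-k$. Your approach is more conceptual --- the dimensions of $\ncHyperCom$ follow formally from those of $\ncGrav$ plus Koszulness --- at the cost of the care needed to state the bigraded functional equation when generators live in every arity, whereas the paper's Gr\"obner enumeration is self-contained and reaches the same quadratic more concretely, without ever inverting a power series.
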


\begin{proof}
First, note that Koszulness is preserved by operadic suspension, so Corollary \ref{cor:GravKoszul} implies that the ns operad $\calS\ncGrav$ is Koszul, and hence its Koszul dual ns operad $\ncHyperCom$ is. 
	
Second, recall the normal monomials for the ns operad $\ncGrav$ constructed in Theorem~\ref{th:GravPresentation}. It is well known \cite{LV} that the Koszul dual ns operad $\calP^!$ of a ns operad $\calP$ with a quadratic Gr\"obner basis also admits a quadratic Gr\"obner basis (for the opposite order of monomials); the normal quadratic monomials for $\calP$ become the leading terms of the Gr\"obner basis of $\calP^!$, and vice versa. Thus, the leading terms of the corresponding quadratic Gr\"obner basis of $\ncHyperCom$ are $\nu_j\circ_p\nu_2$ for $p=2,\ldots,j$. 

We shall compute the dimensions of graded components of the operad $\ncHyperCom$ using generating functions.  We denote by $f_k(q,z)$ the formal power series whose coefficient at $z^nq^l$ is equal to the number of normal planar tree monomials of  degree $l$ with $n$ leaves and with the root vertex labelled by $\nu_k$. Let us also put 
\[f(q,z):=z+\sum_{k\ge 2}f_k(q,z) \quad \text{and} \quad g(q,z):=z+\sum_{k\ge3}f_k(q,z)\ ,\]
the generating series of the ns operad $\ncHyperCom$ and a variation of it.
 We note that
	\[
f_k(q,z)=q^{2k-4}f(q,z)g(q,z)^{k-1}\ ,  
	\]
since the subtree grafted at the first leaf of the root vertex may be a leaf or an arbitrary normal monomial, and all other subtrees may be leaves or normal monomials not having $\nu_2$ at the root vertex.
Adding up all these equations, we get
	\[
f(q,z)-z=\frac{f(q,z)g(q,z)}{1-q^2g(q,z)} \ .
	\]
Using the equation $f_2(q,z)=f(q,z)g(q,z)$, we see that 
$f(q,z)-g(q,z)=f(q,z)g(q,z)$,  
which gives
	\[
g(q,z)=\frac{f(q,z)}{1+f(q,z)}\ .  
	\]
Substituting this back into the equation we obtained, we get
	\[
	q^2f(q,z)^2-f(q,z)(1-z+zq^2)+z=0\ .  
	\]
This equation, after accounting for different indexings of Narayana numbers used in the literature, coincides with the standard functional equation for the generating function of Narayana numbers~\cite{Stan2}. The sum of all Narayana numbers with fixed $n$ is known to be equal to the corresponding Catalan number.
\end{proof}

\section{Geometric definition of noncommutative \texorpdfstring{$\overline{\calM}_{0,n+1}$}{M0n}}\label{sec:three-ncHyperCom}

In this section, we present two geometric constructions of a nonsymmetric version of the operad  $\{\overline{\calM}_{0,n+1}(\mathbb{k})\}$ of the moduli spaces of stable complex curves with marked points which use, respectively, toric varieties and the theory of wonderful models of subspace arrangements. We prove that for $\k=\mathbb{C}$ those definitions provide us with a topological ns operad whose homology is equal to
the algebraic operad $\ncHyperCom$  defined in the previous section. 

\subsection{Noncommutative \texorpdfstring{$\overline{\calM}_{0,n+1}$}{M0n} via brick manifolds}\label{subsec:brick}

In this section, we argue that a sensible analogue of compactified Deligne--Mumford moduli spaces is given by toric varieties of the associahedra. In particular, we prove that the collection of complex toric varieties $\{X_{\mathbb{C}}(L_n)\}$ corresponding to Loday polytopes admits a structure of a ns topological operad for which the homology operad is the operad $\ncHyperCom$.

\subsubsection{Brick manifolds}

We associate to a finite ordinal $I$ the vector space $G(I)=\k\Gap(I)$ with the basis $e_{i_1,i_2}$, where $(i_1,i_2)\in\Gap(I)$. For each interval $J=[a,b]\subset I$, we have a natural inclusion $G(J)\subset G(I)$. In such a situation, we shall also omit square brackets, and use the notation $G(a,b)$ instead of $G(J)$ for clarity. For example, $G(a,a)=\{0\}$, and $G(a,a+1)=\myspan(e_{a,a+1})$.

\begin{definition} \label{def:brick-manifold-collection}
Points of the \emph{brick manifold} (in fact an algebraic variety) $\calB(I)$ are collections of subspaces $V_{i,j}\subset G(I)$ for all proper intervals $[i,j]\subsetneq I$ that satisfy the following constraints:
\begin{itemize}
\item $\dim V_{i,j}=\# [i,j]$, 
\item $V_{i,j}\subset V_{p(i),j}$ for all $\min(I)< i$,
\item $V_{i,j}\subset V_{i,s(j)}$ for all $j<\max(I)$,
\item $V_{\min(I),j}=G(\min(I),s(j))$, 
\item $V_{i,\max(I)}=G(p(i),\max(I))$. 
\end{itemize}
\end{definition}

\begin{remark}\label{rem:codimention-one-subspace}
	Note that if $\min(I)<i,j<\max(I)$, then the space $V_{i,j}$ is a codimension one subspace of $G(p(i),s(j))$.
\end{remark}

\begin{figure}[h]
 \[
\xymatrix@M=7pt@R=7pt@C=7pt{ & & & & & &&& \\
 & & G(1,4)& &G(1,4) & &&*{} &\ar@{..}[l] \dim 3\\
  &G(1,3) \ar@{>->}[ur]& & V_{2,3} \ar@{>->}[ur]\ar@{>->}[ul]& &G(2,4) \ar@{>->}[ul]& &*{}&\ar@{..}[l]\dim 2\\
   G(1,2) \ar@{>->}[ur]& &V_{2,2}\ar@{>->}[ur] \ar@{>->}[ul]& &V_{3,3}  \ar@{>->}[ur]\ar@{>->}[ul]& &G(3,4) \ar@{>->}[ul]&*{}&\ar@{..}[l]\dim 1 \\
    & & & & & && &}
 \]
\caption{An element of $\calB(\underline{4})$} \label{Ex:B(4)}
\end{figure}
In \cite{Escobar2014}, it is proved that the brick manifold $\calB(\underline{n})$ is isomorphic to the toric variety $X(L_n)$ associated to the $n$-th Loday polytope. We shall give another proof of that result in  Theorem \ref{thm:LodayPolytope} of Appendix~\ref{sec:appendix-toric}. 

\begin{example}\label{ex:small-brick-manifolds}\leavevmode
\begin{enumerate}
\item The brick manifold $\calB(\underline{2})$ is a single point, since we only have $V_{1,1}=V_{2,2}=G(\underline{2})$.
\item The brick manifold $\calB(\underline{3})$ is the projective line $\mathbb{P}^1$, since the only variable subspace in this case is $V_{2,2}$, which can be any one-dimensional subspace of $V_{1,2}=V_{2,3}=G(\underline{3})$.  
\item The brick manifold $\calB(\underline{4})$ is the blow-up of $\mathbb{P}^1\times\mathbb{P}^1$ at a point, as one can see by noticing that a choice of subspaces $V_{2,2}\subset V_{1,2}=G(1,3)$ and $V_{3,3}\subset V_{3,4}=G(2,4)$ determines the subspace $V_{2,3}$ uniquely unless $V_{2,2}=V_{3,3}=G(2,3)$, in which case the possible choices of $V_{2,3}$ are parametrised by lines in the two-dimensional subspace $\myspan(e_{1,2},e_{3,4})$. By examining the diagram of subspaces starting from $V_{2,3}\subset G(\underline{4})$, one can also easily see that this brick manifold can be described as the blow-up of $\mathbb{P}^2$ at two points. 
\end{enumerate}	
\end{example}

Our next goal is to give the ns collection of brick manifolds a structure of a ns operad. We remark that there is a unique linear map \[f_{I,i}^J\colon G(I)\oplus G(J)\to G(I\sqcup_i J)\]
extending the bijection between the basis elements $\Gap(I)\sqcup \Gap(J)$ and $G(I\sqcup_i J)$  from  Proposition~\ref{prop:operad-gap}.

\begin{definition}\label{def:operadic-composition}
Let $I$ and $J$ be disjoint finite ordinals, and let $i\in I$. We define the map
	\[
\circ_{I,i}^J\colon\calB(I)\times\calB(J)\to\calB(I\sqcup_i J) ,
	\] 
	putting 
	\[
\left(\{V^1_{i_1,j_1}\}\circ_{I,i}^J \{V^2_{i_2,j_2}\}\right)_{a,b}:=
\begin{cases}
f_{I,i}^J\big(V^1_{a,b}\big),  \text{ for } a,b\in I, a\le b<i\  ,\\
f_{I,i}^J\big(V^2_{a,b}\big), \text{ for }  a\le b\in J, (a,b)\not=(\min(J),\max(J))\ ,\\
f_{I,i}^J\big(V^1_{a,b}\big), \text{ for } a,b\in I, i<a\le b\ , \\
f_{I,i}^J\big(V^1_{a,p(i)}\oplus 
G(\min(J), s(b))\big)\text{ for } a\in I, a<i, b\in J, b<\max(J)\ ,\\
f_{I,i}^J\big(V^1_{s(i),b}\oplus G(p(a), \max(J))
\big)\text{ for } a\in J, a>\min(J), b\in I, i<b\ ,\\
f_{I,i}^J\big(V^1_{i,i}\oplus G(J)\big)\text{ for } (a,b)=(\min(J),\max(J))\ , \\
f_{I,i}^J\big(V^1_{a,i}\oplus G(J)\big)\text{ for } 
a\in I, a<i, b=\max(J)\ , \\
f_{I,i}^J\big(V^1_{i,b}\oplus G(J)\big)\text{ for } 
b\in I, b>i, a=\min(J)\ , \\
f_{I,i}^J\big(V^1_{a,b}\oplus G(J)\big)\text{ for } a,b\in I, a<i<b\ . 
\end{cases} 
	\]
\end{definition}

\begin{example}
The example 
of the operadic composition $\circ^{\underline{4}}_{\underline{6}, 3} : \calB(\underline{6})\times 
\calB(\underline{4}) \to \calB(\underline{9})$ is depicted in Figure~\ref{Ex:OpCompo}, where we use the canonical identification 
$\underline{6}\sqcup_3\underline{4}\cong\underline{9}$.  
\end{example}
\begin{figure}[!h]
{\scriptsize
$$\xymatrix@M=0pt@R=7pt@C=2pt{
&&&&&&&G(1,9)&&G(1,9)&&&&&&& \\
&&&&&&G(1,8)&&\genfrac{}{}{0pt}{}{f(V^1_{2,5})}{\oplus G(3,6)}&&G(2,9)&&&&&& \\
&&&&&G(1,7)&&\genfrac{}{}{0pt}{}{f(V^1_{2,4})}{\oplus G(3,6)}&&\genfrac{}{}{0pt}{}{f(V^1_{3,5})}{\oplus G(3,6)}&&G(3,9)&&&&& \\
&&&&G(1,6)&&\genfrac{}{}{0pt}{}{f(V^1_{2,3})}{\oplus G(3,6)}&&\genfrac{}{}{0pt}{}{f(V^1_{3,4})}{\oplus G(3,6)}&&\genfrac{}{}{0pt}{}{f(V^1_{4,5})}{\oplus G(3,6)}&&G(4,9)&&&& \\
&&&G(1,5)&&\genfrac{}{}{0pt}{}{f(V^1_{2,2})}{\oplus G(3,6)}&&\genfrac{}{}{0pt}{}{f(V^1_{3,3})}{\oplus G(3,6)}&&
\genfrac{}{}{0pt}{}{f(V^1_{4,4})}{\oplus G(3,6)}&&\genfrac{}{}{0pt}{}{f(V^1_{4,5})}{\oplus G(4,6)}&&G(5,9)&&& \\
&&G(1,4)&&\genfrac{}{}{0pt}{}{f(V^1_{2,2})}{\oplus G(3,5)}&&G(3,6)&&G(3,6)&&\genfrac{}{}{0pt}{}{f(V^1_{4,4})}{\oplus G(4,6)}&&\genfrac{}{}{0pt}{}{f(V^1_{4,5})}{\oplus G(5,6)}&&G(6,9)&& \\
&G(1,3)&&\genfrac{}{}{0pt}{}{f(V^1_{2,2})}{\oplus G(3,4)} &&G(3,5)&&f(V^2_{2,3})&&G(4,6)&&\genfrac{}{}{0pt}{}{f(V^1_{4,4})}{\oplus G(5,6)}&&f(V^1_{4,5})&&G(7,9)& \\
G(1,2)&&f(V^1_{2,2})&&G(3,4)&&f(V^2_{2,2})&&f(V^2_{3,3})&&G(5,6)&&f(V^1_{4,5})&&f(V^1_{5,5})&&G(8,9) 
}$$
}
\caption{Example of the operadic composition $\circ^{\underline{4}}_{\underline{6}, 3} : \calB(\underline{6})\times 
\calB(\underline{4}) \to \calB(\underline{9})$.}
 \label{Ex:OpCompo}
\end{figure}

\begin{proposition} \label{proposition:operadic-composition}
The maps $\circ^J_{I,i}$ make the ns collection $\calB$ into a ns operad in the category of algebraic varieties.
\end{proposition}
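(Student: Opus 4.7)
The plan is to verify three things: (1) the output of $\circ_{I,i}^J$ indeed lies in $\calB(I\sqcup_i J)$, i.e.\ satisfies the five axioms of Definition~\ref{def:brick-manifold-collection}; (2) the maps $\circ_{I,i}^J$ are morphisms of algebraic varieties; and (3) the parallel and sequential composition axioms hold. Algebraicity (2) is essentially free: in each of the nine cases of Definition~\ref{def:operadic-composition} the new subspace is obtained from the old ones by applying a fixed linear injection $f_{I,i}^J$ and, where applicable, taking the direct sum with a fixed subspace $G(-,-)$; these operations on linear subspaces are morphisms of the respective Grassmannians, and the target is then cut out of the product by the dimension and incidence constraints checked in (1), so the composition descends from the ambient product of Grassmannians to $\calB(I\sqcup_i J)$.

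For well-definedness (1), I would go through the nine cases. The dimension check uses Proposition~\ref{prop:operad-gap}: the natural bijection $\Gap(I)\sqcup\Gap(J)\to\Gap(I\sqcup_i J)$ guarantees that $G(I\sqcup_i J)=f_{I,i}^J(G(I)\oplus G(J))$ has the right total dimension, and moreover that for any interval $[a,b]\subset I\sqcup_i J$ the subspace $G(a,b)$ is the image under $f_{I,i}^J$ of the direct sum of the $G$-spaces attached to the intervals $[a,b]\cap I$ and $[a,b]\cap J$. Once this basic identity is in hand, each of the nine cases becomes a trivial computation: for instance in the ``straddling'' case $a\in I,\ a<i<b$, one computes $\dim V_{a,b}=\dim V^1_{a,b}+\dim G(J)=\#[a,b]\cap I+\#J=\#[a,b]$. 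The inclusions $V_{a,b}\subset V_{p(a),b}$ and $V_{a,b}\subset V_{a,s(b)}$ are then checked by comparing the two cases that the pair $(a,b)$ and its enlargement fall under; the only non-obvious transitions are the ones that change which case of Definition~\ref{def:operadic-composition} applies (e.g.\ passing from $a\in I,\,b\in J,\,b<\max(J)$ to $b=\max(J)$), and there one uses the brick axiom $V^2_{\min(J),b}=G(\min(J),s(b))$ of the second factor to see that $f_{I,i}^J(V^1_{a,p(i)}\oplus G(\min(J),s(b)))\subset f_{I,i}^J(V^1_{a,i}\oplus G(J))$. The boundary conditions $V_{\min,j}=G(\min,s(j))$ and $V_{i,\max}=G(p(i),\max)$ follow directly from the corresponding boundary conditions in the two factors.

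For the operad axioms (3), I would follow the same case-analytic philosophy. The sequential axiom $S_1\circ_i(S_2\circ_k S_3)=(S_1\circ_i S_2)\circ_{k-i+1}S_3$ and the parallel axiom $(S_1\circ_j S_2)\circ_i S_3=(S_1\circ_i S_3)\circ_{j+n_3-1} S_2$ each reduce, via Proposition~\ref{prop:operad-gap} applied twice, to a purely combinatorial statement about the gap sets: the two iterated bijections from $\Gap(I)\sqcup\Gap(J)\sqcup\Gap(K)$ to $\Gap$ of the triple ordinal sum agree. Granting that combinatorial identity, the verification for an arbitrary interval $[a,b]$ in the triple ordinal splits into cases by which of the three ordinals it meets; in each case both sides of the axiom evaluate to the image under the same composite linear injection of the same explicit direct sum of $V$-spaces (coming from whichever of $S_1,S_2,S_3$ contributes) together with the ``filler'' $G$-spaces for the intervals it fully contains. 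The expected main obstacle is the sheer bookkeeping in the sequential case when $k-1$ of the slots of $S_1$ lie strictly before~$i$ and $[a,b]$ straddles the insertion point of $S_2\circ_k S_3$: one has to match an iterated version of the ``straddling'' formula $f(V^1_{a,b}\oplus G(J))$ from one side with a similar expression coming from the other associativity order. I would organize this by first isolating the universal identity $f^{I\sqcup_i J}_{I\sqcup_i J,\ast}\circ (f^J_{I,i}\oplus\id)=f^{J+K}_{I,i}\circ(\id\oplus f^K_{J,\ast})$ (after an appropriate reindexing) between the linear embeddings and then reading off both sides of the axiom from it.
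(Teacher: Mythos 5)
Your proposal is correct and follows the same route the paper declares for this proposition: a direct (if tedious) verification that the formula lands in $\calB(I\sqcup_i J)$ and satisfies the parallel and sequential axioms, with algebraicity observed from the fact that each case is a fixed linear injection followed by a direct sum with a fixed coordinate subspace. The paper's stated proof is essentially just this plan asserted without detail, so you have filled in what the authors left implicit; be aware, however, that the paper also promises and delivers a cleaner second proof in the remark following Corollary~\ref{cor:closure-of-divisor}, where the stratification by planar trees (Proposition~\ref{proposition:stratification}) identifies the composition maps with closures of boundary divisors and the operad axioms become structural consequences of the divisor combinatorics rather than a nine-case bookkeeping exercise.
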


\begin{proof}
First, it is easy to check that $$\left(\{V^1_{i_1,j_1}\}\circ_{I,i}^J \{V^2_{i_2,j_2}\}\right)\in \calB(I\sqcup_i J)\ ,$$ when $\{V^1_{i_1,j_1}\}\in \calB(I)$ and $\{V^2_{i_2,j_2}\} \in \calB(J)$. Then, the operadic parallel and sequential axioms are straightforward (but somewhat tedious) to check by hands. (We shall give another proof at the end of next section, when we discuss in detail the stratification of the spaces $\calB(\underline{n})$).
\end{proof}

\begin{remark}
By the results of \cite{Escobar2014}, toric varieties of other polytopal realisations of associahedra, for example the one of Chapoton--Fomin--Zelevinsky \cite{CFZ2002}, can also be realised in a similar combinatorial way. It would be interesting to see whether  the corresponding collection also exhibits a natural operadic structure.
\end{remark}

\begin{definition}
The \emph{brick operad} is the ns collection $\calB$ endowed with the operadic composition maps from Proposition~\ref{proposition:operadic-composition}.
\end{definition}

\subsubsection{Stratification} 
\label{sec:stratification}

Let us describe a stratification of the variety $\calB(\underline{n})$. It is the union of 
open strata $\calB(\underline{n},T)$ indexed by elements $T$ of the set $\calT(n)$ of all planar rooted trees with $n+1$ unbounded (external) 
edges: one for the root, and $n$ unbounded edges labelled from left to right by $1,\dots,n$ for the leaves. The valency of each vertex must be at least three, that is the number of input edges of each vertex must be at least two. For each possibly unbounded edge $e$ of the tree, we consider the set $L_e$ of all leaves of the subtree with the root $e$. 

\begin{definition} The stratum $\calB(\underline{n},T)$  consists of all collections $\{V_{i,j}\}$ satisfying the following conditions:

\begin{itemize}
\item For each edge $e$ of $T$ that is not a leaf, so that  $L_e=\{l,l+1,\dots,r-1,r\}$ for some $l<r$, we require that $V_{l,r-1} = V_{l+1,r}=G(l,r)$.

\noindent
(This implies that $V_{l,r}$, which is a $r-l+1$-dimensional space, satisfies $G(l,r)\subset V_{l,r}$).
\smallskip 

\item For each edge $e$ of $T$, which is neither the root nor the leftmost or the rightmost input edge of a vertex, denoting $L_e=\{l,l+1,\dots,r-1,r\}$ for some $l\le r$,
we require that $V_{l,r}$ is neither of the two possible $(r-l+1)$-dimensional coordinate subspaces $G(l-1, r)$ or $G(l, r+1)$ of the space $G(l-1,r+1)$.
\end{itemize}
\end{definition}

Notice that the first condition is a ``boundary'' condition and that the second one is an ``open'' condition.

\begin{remark} 
If $e$ is a leaf, which is not the leftmost or the rightmost leaf of any vertex, then the second condition implies that $V_{l,l}$, which is the same as $V_{r,r}$ in this case, is  a subspace of $G(l-1,l+1)$ different from either $G(l-1,l)$ or $G(l,l+1)$.
\end{remark}

\begin{remark}\label{rem:implication-first-condition}
The first condition, in particular, implies by Remark~\ref{rem:codimention-one-subspace}, that, for any $j=l,\dots,r-1$, we have $V_{l,j}=G(l,j+1)$
and, for any $i=l+1,\dots,r$, we have $V_{i,r}=G(i-1,r)$. In particular, if $e$ is the root, then this is just the last two requirements of Definition~\ref{def:brick-manifold-collection}.
\end{remark}

\begin{example}\label{ex:open-stratum}
Consider the planar tree $T_n\in \calT(n)$ that has no internal edges. Then for each $j=2,\dots,n-1$, the  subspace $V_{j,j}$ of $G(j-1,j+1)$ is not a coordinate line in $\calB(\underline{n},T_n)$. Therefore, all the subspaces $V_{i,j}$ are determined uniquely by the condition $V_{i,j}\supset V_{k,k}$ for $i\leq k\leq j$. So, $\calB(\underline{n},T_n)$ is the subvariety parametrized by the choice, for each $j=2,\dots,n-2$, of a subset $V_{j,j}\subset G(j-1,j+1)$ different from the two coordinate axes. This choice is parametrized by $n-2$ copies of $\mathbb{G}_m$. 
\end{example}

For $l<r$, we denote by  $T_{l,r}\in \calT(n)$ the planar tree with one internal edge, with the inputs of the corresponding vertex labelled by $l,l+1,\dots,r$.

\begin{proposition} \label{prop:InfCompoStrat}
For any $1\leqslant l<r \leqslant n$, the infinitesimal composition map 
$$\circ^{\{l,l+1,\dots,r\}}_{\{1,\dots,l-1,*,r+1,\dots,n\},*}\ : \ 
\calB(\{1,\dots,l-1,*,r+1,\dots,n\},T_{n+l-r})\times \calB(\{l,\dots,r\},T_{r-l+1}) \to 
\calB(\underline{n},T_{l,r})\ , 
$$
when  restricted respectively to the strata associated to the corollas
$T_{n+l-r}$, $T_{r-l+1}$ and to 
the  planar tree  $T_{l,r}$, 
is a bijection. 
\end{proposition}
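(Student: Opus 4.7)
The plan is to show the map is a bijection by identifying both strata with $\mathbb{G}_m^{n-3}$ via explicit parameterisations, and then checking that the composition formula of Definition~\ref{def:operadic-composition} induces a bijection on these parameter sets.

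First, I would describe the source. By Example~\ref{ex:open-stratum}, the corolla stratum $\calB(\{1,\ldots,l-1,*,r+1,\ldots,n\},T_{n+l-r})\cong\mathbb{G}_m^{n-r+l-2}$ is parametrised by the $V^1_{j,j}\subset G(p(j),s(j))$ for $j\in\{2,\ldots,l-1,*,r+1,\ldots,n-1\}$, each required to avoid the two coordinate axes; similarly $\calB(\{l,\ldots,r\},T_{r-l+1})\cong\mathbb{G}_m^{r-l-1}$ is parametrised by $V^2_{j,j}$ for $j\in\{l+1,\ldots,r-1\}$. All other $V^1_{i,j}$, $V^2_{i,j}$ are then determined by the flag conditions.

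Next, I would describe $\calB(\underline{n},T_{l,r})$ explicitly. The sole internal edge of $T_{l,r}$ imposes, by definition, the boundary conditions $V_{l,r-1}=V_{l+1,r}=G(l,r)$. Combined with Remark~\ref{rem:implication-first-condition} applied to the two vertices of $T_{l,r}$, this fixes every $V_{i,j}$ with $[i,j]\subsetneq[l,r]$ to be a coordinate subspace. The remaining free data is then:
\begin{itemize}
\item the non-coordinate lines $V_{j,j}\subset G(j-1,j+1)$ for $j\in\{2,\ldots,l-1,l+1,\ldots,r-1,r+1,\ldots,n-1\}$,
\item the codimension-one subspace $V_{l,r}$ of $G(l-1,r+1)$ containing $G(l,r)$ such that the line $V_{l,r}/G(l,r)$ inside $G(l-1,r+1)/G(l,r)\cong\k e_{l-1,l}\oplus \k e_{r,r+1}$ is non-coordinate.
\end{itemize}
This again yields $\mathbb{G}_m^{n-3}$; all other $V_{i,j}$ are recovered from the flag conditions.

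Then I would compute the composite on these parameters case by case from Definition~\ref{def:operadic-composition}: for $j\in\{2,\ldots,l-1\}$ (Case~1), $j\in\{r+1,\ldots,n-1\}$ (Case~3), and $j\in\{l+1,\ldots,r-1\}$ (Case~2), the map reads $V_{j,j}=f(V^1_{j,j})$, $V_{j,j}=f(V^1_{j,j})$, and $V_{j,j}=f(V^2_{j,j})$, respectively; using that $f_{I,*}^J$ acts as the natural identification on the $\Gap$-basis (with $e_{l-1,*}\mapsto e_{l-1,l}$ and $e_{*,r+1}\mapsto e_{r,r+1}$), each of these is a coordinatewise bijection. The remaining piece, Case~6, yields $V_{l,r}=f(V^1_{*,*}\oplus G(l,r))$, which identifies the line $V^1_{*,*}\subset \k e_{l-1,*}\oplus \k e_{*,r+1}$ with the quotient line $V_{l,r}/G(l,r)\subset \k e_{l-1,l}\oplus \k e_{r,r+1}$. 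Thus each of the $n-3$ coordinates of the source corresponds bijectively to one coordinate of the target, with the non-coordinate condition preserved on both sides.

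The inverse is then read off by the same formulas: set $V^1_{j,j}:=f^{-1}(V_{j,j})$ for $j\in I\setminus\{*\}$ (restricted to the appropriate 2-dimensional subspace), $V^2_{j,j}:=V_{j,j}$ for $j\in\{l+1,\ldots,r-1\}$, and $V^1_{*,*}:=f^{-1}(V_{l,r}/G(l,r))$, the remaining subspaces being forced. The only real obstacle is the bookkeeping across the many cases of Definition~\ref{def:operadic-composition}; in particular, one must verify that the coordinate values $V^2_{l,l}=G(l,l+1)$ and $V^2_{r,r}=G(r-1,r)$ forced by the corolla structure on $\calB(\{l,\ldots,r\},T_{r-l+1})$ are exactly the values of $V_{l,l}$ and $V_{r,r}$ dictated by the tree $T_{l,r}$, and that no spurious relation arises between the free parameters on either side.
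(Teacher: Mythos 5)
Your argument follows the paper's proof essentially verbatim, with the $\mathbb{G}_m$-coordinatizations of the three strata spelled out explicitly: the paper likewise constructs the inverse by declaring $V^2_{i,j}:=f^{-1}(V_{i,j})$ for indices inside $[l,r]$, $V^1_{k,k}:=f^{-1}(V_{k,k})$ outside, and taking $V^1_{*,*}$ to be the unique line with $f(V^1_{*,*})\oplus G(l,r)=V_{l,r}$, checking it is not a coordinate axis. One slip to fix: the sentence asserting that \emph{every} $V_{i,j}$ with $[i,j]\subsetneq[l,r]$ is a coordinate subspace is false, and in fact contradicts your own parameter list one line later --- for $l<j<r$ the lines $V_{j,j}$ are, by the open condition of the stratum, precisely \emph{not} coordinate. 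What the boundary condition on the internal edge (via Remark~\ref{rem:implication-first-condition}) forces to be coordinate are only the spaces $V_{l,j}$ and $V_{i,r}$ with an endpoint at $l$ or $r$, in particular $V_{l,l}=G(l,l+1)$ and $V_{r,r}=G(r-1,r)$. This misstatement is harmless since the list of free parameters you then use is the correct one, but the sentence should be restricted to the boundary spaces.
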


\begin{proof}
First, it is straightforward to check that the image of the restriction of the infinitesimal composition 
to the product of strata $\calB(\{1,\dots,l-1,*,r+1,\dots,n\},T_{n+l-r})\times \calB(\{l,\dots,r\},T_{r-l+1})$ lives 
in the stratum $\calB(\underline{n},T_{l,r})$. The first condition of the stratum $\calB(\underline{n},T_{l,r})$ is given by the second point in the definition~\ref{def:operadic-composition} of the operadic composition map. The second condition for the stratum $\calB(\underline{n},T_{l,r})$ 
is a direct consequence of the same condition for the two strata $\calB(\{1,\dots,l-1,*,r+1,\dots,n\},T_{n+l-r})$ and $\calB(\{l,\dots,r\},T_{r-l+1})$: the image under the map $f^{\{l,l+1,\dots,r\}}_{\{1,\dots,l-1,*,r+1,\dots,n\},*}$ of a subspace which is not a coordinate axis is again not a coordinate axis. \\
Let us now prove that any element $\{V_{i,j}\}$ of the stratum $\calB(\underline{n},T_{l,r})$ can be written in a unique way as the infinitesimal composition of two elements
$\{V^1_{i_1,j_1}\}$ and $\{V^2_{i_2,j_2}\}$
 from the to strata $\calB(\{1,\dots,l-1,*,r+1,\dots,n\},T_{n+l-r})$ and $\calB(\{l,\dots,r\},T_{r-l+1})$ respectively. In this proof, we simply denote $f=\allowbreak f^{\{l,l+1,\dots,r\}}_{\{1,\dots,l-1,*,r+1,\dots,n\},*}$. 
 For $l\leqslant i,j \leqslant r$, we have no choice but to set 
 $V^2_{i,j}:=f^{-1}(V_{i,j})$. In the same way, for $1<k<l$ and for $r<k<n$, we have no choice but to set 
 $V^1_{k,k}:=f^{-1}(V_{k,k})$. Since $f$ is an isomorphism, there is a unique line $V^1_{*,*}\subset G(\{l-1, *, r+1\})$ such that $f(V^1_{*,*})\oplus G(l,r)=V_{l,r}$
 ; it is not a coordinate axis since the latter space is not equal to $G(l-1, r)$
 neither to $G(l, r+1)$.
Since for $1<k<n$ and $k\neq l,r$, the subspace $V_{k,k}$ is not a coordinate axis, the collection 
$\{V^2_{i_2,j_2}\}$ lives in the stratum $\calB(\{l,\dots,r\},T_{r-l+1})$ and the collection 
$\{V^1_{i_1,j_1}\}$ is completely determined by the above setting and lives in 
the stratum 
$\calB(\{1,\dots,l-1,*,r+1,\dots,n\},T_{n+l-r})$.
\end{proof}

\begin{proposition} \label{proposition:stratification}
The subvarieties $\calB(\underline{n},T)$, $T\in\calT(n)$, form a stratification of the variety $\calB(\underline{n})$. More precisely:
\begin{itemize}
\item Each subvariety $\calB(\underline{n},T)\subset \calB(\underline{n})$, $T\in\calT(n)$, is isomorphic to 
\[\calB(\underline{n},T)\cong (\mathbb{G}_m)^{n-2-n_e}\ ,\] where $n_e=n_e(T)$ is the number of the internal edges of the tree $T$. 
\item We have \[\calB(\underline{n})=\bigsqcup\limits_{T\in \calT(n)} \calB(\underline{n},T)\ .\]
\item For any $T\in\calT(n)$, the closure of $\calB(\underline{n},T)$ in $\calB(\underline{n})$ is the union of  the subvarieties  $\calB(\underline{n},T')$ of smaller dimension, that is for planar trees $T'$ such $T$ can be obtained from $T'$ by contracting of some internal (i.e. bounded) edges.
\end{itemize}
\end{proposition}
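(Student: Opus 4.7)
The proof can be organised in three steps, using Proposition~\ref{prop:InfCompoStrat} as the main technical input.

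\emph{Step 1: Strata are tori of the right dimension.} First I would prove $\calB(\underline{n}, T) \cong (\mathbb{G}_m)^{n-2-n_e(T)}$ by induction on $n_e(T)$. The base case $n_e(T)=0$ (corolla) is Example~\ref{ex:open-stratum}. For the inductive step, pick any internal edge $e$ of $T$ with leaf set $\{l, \ldots, r\}$ and decompose $T$ along $e$ as an operadic composition $T_1 \circ T_2$ of two strictly smaller planar trees. The argument in the proof of Proposition~\ref{prop:InfCompoStrat} is a local analysis of the defining conditions around the cut edge, and extends verbatim to give an isomorphism $\calB(I, T_1) \times \calB(J, T_2) \xrightarrow{\sim} \calB(\underline{n}, T)$. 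Iterating all the way down to the corollas at the internal vertices $v$ of $T$ yields $\calB(\underline{n}, T) \cong \prod_v (\mathbb{G}_m)^{k_v - 2}$, where $k_v$ is the number of children of $v$, and an edge count gives $\sum_v (k_v - 2) = n - 2 - n_e(T)$.

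\emph{Step 2: Disjoint covering.} Next I would associate to each point $p = \{V_{i,j}\}$ a unique tree $T_p \in \calT(n)$ through the combinatorial data
\[
S_p := \{(l, r) \mid 1 \le l < r \le n,\; V_{l, r-1} = V_{l+1, r} = G(l, r)\},
\]
which always contains $(1, n)$ by the boundary axioms of Definition~\ref{def:brick-manifold-collection}. The key lemma is that $S_p$ is non-crossing: if $(l_1, r_1), (l_2, r_2) \in S_p$ satisfied $l_1 < l_2 < r_1 < r_2$, then the inclusions of Definition~\ref{def:brick-manifold-collection} would force $V_{l_2, r_1} \subseteq V_{l_1+1, r_1} \cap V_{l_2, r_2-1} = G(l_1, r_1) \cap G(l_2, r_2) = G(l_2, r_1)$, but $\dim V_{l_2, r_1} = r_1 - l_2 + 1 > r_1 - l_2 = \dim G(l_2, r_1)$, a contradiction. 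Since non-crossing families of pairs in $\{1, \ldots, n\}$ containing $(1, n)$ are in canonical bijection with $\calT(n)$, this defines $T_p$. The first condition of $\calB(\underline{n}, T_p)$ holds by construction, while the open (second) condition follows from a case analysis showing that any coordinate-subspace degeneracy $V_{l, r} = G(l-1, r)$ or $V_{l, r} = G(l, r+1)$ for a non-boundary edge propagates via the defining inclusions to enlarge $S_p$, contradicting its maximality relative to $T_p$. This proves covering and disjointness simultaneously.

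\emph{Step 3: Closure description.} For the last bullet, if $T$ is obtained from $T'$ by contracting internal edges, I would show $\calB(\underline{n}, T') \subset \overline{\calB(\underline{n}, T)}$ by an explicit deformation: for any point of $\calB(\underline{n}, T')$ the coordinate-subspace equalities along the to-be-contracted edges can be perturbed to nearby generic subspaces, yielding a one-parameter family whose general member lies in $\calB(\underline{n}, T)$. Conversely, for any $p \in \overline{\calB(\underline{n}, T)}$ the closed conditions defining $S_T$ pass to the limit, so $S_T \subseteq S_{T_p}$, meaning $T_p$ is obtained from $T$ by adding internal edges; equivalently, $T$ is a contraction of $T_p$. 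The main technical obstacle in this programme is the case analysis in Step 2 linking the open condition of $\calB(\underline{n}, T_p)$ to the maximality of $S_p$: one has to carefully trace how each kind of coordinate-subspace degeneracy of an individual $V_{l, r}$ propagates through the containment poset of Definition~\ref{def:brick-manifold-collection}, in the spirit of the dimension count in the non-crossing lemma above.
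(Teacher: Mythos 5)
Your Step~2 is a genuinely different route from the paper's: where the paper constructs $T$ constructively (repeatedly locate a maximal block $[l,r]$ with $V_{l,l}=G(l,l+1)$, $V_{r,r}=G(r-1,r)$ and $V_{i,i}$ non-coordinate in between, peel it off as a corolla, and recurse), you define the global invariant $S_p$ and extract $T_p$ from its laminar structure. Your non-crossing lemma is correct and is a clean observation, and your Step~3 forward inclusion (closed conditions pass to the limit, so $S_T\subseteq S_{T_p}$) is an efficient replacement for the paper's reduction to the corolla case via the operadic composition. But you have flagged — correctly — that the open condition of $\calB(\underline{n},T_p)$ is the real obstacle, and I would push back on the impression that it is just a routine case analysis. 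What can be proved quickly is the implication you do \emph{not} need: if $S_T\subsetneq S_p$ then the open condition for $T$ fails (if $(a,b)\in S_p\setminus S_T$ decomposes as a union of consecutive children $[a_i,b_i],\dots,[a_j,b_j]$, then $V_{a_j,b_j}\subseteq V_{a_i+1,b_j}\cap G(a_j-1,b_j+1)=G(a_i,b_j)\cap G(a_j-1,b_j+1)=G(a_j-1,b_j)$ by a dimension count), which gives disjointness. For \emph{covering} you need the converse: a degeneracy $V_{l,r}=G(l,r+1)$ at a non-extreme edge forces $S_p\supsetneq S_{T_p}$. If the left sibling is a non-leaf, $V_{l'+1,r}\supseteq G(l',l)+G(l,r+1)=G(l',r+1)$ immediately overflows dimension; but if the left sibling is a leaf there is no contradiction locally, and the degeneracy can \emph{propagate} ($V_{l,r+1}=G(l,r+2)$, $V_{l,r+2}=G(l,r+3)$, \dots) through an arbitrary number of levels before producing a new pair in $S_p$ (eventually hitting the parent interval $[L,R]$ and yielding $(l,R)\in S_p$, or terminating earlier when some $V_{m,m}$ becomes a forced coordinate line). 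So the argument requires an induction on this propagation length, not a finite list of local cases. The paper's constructive peel-off avoids this entirely because the open conditions come out as a byproduct of the choice of a minimal inner block. Separately, in Step~1 your claim that the proof of Proposition~\ref{prop:InfCompoStrat} "extends verbatim" to the map $\calB(I,T_1)\times\calB(J,T_2)\to\calB(\underline{n},T_1\circ_i T_2)$ for arbitrary trees is not quite verbatim: one must also check that the first and second stratum conditions at the edges of $T_1$ and $T_2$ away from the cut are transported correctly under $f^J_{I,i}$; this is routine but should be said.
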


\begin{proof} One of the possible ways to prove this proposition is to introduce a toric action and to study its orbits, as we partly do in Appendix~\ref{sec:appendix-toric}. However, since our main goal is to relate this stratification with the operadic structure, we give a more direct proof here, which is recursive with respect to the structure of the planar tree $T$. 
	
\smallskip	
	
Let us explain the first statement. It is easy to see that the planar tree $T$ has precisely $n-2-n_e$ possibly unbounded edges (the root is not included here) that are not the rightmost or the leftmost inputs of a vertex. For each such edge $e$, we associate a copy of $\mathbb{G}_m$ that controls the choice of the corresponding space $V_{l,r}$. This gives a parametrization by $(\mathbb{G}_m)^{n-2-n_e}$. Let us show that there are no additional parameters. 
	
Let us denote the input edges of the root vertex by $e_1,\dots,e_k$. If we cut them off, then each of them is the root edge of a tree $T_i$, where the leaves are labelled by the index set $I_i$, $i=1,\dots,k$. We have, of course,  $I_1+\cdots+I_k = \underline{n}$, and whenever $e_i$ is a leaf, the tree $T_i$ is trivial, the only tree without vertices and just one unbounded edge (the root), and $|I_i|=1$. 

According to Remark~\ref{rem:implication-first-condition}, for each $e_i$ which is not a leaf, we have to fix the spaces $V_{\min(I_i),j}$, where $\min(I_i)\le j<\max(I_i)$ and $V_{j,\max(I_i)}$, where $\min(I_i)<j\le\max(I_i)$, unambiguously. Then the proof is completed by three simple observations:
\begin{itemize}
\item What happens with the spaces $V_{j,l}$ for $\min(I_i)<j,l<\max(I_i)$ is the subject for the induction hypothesis. In particular, those configurations of subspaces are parametrized by $(\mathbb{G}_m)^{|I_i|-2-n_i}$, where $n_i$ is the number of the internal edges of the tree $T_i$. 
\item For all other pairs of indices, we choose $V_{\min(I_i),\max(I_i)}$, $i=2,\dots,k-1$, this is parametrized by $(\mathbb{G}_m)^{k-2}$, and then we see that the conditions $V_{a,b}\subset V_{a-1,b}$ and $V_{a,b}\subset V_{a,b+1}$ given in Definition~\ref{def:brick-manifold-collection} together with natural transversality determine all other spaces $V_{a,b}$.
\item Finally, we have: 
		\[
n-2-n_e = k-2+\sum_{\substack{i=1 \\ |I_i|\geq 2}}^k \left(|I_i|-2-n_i\right).
		\]
\end{itemize}

\smallskip
	
Let us proceed to the second statement. We shall do it in a constructive way, namely we give an inductive process that allows one to associate a particular collection of spaces $\{V_{i,j}\}$ to a planar tree.

Since $V_{1,1}=G(1,2)$, $V_{n,n}=G(n-1,n)$, and $V_{i,i}\subset G(i-1,i+1)$ for all other $i$, we can always find two indices $1\leq l<r\leq n$ such that $V_{l,l}=G(l,l+1)$, $V_{r,r}=G(r-1,r)$, and $V_{i,i}$ is different from either $G(i-1,i)$ or $G(i,i+1)$ for all $l<i<r$. This implies that $V_{l,j}=G(l,j+1)$ for $l\le j<r$, and $V_{j,r}=G(j-1,r)$ for $l<j\le r$. Therefore the given collection of subspaces is in the image of the map $\circ^{\{l,l+1,\dots,r\}}_{\{1,\dots,l-1,*,r+1,\dots,n\},*}$, by the same arguments and constructions as in the proof of Proposition~\ref{prop:InfCompoStrat}. 
In the description of the stratification this corresponds to the following. The leaves with the labels $l,l+1,\dots,r$ are attached to a vertex in the tree, and are the only inputs of that vertex. We can label the ascending edge of this vertex by $*$. Therefore, on the level of configurations, we can proceed considering the configuration $\{V^1_{i,j}\}$ for the index set $I:=\{1,\dots,l-1,*,r+1,\dots,n\}$, while on the level of tree we can cut the edge $*$ and consider the rest of the tree with the leaves labelled by $I$. Now we can proceed further by induction.  Note that in this way, we simultaneously construct a tree $T$ such that $\{V_{i,j}\}\in \calB(\underline{n},T)$ and representation of this collection of subspaces as an operadic composition according to the tree $T$.	

\smallskip
	
Now let us prove the last statement. We have seen how the strata $\calB(\underline{n},T)$ are parametrized by the copies of $\mathbb{G}_m$ corresponding to the particular choices of the spaces $V_{i,j}$. The closure of the stratum is then obtained if we allow these spaces to be equal to the corresponding coordinate spaces. Let us study what happens in the limit. Since the collections of subspaces in $\calB(\underline{n},T)$ are obtained as the operadic compositions with respect to the planar tree $T$ of generic collections in $\calB(I)$, where $I$ is an interval of~$\underline{n}$, it is sufficient to prove this statement for the planar tree with no internal edges. This tree corresponds to the open stratum in $\calB(I)$ from Example~\ref{ex:open-stratum}, so in this case the statement is obvious.  
\end{proof}

\begin{corollary} \label{cor:closure-of-divisor}
For any $1\leqslant l<r \leqslant n$, the closure $\overline{\calB(\underline{n},T_{l,r})}$ of $\calB(\underline{n},T_{l,r})$ in $\calB(\underline{n})$ consists of all collections $\{V_{i,j}\}$ such that $V_{l,l}=G(l,l+1)$ and $V_{r,r}=G(r-1,r)$. Moreover, 
 the infinitesimal composition map 
$$\circ^{\{l,l+1,\dots,r\}}_{\{1,\dots,l-1,*,r+1,\dots,n\},*}\ : \ 
\calB(\{1,\dots,l-1,*,r+1,\dots,n\})\times \calB(\{l,\dots,r\}) \to 
\overline{\calB(\underline{n},T_{l,r})}\ , 
$$
with the image   restricted respectively to the closure of the stratum associated to 
the  planar tree  $T_{l,r}$, 
is a bijection. 
\end{corollary}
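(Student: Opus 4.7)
My plan is to reduce the statement to Propositions~\ref{proposition:stratification} and~\ref{prop:InfCompoStrat} by analysing how the refinements of $T_{l,r}$ decompose along its single internal edge $*$. The combinatorial observation is that a planar tree $T' \in \calT(n)$ contracts to $T_{l,r}$ if and only if it factors uniquely as a grafting $T' = T_1 \circ_{*} T_2$, where $T_1 \in \calT(I_1)$ with $I_1 := \{1, \ldots, l-1, *, r+1, \ldots, n\}$ refines the root vertex of $T_{l,r}$ and $T_2 \in \calT(I_2)$ with $I_2 := \{l, \ldots, r\}$ refines its internal vertex. This is because $T_{l,r}$ has only two vertices, and un-contracting each of them independently amounts to a choice of planar tree on its set of inputs.

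Using the third bullet of Proposition~\ref{proposition:stratification} together with this combinatorial decomposition, I would rewrite
\[
\overline{\calB(\underline{n}, T_{l,r})} \;=\; \bigsqcup_{(T_1, T_2)} \calB(\underline{n}, T_1 \circ_{*} T_2) \ .
\]
Then, for each pair $(T_1, T_2)$, I would generalise Proposition~\ref{prop:InfCompoStrat} to show that the restriction of the infinitesimal composition $\circ^{I_2}_{I_1, *}$ to $\calB(I_1, T_1) \times \calB(I_2, T_2)$ is a bijection onto $\calB(\underline{n}, T_1 \circ_{*} T_2)$; this should follow by unfolding the piecewise formulas of Definition~\ref{def:operadic-composition} along the same lines as the proof of Proposition~\ref{prop:InfCompoStrat}, since in each piece the output is read off from the two input configurations. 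Summing over all pairs $(T_1, T_2)$ and matching this against the stratifications of both $\calB(I_1) \times \calB(I_2)$ and $\overline{\calB(\underline{n}, T_{l,r})}$ would then yield the asserted bijection $\calB(I_1) \times \calB(I_2) \to \overline{\calB(\underline{n}, T_{l,r})}$.

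For the equational description of the closure, the $(\subseteq)$ inclusion is the easy direction: for every refinement $T'$ of $T_{l,r}$, the edge $e$ of $T'$ obtained by un-contracting the internal edge of $T_{l,r}$ satisfies $L_e = \{l, l+1, \ldots, r\}$, so Remark~\ref{rem:implication-first-condition} applied to this edge forces $V_{l,l} = G(l, l+1)$ and $V_{r,r} = G(r-1, r)$ on every point of $\calB(\underline{n}, T')$. For the $(\supseteq)$ inclusion, the plan is to construct a preimage under $\circ^{I_2}_{I_1, *}$ from the configuration data by inverting the piecewise formulas of Definition~\ref{def:operadic-composition}, using the boundary conditions built into the brick manifolds of $I_1$ and $I_2$. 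The main obstacle I anticipate lies precisely here: I will need to show that the pointwise equalities on $V_{l,l}$ and $V_{r,r}$ propagate through the chain inclusions $V_{i,j} \subset V_{p(i), j}$ and $V_{i,j} \subset V_{i, s(j)}$ to the whole block $[l,r]$, yielding the stronger equalities $V_{l,j} = G(l, j+1)$ for $l \leq j < r$ and $V_{i,r} = G(i-1, r)$ for $l < i \leq r$, so that the reconstructed pair $(\{V^1\}, \{V^2\})$ indeed satisfies the defining axioms of $\calB(I_1) \times \calB(I_2)$ and composes to the original configuration.
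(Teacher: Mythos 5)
Your reduction to Propositions~\ref{proposition:stratification} and~\ref{prop:InfCompoStrat}, and the combinatorial decomposition of refinements of $T_{l,r}$ along its single internal edge, match the paper's (very terse) proof; the $(\subseteq)$ inclusion via Remark~\ref{rem:implication-first-condition} is also correct. However, the propagation you flag as the main obstacle in the $(\supseteq)$ direction is a genuine one: it is not merely difficult, it is false. The two corner equalities $V_{l,l}=G(l,l+1)$ and $V_{r,r}=G(r-1,r)$ do \emph{not} imply $V_{l,j}=G(l,j+1)$ and $V_{i,r}=G(i-1,r)$ on the whole block, and consequently the equational description of the closure in the corollary is not correct as literally written.

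Concretely, take $n=5$, $l=2$, $r=4$ and the point of $\calB(\underline{5})$ with $V_{2,2}=V_{3,3}=G(2,3)$, $V_{4,4}=G(3,4)$, $V_{2,3}=G(1,3)$, $V_{3,4}=G(2,4)$, $V_{2,4}=G(1,4)$, and all remaining $V_{i,j}$ forced by the boundary conditions of Definition~\ref{def:brick-manifold-collection}. Checking dimensions and the nesting inclusions shows this is a valid configuration. It satisfies $V_{2,2}=G(2,3)$ and $V_{4,4}=G(3,4)$, yet $V_{2,3}=G(1,3)\neq G(2,4)$, so it fails the boundary condition $V_{l,r-1}=V_{l+1,r}=G(l,r)$ that every point of $\overline{\calB(\underline{5},T_{2,4})}$ must satisfy (via Proposition~\ref{proposition:stratification}, since every refinement of $T_{2,4}$ keeps the edge with $L_e=\{2,3,4\}$). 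This point actually lies in the stratum of the tree with nested internal edges $\{2,3\}\subset\{1,2,3,4\}$, which is not a refinement of $T_{2,4}$.

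The characterisation you actually need (and should prove instead) is that $\overline{\calB(\underline{n},T_{l,r})}$ is the locus $\{V_{l,r-1}=V_{l+1,r}=G(l,r)\}$. From this hypothesis, Remark~\ref{rem:implication-first-condition} does give you the full block of equalities $V_{l,j}=G(l,j+1)$, $V_{i,r}=G(i-1,r)$, and then your reconstruction of the preimage goes through exactly as in the proof of Proposition~\ref{prop:InfCompoStrat}. The corner equalities are a consequence of, but are strictly weaker than, this stratum condition; the useful and correct content of the corollary is the bijection with $\calB(I_1)\times\calB(I_2)$, which in turn implies that the closure is cut out by $V_{l,r-1}=V_{l+1,r}=G(l,r)$.
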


\begin{proof}
This follows from the description of the stratification from Proposition~\ref{proposition:stratification}; the proof uses the exact same arguments as in Proposition~\ref{prop:InfCompoStrat}. 
\end{proof}

\begin{remark}
The description of the stratification in terms of planar trees given in Proposition~\ref{proposition:stratification} and its relation to the operadic composition described in Corollary~\ref{cor:closure-of-divisor} imply directly Proposition~\ref{proposition:operadic-composition}.
\end{remark}

\subsubsection{Local structure} \label{sec:local-structure}

Here we describe how the strata are attached to each other. It is clear (by induction on the number of edges) that the general case would follow from the description of the normal bundle of $\overline{\calB(\underline{n},T_{l,r})}$ in $\calB(\underline{n})$.

Consider the two-dimensional vector bundle over $\calB(\underline{n})$ given by $V_{l,r}/V_{l+1,r-1}$. There are two natural lines in this space given by $V_{l+1,r}/V_{l+1,r-1}$ and $V_{l,r-1}/V_{l+1,r-1}$. The equation that defines (locally) $\overline{\calB(\underline{n},T_{l,r})}$ in $\calB(\underline{n})$ is that these two lines coincide. In other words, the natural map 
 \[
V_{l+1,r}/V_{l+1,r-1}\to V_{l,r}/V_{l,r-1} 
 \]
must vanish. In yet other words, the normal bundle of $\overline{\calB(\underline{n},T_{l,r})}$ in $\calB(\underline{n})$ is isomorphic to  
 \[
(V_{l+1,r}/V_{l+1,r-1})^*\otimes (V_{l,r}/V_{l,r-1}), 
 \]
which is in this case is equal to
 \[
\left(V_{l,r}/G(l,r)\right)\otimes \left(G(l,r)/V_{l+1,r-1}\right)^*.
 \]

This local analysis implies immediately the following statement (that we also know from the study of the torus action, as in Appendix~\ref{sec:appendix-toric}).

\begin{proposition}\label{prop:normalcrossingdivisor}
The variety $\calB(\underline{n})$ is a smooth compactification of $\calB(\underline{n},T_n)$ such that the complement $\calB(\underline{n})\setminus \calB(\underline{n},T_n)$ is a normal crossing divisor.
\end{proposition}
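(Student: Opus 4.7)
The plan is to deduce both conclusions from the local analysis of Section~\ref{sec:local-structure} combined with the stratification established in Proposition~\ref{proposition:stratification}. The latter already identifies $\calB(\underline{n},T_n) \cong (\mathbb{G}_m)^{n-2}$ as a dense open subset of $\calB(\underline{n})$, whose complement is the union, over pairs $1 \le l < r \le n$ with $(l,r)\ne(1,n)$, of the codimension-one stratum closures $\overline{\calB(\underline{n},T_{l,r})}$. It therefore remains to establish that $\calB(\underline{n})$ is smooth and that these closures meet transversally.

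For smoothness, I would argue by induction on $n$. The local analysis preceding the statement exhibits the normal bundle of each $\overline{\calB(\underline{n},T_{l,r})}$ inside $\calB(\underline{n})$ as a line bundle, so the divisor is locally cut out by a single equation; moreover, Corollary~\ref{cor:closure-of-divisor} identifies $\overline{\calB(\underline{n},T_{l,r})}$ with the product of two brick manifolds of strictly smaller arity, which are smooth by the inductive hypothesis. Hence $\calB(\underline{n})$ is smooth along the divisor, and since the open stratum $\calB(\underline{n},T_n)$ is a torus it is smooth there as well. For the normal crossings property, note that at a point $x$ lying in a stratum $\calB(\underline{n},T)$ whose tree $T$ has $n_e$ internal edges, precisely $n_e$ of the divisors $\overline{\calB(\underline{n},T_{l,r})}$ contain $x$, one for each internal edge of $T$ (obtained by contracting the remaining internal edges). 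Iterating the factorisation of Corollary~\ref{cor:closure-of-divisor} along these internal edges exhibits a neighbourhood of $x$ as a product of open subsets of brick manifolds of strictly lower arity, and in this product the $n_e$ divisors through $x$ correspond to the pullbacks of divisors from distinct factors; their defining equations therefore form part of a regular system of parameters, which is the simple normal crossings condition.

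The main technical step in this direct approach is to check that the successive operadic decompositions corresponding to the different internal edges of $T$ combine coherently into a single local product model around $x$; this compatibility is guaranteed by the parallel and sequential axioms of the ns operad structure recorded in Proposition~\ref{proposition:operadic-composition}. A more conceptual route, which I would mention as an alternative, is to invoke the identification $\calB(\underline{n}) \cong X(L_n)$ with the toric variety of the Loday polytope, given by Theorem~\ref{thm:LodayPolytope} in Appendix~A: the Loday polytope is Delzant, so $X(L_n)$ is smooth by Proposition~\ref{prop:h-vector} and the discussion preceding it, and in any smooth projective toric variety the complement of the open torus orbit is automatically a simple normal crossings divisor whose irreducible components are in bijection with the facets of the polytope.
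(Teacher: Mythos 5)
Your proposal takes essentially the same approach as the paper, whose entire proof is the one-sentence assertion that the local analysis of Section~\ref{sec:local-structure} implies the statement immediately, together with a parenthetical appeal to the torus action of Appendix~\ref{sec:appendix-toric}. You simply flesh out the smoothness induction and the normal-crossings bookkeeping that the paper leaves implicit, and your mention of the toric alternative corresponds to the paper's parenthetical remark.
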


In Section \ref{subsec:wonderful} below, we shall also discuss this result from the point of view of wonderful models of De Concini and Procesi \cite{DCP95}.

\subsubsection{Homology of complex brick manifolds}

In this section, we prove the central result of this part of the paper: the complex brick operad is a topological model of the operad $\ncHyperCom$. 

\begin{theorem} \label{th:homology-ncHyperCom}
The homology of the ns complex brick operad is the ns operad of noncommutative hypercommutative algebras:
 \[
H_\bullet(\calB_{\mathbb{C}})\cong \ncHyperCom.  
 \]
\end{theorem}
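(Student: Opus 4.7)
The plan is to proceed in three steps: match graded dimensions, construct an operad morphism defined on generators, and deduce the isomorphism by combining surjectivity with the dimension count.

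First, I would observe that the graded dimensions on the two sides agree. By Theorem~\ref{thm:LodayPolytope} of the appendix, $\calB(\underline{n})$ is the smooth projective toric variety $X(L_n)$ of the Delzant polytope $L_n$, so Proposition~\ref{prop:h-vector} identifies $\dim H_{2i}(\calB(\underline{n}))$ with the $i$-th entry of the $h$-vector of $L_n$. Since $L_n$ is combinatorially equivalent to the Stasheff associahedron $K_n$, this is the classical $h$-vector of $K_n$, whose entries are the Narayana numbers $\tfrac{1}{n-1}\binom{n-1}{i}\binom{n-1}{i+1}$; by Theorem~\ref{th:HyperCom-dim} these equal $\dim \ncHyperCom(n)_{2i}$, so the two sides agree in every arity and every degree (with odd homological degrees vanishing on both sides).

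Next, I would construct an operad morphism $\Phi\colon \ncHyperCom\to H_\bullet(\calB_{\mathbb{C}})$ by setting $\Phi(\nu_n):=[\calB(\underline{n})]$, the fundamental class, whose degree $2(n-2)=2n-4$ matches that of $\nu_n$. Under the homology operad structure, $[\calB(n-k+1)]\circ_i [\calB(k)]$ is the pushforward of the K\"unneth cross product of the two fundamental classes along the operadic composition map $\circ_i$, which by Corollary~\ref{cor:closure-of-divisor} is an isomorphism onto the irreducible boundary divisor $\overline{\calB(\underline{n},T_{i,i+k-1})}$; hence this composite equals the class of that divisor. The identities \eqref{eq:ncWDVV} thus translate into
\[
\sum_{l=1}^{i-1}\bigl[\,\overline{\calB(\underline{n},T_{l,i})}\,\bigr]\;=\;\sum_{r=i+1}^{n}\bigl[\,\overline{\calB(\underline{n},T_{i,r})}\,\bigr]\qquad\text{in } H_{2n-6}(\calB(\underline{n})),
\]
for each $i\in\{2,\dots,n-1\}$; both sides group the codimension-one boundary divisors whose unique internal edge has $i$ as its extreme top leaf on the appropriate side. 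This is a linear equivalence of torus-invariant divisors on $X(L_n)$, which I would establish by exhibiting an explicit character $m_i$ of the torus whose principal divisor $\sum_\rho\langle m_i,u_\rho\rangle D_\rho$ realises the difference of the two sides, using the Minkowski-sum description of $L_n$ from Proposition~\ref{prop:Minkowski} to read off the primitive ray generators of its normal fan.

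Finally, I would argue that $\Phi$ is surjective: as $X(L_n)$ is a smooth projective toric variety, its cohomology ring is generated by the classes of the torus-invariant divisors, and each such class $[\overline{\calB(\underline{n},T_{l,r})}]$ lies in the image of $\Phi$ by the computation above; Poincar\'e duality together with the ring structure then promotes this to surjectivity in every homological degree. Combined with the dimension agreement of the first step, this forces $\Phi$ to be an isomorphism of ns operads.

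The main obstacle is the verification of the WDVV-type identities in the second step. A direct toric computation via the exact sequence $M\to\bigoplus_\rho \mathbb{Z}D_\rho\to\mathrm{Pic}(X(L_n))\to 0$ is feasible but combinatorially involved. A cleaner conceptual alternative is to exploit the wonderful-model interpretation of the brick manifolds developed in Section~\ref{subsec:wonderful}, together with De Concini--Procesi's explicit presentation of the cohomology ring of a wonderful model; in that framework, the required linear equivalences arise as standard codimension-one relations among divisors attached to elements of the building set, paralleling Keel's strategy for $\overline{\calM}_{0,n+1}$.
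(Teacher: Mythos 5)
Your architecture is essentially the same as the paper's: match graded dimensions via the $h$-vector of the associahedron and Proposition~\ref{prop:h-vector}, send $\nu_n$ to the fundamental class of $\calB_{\mathbb{C}}(\underline{n})$, verify that the WDVV relations~\eqref{eq:ncWDVV} hold among boundary divisor classes, and close the argument with the dimension count. Your translation of~\eqref{eq:ncWDVV} into divisor classes is correct, and the reduction of the problem to a linear equivalence of torus-invariant divisors is sound.

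The genuine difference — and the genuine gap — is in how the linear equivalence is established. You flag the WDVV verification as ``the main obstacle'' and propose either a direct computation in $\mathrm{Pic}(X(L_n))$ via a suitable character, or an appeal to the De~Concini--Procesi presentation; neither is carried out. The paper uses a much shorter device that you should know: the map $\calB_{\mathbb{C}}(\underline{n})\to\mathbb{P}(G(i-1,i+1))\cong\mathbb{P}^1$ sending $\{V_{i,j}\}\mapsto V_{i,i}$. Preimages of any two points of $\mathbb{P}^1$ are homologous, and the preimages of the two coordinate lines $G(i-1,i)$ and $G(i,i+1)$ are, by Corollary~\ref{cor:closure-of-divisor}, exactly the two unions of boundary divisors
\[
\bigcup_{l<i}\overline{\calB(\underline{n},T_{l,i})}\quad\text{and}\quad\bigcup_{r>i}\overline{\calB(\underline{n},T_{i,r})}\ ,
\]
because the former consists of points with $V_{i,i}=G(i-1,i)$ and the latter of points with $V_{i,i}=G(i,i+1)$. (This is, in fact, the toric morphism whose principal divisor is the character $m_i$ you would have had to write down, so your first proposed route collapses to a one-paragraph observation once phrased this way.) This is the one substantive step your write-up leaves open.

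A secondary remark on surjectivity: arguing via ``cohomology of a smooth projective toric variety is generated by divisor classes plus Poincar\'e duality plus the ring structure'' requires you to check that the Poincar\'e dual of the image of $\Phi$ is closed under cup product, which is true (transverse intersections of boundary divisors are again strata closures, and non-transverse ones reduce modulo the Stanley--Reisner ideal), but is more indirect than needed. The paper simply notes that the torus-orbit closures $\overline{\calB_{\mathbb{C}}(\underline{n},T)}$, $T\in\calT(n)$, already give additive generators of $H_\bullet(\calB_{\mathbb{C}}(\underline{n}))$, and each such class is an iterated operadic composition of fundamental classes via Corollary~\ref{cor:closure-of-divisor}; this gives surjectivity of $\Phi$ in one stroke without invoking the ring structure at all.
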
  

\begin{proof}
It is obvious that the fundamental classes of the strata $\calB_{\mathbb{C}}(\underline{n},T)$, $T\in\calT(n)$ are the additive generators of the homology of $\calB_{\mathbb{C}}(\underline{n})$. Moreover, Corollary~\ref{cor:closure-of-divisor} and its inductive corollaries for the planar trees with an arbitrary number of edges imply that the induced ns operadic structure on $H_\bullet(\calB_{\mathbb{C}})$ is a quotient of the natural ns operadic structure on the linear span of the collection of all planar trees $\{\calT(n)\}$, which is the free ns operad on one generator per arity. 
	
Since the components of the $h$-vector of the associahedron are Narayana numbers \cite{CWLee89}, Proposition \ref{prop:h-vector} and Theorem \ref{th:HyperCom-dim} show that each component $H_\bullet(\calB_{\mathbb{C}}(\underline{n}))$ of the ns operad $H_\bullet(\calB_{\mathbb{C}})$ has the same dimensions of graded components as $\ncHyperCom(n)$. Thus, it suffices to show that the relations of the operad $\ncHyperCom$ hold. 

There is a map $\calB_{\mathbb{C}}(\underline{n})\to \mathbb{P}(G(i-1,i+1))$ given by $V_{i,i}$. The preimages of any two points in $\mathbb{P}(G(i-1,i+1))$ are homologous. Now observe that the preimage of $G(i-1,i)$ is the union of all divisors $\overline{\calB_{\mathbb{C}}(\underline{n},T)}$, where $T$ is a tree with one internal edge such that the leaf labelled by $i$ and the root are attached to different vertices and the leaf $i$ is the rightmost at its incident vertex. The preimage of of $G(i,i+1)$ is the union of all divisors $\overline{\calB_{\mathbb{C}}(\underline{n},T)}$, where $T$ is a tree with one internal edge such that the leaf labelled by $i$ and the root are attached to different vertices and the leaf $i$ is the leftmost at its vertex. The equality of the classes of these preimages is precisely given by Equation \eqref{eq:ncWDVV}. 
\end{proof}

\subsubsection{Geometrical proof of the Koszul property}
In the same way as for the Deligne--Mumford  operad  $\{\overline{\calM}_{0,n+1}\}$ in \cite{Get95}, one can prove the Koszul property of the ns operad $\ncHyperCom$ using mixed Hodge structures with the following general me\-thod. 

First of all, since the brick manifolds $\calB_{\mathbb{C}}(n)$ are  complex algebraic varieties, their cohomology groups $H^k(\calB_{\mathbb{C}}(n))$ admit a functorial mixed Hodge structure by \cite[Proposition 8.2.2]{delignehodge3}. So the ns operad $\ncHyperCom$ can be promoted to an ns operad in the category of graded mixed Hodge structures. Then, since the stratum indexed by $T_{l,r}$ in $\calB_{\mathbb{C}}(\underline{n})$ has the following form
 \[
\calB_{\mathbb{C}}(\underline{n}, T_{l,r})\cong \calB_{\mathbb{C}}(\underline{n-r}, T_{n-r})\times\calB_{\mathbb{C}}(\underline{r+1}, T_{r+1})\ ,
 \]
the theory of logarithmic forms along a normal crossing divisor \cite[Section~$3.1$]{delignehodge2} implies, by Proposition~\ref{prop:normalcrossingdivisor}, the existence of the residue morphisms 
 \[
H^{k+l-1}\left(\calB_{\mathbb{C}}(\underline{n}, T_{n})\right)(-1)\rightarrow H^{k-1}\left(\calB_{\mathbb{C}}(\underline{n-r}, T_{n-r})\right)(-1)
 \otimes H^{l-1}\left(\calB_{\mathbb{C}}(\underline{r+1}, T_{r+1})\right)(-1)\ .
 \]
These morphisms assemble into a ns cooperad structure in the category of mixed Hodge structures on the collection 
  \[
\mathcal{S}^cH^{\bullet-1}\left(\calB_{\mathbb{C}}(\underline{n}, T_{n})\right)(-1) \ .
 \]
\begin{proposition}
The ns operad $H_\bullet(\calB_{\mathbb{C}})\cong\ncHyperCom$ is Koszul and its  Koszul dual cooperad $\ncGrav$ is isomorphic to the above-mentionned ns cooperad 
$$\ncHyperCom^{\ac}\cong \mathcal{S}^c \ncGrav^*
\cong \mathcal{S}^cH^{\bullet-1}\left(\calB_{\mathbb{C}}(\underline{n}, T_{n})\right)(-1) \ .$$
\end{proposition}
\begin{proof}
Since the ns operad $\calB_{\mathbb{C}}$ is made up of  components which are  smooth  projective complex algebraic 
varieties obtained from $\calB_{\mathbb{C}}(\underline{n}, T_{n})$ by a compactification with a normal crossing divisor, one can consider the Deligne's spectral sequence \cite[3.2]{delignehodge2}, which gives here 
$$E_1^{p,q}=\bigoplus_{T\in \calT^{[p]}(n)}H^{q-p} \left(\calB_{\mathbb{C}}(\underline{n}, T)\right)(-p)\ \Longrightarrow \ 
H^{p+q} \left(\calB_{\mathbb{C}}(\underline{n})\right)\ ,$$
where $\calT^{[p]}(n)$ is the set of  planar rooted trees with $p$ internal edges. Its first  differential map  $d_1\colon E_1^{p,q}\to E_1^{p+1,q}$ is the sum of the residue morphisms 
\[H^{q-p}\left(\calB_{\mathbb{C}}(\underline{n}, T\right)(-p) \to 
H^{q-p-1}\left(\calB_{\mathbb{C}}(\underline{n}, T'\right)(-p-1)\]
for any planar tree $T'$ such that $T$ can be obtained from $T'$ by contracting of one internal  edge. 
Therefore, the $q$-th row $E_1^{\bullet, q}$ of the first page of this spectral sequence, equal to 
\[0\to  H^q\left(\calB_{\mathbb{C}}(\underline{n}, T_n)\right) \to
 \bigoplus_{T\in \calT^{[1]}(n)}H^{q-1}\left(\calB_{\mathbb{C}}(\underline{n}, T)\right)(-1) \to
  \bigoplus_{T\in \calT^{[2]}(n)} H^{q-2}\left(\calB_{\mathbb{C}}(\underline{n}, T)\right)(-2) \to
   \cdots \ ,\]
   is a direct summand  of the cobar construction of the ns cooperad $\mathcal{S}^cH^{\bullet-1}\left(\calB_{\mathbb{C}}(\underline{n}, T_{n})\right)\allowbreak(-1)$. 
The key point now lies in the fact that each  cohomology group
 $H^{k}\left(\calB_{\mathbb{C}}(\underline{n},T_n)\right)$ has a mixed Hodge structure concentrated in Tate weight $2k$, since $\calB_{\mathbb{C}}(\underline{n},T_n)\cong (\mathbb{C}^\times)^{n-2}$, and that condition manifestly holds for $\mathbb{C}^\times$. This implies that the Deligne spectral sequence collapses at the second page, thereby inducing the following long exact sequence 
\begin{multline*}
0\to  H^q\left(\calB_{\mathbb{C}}(\underline{n}, T_n)\right) \to
 \bigoplus_{T\in \calT^{[1]}(n)}H^{q-1}\left(\calB_{\mathbb{C}}(\underline{n}, T)\right)(-1) \to
  \bigoplus_{T\in \calT^{[2]}(n)} H^{q-2}\left(\calB_{\mathbb{C}}(\underline{n}, T)\right)(-2) \to
   \cdots \\
\cdots \to   \bigoplus_{T\in \calT^{[q]}(n)} H^{0}\left(\calB_{\mathbb{C}}(\underline{n}, T)\right)(-q) \to  H^{2q}\left(
\calB_{\mathbb{C}}(\underline{n})
\right)\to 0\ .
   \end{multline*}
   In operadic terms and using the homological degree convention, this means, first,  that the homology of the cobar construction of the ns cooperad $\mathcal{S}^cH^{\bullet-1}\left(\calB_{\mathbb{C}}(\underline{n}, T_{n})\right)(-1)$ is concentrated in syzygy degree $0$, statement equivalent to the fact that it is Koszul. Then, the homology of the cobar construction of $\mathcal{S}^cH^{\bullet-1}\left(\calB_{\mathbb{C}}(\underline{n}, T_{n})\right)(-1)$ is isomorphic to the ns operad 
   $H_\bullet(\calB_{\mathbb{C}})\cong \ncHyperCom$, which is its Koszul dual; therefore we have an isomorphism of ns cooperads
   $$\mathcal{S}^c \ncGrav^*
\cong \mathcal{S}^cH^{\bullet-1}\left(\calB_{\mathbb{C}}(\underline{n}, T_{n})\right)(-1)\ ,$$ which proves the two claims of the statement. 
\end{proof}

\subsection{The operad $\ncHyperCom$ and wonderful models of hyperplane arrangements}\label{subsec:wonderful}

In this section, we demonstrate that brick manifolds can be viewed as projective wonderful models of subspace arrangements in the sense of de Concini and Procesi~\cite{DCP95}.  Our exposition follows closely that of Rains \cite{Rains10}, where, in particular, the operad-like structures of wonderful models are explained in detail. 

\subsubsection{Wonderful models of subspace arrangements}

Let $V$ be a finite-dimensional vector space. A \emph{subspace arrangement} in   $V$ is a finite collection $\calG$ of subspaces of $V^*$. We consider the lattice $L_\calG$ generated by $\calG$, it is the set of all sums of subsets of $\calG$, including the empty sum~$0$. A \emph{$\calG$-decomposition} of $U\in L_\calG$ is a collection of non-empty subspaces $U_i\in L_\calG $ for which 
 \[
U=\bigoplus_i U_i,  
 \]
and such that, for every $G\in\calG$ satisfying $G\subset U$, we have $G\subset U_i$ for some $i$. If we denote by $\overline{\calG}$ the set of all $\calG$-indecomposable subspaces, then $L_\calG=L_{\overline{\calG}}$, and the notions of $\calG$-indecomposable elements and $\overline{\calG}$-indecomposable elements coincide. A subspace arrangement for which $\calG=\overline{\calG}$ is called a \emph{building set}. Building sets form a poset with respect to inclusion; this poset always has exactly one minimal element, and exactly one maximal one.

Let us assume that $\calG$ is a building set. We denote by
$M_\calG$ the complement  in $V$ of the union of all the subspaces $G^\bot$ for $G\in\calG$, and by $\widehat{M}_\calG$ the complement in $\mathbb{P}(V)$ of all the subspaces $\mathbb{P}(G^\bot)$ for $G\in\calG$.  

For each $G\in\calG$, we have an obvious map $\psi_G\colon M_\calG\to\mathbb{P}(V/G^\bot)$, and therefore the maps
 \[
\psi\colon M_\calG\to V\times\prod_{G\in\calG} \mathbb{P}(V/G^\bot) 
 \]
and 
 \[
\widehat{\psi}\colon\widehat{M}_\calG\to\mathbb{P}(V)\times\prod_{G\in\calG} \mathbb{P}(V/G^\bot) 
 \]
\begin{definition}[Wonderful model]
The \emph{wonderful model} $Y_\calG$ is defined by the closure of $\psi(M_\calG)$ in $V\times\prod_{G\in\calG} \mathbb{P}(V/G^\bot)$, and the \emph{projective wonderful model} $\widehat{Y}_\calG$ is defined by the closure of $\widehat{\psi}(\widehat{M}_\calG)$ in $\mathbb{P}(V)\times\prod_{G\in\calG} \mathbb{P}(V/G^\bot)$. 
\end{definition}

The most important geometric properties of the projective wonderful models are summarised in the following proposition. Recall that a set $\mathsf{S}$ of subspaces in $V^*$ is called \emph{nested} if given any subspaces $U_1,\ldots,U_k\in\mathsf{S}$ which are pairwise not comparable (by inclusion), they form a direct sum, and their direct sum is not in $\mathsf{S}$.

\begin{proposition}[\cite{DCP95}]\label{prop:DeConciniProcesi}\leavevmode
\begin{enumerate}
\item The projective wonderful model $\widehat{Y}_\calG$ is a smooth projective irreducible variety. The natural projection map $\pi\colon \widehat{Y}_\calG\to\mathbb{P}(V)$ is surjective, and restricts to an isomorphism on $\widehat{M}_\calG$.
\item The complement $\widehat{D}=\widehat{Y}_\calG\setminus \pi^{-1}(\widehat{M}_\calG)$ is a divisor with normal crossings. Its irreducible components $\widehat{D}_G$ are in one-to-one correspondence with elements $G\in\calG\setminus\{V^*\}$, and we have 
	\[
\pi^{-1}(\mathbb{P}(G^\bot))=\bigcup_{G\subset F}\widehat{D}_F \ .  
	\]
\item For a subset $\mathsf{S}$ of $\calG\setminus\{V^*\}$, the intersection
	\[
\widehat{D}_\mathsf{S}=\bigcap_{X\in\mathsf{S}} \widehat{D}_X 
	\] 
is non-empty if and only if the set $\mathsf{S}$ is nested; in this case $\widehat{D}_\mathsf{S}$ is irreducible. 
\end{enumerate}
\end{proposition}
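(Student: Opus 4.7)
The plan is to prove the three items simultaneously by exhibiting $\widehat{Y}_\calG$ as an explicit iterated blow-up of $\mathbb{P}(V)$ and reading each statement off the combinatorics of the construction. Choose a total order $G_1,\ldots,G_N$ on $\calG\setminus\{V^*\}$ that refines the partial order by reverse inclusion, so that $G_i\subsetneq G_j$ forces $i>j$. Set $X_0:=\mathbb{P}(V)$, and inductively let $X_k\to X_{k-1}$ be the blow-up of $X_{k-1}$ along the strict transform of $\mathbb{P}(G_k^\perp)$. The first technical point is that at every step this strict transform is a smooth closed subvariety of $X_{k-1}$; this is where the building set hypothesis enters essentially, since indecomposability of $G_k$ together with the order chosen is what prevents pathological interactions between $\mathbb{P}(G_k^\perp)$ and the exceptional loci already introduced.

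Once $X_N$ is in hand, I would identify it with $\widehat{Y}_\calG$ via the universal property of blowing up: each rational projection $\mathbb{P}(V)\dashrightarrow \mathbb{P}(V/G^\perp)$ is resolved after blowing up $\mathbb{P}(G^\perp)$, and threading these resolved projections together produces a morphism $X_N\to\mathbb{P}(V)\times\prod_G\mathbb{P}(V/G^\perp)$ whose image is closed, contains $\widehat{\psi}(\widehat{M}_\calG)$ as a dense open subset, and therefore coincides with $\widehat{Y}_\calG$. Item (1) follows immediately, since iterated blow-ups of a smooth projective irreducible variety along smooth centers remain smooth projective and irreducible, and $\pi$ is tautologically the identity over the locus where no center is hit, which is precisely $\widehat{M}_\calG$.

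For item (2) I would define $\widehat{D}_G$ as the exceptional divisor produced at the step where $\mathbb{P}(G^\perp)$ is blown up. Irreducibility of each $\widehat{D}_G$ is automatic; the equality $\pi^{-1}(\mathbb{P}(G^\perp))=\bigcup_{G\subset F}\widehat{D}_F$ is then proved by induction on the iterated blow-up, using the elementary fact that the total preimage of a smooth center under a single blow-up is the union of its strict transform and the exceptional divisor, and iterating this observation through the successors of $G$ in the inclusion poset. The normal crossing statement is woven into the same induction: at each step one verifies in local coordinates that the new exceptional divisor meets the collection of divisors accumulated so far transversally, a transversality which again is ensured by the building set condition.

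The core difficulty, and the main obstacle, is item (3) together with justifying the transversality claims needed for (1) and (2). Concretely, one must show that a subset $\mathsf{S}\subset\calG\setminus\{V^*\}$ yields a non-empty intersection $\bigcap_{X\in\mathsf{S}}\widehat{D}_X$ if and only if $\mathsf{S}$ is nested, and that such an intersection is irreducible. For the ``only if'' direction one extracts a $\calG$-decomposition of any sum of the $X_i$ from the existence of a point in the intersection by inspecting local equations of the iterated blow-up; for the ``if'' direction one must exhibit such a point, which is done by an inductive local construction along the tower $X_0,X_1,\ldots,X_N$. Both directions reduce to a careful translation of indecomposability and nestedness into the combinatorics of local coordinates after successive blow-ups, which is the technical heart of the De Concini--Procesi argument and the step I expect to require the most care.
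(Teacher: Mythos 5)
This proposition is quoted in the paper as a known result of De Concini and Procesi, with a citation to \cite{DCP95} and no proof supplied; there is therefore no paper argument to compare your attempt against. Your sketch correctly identifies the standard route from \cite{DCP95}: build $\widehat{Y}_\calG$ as an iterated blow-up of $\mathbb{P}(V)$ along the strict transforms of the $\mathbb{P}(G^\perp)$, taken in an order refining reverse inclusion (largest $G$ first), identify the result with the closure of $\widehat{\psi}(\widehat{M}_\calG)$ by resolving the rational projections to the $\mathbb{P}(V/G^\perp)$, define $\widehat{D}_G$ as the exceptional divisor created at the step where $\mathbb{P}(G^\perp)$ is blown up, and read items (1)--(3) from the combinatorics of the tower. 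The ordering, the universal-property argument for the identification, and the definition of the boundary divisors are all the right ingredients, and the paper itself alludes to this construction in its later Remark on iterated blow-ups.

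That said, what you have is a plan, not a proof, and the parts you defer are exactly the ones where a blind attempt would either succeed or collapse. Smoothness of each successive strict transform does not follow from indecomposability of $G_k$ alone; in De Concini--Procesi it is established via an explicit local model (adapted coordinates in which a marked basis compatible with the building set linearizes all the centers simultaneously after the prior blow-ups), and this local model is also what delivers the normal-crossing and transversality assertions you invoke. Likewise, item (3) is not a corollary of the blow-up picture but the technical heart: one needs the same local coordinates to show that a nonempty intersection $\bigcap_{X\in\mathsf{S}}\widehat{D}_X$ forces nestedness of $\mathsf{S}$ (by extracting a forbidden $\calG$-decomposition from local equations when $\mathsf{S}$ is not nested), and an inductive point construction to show the converse together with irreducibility. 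You flag these as ``requiring the most care'' but do not carry them out, so as written this is a correct outline of the original strategy rather than a self-contained proof.
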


The original motivation of de Concini and Procesi came from the example of the \emph{braid arrangement}
 \[
A_{n-1}:=\big\{H_{i,j}=\myspan(x_i-x_j)\mid 1\le i,j\le n\big\}  
 \]
in $(\k^n)^*$. The intersection of all the hyperplanes $H_{i,j}^\bot$ in $\k^n$ is the subspace $N=\k(1,1,\ldots,1)$, and we can consider $V=\k^n/N$. We note that $A_{n-1}\subset V^*\subset(\k^n)^*$, so $A_{n-1}$ can be regarded as a subspace arrangement in $V$. For a building set, we consider the minimal one, that is the collection of all subspaces
 \[
H_I=\myspan(x_i-x_j\mid i,j\in I) 
 \]
for subsets $I\subset\{1,2,\ldots,n\}$ with $|I|\ge 2$. It turns out that the corresponding projective wonderful model is isomorphic to the Deligne--Mumford compactification of the moduli space of genus $0$ curves with marked points:
 \[
\widehat{Y}_{A_{n-1},\min}\cong \overline{\calM}_{0,n+1}. 
 \]

\subsubsection{Noncommutative braid arrangements and their wonderful models}
\label{sec:WonderfulModel}

We are now ready to interpret brick manifolds as wonderful models.

\begin{definition}
Let $I$ be a finite ordinal. The \emph{noncommutative braid arrangement} is the subspace arrangement 
 \[
ncA_{n-1}:=\big\{H_{i,i+1}=\myspan(x_{i}-x_{i+1})\mid 1\le i,i+1\le n\big\}  
 \]
in $(\k^n)^*$. 
\end{definition}

Similar to the usual braid arrangement, the intersection of all the hyperplanes $H_{i,i+1}^\bot$ in $\k^n$ is the subspace
$N=\k(1,1,\ldots,1)$, and we can consider $V=\k^n/N$. Once again, we have $ncA_{n-1}\subset V^*\subset(\k^n)^*$, so $ncA_{n-1}$ can be regarded as a subspace arrangement in $V$. For a building set, we consider again the minimal one, that is the collection of all subspaces
 \[
H_I=\sum_{i,i+1\in I} H_{i,i+1} \ ,
 \]
for \emph{intervals} $I\subset\underline{n}$ with $|I|\ge 2$.

\begin{theorem}\label{th:wonderful-brick}
The minimal projective wonderful model for the noncommutative braid arrangement is isomorphic to the brick manifold: 
 \[
\widehat{Y}_{ncA_{n-1},\min}\cong \calB(\underline{n})\ . 
 \]
\end{theorem}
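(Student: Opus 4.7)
My plan is to construct an explicit morphism $\Phi \colon \calB(\underline{n}) \to \widehat{Y}_{ncA_{n-1},\min}$ and to show that it is an isomorphism by matching the two stratifications already established in the paper.

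Identifying $V^* \cong G(\underline{n})$ via $x_i - x_{i+1} \leftrightarrow e_{i,i+1}$, the minimal building set element $H_I$ for an interval $I = [i,j]$ coincides with the subspace $G(i,j)$, so $V/H_I^\perp \cong H_I^* = G(i,j)^*$ and a point of $\mathbb{P}(V/H_I^\perp)$ is equivalently a hyperplane in $G(i,j)$. I define
\[
\Phi(\{V_{k,l}\}) := \bigl([V_{2,n-1}],\, (V_{i+1,j-1})_{I=[i,j]\subsetneq\underline{n},\,|I|\geq 2}\bigr),
\]
viewing $V_{2,n-1}$ as a hyperplane in $G(\underline{n}) = V^*$ and each $V_{i+1,j-1}$ as a hyperplane in $G(i,j) = H_I$, with the convention $V_{i+1,j-1} = 0$ when $j = i+1$. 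The inclusions of Definition~\ref{def:brick-manifold-collection} give $V_{i+1,j-1} \subset V_{1,j-1} \cap V_{i+1,n} = G(1,j) \cap G(i,n) = G(i,j)$ with the correct codimension one, so $\Phi$ is a well-defined morphism into $\mathbb{P}(V) \times \prod_I \mathbb{P}(V/H_I^\perp)$.

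To verify that $\Phi$ lands in $\widehat{Y}_{ncA_{n-1},\min}$, note that by Proposition~\ref{proposition:stratification} the brick manifold is the closure of its open stratum $\calB(\underline{n},T_n) \cong (\mathbb{G}_m)^{n-2}$, so by continuity it suffices to treat the open piece. There the data is a collection of lines $V_{k,k} = \myspan(a_k e_{k-1,k} + b_k e_{k,k+1})$ with $a_k, b_k \in \k^\times$; setting $v_1 = 0$ and iterating the recursion $a_k(v_k - v_{k-1}) = b_k(v_k - v_{k+1})$ yields, up to scaling, a unique point $[v] \in \widehat{M}_\calG$, and a direct computation of $v$ paired against each $H_I$ shows $\widehat{\psi}([v]) = \Phi(\{V_{k,l}\})$.

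Finally I would upgrade this to an isomorphism by comparing stratifications. There is a natural bijection between planar rooted trees $T$ with $n$ leaves (indexing the strata of $\calB(\underline{n})$ by Proposition~\ref{proposition:stratification}) and nested sets $\mathsf{S}$ of proper sub-intervals of $\underline{n}$ of size $\geq 2$ (indexing the strata of $\widehat{Y}_{ncA_{n-1},\min}$ by Proposition~\ref{prop:DeConciniProcesi}): each non-root internal vertex of $T$ contributes the interval of its leaf-descendants. Using the operadic decomposition of strata (Proposition~\ref{prop:InfCompoStrat}) and induction on the number of internal edges, I would verify that $\Phi$ maps $\calB(\underline{n},T)$ bijectively onto the stratum of $\widehat{Y}_{ncA_{n-1},\min}$ indexed by $\mathsf{S}(T)$. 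Both varieties are smooth projective irreducible of dimension $n-2$, so the bijective morphism $\Phi$ is an isomorphism, either by Zariski's main theorem or by an explicit stratumwise construction of the inverse. The main obstacle is the inductive analysis at deep boundary strata, where several of the hyperplanes $V_{i+1,j-1}$ degenerate simultaneously to coordinate subspaces, and one must match these degenerations with the intrinsic blowup description of the wonderful model via the nested set combinatorics.
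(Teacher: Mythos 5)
Your key identification --- a point of $\mathbb{P}(V/p(H_I^\perp))$ is a hyperplane in $H_I\cong G(I)$, and $\alpha_I = V_{\min I + 1,\, \max I - 1}$ --- is exactly the dictionary the paper uses, read in the opposite direction (the paper writes $V_{i,j}=\alpha_{[p(i),s(j)]}$ and builds the map $\widehat{Y}_{ncA_{n-1},\min}\to\calB(\underline{n})$, whereas you build $\Phi\colon\calB(\underline{n})\to\widehat{Y}_{ncA_{n-1},\min}$). So the construction agrees with the paper's; the difference is in how one argues that it is an isomorphism. The paper sidesteps any stratification matching: it simply observes that the assignment sending $V_{i,j}$ to the corresponding point of $\mathbb{P}(V/p(H_{[p(i),s(j)]}^\perp))$ gives the inverse in the ambient product of projective spaces, and concludes from there.

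The obstacle you flag yourself --- the inductive analysis of simultaneous degenerations at deep boundary strata and its matching with the nested-set combinatorics --- is a genuine gap in your write-up, and it is more work than the theorem needs. It can be bypassed. Once you know (via the open-stratum computation and closure) that $\Phi$ is a morphism landing in $\widehat{Y}_{ncA_{n-1},\min}$, note that $\Phi$ is injective on all of $\calB(\underline{n})$, not merely on the open stratum, because $V_{i,j}=\alpha_{[p(i),s(j)]}$ for $1<i\leq j<n$ recovers the whole collection from its image. Then $\Phi$ is a proper injective morphism between irreducible projective varieties of dimension $n-2$, hence surjective; and the reconstruction formula now gives a two-sided inverse morphism, so $\Phi$ is an isomorphism without any analysis of the boundary stratification. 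This also avoids your appeal to Zariski's main theorem for bijective morphisms of smooth projective varieties: the paper works over an arbitrary field $\k$, and in positive characteristic a bijective morphism (e.g.\ Frobenius) need not be an isomorphism, whereas the explicit two-sided inverse is characteristic-free.
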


\begin{proof}
Let us denote by $\{e_i\}$ the basis of $\k^n$ dual to the basis $\{x_i\}$ of $(\k^n)^*$, and by~$p$ the natural projection from $\k^n$ to $V=\k^n/N$. The minimal projective wonderful model of $ncA_I$ is the closure of the image of 
	\[
\widehat{M}_{ncA_{n-1}}=\mathbb{P}(V)\setminus\bigcup_{I} \mathbb{P}(p(H_I^\bot))
	\] 
in the product  
	\[
\mathbb{P}(V)\times \prod_{I \text{ an interval of } \underline{n},\, |I|\ge 2}\mathbb{P}(V/p(H_I^\bot)) .
	\]
Let us note that $H_I^\bot$ is the subspace spanned by the vectors $e_i$ with $i\notin I$, and by the vector $e_I=\sum_{i\in I}e_i$. Therefore the cosets of the vectors
	\[
v_k^I:=\sum_{i\in I, j<k}e_i\ ,
	\]
for all $k>\min(I)$, form a basis in $\k^n/H_I^\bot\cong V/p(H_I^\bot)$. Clearly, if $I\subset J$ are two intervals of $\underline{n}$, then $H_I\subset H_J$ and $H_J^\bot\subset H_I^\bot$, so there is a canonical projection 
	\[
p_{I,J}\colon V/p(H_J^\bot)\cong \k^n/H_J^\bot\to \k^n/H_I^\bot\cong V/p(H_I^\bot)\ ,
	\]
and we have 
	\[
p_{I,J}(v_i^J)=v_i^I  \ ,
	\]
for all $i\in I$. By direct inspection, we see that for each vector $v=\sum_{i=1}^n c_ie_i\in\k^n$, the vector
\[
\sum_{\min(I)< i\in I}(c_{p(i)}-c_i)v_i^I
\]
in $V/p(H_I^\bot)$ is in the same coset as $v$. Let us identify, for each $I$, the space $H_I$ with the space $G(I)$ in the most obvious way: $x_{i}-x_{i+1}\leftrightarrow e_{i,s(i)}$.  By projective duality a point in $\mathbb{P}(V/p(H_I^\bot))\cong\mathbb{P}(\k^n/H_I^\bot)$ corresponds to a hyperplane $\alpha_I\subset H_I\cong G(I)$. We define, for each pair $(i,j)$ with $1<i\le j<n$, a subspace $V_{i,j}\subset G(I)\subset G(\underline{n})$ by the formula
 \[
V_{i,j}=\alpha_{[p(i),s(j)]}\  . 
 \]
Because of the compatibility of our bases $\{v_k^I\}$ with the canonical projections, it is immediate that for each point of $\widehat{Y}_{ncA_{n-1},\min}$, viewed as a point in the product of projective spaces, the collection of the subspaces $V_{i,j}$ that we just constructed defines a point of the brick manifold. Moreover, this map is one-to-one: by viewing the space $V_{i,j}$ as a hyperplane in $G(p(i),s(j))$, we can assign to it a point in the appropriate projective space, so each point of the brick manifold gives rise to a point in the product of projective spaces where the wonderful model is defined. This completes the proof.    
\end{proof}

\begin{remark}\label{rem:IteratedBlowup}
Recall from \cite{DCP95} that wonderful models can also be constructed as iterated blow-ups of $\mathbb{P}(V)$. In the case we consider, we have to use all the subspaces $\mathbb{P}(p(H_I^\bot))$ as centers of blow-ups, in the following order: first we blow up the two points $\mathbb{P}(p(H_I^\bot))$, $|I|=n-1$, then we blow up the proper transforms of the three lines $\mathbb{P}(p(H_I^\bot))$, $|I|=n-2$, then we blow up the proper transforms of the four planes $\mathbb{P}(p(H_I^\bot))$, $|I|=n-3$, and so on, up to the proper transforms of $n-1$ hyperplanes $\mathbb{P}(p(H_I^\bot))$, $|I|=2$, whose blow-up does nothing to the variety. (For example, this describes  $\calB(\underline{4})$ as a blow-up of $\mathrm{P}^2$ at two points, a description which we already discussed in Example~\ref{ex:small-brick-manifolds}). Note that
the projective subspaces $\mathbb{P}(p(H_I^\bot))$ for fixed $|I|=k$ intersect transversally along the projective subspaces $\mathbb{P}(p(H_J^\bot))$ for $|J|>k$, which  guarantees that, in our sequence of blow-ups, we always blow-up non-singular subvarieties.
\end{remark}

The identification of two constructions of the wonderful model in~\cite{DCP95} and the identification of this particular wonderful model with the brick manifold that we just obtained implies that the open part of the exceptional divisor over the subspace $\mathbb{P}(p(H_J^\bot))$ is the stratum $\calB(\underline{n},T_{\min(J),\max(J)})$. On the other hand, it follows from \cite[Th.~4.3]{DCP95} that for each decomposition $\underline{n}=I\sqcup_i J$, the irreducible component $\widehat{D}_{H_J}$ of the boundary divisor $\widehat{D}$ of $\widehat{Y}_{ncA_{n-1},\min}$ can be naturally identified with the product $\widehat{Y}_{ncA_{|I|-1},\min}\times\widehat{Y}_{ncA_{|J|-1},\min}$. This leads to a ns operad structure on the collection of wonderful models. In the same way that this is done for the operad structure on the collection of spaces $\overline{\calM}_{0,n+1}$ obtained via wonderful models, see, e.g., \cite{Rains10}, one can show that this ns operad structure coincides with the ns operad structure on brick manifolds. Overall, this proves the following result (which is completely analogous to the corresponding classical result).

\begin{theorem}
The homology of the ns operad of complex projective wonderful models of the noncommutative braid arrangements is isomorphic to the ns operad of noncommutative hypercommutative algebras:
 \[
\big\{H_\bullet(\widehat{Y}_{ncA_{n-1},\min}(\mathbb{C}))\big\}\cong\ncHyperCom . 
 \]
\end{theorem}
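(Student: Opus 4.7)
The plan is to deduce the statement by combining Theorem~\ref{th:wonderful-brick}, which provides a pointwise isomorphism $\widehat{Y}_{ncA_{n-1},\min} \cong \calB(\underline{n})$, with Theorem~\ref{th:homology-ncHyperCom}, which identifies $H_\bullet(\calB_\mathbb{C})$ with $\ncHyperCom$. The only thing that these two results do not immediately give us is the compatibility of the ns operad structure defined intrinsically from wonderful models with the one on the brick operad that was used to prove Theorem~\ref{th:homology-ncHyperCom}.

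The first step is to make the operad structure on the collection $\{\widehat{Y}_{ncA_{n-1},\min}\}$ explicit. By Proposition~\ref{prop:DeConciniProcesi}(3) together with \cite[Th.~4.3]{DCP95}, for each interval $J\subset\underline{n}$ of cardinality at least $2$ the irreducible boundary divisor $\widehat{D}_{H_J}$ is canonically identified with a product of two smaller wonderful models of noncommutative braid arrangements, one associated with $J$ and the other with the ordinal obtained from $\underline{n}$ by contracting $J$ to a single point. The inverse of this identification is exactly the kind of composition morphism that underlies the operad structure on the minimal projective wonderful models of Rains' formalism~\cite{Rains10}, and it endows the collection $\{\widehat{Y}_{ncA_{n-1},\min}\}$ with a ns operad structure in the category of smooth projective varieties.

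Next I would transport this structure through the isomorphism of Theorem~\ref{th:wonderful-brick}. The pointwise bijection constructed in the proof of that theorem reads off, from a point of the wonderful model, a hyperplane $V_{i,j}\subset G(p(i),s(j))$ for every pair $(i,j)$ with $1<i\le j<n$; and the local analysis of Section~\ref{sec:local-structure}, together with Corollary~\ref{cor:closure-of-divisor}, shows that the closure $\overline{\calB(\underline{n},T_{l,r})}$ is precisely the locus where $V_{l,l}=G(l,l+1)$ and $V_{r,r}=G(r-1,r)$, i.e.\ the image of an infinitesimal composition in the brick operad. Since by the proof of Theorem~\ref{th:wonderful-brick} the divisor $\widehat{D}_{H_{[l,r]}}$ maps to exactly this closure, the two operadic composition maps coincide on the level of underlying varieties; in particular the ns operad $\{\widehat{Y}_{ncA_{n-1},\min}\}$ is isomorphic to $\calB$ as a ns operad.

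Applying $H_\bullet(-;\mathbb{Q})$ (or over any field, using the formality discussion of the previous subsection) and invoking Theorem~\ref{th:homology-ncHyperCom} finishes the argument. The main obstacle I anticipate is purely bookkeeping: one needs to verify that the canonical product decomposition of $\widehat{D}_{H_J}$ supplied by \cite[Th.~4.3]{DCP95}, expressed in terms of the hyperplane coordinates $V_{i,j}$ coming from Theorem~\ref{th:wonderful-brick}, coincides on the nose with the infinitesimal composition of Definition~\ref{def:operadic-composition}. This is a direct but delicate check, entirely parallel to the classical treatment of $\{\overline{\calM}_{0,n+1}\}$ as wonderful models of the braid arrangement in \cite{Rains10}, and no new ideas are needed beyond translating that argument into our interval-building-set setting.
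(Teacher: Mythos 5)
Your proposal is correct and follows essentially the same route as the paper: establish the ns operad structure on the wonderful models via De~Concini--Procesi boundary divisor factorisation and Rains' formalism, check (or assert, modulo a parallel-to-the-classical-case bookkeeping argument) that it agrees with the brick operad structure under the isomorphism of Theorem~\ref{th:wonderful-brick}, and conclude by Theorem~\ref{th:homology-ncHyperCom}. The ``delicate bookkeeping'' you flag at the end is exactly the step the paper also leaves at the same level of detail, citing the analogous argument for $\overline{\calM}_{0,n+1}$ in \cite{Rains10}.
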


It is known that the toric variety of the permutahedron, also known as the Losev--Manin moduli space $\overline{\calL}_n$, can be realised as a wonderful model of the coordinate subspace arrangement in $\k^n$ \cite{Hen12,Pro90}. Let us mention here that this variety can be also viewed as a wonderful model of the noncommutative braid arrangement $ncA_{n-1}$. Namely, the following proposition holds.

\begin{proposition}
The maximal projective wonderful model of $ncA_{n-1}$, i.e. the model corresponding to the maximal building set, that is the set of all possible sums of the subspaces $H_{i,i+1}$, is the Losev--Manin moduli space:
\[
\widehat{Y}_{ncA_{n-1},\max}\cong \overline{\calL}_{n-1}. 
\]
\end{proposition}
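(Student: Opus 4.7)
The plan is to identify, after a linear change of coordinates, the noncommutative braid arrangement with maximal building set with the full coordinate subspace arrangement, and then invoke the known identification of the wonderful model of the coordinate arrangement with the Losev--Manin space.

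First I would observe that the one-dimensional subspaces $H_{i,i+1} = \myspan(x_i - x_{i+1})$ for $i=1,\ldots,n-1$ are linearly independent in $(\k^n)^*$, and their span is exactly the annihilator of the line $N = \k(1,\ldots,1)$; hence, upon descending to $V = \k^n/N$, the vectors $y_i := x_i - x_{i+1}$ form a basis of $V^*$. Any element of the lattice generated by the $H_{i,i+1}$ is thus a direct sum $H_S = \bigoplus_{i \in S} H_{i,i+1}$ for some non-empty $S \subseteq \underline{n-1}$, and the maximal building set coincides with the collection of all non-zero coordinate subspaces of $V^*$ with respect to the basis $\{y_i\}$.

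Next, under the linear isomorphism $V^* \cong \k^{n-1}$ sending $y_i$ to the $i$-th coordinate, the pair $(ncA_{n-1}, \calG_{\max})$ becomes the coordinate subspace arrangement in $\k^{n-1}$ together with its complete building set of all coordinate subspaces. Since wonderful models are functorial with respect to linear isomorphisms compatible with the respective building sets, this identification yields an isomorphism of projective wonderful models
\[
\widehat{Y}_{ncA_{n-1},\max} \cong \widehat{Y}_{\mathrm{coord}(\k^{n-1}),\max}.
\]

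Finally, I would invoke the results of Procesi \cite{Pro90} (see also \cite{Hen12}), according to which the projective wonderful model of the full coordinate arrangement in $\k^{n-1}$ is the toric variety $X(\Pi_{n-2})$ of the permutohedron of dimension $n-2$; since this toric variety is by definition the Losev--Manin compactification $\overline{\calL}_{n-1}$, this chain of identifications produces the desired isomorphism.

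The only conceptually non-trivial step is the identification of the wonderful model of the coordinate arrangement with the toric variety of the permutohedron, but this is precisely the content of the cited references and is analogous to (in fact simpler than) the identification of $\widehat{Y}_{ncA_{n-1},\min}$ with the brick manifold carried out in Theorem~\ref{th:wonderful-brick}; the remaining steps are a matter of recording a change of basis and unwinding definitions.
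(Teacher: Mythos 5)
Your proof is correct and follows the same route as the paper's: the paper's proof is literally the one-liner ``Up to a change of coordinates, this is the result of \cite[Sec.~3]{Pro90}, see also \cite[Prop.~2.9]{Hen12},'' and you have simply made the change of coordinates explicit, observing that $y_i := x_i - x_{i+1}$ is a basis of $V^*$ identifying the maximal building set with the full coordinate arrangement in $\k^{n-1}$. The only minor point worth noting is that the functoriality of wonderful models under linear isomorphisms compatible with building sets, while certainly true, is being used implicitly; a word explaining that such an isomorphism identifies the ambient products $\mathbb{P}(V)\times\prod_G \mathbb{P}(V/G^\perp)$ and hence the closures would make the argument fully self-contained, but this is a very small quibble.
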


\begin{proof}
Up to a change of coordinates, this is the result of \cite[Sec.~3]{Pro90}, see also \cite[Prop.~2.9]{Hen12}.
\end{proof}

\begin{remark}
The fact that the toric varieties of permutahedra and associahedra are both wonderful models of the same arrangement should not be too surprising: the permutahedron is obtained from the associahedron by truncations, hence the toric variety of the permutahedron is obtained from the toric variety of the associahedron by blow-ups, exactly in the way that it is for the respective wonderful models. 
\end{remark}

\section{Givental group action on \texorpdfstring{$\ncHyperCom$}{ncHyperCom}-algebras}\label{sec:Givental}

In this section, we develop a convenient framework for intersection theory on brick manifolds, and use it to define a noncommutative version of the celebrated Givental group 
action \cite{Giv01}. We also identify the latter action defined geometrically via intersection theory with the gauge symmetries action in the homotopy Lie algebra controlling deformations of homotopy $\ncBV$-algebras.

\subsection{Intersection theory for complex brick manifolds}

In this section, we shall present a somewhat elegant description of the intersection product in the cohomology rings of complex brick manifolds, which we find particularly convenient for our purposes, especially for defining the Givental group action on $\ncHyperCom$-algebra structures. Of course, in principle, the   results of the previous section suggest at least two other ways to describe those cohomology rings. 
First, since brick manifolds are toric varieties of Loday polytopes (Appendix~\ref{sec:appendix-toric}), one can use the general results on cohomology rings of toric varieties \cite{Danilov78,Fulton93} to obtain presentations of those rings. (For the other realisations from~\cite{CFZ2002}, which are different from the Loday polytope realisations, this way to describe the corresponding cohomology rings was undertaken in~\cite{Chapoton05}). Second, since brick manifolds are wonderful models of the noncommutative braid arrangements, one can use the results of Feichtner and Yuzvinsky \cite{FY04} to obtain presentations of the cohomology rings. \\

Note that any pair of strata $\calB_{\mathbb{C}}(\underline{n},T_1)$ and $\calB_{\mathbb{C}}(\underline{n},T_2)$ either 
\begin{itemize}
\item intersect transversally, when there exists a tree $T_3$ that can be turned  into $T_1$ and $T_2$ by contraction of two disjoint subsets of edges, 
\item do not intersect at all, when there is no tree with a bigger number of edges that can be turned  into both $T_1$ and $T_2$ by contractions, 
\item or, intersect non-transversally, when there exists a tree $T_3$ that can be turned  into $T_1$ and $T_2$ by contraction of two intersecting subsets of edges. 
\end{itemize}
So, in order to
have a complete description of the intersection product, it is enough
to describe the self-intersections of divisors
$\overline{\calB_{\mathbb{C}}(\underline{n},T_{l,r})}$. It turns out that the description is very similar to that in the commutative case, and is best described via $\psi$-classes, i.e. the first Chern classes of appropriate line bundles.

\subsubsection{Noncommutative $\psi$-classes and topological recursion relations}

\begin{definition}\label{def:psi-classes} We define the line bundles $\calL_0,\calL_1,\dots,\calL_n$ on the complex brick manifold $\calB_{\mathbb{C}}(\underline{n})$ as follows. The fibre of the line bundle $\calL_0$ over a collection $\{V_{i,j}\}$ is given by $G(\underline{n})/V_{2,n-1}$. The fibre of of the line bundle  $\calL_j$, for $j=1,\dots,n$, is given by $V_{j,j}^*$. We denote by $\psi_j$ the first Chern class of $\calL_j$, $j=0,1,\dots,n$. 
\end{definition}

\begin{proposition}[Topological recursion relations]\label{prop:TRR}\leavevmode
	\begin{itemize}
		\item The $0$th $\psi$-class is equal to 
		$\psi_0=\sum_T [\overline{\calB_{\mathbb{C}}(\underline{n},T)}]$, where the sum is taken over all trees with one internal edges such that the leaves labelled by $i$ and $i+1$ are attached to a different vertex than the root. This formula is valid for any $i=1,\dots,n-1$. 
		\item For $j=1,\dots,n$, the $j$th $\psi$-class is equal to 
		$\psi_j=\sum_T [\overline{\calB_{\mathbb{C}}(\underline{n},T)}]$, where the sum is taken over all trees with one internal edges such that the leaf labelled by $j$ is attached to a different vertex than the root and the leaf labelled by $j+1$.  
		\item For $j=1,\dots,n$, the $j$th $\psi$-class is equal to 
	 $\psi_j=\sum_T [\overline{\calB_{\mathbb{C}}(\underline{n},T)}]$, where the sum is taken over all trees with one internal edges such that the leaf labelled by $j$ is attached to a different vertex than the root and the leaf labelled by $j-1$.  
	\end{itemize}
\end{proposition}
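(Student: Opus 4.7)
The plan is to exhibit, for each $\psi$-class, a global section of the corresponding line bundle whose zero divisor, counted with multiplicities, realises the claimed sum of boundary divisors; recall that the first Chern class of a line bundle on a smooth variety coincides with the class of the zero divisor of any non-identically-vanishing section.

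For the second and third formulas (the classes $\psi_j$, $1 \leq j \leq n$), I would use that $V_{j,j}$ is a line inside $G(j-1, j+1) = \myspan(e_{j-1, j}, e_{j, j+1})$ whenever $1 < j < n$. Restricting the fixed functional $e_{j,j+1}^*$ on $G(j-1, j+1)$ to $V_{j,j}$ yields a global section $\sigma_j$ of $\calL_j = V_{j,j}^*$, vanishing exactly on the locus $\{V_{j,j} = G(j-1, j)\}$. By Corollary~\ref{cor:closure-of-divisor}, this locus is $\bigcup_{l < j} \overline{\calB_{\mathbb{C}}(\underline{n}, T_{l, j})}$. Taking the one-parameter family $V_{j,j} = \myspan(e_{j-1,j} + t\, e_{j, j+1})$ at a generic point of each component shows that the zero of $\sigma_j$ is simple, which yields the second formula; the third formula follows symmetrically by using $e_{j-1,j}^*$ instead. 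For $j = 1$ and $j = n$, the line $V_{j,j}$ is pinned by the boundary conditions of Definition~\ref{def:brick-manifold-collection}, so $\calL_j$ is trivial and $\psi_j = 0$; direct inspection shows that both right-hand side sums are empty in these cases (the referenced leaf $0$ or leaf $n+1$ does not exist), so the formulas hold tautologically.

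For the first formula (the class $\psi_0$), for each $i \in \{1, \dots, n-1\}$ I would descend the constant vector $e_{i, i+1} \in G(\underline{n})$ through the quotient $G(\underline{n}) \twoheadrightarrow G(\underline{n})/V_{2, n-1}$ to a global section $\sigma^{(i)}$ of $\calL_0$, which vanishes at a point iff $e_{i,i+1} \in V_{2, n-1}$ there. On the boundary divisor $D_{l,r} = \overline{\calB_{\mathbb{C}}(\underline{n}, T_{l, r})}$, Corollary~\ref{cor:closure-of-divisor} identifies a generic point as an operadic composition, and unpacking the relevant cases of Definition~\ref{def:operadic-composition} shows that $V_{2, n-1}$ always contains the ``inner'' summand $G(l, r) \subset G(\underline{n})$ on $D_{l, r}$. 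Consequently, whenever $l \leq i$ and $i+1 \leq r$, the inclusion $e_{i, i+1} \in G(l, r) \subset V_{2, n-1}$ is automatic, so $\sigma^{(i)}$ vanishes identically on $D_{l,r}$; and when $l > i$ or $r < i+1$, the same unpacking shows that $e_{i, i+1} \in V_{2, n-1}$ becomes an honest codimension-one condition on $D_{l,r}$ depending on the ``outer'' factor, so $\sigma^{(i)}$ does not vanish identically there. A local coordinate check at a generic boundary point then yields simple zeros, giving $\psi_0 = c_1(\calL_0) = \sum [D_{l, r}]$, summed over the described trees. Independence of $i$ is an automatic byproduct of the computation.

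The hard part will be the precise case analysis of $V_{2, n-1}$ on each divisor $D_{l, r}$: depending on whether the indices $2$ and $n-1$ sit in the inner or outer factor of the operadic composition, and on whether $l = 1$ or $r = n$, the formula of Definition~\ref{def:operadic-composition} unfolds into several subcases that must be handled separately. Carrying out this bookkeeping uniformly, while simultaneously extracting the correct local normal coordinate so that the multiplicity-one property can be read off in every case, is where the bulk of the work concentrates.
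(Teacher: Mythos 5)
Your proposal is essentially the paper's own proof: for $\psi_0$ the paper uses exactly the section given by $e_{i,i+1}$ (projected to $G(\underline{n})/V_{2,n-1}$), and for $\psi_j$ it realizes $\calL_j$ as the pullback of $\calO(1)$ under $\{V_{i,j}\}\mapsto V_{j,j}$ and pulls back a coordinate-point class, which is the same thing as taking the zero locus of your section $\sigma_j$. Both arguments then identify the vanishing loci via Corollary~\ref{cor:closure-of-divisor} and the stratification; the paper is in fact even terser than your write-up about the multiplicity and bookkeeping you flag as the remaining work.
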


In particular, $\psi_1=\psi_n=0$. 

\begin{proof}
In order to compute, $\psi_0$ we have to choose a section of $\calL_0$ that is transversal to the zero section. Let us choose the section given by $e_{i,i+1}$. This section is equal to zero if and only if $G(i,i+1)\subset V_{2,n-1}$, which is possible if and only if the corresponding collection is in a divisor $\overline{\calB_{\mathbb{C}}(\underline{n},T_{l,r})}$, where $V_{l+1,r}=V_{l,r-1}\supset G(i,i+1)$. This gives the desired formula. 

For $j=1,\dots,n$, the line bundle $\calL_j$ is the pullback of the line bundle $O(1)$ on $\mathbb{P}(G(j-1,j+1))$ under the map $\{V_{i,j}\}\mapsto V_{j,j}$. So, the characteristic class $\psi_j$ is given by the pullback of any point on $\mathbb{P}(G(j-1,j+1))$. Choosing one of the two coordinate axes, we obtain the two possible formulas (cf. the argument in the proof of Theorem~\ref{th:homology-ncHyperCom}).
\end{proof}

\subsubsection{The pull-back formula} There are two natural maps from $\calB_{\mathbb{C}}(\underline{n+1})$ to $\calB_{\mathbb{C}}(\underline{n})$, $n\geq 2$, that we can ``the maps forgetting the marked points'' using the analogy with the usual moduli spaces of genus $0$ curves. Of course, in the nonsymmetric setting it is only fully natural to forget the leftmost or the rightmost marked point, and though we don't have the marked points directly in our geometric interpretation, this intuition is sufficient to propose the following definition.
\begin{definition} The \emph{left projection map} 
\begin{equation*}
\pi_L\colon \calB_{\mathbb{C}}(\underline{n+1})\to \calB_{\mathbb{C}}([2,n+1])\cong \calB_{\mathbb{C}}(\underline{n})
\end{equation*}
takes a point represented by a collection of subspaces $V_{i,j}$ forgets all spaces $V_{1,j}$ for $i=1,\dots,n-1$, forgets $V_{2,n}$, and replaces the spaces $V_{2,i}$ by $G_{2,s(i)}$ for $i=1,\dots,n-1$. The \emph{right projection map} 
\begin{equation*}
\pi_R\colon \calB_{\mathbb{C}}(\underline{n+1})\to \calB_{\mathbb{C}}(\underline{n})
\end{equation*}
takes a point represented by a collection of subspaces  $V_{i,j}$ forgets all spaces $V_{i,n+1}$ for $i=2,\dots,n$, forgets $V_{1,n}$, and replaces the spaces $V_{i,n}$ by $G_{p(i),n}$ for $i=2,\dots,n$.
\end{definition}
These maps are obviously complex analytic and surjective. 
The main feature of these maps is that with them we can correctly reproduce the natural noncommutative analogues of the pull-back formulas for the $\psi$-classes.  Recall that in the case of the usual moduli spaces of curves the difference between the $\psi$-class and the pull-back of a $\psi$-class at the point with the same label $i$ under the map that forgets the point labelled by $j$ is the natural divisor represented by a tree with two vertices, where the leaves $i$ and $j$ are attached to one of the vertices, and all other leaves are attached to the other vertex.

Extending this analogy in the case of the map $\pi_L$ we can think that we forget the point labelled by $1$ and then there are only two such natural divisors in $\calB_{\mathbb{C}}(\underline{n+1})$: $\overline{\calB_{\mathbb{C}}(\underline{n+1},T_{2,n+1})}$ that should affect $\psi_0$ and $\overline{\calB_{\mathbb{C}}(\underline{n+1},T_{1,2})}$ that should affect $\psi_2$. 
In the case of the map $\pi_R$ we can think that we forget the point labelled by $n+1$, and then there are only two such natural divisors in $\calB_{\mathbb{C}}(\underline{n+1})$ as well: $\overline{\calB_{\mathbb{C}}(\underline{n+1},T_{1,n})}$ that should affect $\psi_0$ and $\overline{\calB_{\mathbb{C}}(\underline{n+1},T_{n,n+1})}$ that should affect $\psi_n$. 

\begin{proposition} For the map $\pi_L\colon \calB_{\mathbb{C}}(\underline{n+1})\to \calB_{\mathbb{C}}([2,n+1])$ we have: 
\begin{align*}
\pi_L^*\psi_0 &= \psi_0-[\overline{\calB_{\mathbb{C}}(\underline{n+1},T_{2,n+1})}] \\ 
\pi_L^*\psi_2 & = \psi_2-[\overline{\calB_{\mathbb{C}}(\underline{n+1},T_{1,2})}] \\
\pi_L^*\psi_i & = \psi_i \qquad \text{for\ } i=3,\dots,n+1
\end{align*}
Analogously, for the map $\pi_R\colon \calB_{\mathbb{C}}(\underline{n+1})\to \calB_{\mathbb{C}}(\underline{n})$ we have: 
\begin{align} \label{eq:pullbackpsi0} 
\pi_R^*\psi_0 &=\psi_0-[\overline{\calB_{\mathbb{C}}(\underline{n+1},T_{1,n})}] \\ 
\label{eq:pullbackpsin}
\pi_R^*\psi_n & = \psi_n-[\overline{\calB_{\mathbb{C}}(\underline{n+1},T_{n,n+1})}] \\
\label{eq:pullbackpsiarbitrary} 
\pi_R^*\psi_i & = \psi_i \qquad \text{for\ } i=1,\dots,n-1
\end{align}
\end{proposition}
\begin{proof} The proof is a simple comparison for the expressions of $\psi$-classes in terms of divisors using Proposition~\ref{prop:TRR}. We do it for the map $\pi_R$, and the argument for $\pi_L$ is completely analogous. 
	
Observe that
\begin{align} \label{eq:pullbackdivuniek}
 \pi_R^{-1}(\overline{\calB_{\mathbb{C}}(\underline{n},T_{l,r})}) & = \overline{\calB_{\mathbb{C}}(\underline{n+1},T_{l,r})} & \text{for\ } 1\leq l<r\leq n-1; \\  \notag
 \pi_R^{-1}(\overline{\calB_{\mathbb{C}}(\underline{n},T_{l,n})}) & = \overline{\calB_{\mathbb{C}}(\underline{n+1},T_{l,n})}
 \cup 
 \overline{\calB_{\mathbb{C}}(\underline{n+1},T_{l,n+1})}
  & \text{for\ } 2\leq l.
\end{align}
Then, using $i=1$ in the first statement of Proposition~\ref{prop:TRR} we see that
\begin{align*}
\psi_0|_{\calB_{\mathbb{C}}(\underline{n})} & = \sum_{r=2}^{n-1} 
[\overline{\calB_{\mathbb{C}}(\underline{n},T_{1,r})}] \\
\psi_0|_{\calB_{\mathbb{C}}(\underline{n+1})} & = \sum_{r=2}^{n} 
[\overline{\calB_{\mathbb{C}}(\underline{n+1},T_{1,r})}]
\end{align*}
Thus, using Equation~\eqref{eq:pullbackdivuniek} we see that 
\[
\pi_R^*(\psi_0|_{\calB_{\mathbb{C}}(\underline{n})})
= \sum_{r=2}^{n-1} 
[\overline{\calB_{\mathbb{C}}(\underline{n+1},T_{1,r})}],
\]
which is exactly $[\overline{\calB_{\mathbb{C}}(\underline{n+1},T_{1,n})}]$ minder than $\psi_0|_{\calB_{\mathbb{C}}(\underline{n+1})}$. This proves~\eqref{eq:pullbackpsi0}. 

Since $\psi_n|_{\calB_{\mathbb{C}}(\underline{n})}=0$, we have $\pi_R^*\psi_n=0$. On the other hand, the third formula in Proposition~\ref{prop:TRR} applied for $j=n$ gives $\psi_n|_{\calB_{\mathbb{C}}(\underline{n+1})}=[\overline{\calB_{\mathbb{C}}(\underline{n+1},T_{n,n+1})}]$. These two observations imply Equation~\eqref{eq:pullbackpsin}.

For $\psi_i$, $i=1,\dots,n-1$, we use the second formula in Proposition~\ref{prop:TRR}: 
\begin{align*}
\psi_i|_{\calB_{\mathbb{C}}(\underline{n})} & = \sum_{l=1}^{i-1} 
[\overline{\calB_{\mathbb{C}}(\underline{n},T_{l,i})}]; \\
\psi_i|_{\calB_{\mathbb{C}}(\underline{n+1})} & = \sum_{l=1}^{i-1} 
[\overline{\calB_{\mathbb{C}}(\underline{n+1},T_{l,i})}].
\end{align*}
Equation~\eqref{eq:pullbackdivuniek} implies that the pull-back of the right hand side of the first of these expressions is equal to the right hand side of the second expression. This proves Equation~\eqref{eq:pullbackpsiarbitrary}
\end{proof}

\subsubsection{Excess intersection formula}

We can also describe non-transversal self-intersections in the way similar to the usual moduli spaces. 

\begin{proposition}[Excess intersection formula]
We have
	\[
[\overline{\calB_{\mathbb{C}}(\underline{n},T_{l,r})}]^2 = [\overline{\calB_{\mathbb{C}}(\underline{n},T_{l,r})}] \cdot (\circ)_* (-\psi'-\psi''), 
	\]
where $\circ=\circ^{\{l,l+1,\dots,r\}}_{\{1,\dots,l-1,\star,r+1,\dots,n\},\star}$, and $\psi'$ and $\psi''$ denote, respectively, the $\psi_\star$-class on $\calB_{\mathbb{C}}(\{1,\dots,l-1,\star,r+1,\dots,n\})$ and the $\psi_0$-class on $\calB_{\mathbb{C}}(\{l,l+1,\dots,r\})$. In other words, we can say that
	\[
(\circ)^* [\overline{\calB_{\mathbb{C}}(\underline{n},T_{l,r})}] = -\psi'-\psi''.
	\]
\end{proposition}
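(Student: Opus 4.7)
The plan is to deduce the formula from the standard self-intersection formula for a smooth divisor $D$ in a smooth variety $X$, namely $(i_D)^*[D] = c_1(N_{D/X})$, combined with the explicit description of the normal bundle of $\overline{\calB_{\mathbb{C}}(\underline{n},T_{l,r})}$ already given in Section~\ref{sec:local-structure}. By Corollary~\ref{cor:closure-of-divisor}, the map $\circ$ is an isomorphism onto the divisor $\overline{\calB_{\mathbb{C}}(\underline{n},T_{l,r})}$, so everything amounts to identifying $c_1(N)$ as $-\psi'-\psi''$ on the product $\calB_{\mathbb{C}}(\{1,\ldots,l-1,\star,r+1,\ldots,n\})\times\calB_{\mathbb{C}}(\{l,\ldots,r\})$.

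First, I would recall from Section~\ref{sec:local-structure} the isomorphism
\[
N \;\cong\; \bigl(V_{l,r}/G(l,r)\bigr)\otimes\bigl(G(l,r)/V_{l+1,r-1}\bigr)^{*}\ ,
\]
where each factor is a line bundle on the divisor. By additivity of the first Chern class under tensor product of line bundles, it suffices to compute the pullback along $\circ$ of each of these two line bundles separately.

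Next, I would use the explicit formula for the operadic composition from Definition~\ref{def:operadic-composition}, specialised to the composition $\circ^{\{l,\ldots,r\}}_{\{1,\ldots,l-1,\star,r+1,\ldots,n\},\star}$. On the image of this map one has $V_{l,r}=f(V^{1}_{\star,\star})\oplus G(l,r)$, which gives a canonical isomorphism of line bundles
\[
\circ^{*}\bigl(V_{l,r}/G(l,r)\bigr)\;\cong\;V^{1}_{\star,\star}\ .
\]
By Definition~\ref{def:psi-classes}, the line bundle $\calL_{\star}$ on the outer factor has fiber $(V^{1}_{\star,\star})^{*}$, so the pullback above is $\calL_{\star}^{-1}$, with first Chern class $-\psi_{\star}=-\psi'$. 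Similarly, Definition~\ref{def:operadic-composition} gives $V_{l+1,r-1}=f(V^{2}_{l+1,r-1})$, whence
\[
\circ^{*}\bigl(G(l,r)/V_{l+1,r-1}\bigr)\;\cong\;G(\{l,\ldots,r\})/V^{2}_{l+1,r-1}\ ,
\]
which is precisely the fiber of the line bundle $\calL_{0}$ on the inner factor $\calB_{\mathbb{C}}(\{l,\ldots,r\})$. Hence the dual of this line bundle has first Chern class $-\psi_{0}=-\psi''$.

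Putting this together yields $\circ^{*}c_{1}(N)=-\psi'-\psi''$, which is the desired identity. The only non-formal step is the matching of the line bundle factors appearing in the normal bundle with the pullbacks of $\calL_{\star}$ and $\calL_{0}$ from the two sides; this is where I expect the only real bookkeeping effort, but it follows directly from unpacking Definition~\ref{def:operadic-composition} in the relevant cases $(a,b)=(\min(J),\max(J))$ and $a,b\in J$ with $(a,b)\ne(\min(J),\max(J))$, together with the definition of the $\psi$-classes on each factor.
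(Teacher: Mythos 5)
Your proof is correct and follows essentially the same route as the paper's: both reduce the self-intersection to $c_1$ of the normal bundle using the description from Section~\ref{sec:local-structure} and then identify the two line-bundle factors with (duals of) $\calL_\star$ and $\calL_0$ via Definition~\ref{def:operadic-composition} and Definition~\ref{def:psi-classes}. The paper states this very tersely; you have merely filled in the bookkeeping, which is welcome but not a different argument.
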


\begin{proof} 
	The class of the self-intersection of a divisor is the class of its intersection with a transversal perturbation in the normal bundle. In other words, it is the first Chern class of its normal bundle.  The normal bundle of $\overline{\calB_{\mathbb{C}}(\underline{n},T_{l,r})}$ is described in Section~\ref{sec:local-structure}, the first Chern classes of the involved line bundles are defined in Definition~\ref{def:psi-classes}, hence the formula. 
\end{proof}

\subsubsection{Noncommutative correlators}

In this section, we shall compute the correlators
\[
\langle \tau_{d_0}\cdots\tau_{d_n}\rangle := \int_{\calB_{\mathbb{C}}(\underline{n})}\psi_0^{d_0}\cdots\psi_n^{d_n}.  
\]

Proposition~\ref{prop:TRR} immediately translates into following relations between these numbers.

\begin{proposition}\label{prop:abstract-correlators}
The correlator numbers satisfy the following relations:
\begin{itemize}
\item the relations at the points $1\le i-1, i,i+1\le n$
\begin{gather}
\label{TRR-3pts-1}
\langle\tau_{d_0}\tau_{d_1}\tau_{d_2}\cdots\tau_{d_{i-1}}\tau_{d_i+1}\tau_{d_{i+1}}\cdots\tau_{d_n}\rangle=\sum_{1\le l\le i-1}
\langle\tau_{d_0}\cdots\tau_{d_{i-1}}\tau_{0}\tau_{d_{i+1}}\cdots\tau_{d_n}\rangle\langle\tau_0\tau_{d_l}\cdots\tau_{d_i}\rangle ,\\
\label{TRR-3pts-2}
\langle\tau_{d_0}\tau_{d_1}\tau_{d_2}\cdots\tau_{d_{i-1}}\tau_{d_i+1}\tau_{d_{i+1}}\cdots\tau_{d_n}\rangle=\sum_{i+1\le l\le n}
\langle\tau_{d_0}\cdots\tau_{d_{i-1}}\tau_{0}\tau_{d_{l+1}}\cdots\tau_{d_n}\rangle\langle\tau_0\tau_{d_i}\cdots\tau_{d_l}\rangle ,
\end{gather}
\item the relations at the root and the points $1\le i,i+1\le n$
\begin{equation}\label{TRR-root}
\langle\tau_{d_0+1}\tau_{d_1}\cdots\tau_{d_n}\rangle=
\sum_{\substack{1\le l\le i\le i+1\le m\le n\\ m-l<n-1}}
\langle\tau_{d_0}\tau_{d_1}\cdots\tau_{d_{l-1}}\tau_0\tau_{d_{m+1}}\cdots\tau_{d_n}\rangle\langle\tau_0\tau_{d_l}\cdots\tau_{d_m}\rangle .
\end{equation}
\end{itemize}
\end{proposition}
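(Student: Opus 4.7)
The three identities are immediate reformulations of Proposition~\ref{prop:TRR}, obtained by combining it with the product description of boundary divisors from Corollary~\ref{cor:closure-of-divisor}. I sketch the argument for (TRR-3pts-1) and then indicate how the other two follow in the same way.

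The first task is to record how the line bundles $\calL_k$ behave under the composition
$$\circ = \circ_{I, \star}^{J} \colon \calB_{\mathbb{C}}(I) \times \calB_{\mathbb{C}}(J) \xrightarrow{\cong} \overline{\calB_{\mathbb{C}}(\underline{n}, T_{l, r})}, \qquad I = \{1, \ldots, l-1, \star, r+1, \ldots, n\}, \quad J = [l, r].$$
Inspecting Definition~\ref{def:operadic-composition} case by case, one verifies: (i) $\circ^{*}\calL_k \cong \pi_1^{*}\calL_k^{I}$ when $k \in \{1, \dots, l-1\} \cup \{r+1, \dots, n\}$, since in that range $V_{k, k}$ in the composite is just $f(V^{1}_{k, k})$; (ii) $\circ^{*}\calL_k \cong \pi_2^{*}\calL_k^{J}$ when $l < k < r$; (iii) $\circ^{*}\psi_k = 0$ for $k \in \{l, r\}$, since on the divisor $V_{l, l}$ and $V_{r, r}$ are forced to coordinate lines, so $\calL_k$ is trivial; (iv) $\circ^{*}\calL_0 \cong \pi_1^{*}\calL_0^{I}$, because in every case of Definition~\ref{def:operadic-composition} the space $V_{2, n-1}$ of the composite is of the form $f(W \oplus G(J))$ for a subspace $W \subset G(I)$ depending only on the outer factor, and the quotient $G(\underline{n})/V_{2, n-1}$ identifies with $G(I)/W$. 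In particular, the pullback of any monomial in the $\psi_k$ factorises as a pullback from the two factors, and no $\psi_\star$ or inner root $\psi$-class appears.

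For (TRR-3pts-1), I rewrite the left-hand side as $\int_{\calB_{\mathbb{C}}(\underline{n})} \psi_i \cdot \prod_{k=0}^n \psi_k^{d_k}$ and expand $\psi_i$ using the second bullet of Proposition~\ref{prop:TRR}: the trees in that sum are precisely the $T_{l, i}$ for $1 \le l \le i - 1$. Using the projection formula together with Corollary~\ref{cor:closure-of-divisor} turns each summand into an integral over $\calB_{\mathbb{C}}(I_l) \times \calB_{\mathbb{C}}([l, i])$ with $I_l = \{1, \dots, l-1, \star, i+1, \dots, n\}$. Applying the pullback formulas above and Fubini, the integral factorises as a product of correlators on the two factors, with $\tau_0$ at the position $\star$ on the outer side and at the inner root, yielding the right-hand side of (TRR-3pts-1).

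Identity (TRR-3pts-2) is obtained by the same argument with $\psi_i$ expanded via the third bullet of Proposition~\ref{prop:TRR}, giving the divisors $\overline{\calB_{\mathbb{C}}(\underline{n}, T_{i, r})}$ for $i + 1 \le r \le n$. For (TRR-root) one fixes an arbitrary $i \in \{1, \dots, n-1\}$ and applies the first bullet of Proposition~\ref{prop:TRR} to expand $\psi_0$ over the divisors $\overline{\calB_{\mathbb{C}}(\underline{n}, T_{l, m})}$ with $l \le i$ and $m \ge i + 1$; the degenerate tree with $(l, m) = (1, n)$, which does not exist as a tree with a single internal edge, is exactly the one excluded by the condition $m - l < n - 1$. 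The main obstacle throughout is the case-by-case verification of the pullback of $\calL_0$, since the expression for $V_{2, n-1}$ in the composite falls into the generic Case~9 of Definition~\ref{def:operadic-composition} only when $1 < l$ and $r < n - 1$; the boundary situations ($l \in \{1, 2\}$ or $r \in \{n-1, n\}$) land in Cases~2, 5, 6, 7, or~8, and each one has to be checked separately to confirm the required decomposition $V_{2, n-1} = f(W \oplus G(J))$.
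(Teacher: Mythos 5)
Your proposal is correct and supplies the detail that the paper leaves implicit: the paper states the proposition with no proof beyond the remark that Proposition~\ref{prop:TRR} ``immediately translates into'' these relations, and the expansion-of-a-$\psi$-factor followed by restriction to the boundary divisors of Corollary~\ref{cor:closure-of-divisor}, verification of the pullback behaviour of the line bundles $\calL_k$, and Fubini is precisely the intended argument. Your identification of the trees contributing in each case (including that the constraint $m-l<n-1$ in~\eqref{TRR-root} excludes exactly the degenerate pair $(l,m)=(1,n)$) and the observation that neither $\psi_\star$ on the outer factor nor the inner root class arises in the pullback, which accounts for the $\tau_0$'s on the right-hand sides, are all in order.
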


\begin{remark}
The relations between the correlators from Proposition~\ref{prop:abstract-correlators} can also be used to obtain a different presentation of the operad $\ncHyperCom$ via ``abstract correlators''. Namely, it can be shown that the operad $\ncHyperCom$ is isomorphic to the ns operad $\calQ$ whose space of generators of arity $n\ge 2$ has, for each $n$, a basis  $\alpha^n_{d_0;d_1,\ldots,d_n}$ of degree $2n-4-2(d_0+d_1+\cdots+d_n)$, where $d_0,d_1,\ldots,d_n$ are nonnegative integers, subject to the following system of relations:
	\begin{itemize}
		\item (dimensionality)  for $d_0+d_1+\cdots+d_n>n-2$, we have 
		$\alpha ^n_{d_0;d_1,d_2,\ldots,d_n}=0$,
		\item (extreme input relations) for each $n,d_0,d_1,\ldots,d_n$, we have
		 \[
		\alpha ^n_{d_0;d_1+1,d_2,\ldots,d_n}=0 \quad\text{ and }\quad
		\alpha^n_{d_0;d_1,\ldots,d_{n-1},d_n+1}=0,
		\]
		\item (non-extreme input relations) for each $i=2,\ldots,n-1$, we have
		\begin{gather*}
		\alpha^n_{d_0;d_1,\ldots,d_{i-1},d_i+1,d_{i+1},\ldots,d_n}=
		\sum_{1\le k\le i-1}\alpha^{n-i+k}_{d_0;d_1,\ldots,d_{k-1},0,d_{i+1},\ldots,d_n} \circ_{k}\alpha^{i-k+1}_{0; d_k,\ldots, d_i},\\
		\alpha^n_{d_0;d_1,\ldots,d_{i-1},d_i+1,d_{i+1},\ldots,d_n}=
		\sum_{i+1\le k\le n}\alpha^{n-i+k}_{d_0;d_1,\ldots,d_{i-1},0,d_{k+1},\ldots,d_n} \circ_{i}\alpha^{i-k+1}_{0; d_i,\ldots, d_k},
		\end{gather*}
		\item (output relations) for each $i=1,\ldots,n-1$, we have
		\begin{gather*}
		\alpha^n_{d_0+1;d_1,\ldots,d_n}=
		\sum_{\substack{1\le k\le i, i+1\le l\le n\\ l-k<n-1}} \alpha^{n-l+k}_{d_0;d_1,\ldots,d_{k-1},0,d_{l+1},\ldots,d_n} \circ_{k}\alpha^{l-k+1}_{0; d_k,\ldots, d_l} .
		\end{gather*}
	\end{itemize}
Geometrically, 	the class $\alpha^n_{d_0;d_1,\ldots,d_n}$ is the Poincar\'e dual of the class $\psi_0^{d_0}\cdots\psi_n^{d_n}$ on $\calB_{\mathbb{C}}(n)$. 
\end{remark}

Relations of Proposition~\ref{prop:abstract-correlators} also allow to compute all correlators explicitly.  

\begin{proposition}\label{prop:nc-correlators}
	We have the following expression for the generating function of correlators: 
	\[
	\sum_{d_0+\cdots+d_n=n-2}\langle \tau_{d_0}\cdots\tau_{d_n}\rangle t_0^{d_0}\cdots t_n^{d_n}=(t_0+t_2)(t_0+t_3)\cdots(t_0+t_{n-1}).
	\]
	In other words, each correlator is equal to zero or one, and nonzero correlators are those for which $d_1=d_n=0$, and $d_i\le 1$, for $2\le i\le n-1$. 
\end{proposition}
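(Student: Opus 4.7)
My plan is to proceed by strong induction on $n$, with base case $n=2$ immediate since $\calB_{\mathbb{C}}(\underline{2})$ is a point, giving $\langle \tau_0 \tau_0 \tau_0\rangle = 1$ while the RHS is the empty product $1$. The first observation is that the boundary conditions $V_{1,1}=G(1,2)$ and $V_{n,n}=G(n-1,n)$ in Definition~\ref{def:brick-manifold-collection} trivialize the line bundles $\calL_1$ and $\calL_n$ of Definition~\ref{def:psi-classes}, so $\psi_1=\psi_n=0$; hence any nonzero correlator must have $d_1=d_n=0$, matching the absence of $t_1$ and $t_n$ on the right-hand side.

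For the inductive step, I distinguish two cases. When all interior exponents vanish, i.e.\ $d_2=\cdots=d_{n-1}=0$ and $d_0=n-2$, I apply \eqref{TRR-root} at $i=1$: the sum is indexed by $l=1$ and $2\le m\le n-1$, and the dimensional constraint on the second factor $\langle\tau_0\tau_0\cdots\tau_0\rangle_{\calB_{\mathbb{C}}(\underline{m})}$ (degree $0$ versus dimension $m-2$) isolates the unique contribution $m=2$, reducing the correlator to $\langle\tau_{n-3}\tau_0\cdots\tau_0\rangle_{\calB_{\mathbb{C}}(\underline{n-1})}=1$ by induction.

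When some $d_i>0$ with $i\in\{2,\ldots,n-1\}$, I let $i^*$ be the smallest such index (so $d_1=\cdots=d_{i^*-1}=0$) and apply \eqref{TRR-3pts-1} at position $i^*$. In each summand indexed by $l\in\{1,\ldots,i^*-1\}$, the second factor $\langle\tau_0\tau_0\cdots\tau_0\tau_{d_{i^*}-1}\rangle_{\calB_{\mathbb{C}}(\underline{i^*-l+1})}$ is controlled by the inductive hypothesis: the last-position vanishing forces $d_{i^*}=1$, while the dimensional constraint forces $l=i^*-1$. Hence either $d_{i^*}\ge 2$ (correlator vanishes) or $d_{i^*}=1$, in which case the sole surviving summand gives $\langle\tau_{d_0}\tau_0\cdots\tau_0\tau_{d_{i^*+1}}\cdots\tau_{d_n}\rangle_{\calB_{\mathbb{C}}(\underline{n-1})}$, to which the inductive hypothesis applies and yields the claimed value.

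The main obstacle is the careful bookkeeping of which summand of each TRR actually contributes: one must combine the dimension of the secondary factor $\calB_{\mathbb{C}}(\underline{m})$ with the inductive vanishing of boundary exponents to pin down a unique surviving term among the a priori many. Once this is in place, the induction closes, recovering both the vanishing outside the range $d_i\in\{0,1\}$ for interior $i$ and the value $1$ for every monomial appearing in the product $\prod_{i=2}^{n-1}(t_0+t_i)$.
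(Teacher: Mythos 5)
Your proof is correct and follows the same strategy as the paper's: induction on $n$, using the vanishing of $\psi_1$ and $\psi_n$, the topological recursion relation at the smallest interior index with nonzero exponent to kill $d_i\ge 2$ and reduce $d_i=1$ to arity $n-1$, and the root recursion relation for the all-zero-interior case. The only difference is cosmetic --- you merge the paper's separate $d_i\ge 2$ and $d_i=1$ cases into one by explicitly tracking which summand of the TRR survives dimension constraints, which makes the ``for degree reason'' bookkeeping more transparent but does not change the argument.
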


\begin{proof}
Let us prove this result by induction on $n$. First, for $d_1>0$ or $d_n>0$, the correlator is equal to zero because the corresponding $\psi$-class is equal to zero. Second, if we assume that $d_i\ge 2$, we may use Equation \eqref{TRR-3pts-1} and rewrite the correlator as a sum of products of correlators 
 \[
\langle\tau_{d_0}\cdots\tau_{d_{i-1}}\tau_{0}\tau_{d_{i+1}}\cdots\tau_{d_n}\rangle\langle\tau_0\tau_{d_l}\cdots\tau_{d_i-1}\rangle \ ,
 \]
which vanishes since $d_i-1>0$. Next, if we assume that $d_1=\ldots=d_{i-1}=0$, and $d_i=1$, then using the same equation, we note that, for degree reason, the correlator is equal to the product of a similar correlator for $n-1$, and the correlator $\langle \tau_0^3\rangle=1$.  Finally, if we assume that $d_1=d_2=\cdots=d_n=0$, and $d_0=n-2$, we may apply Equation \eqref{TRR-root} for $i=1$. For homological degree reason, the non-root corolla must be binary, and so the correlator is equal to the product of a similar correlator for the arity one less with the correlator $\langle \tau_0^3\rangle=1$. The statement follows by induction. 
\end{proof}

\subsection{Givental group action via intersection theory}

\subsubsection{Noncommutative CohFTs}

In the classical situation,  hypercommutative algebras can be vi\-ew\-ed as cohomological field theories, \emph{\`a la}  Kontsevich--Manin \cite{KontsevichManin94}. We use that as a motivation for the following definition.

\begin{definition}[tree level ncCohFT]
Let  $A$ be a  graded vector space. An $A$-valued \emph{tree level noncommutative cohomological field theory} (ncCohFT) is defined as a system of classes \[\alpha_n\in H^\bullet(\calB_{\mathbb{C}}(\underline{n}))\otimes\End_A(n)\] of total degree $0$ satisfying the following \emph{factorisation property}: the pullbacks via the mappings $\circ_i \colon \calB_{\mathbb{C}}(\underline{k})\times\calB_{\mathbb{C}}(\underline{l}) \to \calB_{\mathbb{C}}(\underline{k+l-1})$ produce the composition of the multilinear maps at the slot $i$:
 \[
(\circ_i)^*\alpha_{k+l-1}=\alpha_{k}\tilde{\circ}_i \alpha_{l}\ , 
 \]
where $\tilde{\circ}_i$ combines the composition in the endomorphism operad and the K\"unneth isomorphism. 
\end{definition}

\begin{proposition}
The datum of a $\ncHyperCom$-algebra on a graded vector space $A$ is equivalent to the datum of an $A$-valued tree level ncCohFT. In particular, the datum of an associative algebra on a graded vector space $A$ is equivalent to the datum of an $A$-valued tree level ncCohFT with all the classes $\alpha_n$ concentrated in  $H^0(\calB_{\mathbb{C}}(n))$.
\end{proposition}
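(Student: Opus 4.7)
The strategy is to recast both sides of the claimed equivalence as morphisms of ns operads $H_\bullet(\calB_{\mathbb{C}})\to\End_A$, exploiting Theorem~\ref{th:homology-ncHyperCom}. First, a $\ncHyperCom$-algebra on $A$ is by definition a morphism of graded ns operads $\rho\colon\ncHyperCom\to\End_A$, which by Theorem~\ref{th:homology-ncHyperCom} is equivalent to a collection of degree-preserving linear maps $\rho_n\colon H_\bullet(\calB_{\mathbb{C}}(\underline{n}))\to\End_A(n)$ satisfying
\[
\rho_{k+l-1}\bigl((\circ_i)_*(x\otimes y)\bigr)=\rho_k(x)\circ_i\rho_l(y),
\]
where $(\circ_i)_*$ is the map induced on the homology operad by the topological composition $\circ_i\colon\calB_{\mathbb{C}}(\underline{k})\times\calB_{\mathbb{C}}(\underline{l})\to\calB_{\mathbb{C}}(\underline{k+l-1})$.

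On the other hand, because each $\calB_{\mathbb{C}}(\underline{n})$ is a smooth projective variety, its cohomology is finite-dimensional and pairs perfectly with its homology; in particular, one has a canonical isomorphism of graded vector spaces
\[
\bigl(H^\bullet(\calB_{\mathbb{C}}(\underline{n}))\otimes\End_A(n)\bigr)_0\;\cong\;\Hom\bigl(H_\bullet(\calB_{\mathbb{C}}(\underline{n})),\End_A(n)\bigr)_0
\]
sending a ncCohFT class $\alpha_n$ of total degree~$0$ to the map $\rho_n$ obtained by pairing its cohomological factor with homology classes. The essential step is to verify that under this duality the factorisation property $(\circ_i)^*\alpha_{k+l-1}=\alpha_k\,\tilde{\circ}_i\,\alpha_l$ translates exactly into the operadic compatibility of the $\rho_n$ displayed above. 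This follows from two standard facts: on the one hand, $(\circ_i)^*$ on cohomology is transpose to $(\circ_i)_*$ on homology, and on the other hand the K\"unneth formula identifies $H^\bullet(\calB_{\mathbb{C}}(\underline{k})\times\calB_{\mathbb{C}}(\underline{l}))$ with $H^\bullet(\calB_{\mathbb{C}}(\underline{k}))\otimes H^\bullet(\calB_{\mathbb{C}}(\underline{l}))$, so that the right-hand side $\alpha_k\,\tilde{\circ}_i\,\alpha_l$ unpacks precisely to the operadic composition in $\End_A$ of the multilinear maps dual to $\alpha_k$ and $\alpha_l$. I expect this dualisation step, together with the careful tracking of the grading convention making "total degree~$0$" equivalent to "degree-preserving morphism", to be the main technical obstacle; once it is done, the two preceding bullet points are the same datum, giving the first equivalence.

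For the second statement, observe that under the above duality, concentrating $\alpha_n$ in $H^0(\calB_{\mathbb{C}}(\underline{n}))$ amounts to requiring that $\rho_n$ factor through the $H_0$-component of $H_\bullet(\calB_{\mathbb{C}}(\underline{n}))$, that is, through the degree-zero part of $\ncHyperCom$. An inspection of the presentation of $\ncHyperCom$ shows that $\nu_2$ is its only generator of degree $0$, and that specialising relation~\eqref{eq:ncWDVV} to $n=3$, $i=2$ gives the associativity relation $\nu_2\circ_1\nu_2=\nu_2\circ_2\nu_2$; since all iterated compositions of $\nu_2$ are thereby identified and since $\dim H_0(\calB_{\mathbb{C}}(\underline{n}))=1$, this suboperad is the ns associative operad $\As$. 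Consequently, a ncCohFT with classes concentrated in $H^0$ is the same as a morphism of ns operads $\As\to\End_A$, namely an associative algebra structure on~$A$, which completes the proof.
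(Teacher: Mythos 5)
Your proposal is correct and fills in the details of what the paper dispatches with ``direct inspection.'' The key steps — recasting a $\ncHyperCom$-algebra as an operad morphism $H_\bullet(\calB_{\mathbb{C}})\to\End_A$ via Theorem~\ref{th:homology-ncHyperCom}, identifying $\bigl(H^\bullet(\calB_{\mathbb{C}}(\underline{n}))\otimes\End_A(n)\bigr)_0$ with $\Hom\bigl(H_\bullet(\calB_{\mathbb{C}}(\underline{n})),\End_A(n)\bigr)_0$ by the perfect pairing over the base field, checking that the factorisation property dualises to operadic compatibility, and observing that $\dim H_0(\calB_{\mathbb{C}}(\underline{n}))=1$ together with the $n=3$ instance of \eqref{eq:ncWDVV} identifies the degree-zero suboperad with $\As$ — all match the intended argument and are accurate.
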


\begin{proof}
This is proved by a direct inspection. 
\end{proof}

\begin{remark}
In the classical case, tree level CohFTs are algebras over the genus $0$ part of a larger structure, the modular operad $\{\overline{\calM}_{g,n}\}$. The operad $\ncHyperCom$ is not cyclic, so one should not expect that to happen in this situation: as we argued in the Introduction, ``in the noncommutative world, only genus~$0$ is visible''.
\end{remark}

\subsubsection{Formula for the Givental action} 

\begin{definition}
The Givental--Lee Lie algebra action on $\ncHyperCom$-algebras is the action of the Lie algebra $\End(A)[[z]]$, where $z$ is a formal parameter of degree~$2$, on the corresponding $A$-valued tree level ncCohFTs, given by the formula
\begin{multline}\label{Givental-action-genus-0}
(\widehat{r_k z^{k}}.\{\alpha_n\})_n=
(-1)^{k-1}r_k\circ_1\alpha_{n}\cdot\psi_0^k+\sum_{m=1}^n\alpha_{n}\cdot\psi_m^k\circ_m r_k+\\+
\sum_{\substack{1\le p<q\le n, q-p<n-1,\\ i+j=k-1}}(-1)^{i+1}\,(\alpha_{n-q+p}\cdot\psi_p^j) \tilde{\circ}_p  (r_k \circ_1\alpha_{q-p+1}\cdot \psi_0^i).
\end{multline}
The operation $\tilde{\circ}_p$ in the last sum combines the composition in the endomorphism operad and the K\"unneth isomorphism.
\end{definition}

Note that this formula makes sense for all $k\ge 0$. 

\begin{proposition}
The formulas \eqref{Givental-action-genus-0} give a well defined 
Lie algebra action on $A$-valued ncCohFTs: they provide a Lie algebra homomorphism from the Lie algebra $\End(A)[[z]]$ from the Lie algebra of infinitesimal automorphisms of $\ncHyperCom$-algebra structures.
\end{proposition}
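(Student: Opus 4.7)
The plan is to establish the two properties encoded in the statement separately: first, that each operator $\widehat{r_k z^k}$ is an infinitesimal automorphism of the ncCohFT structure, i.e.\ that the formula \eqref{Givental-action-genus-0} preserves the factorisation property to first order in the deformation parameter; and second, that the assignment $r_k z^k\mapsto\widehat{r_k z^k}$ is a Lie algebra homomorphism. Both verifications reduce to intersection-theoretic identities on the brick manifolds established earlier in this section.

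For the first property, I would compute the pullback $(\circ_i)^*\bigl((\widehat{r_k z^k}.\alpha)_{k+l-1}\bigr)$ along an operadic composition $\circ_i\colon \calB_{\mathbb{C}}(\underline{k})\times \calB_{\mathbb{C}}(\underline{l})\to \calB_{\mathbb{C}}(\underline{k+l-1})$ and compare it, term by term, with the expected Leibniz expression $(\widehat{r_k z^k}.\alpha)_k\,\tilde\circ_i\,\alpha_l+\alpha_k\,\tilde\circ_i\,(\widehat{r_k z^k}.\alpha)_l$. The key ingredients are the pullback rules for the noncommutative $\psi$-classes under $\circ_i$: the root class $\psi_0$ on the target pulls back to the root class on the first factor plus a boundary contribution, an input class $\psi_m$ pulls back to the corresponding $\psi$-class on the factor carrying the $m$-th input plus an excess-intersection correction, and the fundamental class of the boundary divisor associated to $\circ_i$ itself pulls back, by the excess intersection formula already proved, to $-\psi'-\psi''$. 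Combined with the topological recursion relations of Proposition \ref{prop:TRR}, these identities turn the pullback of the root term $r_k\circ_1\alpha_{k+l-1}\cdot\psi_0^k$ and of the input terms $\alpha_{k+l-1}\cdot\psi_m^k\circ_m r_k$ into exactly the corresponding contributions on the two factors plus the correction sum in \eqref{Givental-action-genus-0}. Put differently, the combinatorial role of that third term is precisely to absorb the excess-intersection contributions that arise when $r_k$ acts on one factor while a $\psi$-class is pulled back across the boundary divisor.

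For the Lie algebra homomorphism property, the strategy is a direct calculation of the commutator $[\widehat{r_k z^k},\widehat{s_l z^l}].\alpha$ using \eqref{Givental-action-genus-0}, pairing each of the three types of terms produced by one operator with each of the three types produced by the other. Most pairings cancel in antisymmetric pairs because the insertions of $r_k$, $s_l$, and the various $\psi$-classes occur at independent slots and at different $z$-levels; the surviving contributions, collected by total $z$-degree, reassemble into the formula for $\widehat{[r_k,s_l]\,z^{k+l}}.\alpha$, in complete analogy with the classical Givental computation on CohFTs.

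The main obstacle I expect is the combinatorial bookkeeping required to match the pullbacks of $\psi^k$-classes and boundary divisors with the specific shape and signs of the correction term in \eqref{Givental-action-genus-0}. More concretely, an induction on $k$ is needed to expand $(\circ_i)^*\psi_j^k$ as a sum of products of $\psi$-classes on the two factors plus excess-intersection corrections, and one must simultaneously track the factors $(-1)^{k-1}$ and $(-1)^{i+1}$ explicit in \eqref{Givental-action-genus-0} together with the Koszul signs coming from moving the even-degree $\psi$-insertions past the potentially odd-degree generators $r_k$ and odd-degree pieces of $\alpha_n$. Once this combinatorial identity is in place, both the well-definedness of the action and the Lie algebra homomorphism property follow essentially by inspection.
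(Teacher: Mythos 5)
Your overall plan---verifying that the formula respects the factorisation property via intersection theory on brick manifolds, then checking the bracket by a direct commutator calculation---is the argument the paper has in mind. The paper compresses its proof into a one-line citation to the classical CohFT references (Faber--Shadrin--Zvonkine on the level of correlators; Kazarian, Pandharipande--Pixton--Zvonkine on the level of classes), with the observation that those proofs use nothing beyond the excess intersection formula, which has just been established for brick manifolds.

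One detail in your sketch is however incorrect and would derail a literal execution. Under a boundary inclusion $\circ_i$, the classes $\psi_0$ and $\psi_m$ pull back \emph{without} any boundary-divisor or excess-intersection correction: the root and the marked-point inputs are distinct from the node created by the gluing, so $(\circ_i)^*\psi_0$ is simply the root class on the factor carrying the root, and $(\circ_i)^*\psi_m$ is the corresponding $\psi$-class on the factor carrying the $m$-th input. (In the brick-manifold picture, $\psi_0$ and $\psi_m$ are Chern classes of line bundles built from $G(\underline{n})/V_{2,n-1}$ and $V_{m,m}^*$, and these subspaces are determined by the configuration on one of the two factors.) The corrections you are trying to locate enter only when powers $\psi_j^k$ are expanded via the topological recursion relations of Proposition~\ref{prop:TRR} into sums of boundary divisor classes, and those divisors are then pulled back using the excess intersection formula; this is exactly what the third sum in~\eqref{Givental-action-genus-0} is designed to absorb. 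With the correct pullback rules the bookkeeping closes as you expect; with the spurious corrections on $\psi_0$ and $\psi_m$ it does not.
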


\begin{proof} We can repeat the same argument as in~\cite{FaberShadrinZvonkine} on the level of correlators or in~\cite{Kazarian,PandharipandePixtonZvonkine} on the level of classes, since this argument uses nothing but the excess intersection formula that we have as well. 
\end{proof}

Let us denote by $\tau^{(k)}_{n}\in\End_A(n)$ the value of the element $(-1)^{k-1}(\widehat{rz^k}.\alpha)_n$ on the fundamental cycle of $\calB_{\mathbb{C}}(\underline{n})$, and by $\nu_n$ the value of the element $\alpha_n$ on the fundamental cycle of $\calB_{\mathbb{C}}(\underline{n})$. We shall prove the following recurrence relation.
 
\begin{proposition}\label{prop:recurrence-Givental}
The infinitesimal deformations $\tau^{(k+1)}_n$ of the fundamental classes by the Givental action satisfy, for each $k\ge 0$, the recurrence relation
\begin{equation}\label{recursion-Givental}
\tau^{(k+1)}_{n}=\sum_{I\subsetneq\underline{n}}\sum_{j=1}^{n-|I|+1}\left(\frac{|I|-1}{n-1}\tau^{(k)}_{n-|I|+1}\circ_j\nu_{|I|}-\frac{n-|I|}{n-1}\nu_{n-|I|+1}\circ_j\tau^{(k)}_{|I|}\right)\ ,
\end{equation}
where the sum runs over all intervals $I$ of $\underline{n}$ of cardinality at least two.
\end{proposition}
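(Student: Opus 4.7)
The plan is to evaluate the Givental action formula~\eqref{Givental-action-genus-0} (with $k$ replaced by $k+1$) on the fundamental class $[\calB_{\mathbb{C}}(\underline n)]$, apply the topological recursion relations of Proposition~\ref{prop:TRR} to reduce one power of $\psi_0$ in the leading term $\alpha_n\cdot\psi_0^{k+1}$ to a sum of boundary divisors, and then use the factorization property of the ncCohFT on each such divisor to re-assemble the resulting integrals into the combination of $\tau^{(k)}$'s and $\nu$'s appearing on the right-hand side of~\eqref{recursion-Givental}.

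A useful preliminary observation is that $\psi_m^2=0$ in $H^\bullet(\calB_{\mathbb{C}}(\underline n))$ for every $m\in\{1,\ldots,n\}$: by Proposition~\ref{prop:TRR} the class $\psi_m$ is a sum of boundary divisors $[\overline{\calB(\underline n,T_{l,m})}]$, and by Corollary~\ref{cor:closure-of-divisor} on any such divisor the subspace $V_{m,m}$ becomes the fixed coordinate line $G(m-1,m)$, so $\calL_m=V_{m,m}^*$ is trivial there and $\psi_m$ restricts to zero. Consequently, all leaf $\psi^{k+1}$-contributions to $\tau^{(k+1)}_n$ vanish for $k\geq 1$, and each middle boundary piece of~\eqref{Givental-action-genus-0} survives only when the leaf $\psi_p$-power is at most one; this is consistent with the structure of the right-hand side of~\eqref{recursion-Givental}, which carries no residual leaf-$\psi$ classes.

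For the surviving root $\psi_0^{k+1}$-contribution, the crucial step is to exploit the freedom in the TRR for $\psi_0$: since $\psi_0=\sum_{l\le i<r}[\overline{\calB(\underline n,T_{l,r})}]$ holds for every $i\in\{1,\ldots,n-1\}$, averaging over~$i$ yields
\[
\psi_0=\frac{1}{n-1}\sum_{1\le l<r\le n}(|I|-1)\,[\overline{\calB(\underline n,T_{l,r})}],\qquad I:=\{l,\ldots,r\},
\]
whose weights are exactly the coefficients $(|I|-1)/(n-1)$ appearing in~\eqref{recursion-Givental}. On each divisor $\overline{\calB(\underline n,T_{l,r})}\cong\calB((\underline n\setminus I)\sqcup\{\star\})\times\calB(I)$ (Corollary~\ref{cor:closure-of-divisor}), the factorization property gives $\alpha_n|_{\ldots}=\alpha_{I_c}\tilde\circ_\star\alpha_I$, and inspection of Definition~\ref{def:psi-classes} shows that the $\psi_0$-class of the big space pulls back to the $\psi_0$-class of the root factor $\calB(I_c)$. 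The integral therefore splits as the product of a correlator on $\calB(I_c)$ with $\nu_I$, and re-packaging this output produces the first sum of~\eqref{recursion-Givental}. The complementary family $-\tfrac{n-|I|}{n-1}\,\nu_{n-|I|+1}\circ_j\tau^{(k)}_{|I|}$ then emerges from matching the middle boundary pieces of the Givental formula at level $k+1$ (restricted by $\psi_p^2=0$ to $j\in\{0,1\}$) with the analogous middle boundary pieces already sitting inside each $\tau^{(k)}$ on the right.

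The main obstacle will be the combinatorial and sign bookkeeping required to recover the complementary coefficient $(n-|I|)/(n-1)$. A convenient way to organise the computation is to stratify both sides of~\eqref{recursion-Givental} according to the boundary divisors $\overline{\calB(\underline n,T_{l,r})}$ and on each stratum to expand every contribution as a tensor product of correlators on the two factors; the required equality then reduces, after matching the alternating signs $(-1)^{i+1}$ produced by the middle boundary pieces of the Givental formula at two consecutive levels, to the elementary identity $(|I|-1)+(n-|I|)=n-1$. A direct induction on $k$, with the base case $k=0$ being an explicit check against the Givental formula, then finishes the argument.
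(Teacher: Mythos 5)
Your overall strategy is the right one and, so far as the paper's own proof goes, it is the \emph{same} strategy: the paper's proof is a one-line pointer to the analogous commutative computation, ``combining topological recursion relations from Proposition~\ref{prop:TRR} in a somewhat imaginative way,'' and your plan---evaluate the Givental formula at level $k+1$ on the fundamental class, trade one power of $\psi_0$ for a sum of boundary divisors via the TRR, and factorize---is precisely how that cited argument runs. Your two auxiliary facts are correct: $\psi_m^2=0$ for $1\le m\le n$ does follow because each divisor in the TRR expression for $\psi_m$ fixes $V_{m,m}$ to be the coordinate line, trivializing $\calL_m$; and $\psi_0$ does restrict to the $\psi_0$-class of the root factor on every boundary divisor, which one can check directly from Definition~\ref{def:operadic-composition} and Definition~\ref{def:psi-classes}. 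Most importantly, the observation that averaging the TRR for $\psi_0$ over $i\in\{1,\ldots,n-1\}$ produces exactly the coefficient $(|I|-1)/(n-1)$ in front of the divisor $\overline{\calB(\underline n,T_{l,r})}$ with $I=\{l,\ldots,r\}$ is correct and is the mechanism that explains these fractions. That is the real content of the ``somewhat imaginative'' combination of TRRs, and you have identified it.

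Where the argument is still incomplete is the ``re-packaging.'' After applying the averaged TRR and the factorization property to $\alpha_n\psi_0^{k+1}$, what one gets on the divisor indexed by $I$ is
$\bigl(\int_{\calB(I_c)}\alpha_{I_c}\psi_0^{\,k}\bigr)\circ\nu_{|I|}$,
and $\int\alpha_{I_c}\psi_0^{\,k}$ is only the leading piece of $\tau^{(k)}_{|I_c|}$: the full $\tau^{(k)}$ also contains the leaf-$\psi$ and middle boundary contributions of the Givental formula at level $k$. To obtain the closed recursion~\eqref{recursion-Givental}, one must show that these missing pieces of $\tau^{(k)}_{|I_c|}\circ\nu_{|I|}$, together with the entire second family $-\tfrac{n-|I|}{n-1}\nu_{n-|I|+1}\circ\tau^{(k)}_{|I|}$, are exactly produced by the middle boundary terms of the Givental formula at level $k+1$ (where, as you note, $\psi_p^2=0$ confines the leaf $\psi$-power to $0$ or $1$). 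You gesture at this but do not carry it through, and the claim that the matching ``reduces to $(|I|-1)+(n-|I|)=n-1$'' is not justified as stated: that identity only says the two coefficients sum to $1$, which does not by itself pin down the second coefficient or the sign $(-1)^{i+1}$ bookkeeping. So the proposal correctly identifies the key lemma (the averaged TRR) and the right decomposition, but the final assembly is asserted rather than proved, and the proposed inductive scheme is not spelled out well enough to see that it closes. This leaves it at roughly the same level of detail as the paper's own remark, rather than supplying the missing computation.
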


\begin{proof}
The proof is analogous to \cite[Lemma 2]{DSV2013b}. In a nutshell, the proof amounts to combining topological recursion relations from Proposition~\ref{prop:TRR} in a somewhat imaginative way.
\end{proof}

\subsection{Givental group action via gauge symmetries}

This section follows the general plan of  \cite{DSV2013b} in order to prove
that the Givental action is equal to the action of gauge symmetries in a certain homotopy Lie algebra. Let us assume that $A$ is a chain complex with zero differential. 

The deformation retract 
 \[
\xymatrix{ *{\quad \qquad\qquad \qquad \big(\overline{\qncBV^{\ac}}, \delta^{-1}\otimes d_\psi\big)\ } \ar@(dl,ul)[]^{\delta\otimes H}\ \ar@<0.5ex>[r]^-{\mathrm{P}} & *{\ \big(\overline{T}^c(\delta)\oplus(\overline{\calS^2\ncGrav})^*, 0\big)  }  \ar@<0.5ex>[l]^-{\mathrm{I}}} 
 \]	
from Theorem~\ref{th:BVDefRetract} can be extended, by considering the whole ns cooperad $\qncBV^{\ac}$ instead of its coaugmentation coideal $\overline{\qncBV^{\ac}}$, to a deformation retract
 \[
\xymatrix{ *{\quad \qquad\qquad \qquad \big(\qncBV^{\ac}, \delta^{-1}\otimes d_\psi\big)\ } \ar@(dl,ul)[]^{\delta\otimes H}\ \ar@<0.5ex>[r]^-{\mathrm{P}} & *{\ \big(\calI\oplus\overline{T}^c(\delta)\oplus(\overline{\calS^2\ncGrav})^*, 0\big) \ . }  \ar@<0.5ex>[l]^-{\mathrm{I}}} 
 \]	

It is straightforward to check that this extension changes the transferred homotopy cooperad structure only marginally. More precisely, there are extra cooperad decomposition maps (decomposition into two pieces) that decompose every element $\phi$ of $\calI\oplus\overline{T}^c(\delta)\oplus(\overline{\calS^2\ncGrav})^*(n)$ as 
 \[
\sum_{m=1}^n \phi \circ_m \id +\id \circ_1 \phi\ , 
 \]
where $\id$ denotes the only basis element of $\calI$,  while all the strictly higher structure maps of the homotopy cooperad structure remain the same, and vanish on $\calI$. 

This homotopy cooperad leads to the convolution $L_\infty$-algebra 
\[
\mathfrak{l}_{\ncBV}=\Hom\big(\calI\oplus\overline{T}^c(\delta)\oplus(\overline{\calS^2\ncGrav})^*(n), \End_A\big) \cong \End(A)[[z]] \oplus \g_{\ncHyperCom} \ .
\]
Here $z$ is a formal parameter of degree $-2$, 
$$\g_{\ncHyperCom} := \Hom\big((\overline{\calS^2\ncGrav})^*(n), \End_A\big)$$
is the convolution Lie algebra  controlling the deformations of $\ncHyperCom_\infty$-algebra structures on $A$, and the direct sum decomposition on the right-hand side is a nontrivial extension of $L_\infty$-algebras. The deformation retract above leads to a deformation retract
\begin{equation}
\label{eq:LInftyDefRetract}
\xymatrix{ *{\quad \qquad\qquad \qquad\quad \big(\mathfrak{g}_{\ncBV}, (\delta^{-1}\otimes d_\psi)^*\big)\ } \ar@(dl,ul)[]^{h}\ \ar@<0.5ex>[r]^-{p} & *{\ (\mathfrak{l}_{\ncBV}, 0) \ , }  \ar@<0.5ex>[l]^-{i}}  
\end{equation}
where $\mathfrak{g}_{\ncBV}$ is the convolution dg Lie algebra 
$\Hom(\qncBV^{\ac},\End_A)$, $h=(\delta\otimes H)^*$, $i=\mathrm{I}^*$, $p=\mathrm{P}_*$. The structure maps of $\mathfrak{l}_{\ncBV}$ are obtained by homotopy transfer formulas for $L_\infty$-algebras. 

Any Maurer--Cartan element $\alpha$ in $\g_{\ncHyperCom}$, in particular a Maurer--Cartan element representing a ``strict'' $\ncHyperCom$-algebra, is also a Maurer-Cartan element in  $\mathfrak{l}_{\ncBV}$. We shall deform $\alpha$ in the ``transversal'' direction. Let $r(z)=\sum_{l\ge 0} r_l z^l$ be a degree $0$ element of~$\End(A)[[z]]$. The general definition of gauge symmetries in homotopy Lie algebras~\cite{Get09} implies that the image $\ell^\alpha_1(r(z))$ of $r(z)$ under the twisted map
 \[
\ell^\alpha_1(x):=\sum_{p\ge 0} \frac{1}{p!} \ell_{p+1}(\underbrace{\alpha, \ldots, \alpha}_p, x)\ , 
 \]
 is an infinitesimal deformation of $\alpha$. 

\begin{theorem}\label{thm:Giv=LinftyAction}
For any $\ncHyperCom$-algebra structure on $A$ encoded by a Maurer--Cartan element $\alpha\in\g_{\ncHyperCom}$ and for any degree $0$ element $r(z)\in\End(A)[[z]] $, the Givental action of $r(z)$ on $\alpha$ is equal to the gauge symmetry action of $r(z)$ on $\alpha$ viewed as a homotopy $\ncBV$-algebra structure:
	\[\widehat{r(z)}.\alpha=\ell^\alpha_1(r(z))\ . \]
\end{theorem}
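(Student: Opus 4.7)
The plan is to adapt the strategy of \cite[Th.~3]{DSV2013b} to our noncommutative setting. By linearity it suffices to fix $k\geq 0$ and to treat the element $r_kz^k\in\End(A)[[z]]$ of degree $0$. The idea is to identify two recursion procedures: the one satisfied by the Givental deformation, given in Proposition~\ref{prop:recurrence-Givental}, and an analogous recursion satisfied by the gauge symmetry action $\ell_1^\alpha(r_kz^k)$ obtained by homotopy transfer along the deformation retract~\eqref{eq:LInftyDefRetract}. Since both deformations start at the same value when $k=0$ (as a direct inspection shows), once we prove they satisfy the same recurrence, we will conclude by induction on~$k$.

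First, I would unpack the transferred $L_\infty$-structure on $\mathfrak{l}_{\ncBV}$. By the Van der Laan--Berglund formula, the bracket $\ell_{p+1}(\underbrace{\alpha,\ldots,\alpha}_{p},r_kz^k)$ is computed as a sum over planar trees with $p+1$ leaves, one of which is decorated by $r_kz^k$ and the remaining ones by $\alpha$, with internal edges weighted by the homotopy $\delta\otimes H$ and the output projected by $\mathrm{P}$. Using the explicit description of $H=\frac{1}{n-1}(\calS^2 D_1)^\ast$ coming from Proposition~\ref{prop:ncGrav=Ker}, together with the combinatorics of the coderivation $d_\psi$ acting on $\qncBV^{\ac}\cong T^c(\delta)\otimes\ncGerst^{\ac}$, one can reduce each such tree sum to an expression indexed by collections of intervals of $\underline{n}$.

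The second step is to identify these tree expressions with the topological recursion relations of Proposition~\ref{prop:TRR} on complex brick manifolds. Concretely, the homotopy $H$ implements precisely the ``substitution'' of a $\psi$-class by a boundary divisor, and the coefficients $\frac{|I|-1}{n-1}$ and $\frac{n-|I|}{n-1}$ arising in Proposition~\ref{prop:recurrence-Givental} come out of the normalisation factor $\frac{1}{n-1}$ built into $H$ when combined with the two sides of the excess intersection formula. After some bookkeeping of signs, which are controlled by the formal parameters $s$ and $\delta$ of degree~$1$, one obtains the recursion
\[
(-1)^{k}\ell_1^\alpha\big(r_kz^{k}\big)_n = \sum_{I\subsetneq\underline{n}}\sum_{j=1}^{n-|I|+1}\left(\tfrac{|I|-1}{n-1}\,\widetilde{\tau}^{(k-1)}_{n-|I|+1}\circ_j\nu_{|I|}-\tfrac{n-|I|}{n-1}\,\nu_{n-|I|+1}\circ_j\widetilde{\tau}^{(k-1)}_{|I|}\right),
\]
where $\widetilde{\tau}^{(k-1)}$ denotes the previous step of the gauge action and where the sum is over intervals of cardinality at least two.

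The remaining step is the induction: assuming $\widehat{r_{k-1}z^{k-1}}.\alpha=\ell_1^\alpha(r_{k-1}z^{k-1})$, the two recurrences coincide term by term, so we conclude that $\widehat{r_kz^k}.\alpha=\ell_1^\alpha(r_kz^k)$ as well. I expect the hardest part to be the careful combinatorial identification of the tree formulas for $\ell_{p+1}$ with the sum in \eqref{recursion-Givental}, in particular the verification that all signs and multiplicity factors coming from the coderivation $d_\psi$, the rescaled contracting homotopy $H=\frac{1}{n-1}(\calS^2D_1)^\ast$, and the Koszul suspension match those produced by intersection theory on the brick manifolds. This is the point where one genuinely uses the explicit description of the contracting homotopy from the proof of Proposition~\ref{prop:ncGrav=Ker}, and it is also where the structure of the noncommutative braid arrangement (as opposed to the classical braid arrangement in \cite{DSV2013b}) enters in a non-trivial way.
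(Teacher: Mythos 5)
Your proposal follows the same high-level strategy as the paper's proof and its commutative antecedent \cite[Th.~5]{DSV2013b}: express the transferred $L_\infty$-brackets as a sum over trees, show that the resulting operations satisfy the same recursion as the one recorded for the Givental action in Proposition~\ref{prop:recurrence-Givental}, and conclude by induction on the power of $z$ from the common initial value. That part of the plan is sound.

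However, you have omitted a necessary preliminary step, which the paper records as the noncommutative analogue of \cite[Lemma~1]{DSV2013b}: before the recursion argument can be applied, one must show that the gauge symmetry action $\ell^\alpha_1(r(z))$ actually lands in the subspace of $\mathfrak{l}_{\ncBV}$ parametrising deformations as strict $\ncHyperCom$-algebras, i.e.\ that the element $\ell^\alpha_1(r(z))\in\Hom\bigl(\calI\oplus\overline{T}^c(\delta)\oplus(\overline{\calS^2\ncGrav})^*,\End_A\bigr)$ is supported only on the cogenerators (and has no $\End(A)[[z]]$-component). This is what justifies the paper's remark ``those are the only nonzero contributions to the infinitesimal deformation'' and allows everything to be encoded by the arity-$n$ generator values $\theta^{(k)}_n$. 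Without this step, the recursion you derive only shows that the \emph{generator values} of the two sides agree; it does not rule out nonzero contributions of $\ell^\alpha_1(r(z))$ on the rest of $(\overline{\calS^2\ncGrav})^*$ (i.e., higher-homotopy terms), which would make the asserted identity $\widehat{r(z)}.\alpha=\ell^\alpha_1(r(z))$ false as an equality in $\g_{\ncHyperCom}$. You should prove this vanishing first, and only then run the induction.
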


\begin{proof}
The proof is analogous to  \cite[Th.~5]{DSV2013b}. Basically, there are two main steps that one has to undertake. First, it is possible to prove a version of \cite[Lemma 1]{DSV2013b}, showing that the infinitesimal gauge symmetry action of $r(z)$ on $\alpha$ produces an element which also corresponds to a $\ncHyperCom$-algebra. Next, one should consider, for each $n\ge 2$ and $k\ge 0$, the element $\theta^{(k)}_n\in\End_A(n)$ which is the result of evaluating  $\ell_{k+2}(rz^k,\alpha,\ldots,\alpha)$ on the element representing the $n$-ary generator of $\ncHyperCom$ (those are the only nonzero contributions to the infinitesimal deformation). Similarly to \cite[Lemma 3]{DSV2013b}, there is a recurrence relation that expresses, for each $k\ge0$, the operation $\theta^{(k+1)}$ via the operations $\theta^{(i)}$ with $i\le k$ and the operations $\nu_j$ (the noncommutative hypercommutative operations that are being deformed):
 \[
\theta^{(k+1)}_{n}=\sum_{I\subset\underline{n}}\sum_{j=1}^{n-|I|+1}\left(\frac{|I|-1}{n-1}\theta^{(k)}_{n-|I|+1}\circ_j\nu_{|I|}-\frac{n-|I|}{n-1}\nu_{n-|I|+1}\circ_j\theta^{(k)}_{|I|}\right)\ ,
 \]
where the sum runs over all intervals $I$ of $\underline{n}$ of cardinality at least two. According to Proposition~\ref{prop:recurrence-Givental}, the operations $\lambda^{(k)}_n$ satisfy the same recurrence relation, and same initial condition $\lambda^{(0)}_n=\theta^{(0)}_n$, therefore $\lambda^{(k)}_n=\theta^{(k)}_n$ for all~$k$.
\end{proof}

\section{Algebra and geometry around the \texorpdfstring{$\ncBV$}{ncBV}-algebras and the Givental action}\label{sec:applications}

In this section, we discuss two applications of the Givental action, and some results directly related to these applications. We use the Givental action to re-interpret the notion of the noncommutative order of a differential operator on a noncommutative algebra due to B\"orjeson \cite{Bor13}, and to establish an explicit quasi-isomorphism between the ns operad $\ncHyperCom$ and an explicit model for the homotopy quotient of $\ncBV$ by $\Delta$. The first of those results leads to a new type of algebras, a particular type of homotopy $\ncBV$-algebras where only the operator $\Delta$ is relaxed up to homotopy; these algebras are related to homotopy involutive infinitesimal bialgebras via a certain bar construction. The second one leads to a corresponding conjecture on the geometric level, which we support by exhibiting a different geometric model for the operad $\ncBV$.

\subsection{B\"orjeson products and the Givental action}

The purpose of this section is to relate the B\"orjeson products, which are noncommutative analogues of Koszul brackets \cite{Koszul85} defining the order of differential operators, to the intersection theory of brick manifolds.

\subsubsection{Differential operators on noncommutative algebras}\label{sec:DifOp}

In this section, we recall a definition of a differential operator of noncommutative order at most $l$ on an associative algebra prompted by results of B\"orjeson~\cite{Bor13} and Markl~\cite{Markl2014}.

\begin{definition}[\cite{Bor13,Markl2014}]
Let $(A,m)$ be a graded associative algebra, and let $D\colon A\to A$ be a linear operator. We define the sequence of \emph{B\"orjeson products} $b_n^D\colon A^{\otimes n}\to A$ as follows: 
\begin{gather*}
b_1^D:=D,\\
b_2^D:=D\circ_1m-m\circ_1D-m\circ_2D,\\
b_3^D:=D\circ_1m\circ_2m-m\circ_1(D\circ_1m)-m\circ_2(D\circ_1m)+m\circ_2(m\circ_1D),\\
b_n^D:=b_{n-1}^D\circ_2m \quad (n\ge 4). 
\end{gather*} 
\end{definition}

It is easy to see that the above recursive definition results in the following explicit formula for the higher B\"orjeson products:
\begin{equation}\label{eq:BorjesonExpl}
b_n^D=D\circ_1m^{(n-1)}-m\circ_1 (D\circ_1m^{(n-2)}) - m\circ_2 (D\circ_1m^{(n-2)})+
(m \circ_1 m)\circ_2 (D\circ_1m^{(n-3)}).
\end{equation} 

This implies the following result which shows that the substitution in the slot $2$ in the formula $b_n^D:=b_{n-1}^D\circ_2m$ above is somewhat coincidental, and there are recursive formulas for the higher B\"orjeson products which use any chosen slot. 

\begin{proposition}\label{prop:Borjeson-recursion}
We have the following formulas:
\begin{gather}
b_n^D=b_{n-1}^D\circ_1 m -m \circ_2  b_{n-1}^D , \label{eq:recur-first}\\
b_n^D=b_{n-1}^D\circ_i m, \quad 2\le i\le n-2 , \label{eq:recur-mid} \\
b_n^D=b_{n-1}^D\circ_{n-1} m -m \circ_1  b_{n-1}^D , \label{eq:recur-last}
\end{gather}
\end{proposition}

\begin{proof}
Direct computation using Formula \eqref{eq:BorjesonExpl}. 
\end{proof}

\begin{definition}[Noncommutative order of an operator~\cite{Bor13,Markl2014}]
Let $D$ be a linear map on an associative algebra $A$. It is said to be a \emph{differential operator on~$A$ of noncommutative order at most~$l$} if the B\"orjeson product $b_{l+1}^D$ is identically equal to zero. 
\end{definition}

\begin{remark}
This definition should remind the reader of the definition of differential operators on commutative algebras due to Koszul~\cite{Koszul85}, which, for unital algebras, is equivalent to the classical definition of Grothendieck~\cite{Grothendieck67}. However, in contrast to that, the definition of noncommutative order of a differential operator has a very unusual feature: while for non-unital algebras one can easily construct examples of differential operators of all possible orders, e.g. coming from bar constructions of $A_\infty$-algebras \cite{Bor13}, for unital algebras, a differential operator of order at most $l$, for $l\ge 2$, is automatically a differential operator of order at most~$2$. To verify that, one can see that substituting $a_1=a$, $a_2=\cdots=a_{l}=1$, $a_{l+1}=b$ in the identity $b_{l+1}^D(a_1,\ldots,a_{l+1})=0$, one gets the identity 
 \[
D(ab)=D(a)b+(-1)^{|a||D|}aD(b)-(-1)^{|a||D|}aD(1)b ,   
 \]
which, by direct inspection, implies $b_3^D=0$. The latter identity, in turn, implies the identity \[b_{l+1}^D(a_1,\ldots,a_{l+1})=0 \quad  \text{ for all } \quad l\ge 2,\] by definition. Therefore, the notion of noncommutative order of differential operators is mainly ``interesting'' for non-unital algebras.
\end{remark}

We denote by $\Diff_{\le l}(A)$ the set of all differential operators of noncommutative order at most~$l$, and by $\Diff(A)$ the set of differential operators of all possible noncommutative orders: 
 \[
\Diff(A):=\bigcup_{l\ge0}\Diff_{\le l}(A).
 \]

\begin{remark}\label{rk:nonassoc}
In the commutative case, the set of all differential operators of all possible orders forms an algebra with respect to composition, and moreover this algebra is filtered by the order of the operators. However, in the noncommutative case, the composite of two differential operators of finite noncommutative orders is not necessarily a differential operator of a finite noncommutative order:
 \[
\Diff_{\le k}(A)\circ\Diff_{\le l}(A)\not\subset\Diff(A) , 
 \]
as one can verify by direct inspection already for $k=l=1$.  
\end{remark}

Similarly to the commutative case, values of a differential operator of order at most $l$ on any product can be expressed via values on products of at most $l$ factors. More precisely, the following result holds. 

\begin{proposition}\label{prop:identities-diff}
For a differential operator~$D$ of noncommutative order at most~$l$, and for every $n\ge l+1$ we have
 \[
D\circ m^{(n-1)}=\sum_{1\le k\le n-l+1} m^{(n-l)}\circ_k (D\circ_1 m^{(l)}) -\sum_{2\le k\le n-l+1} m^{(n-l+1)}\circ_k (D\circ_1 m^{(l-1)}). 
 \]
\end{proposition}

\begin{proof}
This is proved by an easy induction on $n$, similar to \cite[Prop.~5]{DSV2013a}, using recursive formulas of Proposition \ref{prop:Borjeson-recursion}.
\end{proof}

An important property of B\"orjeson products is given by the following proposition. 
\begin{proposition}\label{prop:LieAlgHomomorphism}
We have 
\begin{equation}\label{commutator-Koszul}
b_n^{[D_1,D_2]}= 
\sum_{\substack{n=i+j+k,\\k\ge 1, i,j\ge 0}}
b_{i+j+1}^{D_1} \left(\id^{\otimes i}\otimes b_k^{D_2}\otimes \id^{\otimes j}\right)-(-1)^{|D_1||D_2|}
b_{i+j+1}^{D_2} \left(\id^{\otimes i}\otimes b_k^{D_1}\otimes \id^{\otimes j}\right).
\end{equation}
Here $[D_1,D_2]=D_1D_2-(-1)^{|D_1||D_2|}D_2D_1$ is the graded commutator of the linear maps $D_1$ and $D_2$. 
\end{proposition}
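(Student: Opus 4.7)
Plan: The right-hand side of the identity is exactly the cogenerator of the graded commutator $[\widehat B_{D_1},\widehat B_{D_2}]$ of two coderivations on the tensor coalgebra $T^c(A)$, where for each linear map $D\colon A\to A$ the family $(b_n^D)_{n\ge 1}$, viewed as a map $T^c(A)\to A$, extends uniquely to a coderivation $\widehat B_D$. Indeed, writing $\widehat B_{D_2}(a_1\otimes\cdots\otimes a_n)=\sum_{i+k+j=n,\,k\ge 1}\pm\, a_1\otimes\cdots\otimes a_i\otimes b_k^{D_2}(a_{i+1},\ldots,a_{i+k})\otimes a_{i+k+1}\otimes\cdots\otimes a_n$, the projection of $\widehat B_{D_1}\widehat B_{D_2}$ back to $A$ retains only the length-one output of $\widehat B_{D_1}$, which is precisely $b_{i+j+1}^{D_1}$ applied to the list $(a_1,\ldots,a_i,b_k^{D_2}(\ldots),a_{i+k+1},\ldots,a_n)$. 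The identity is thus equivalent to the assertion that $D\mapsto\widehat B_D$ is a morphism of graded Lie algebras $\End(A)\to\mathrm{Coder}(T^c(A))$; by the uniqueness of coderivations on a cofree coalgebra, this is the same as the equality of cogenerators, so the reformulation provides conceptual guidance but does not by itself avoid work.

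I would then carry out the verification by direct expansion. Substituting the four-term formula~\eqref{eq:Borj-n} for both $b_{i+j+1}^{D_1}$ and $b_k^{D_2}$ on the right-hand side produces a large sum of elementary monomials of three shapes: $\eta_0\cdot D_1(P_1\cdot D_2(Q)\cdot P_2)\cdot \eta_1$ (nested), $\eta_0\cdot D_1 D_2(Q)\cdot\eta_1$ (composed), and $\eta_0\cdot D_1(P_1)\cdot R\cdot D_2(P_2)\cdot\eta_1$ (disjoint), where $\eta_i, P_j, Q, R$ stand for products of consecutive $a_\bullet$. The expansion of~\eqref{eq:Borj-n} for $D=[D_1,D_2]=D_1 D_2-(-1)^{|D_1||D_2|}D_2 D_1$ on the left-hand side produces, by inspection, only composed atoms with $\eta_0\in\{1,a_1\}$, $\eta_1\in\{1,a_n\}$ and one of the four possible sub-products for $Q$.

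The combinatorial core of the argument is to verify three cancellation statements about the expanded right-hand side: (N) every nested atom with non-trivial $P_1$ or $P_2$ cancels, as its four contributions arising from the four terms of~\eqref{eq:Borj-n} applied to $b_k^{D_2}$ at the four corresponding values of $(i,k,j)$ sum to zero with signs $+1,-1,-1,+1$; (D) every disjoint atom cancels against its counterpart in the antisymmetrization term $-(-1)^{|D_1||D_2|}b^{D_2}(\cdots b^{D_1}\cdots)$, by a one-to-one matching of $(i,k,j)$-configurations; (C) the remaining composed atoms exactly reproduce the four terms of $b_n^{[D_1,D_2]}$ with correct Koszul signs. The principal obstacle is the bookkeeping of boundary degeneracies, since~\eqref{eq:Borj-n} has fewer terms when $k\in\{1,2\}$ or $i+j+1\in\{1,2\}$: these collapses interact with small values of the indices $(i,j,k)$ and must be handled case by case to confirm that (N), (D), and (C) remain valid in all configurations, including in particular the edge cases where one of $\eta_0, \eta_1, P_1, P_2$ is empty.
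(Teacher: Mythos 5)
Your plan takes a genuinely different, and substantially more laborious, route than the paper's proof. The paper's argument has essentially one step: in the operadic pre-Lie algebra $\mathfrak{g}=\big(\prod_{n\ge 1}\Hom(A^{\otimes n},A),\star\big)$ attached to $\End_A$, view the associative product as the gauge-group element $\Phi=(\id,m,m^{(2)},\ldots)$; by a computation of Markl cited in the paper, conjugation $\mathrm{Ad}_\Phi(D)$ of a unary operator $D$ produces exactly the Börjeson sequence $(b_1^D,b_2^D,\ldots)$; and since conjugation by a fixed group element is automatically a Lie algebra automorphism while $\End(A)=\End_A(1)\subset\mathfrak{g}$ is a Lie subalgebra whose bracket is the ordinary commutator, the identity $b^{[D_1,D_2]}=[b^{D_1},b^{D_2}]$ follows with no expansion at all. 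Your opening paragraph actually builds the same Lie algebra --- coderivations of $T^c(A)$ are, via the cofreeness identification, precisely $\mathfrak{g}$ with the antisymmetrised $\star$ --- but stops one step short: having correctly observed that the claim is equivalent to $D\mapsto\widehat{B}_D$ being a Lie morphism, you never notice that this assignment factors as conjugation $D\mapsto\Phi\cdot D\cdot\Phi^{-1}$ by a fixed invertible element, which is what makes the morphism property automatic. You therefore fall back to brute force.

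The brute-force plan is sound in principle, and your taxonomy of monomials into nested, composed and disjoint shapes with the proposed cancellations (N), (D), (C) is the right skeleton for a direct verification. But as written the proposal is incomplete: (N), (D) and (C) are asserted rather than verified, and you flag yourself that the collapse of the four-term formula~\eqref{eq:Borj-n} at small arities generates edge cases that must still be handled one by one. Until that case analysis is actually closed, this is a plan rather than a proof, and it is a considerably heavier plan than the one-line argument the paper executes. If you do want a computational proof, the cleanest way to organise it is to first reconstruct the conjugation picture, at which point the cancellations you are chasing become the formal identity $\Phi\star(D_1\Phi^{-1}\Phi D_2 - \pm D_2\Phi^{-1}\Phi D_1)\star\Phi^{-1}$; but that is just rediscovering the paper's argument.
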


\begin{proof}
This essentially follows from results of~\cite{Markl2014} interpreted from the viewpoint of~\cite{DSV2015}. Let us consider the pre-Lie algebra $\mathfrak{g}:=\big(\prod_{n\ge 1} \mathrm{Hom}(A^{\otimes n}, A), \star
\big)$ associated to the ns operad $\End_{A}$, see \cite[Section~$5.9.15$]{LV}.  We view the associative algebra structure on $A$ as an element $\Phi=(\id, m , m^{(2)}, \ldots)$ of $\mathfrak{g}$ which, in each arity $k\ge 1$, is the operation $m^{(k)}$ of the $k$-fold product. According to \cite[Example~2.4]{Markl2014}, the element $\Phi^{-1}$ is the operation which, in each arity $k\ge 1$, is $(-1)^{k-1}$ times the $k$-fold product, and the
 adjoint action of $\Phi$ on an element $D\in\End(A)=\End_A(1)\subset\mathfrak{g}$ is equal to $(b_1^D, b_2^D, b_3^D, \ldots)=b^D$, that is the element which, in each arity $k\ge 1$, is the $k$-th B\"orjeson product $b_k^D$. (In \cite{Markl2014}, this is applied for a map $D$ of degree $1$ satisfying $D^2=0$, but this is only important if we aim to obtain an $A_\infty$-algebra; otherwise these formulas can be applied to any $D$ whatsoever). The adjoint action of a Lie group is in always an automorphism of the corresponding Lie algebra, which is precisely what Equation~\eqref{commutator-Koszul} translates into in our case.  
\end{proof}

Proposition~\ref{prop:LieAlgHomomorphism} implies the claim of B\"orjeson that for a map $D$ of degree $-1$ satisfying $D^2=0$ the B\"orjeson products define an 
 $A_\infty$-algebra structure on the desuspension $s^{-1}A$ of $A$, since $D^2=\frac12[D,D]$. More conceptually with the framework developed in \cite{DSV2015}, $D$ can be viewed as a Maurer--Cartan element in the convolution dg pre-Lie algebra 
 \[
\left(\prod_{n\ge 1} \Hom(\As^{\ac}, \End_A), \star\right) = \left(\prod_{n\ge 1} \mathrm{Hom}(A^{\otimes n}, A), \star\right) .
 \]
The element $\Phi=(\id, m , m^{(2)}, \ldots)$ is an element of the associated gauge group, whose action on $D$ gives yet another Maurer--Cartan element, which is nothing but the $A_\infty$-algebra of B\"orjeson.
 
 Another result that one can immediately derive is the following property that is known to hold for differential operators on commutative algebras (and is somewhat surprising in the view of Remark~\ref{rk:nonassoc}). 

\begin{corollary}\label{cor:LieAdm}
The graded commutator of two differential operators of finite noncommutative order is also a  differential operator of finite noncommutative order. Moreover, the commutator ``makes the noncommutative order drop by~$1$'':
\begin{equation}\label{filtered-Lie}
[\Diff_{\le k}(A),\Diff_{\le l}(A)]\subset\Diff_{\le k+l-1}(A). 
\end{equation}	
\end{corollary}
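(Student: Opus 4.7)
The plan is to derive this as a direct consequence of Proposition~\ref{prop:LieAlgHomomorphism}, once we know that vanishing of $b_{l+1}^D$ propagates upward in arity. Concretely, for every $D$ and every $n\ge 4$, the recursion $b_n^D=b_{n-1}^D\circ_2 m$ immediately shows that $b_{l+1}^D=0$ forces $b_n^D=0$ for all $n\ge l+1$, as soon as $l\ge 2$. The only case that requires a separate verification is $l=1$: if $b_2^D=0$, then $D$ is a graded derivation, and I would check by a short direct computation in the formula \eqref{eq:Borj-n} that this implies $b_3^D=0$, after which the recursion takes over. (The case $l=0$ is trivial, since then $D=0$.) Thus, for every $D\in\Diff_{\le l}(A)$, one actually has $b_n^D=0$ for all $n\ge l+1$.

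With this preparatory observation in place, the corollary follows by a one-line inspection of the right-hand side of Equation \eqref{commutator-Koszul}. Given $D_1\in\Diff_{\le k}(A)$ and $D_2\in\Diff_{\le l}(A)$, the goal is to show $b_{k+l}^{[D_1,D_2]}=0$. Setting $n=k+l$ in Proposition~\ref{prop:LieAlgHomomorphism}, each summand of the first sum is of the form $b_{i+j+1}^{D_1}(\id^{\otimes i}\otimes b_m^{D_2}\otimes\id^{\otimes j})$ with $i+j+m=k+l$ and $m\ge 1$. For such a summand to be potentially nonzero, the preparatory observation forces $i+j+1\le k$ and $m\le l$, whence $n=i+j+m\le(k-1)+l=k+l-1$, contradicting $n=k+l$. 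The analogous inequality $i+j+m\le(l-1)+k$ rules out every term of the second sum. Hence all summands vanish, and we conclude $b_{k+l}^{[D_1,D_2]}=0$, i.e.\ $[D_1,D_2]\in\Diff_{\le k+l-1}(A)$.

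The only step that is not completely formal is the stability statement $b_{l+1}^D=0\Rightarrow b_n^D=0$ for $n>l+1$, and within that step the only nontrivial case is $l=1$. Even there, the calculation is elementary: expanding each summand of $b_3^D(a_1,a_2,a_3)$ via the Leibniz rule $b_2^D=0$ produces a telescoping cancellation in the six resulting monomials. This is the one technical check one has to perform, and it is the only real content beyond invoking Proposition~\ref{prop:LieAlgHomomorphism}; everything else is pure index bookkeeping on the Hochschild-type sum of Equation \eqref{commutator-Koszul}.
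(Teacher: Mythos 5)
Your proof is correct and follows the route the paper intends: the corollary is stated as a direct consequence of Proposition~\ref{prop:LieAlgHomomorphism}, and your index bookkeeping on the right-hand side of Equation~\eqref{commutator-Koszul} is precisely the argument one would write out. You also make explicit the propagation property $b_{l+1}^D=0\Rightarrow b_n^D=0$ for all $n\ge l+1$ (including the $l=1$ case, which needs a short direct computation showing that a graded derivation has $b_3^D=0$); the paper leaves this step implicit, but it is genuinely needed for the estimates $i+j+1\le k$ and $m\le l$, so spelling it out is a welcome addition rather than a deviation.
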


In the commutative case, a differential operator of order at most $l$ on the algebra of formal power series in several variables is completely determined by its restriction to the subspace of polynomials of degree at most $l$. Let us establish, for a given $l\ge 1$, an explicit description of the set of all differential operators of noncommutative order at most $l$ on the non-unital algebra $\overline{T}(V)$ of noncommutative polynomials. 
For any map $f \colon V^{\otimes l} \to \overline{T}(V)$, we consider the map $D_f\colon  \overline{T}(V) \to  \overline{T}(V)$ defined by
 \[
D_f := 
\left\lbrace
\begin{array}{cl}
\displaystyle \sum_{i+j+l=n}\id_V^{\otimes i}\otimes f\otimes\id_V^{\otimes j}\ , & \text{ for } n\ge l\ , \\
0\ , &\text{ for } n<l\ .
\end{array}
\right.
 \]
The assignment $f\mapsto D_f$ defines a linear map from $\Hom(V^{\otimes l}, \overline{T}(V))$ to $\End\big( \overline{T}(V)\big)$. 
We denote its image by $\Diff^{\,(l)}\big(\overline{T}(V)\big)$.

\begin{proposition}\label{prop:NCDiffOpTensor}\leavevmode
\begin{enumerate}
\item For any map $f : V^{\otimes l} \to \overline{T}(V)$, the map $D_f\in\, \Diff_{\leq l}\big(\overline{T}(V)\big)$ is differential operator of noncommutative order at most~$l$. 
\item The vector space of differential operators of noncommutative operators of order at most~$l$ decomposes with respect to these maps: 
$$ \Diff_{\leq l}\big(\overline{T}(V)\big) \cong \bigoplus_{k=1}^l \Diff^{\,(k)}\big(\overline{T}(V)\big)\ .$$
\end{enumerate}
\end{proposition}

\begin{proof}
The first assertion is directly proved by computing $b_{l+1}^{D_f}=0$.

For the second assertion, let $D_{f_1}+\cdots + D_{f_l}=0$ in 
$\sum_{k=1}^l \Diff^{\,(k)}\big(\overline{T}(V)\big)$. By induction, its restriction to $V^{\otimes k}$, for $k=1,\ldots, l$ is equal to $f_k$. So all these latter ones vanish, proving that all the $D_{f_k}$ are equal to $0$. 
Now let $D\in \Diff_{\leq l}(\overline{T}(V))$ be a differential operator of order at most $l$. We consider its restrictions
$f_k:=D_{|V^{\otimes k}}$  on $V^{\otimes k}$, for $1\leq k \leq l$. We define by induction on $k=1, \ldots, l$, the following maps $f_{(k)}$ and differential operators $D^{(k)}$ of noncommutative order at most $k$:
 \[
f_{(k)}:=f_k-D^{(1)}_{|V^{\otimes k}}-\cdots-D^{(k-1)}_{|V^{\otimes k}}\, \quad D^{(k)}:=D_{f_{(k)}}\in \Diff^{\,(k)}\big(\overline{T}(V)\big) .
 \] 
By definition, $D^{(1)}+\cdots+D^{(l)}$ agrees with $D$ on $V\oplus \cdots \oplus V^{\otimes l}$. Finally, Proposition~\ref{prop:identities-diff} shows that $D^{(1)}+\cdots+D^{(l)}$ and $D$ agree on $V^{\otimes n}$ for all $n\ge l+1$, hence $D=D^{(1)}+\cdots+D^{(l)}$.
\end{proof}

\subsubsection{Associative $\ncBV_\infty$-algebras}

Recall that in~\cite{Kra00}, a version of homotopy $\BV_\infty$-algebras was proposed. A conceptual interpretation of that definition is contained in~\cite{GTV09}, where it is shown to be equivalent to an algebra over the Koszul resolution of the operad $\BV$, for which most higher operations vanish. In this section, we present a noncommutative counterpart of the latter definition, and show that a definition \emph{\`a la} \cite{Kra00} which utilises B\"orjeson products instead of Koszul brackets is also available. 

\begin{definition}[Homotopy $\ncBV$-algebra]
A \emph{homotopy $\ncBV$-algebra} is an algebra over the Koszul resolution $\ncBV_\infty:=\Omega \ncBV^{\ac}$.
\end{definition}

Since the underlying nonsymmetric collection of the Koszul dual is isomorphic to
\[\ncBV^{\ac}\cong T^c(\delta)\circ \As_1^{\ac} \circ \As^{\ac},\] the data of a $\ncBV_\infty$-algebra on a chain complex $A$ amounts to a set of operations $m_{i_1, \ldots, i_k}^l$, where $l\ge 0$ is the weight in $\delta$, $k\ge 1$ is the arity in $\As_1^{\ac}$, and $i_1,\ldots,i_k \ge 1$ are the arities in $\As^{\ac}$ (where $\mu_1^0=d_A$).

The following result is a noncommutative counterpart of \cite[Th.~20]{GTV09}.

\begin{proposition}
A homotopy $\ncBV$-algebra is a chain complex $(A, d_A)$ equipped  with operations
 \[
m_{i_1, \ldots, i_k}^l\ :\ A^{\otimes n} \to A \ , \quad \text{for}\quad n=i_1+\cdots+i_k, \ l\ge 0,\  k\ge 1, \ i_j\ge 1\ ,
 \] 
of degree $n+k+2l-3$, with  $m^0_1=d_A$, satisfying the following relations (indexed by the same set) 
 \[
\sum_{\substack{1\leq j\leq k\\ i_j={i_j}'+{i_j}''}}
 \varepsilon_1\, m^{l-1}_{i_1, \ldots, {i_j}', {i_j}'', \ldots,  i_k} +
 \sum_{\substack{
l=l'+l''\\
 a\leq b }}
  \varepsilon_2\,
m^{l'}_{i_1, \ldots, {i_\alpha}'+1+{i_\beta}'', \ldots,  i_k}
 \circ_{i_1+\cdots+{i_\alpha}'+1}
 m^{l''}_{{i_\alpha}'', i_{\alpha+1}, \ldots, i_{\beta-1}, {i_\beta}'} =0\ ,
 \]
where ${i_\alpha}'$, ${i_\alpha}''$, ${i_\beta}'$, and ${i_\beta}''$ are defined by $i_\alpha={i_\alpha}'+{i_\alpha}''$,
$i_\beta={i_\beta}'+{i_\beta}''$, $a=i_1+\cdots+{i_\alpha}'+1$, and $b=i_1+\cdots+{i_\beta}'$, and where
$ \varepsilon_1= (-1)^{i_1+\cdots+i_{j-1}+{i_j}'-j+1}$ and
\[ \varepsilon_2= (-1)^{i_1+\cdots+i_{\alpha-1}+i_{\beta}+\cdots+i_k
+[({i_\alpha}''-1)+\cdots+({i_\beta}'-1)].[({i_\beta}''-1)+\cdots+({i_k}-1)]
+k-1}\ .\]
We denote this relation by $\mathcal{R}_{i_1, \ldots, i_k}^l$.  Note that for $l=0$, the first (linear) term is not present in the relation.
\end{proposition}

\begin{proof}
A structure of a $\ncBV_\infty$-algebra on a chain complex $A$ is encoded by a Maurer--Cartan element in the convolution dg Lie algebra $\Hom(\ncBV^{\ac},\End_A)$.
Let us denote by $m_{i_1, \ldots, i_k}^l$ the image of the element $\mu_{i_1, \ldots, i_k}^l\in\ncBV^{\ac}$ in the endomorphism operad under a given $\ncBV_\infty$-algebra structure. The Maurer--Cartan equation, once evaluated on $\mu_{i_1, \ldots, i_k}^l$, leads to a certain relation in the given algebra. The first term of that relation corresponds to the image of $\mu_{i_1, \ldots, i_k}^l$ under the coderivation $d_\varphi$. The second term of that relation is given by the infinitesimal decomposition map of the ns cooperad $\ncBV^{\ac}$. These are precisely the terms in the relation $\mathcal{R}_{i_1, \ldots, i_k}^l$ above. 
\end{proof}

\begin{definition}[Associative $\ncBV_\infty$-algebra]\label{nc-homotopy-bv}
An $\ncBV_\infty$-algebra is called \emph{associative} if all its generating operations vanish except for the binary product $m^0_2$ and the operations of the form $m^l_{1, \ldots, 1}$.
\end{definition}

\begin{proposition}\label{prop:asBV-ala-Kra}
An associative $\ncBV_\infty$-algebra is an associative algebra $(A, m)$
 equipped with a collection of operators $\Delta_l$, $l\ge 0$, each $\Delta_l$ being a differential operator of noncommutative order at most~$l+1$ and of degree~$2l-1$, such that
 \[
\sum\limits_{i+j=l}\Delta_i\Delta_j=0\ ,
 \]
 for each $l\ge 0$.
\end{proposition}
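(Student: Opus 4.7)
The plan is to exhibit an explicit bijection between the two structures, following the noncommutative analogue of Kravchenko's approach~\cite{Kra00}, via the identifications $m := m^0_2$ (degree $0$) and $\Delta_l := m^l_1$ (degree $2l - 1$), and to show that the higher generating operations $m^l_{\underbrace{1,\ldots,1}_k}$ with $k \ge 2$ are uniquely determined by the data $(m, \{\Delta_l\})$ through the relations $\mathcal{R}^l_{i_1, \ldots, i_k}$ as the B\"orjeson products $b_k^{\Delta_l}$.

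First I would extract the basic structural identities. In the associative case, the relation $\mathcal{R}^0_3$ has a vanishing linear part (since $m^{-1}_{\bullet}$ does not exist) and a quadratic part reducing to $m \circ_1 m - m \circ_2 m$, which yields the strict associativity of $m$; and the unary relation $\mathcal{R}^l_1$ retains only quadratic contributions of the shape $m^{l'}_1 \circ_1 m^{l''}_1 = \Delta_{l'} \Delta_{l''}$ with $l' + l'' = l$, yielding $\sum_{i+j=l} \Delta_i \Delta_j = 0$. Next, I would proceed by induction on the number of ones: the relation $\mathcal{R}^l_{1,1}$ involves only $\Delta_l$, $m$, and the single new operation $m^l_{1,1}$, forcing $m^l_{1,1} = \Delta_l \circ_1 m - m \circ_1 \Delta_l - m \circ_2 \Delta_l = b_2^{\Delta_l}$; an analogous analysis of $\mathcal{R}^l_{1,1,1}$ forces $m^l_{1,1,1} = b_3^{\Delta_l}$; and for $k \ge 4$ the relations $\mathcal{R}^l_{\underbrace{1,\ldots,1}_k}$ reduce, using the already-established identities together with the associativity of $m$, to the B\"orjeson recurrence $m^l_{\underbrace{1,\ldots,1}_k} = m^l_{\underbrace{1,\ldots,1}_{k-1}} \circ_2 m$, so that $m^l_{\underbrace{1,\ldots,1}_k} = b_k^{\Delta_l}$ for all $k \ge 2$.

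The differential-operator hypothesis emerges as the consistency condition for the relations $\mathcal{R}^l_{\underbrace{1,\ldots,1}_{l+2}}$, whose weight-reducing linear part (coming from the inhomogeneous Koszul presentation, ultimately traceable to the relation~\eqref{action} that defines $b$ in terms of $\Delta$ and $m$) becomes nontrivial and, combined with the already-determined values $m^l_{\underbrace{1,\ldots,1}_k} = b_k^{\Delta_l}$, is equivalent to the vanishing $b_{l+2}^{\Delta_l} = 0$, i.e.\ to $\Delta_l$ being a differential operator of noncommutative order at most $l+1$. The hard part will be the careful bookkeeping of the signs $\varepsilon_1$ and $\varepsilon_2$ upon restriction to the associative case, together with the precise computation of the weight-reducing contribution that matches the B\"orjeson identity $b_{l+2}^{\Delta_l} = 0$. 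The converse direction is then straightforward: given $(A, m, \{\Delta_l\})$ satisfying the hypotheses, one sets $m^l_{\underbrace{1,\ldots,1}_k} := b_k^{\Delta_l}$ (automatically zero for $k \ge l+2$) and verifies every relation $\mathcal{R}$ by a direct computation using the explicit B\"orjeson formula~\eqref{FormulaMn}, the commutator identity of Proposition~\ref{prop:LieAlgHomomorphism}, and the quadratic condition $\sum_{i+j=l} \Delta_i \Delta_j = 0$.
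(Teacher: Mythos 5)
Your plan of identifying the higher operations $m^l_{1,\ldots,1}$ with B\"orjeson products and then reading off the differential-operator condition is the right general idea, and it is also the spirit of the paper's proof; but the execution has several concrete errors that a careful comparison exposes.

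The central error is the claimed identification $m^l_{\underbrace{1,\ldots,1}_{k}} = b_k^{\Delta_l}$. This is degree-inconsistent for $k \ge 2$: the generator $m^l_{\underbrace{1,\ldots,1}_{k}}$ has degree $n + k + 2l - 3 = 2k + 2l - 3$ (with $n = k$), whereas $b_k^{\Delta_l}$ has degree $|\Delta_l| = 2l - 1$, so they differ by $2(k-1)$. The correct identification, which is what the relations with a single $2$ in the subscript actually force, is
$m^{l}_{\underbrace{1,\ldots,1}_{k}} = b_k^{\Delta_{l+k-1}}$ (the paper writes this as $m^{l-1}_{1,\ldots,1} = b_n^{\Delta_{l+n-2}}$). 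In particular your ``obvious'' formula for $m^l_{1,1}$ is off by one in the $\Delta$ index.

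A second problem is the choice of relations you propose to use. You argue from $\mathcal{R}^l_{1,1}$, $\mathcal{R}^l_{1,1,1}$, \ldots, but a relation $\mathcal{R}^l_{1,\ldots,1}$ has \emph{no} linear term: the linear part of $\mathcal{R}^l_{i_1,\ldots,i_k}$ requires splitting some $i_j$ into two positive pieces, which is impossible when all $i_j = 1$. Moreover, since the quadratic terms must respect the weight grading in $\delta$, the relation $\mathcal{R}^l_{1,1}$ does not involve ``only $\Delta_l$, $m$, and the single new operation $m^l_{1,1}$''; it involves all $m^{l'}_{1,1}$ with $l' \le l$ and all $\Delta_{l'}$ with $l' \le l$. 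These relations are precisely the ones the paper identifies as redundant (``bring nothing new, by Relation~(\ref{commutator-Koszul})''), not as the relations that determine the higher operations. The relations that do the determining are those with a single~$2$ --- $\mathcal{R}^l_2$, $\mathcal{R}^l_{1,2}$, $\mathcal{R}^l_{1,2,1,\ldots,1}$ for $l\ge 1$ --- whose linear part is the single new operation $m^{l-1}_{1,\ldots,1}$.

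The same misattribution infects your account of where the order condition comes from. You claim that the ``weight-reducing linear part'' of $\mathcal{R}^l_{\underbrace{1,\ldots,1}_{l+2}}$ becomes nontrivial and gives $b_{l+2}^{\Delta_l}=0$; as noted, there is no linear part to these relations. What actually produces $b_n^{\Delta_{n-2}}=0$ is the \emph{quadratic} part of the $l=0$ members of the family with a single $2$, namely $\mathcal{R}^0_2$, $\mathcal{R}^0_{1,2}$, $\mathcal{R}^0_{1,2,1,\ldots,1}$: here the linear term would be a weight-$(-1)$ operation, which does not exist, so the whole relation collapses to $b_n^{\Delta_{n-2}} = 0$.

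So while the bijection you aim to establish is indeed what the proposition asserts, the mechanism by which the B\"orjeson products appear, the family of relations one must read, and the source of the order bound are all mislocated in your sketch; as written the argument would not go through.
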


\begin{proof}
The proof is similar to the proof of \cite[Prop.~$23$]{GTV09}. When all the operations vanish except for $m:=m^0_2$ and the $m^l_{1, \ldots, 1}$'s, the only non-trivial relations are $\mathcal{R}^0_2$, $\mathcal{R}^0_3$, $\mathcal{R}^l_{1, \ldots, 1}$, and $\mathcal{R}^l_{1, \ldots,1,2, 1, \ldots 1}$. The relations $\mathcal{R}^0_2$ and $\mathcal{R}^0_3$ together  express the fact that $(A, m^0_2, m^0_1)$ is a differential graded associative algebra.

We consider $\Delta_0:=d_A$ and $\Delta_l:=m^l_1$, for $l\ge 1$, which satisfy $|\Delta_l|=2l-1$. For $l\ge1$, the relations $\mathcal{R}^l_2$, $\mathcal{R}^l_{1,2}$ (or equivalently $\mathcal{R}^l_{2,1}$), and $\mathcal{R}^l_{1,2, 1,\ldots, 1}$ are equivalent to the formulas
 \[
m^{l-1}_{1, \ldots, 1}=b_n^{\Delta_{l+n-2}}, 
 \]  
which show that the operations $m^{l}_{1, \ldots, 1}$ are completely determined by the operators $\Delta_l$. Examining Formula~\eqref{eq:recur-mid}, we find that the relations $\mathcal{R}^l_{1, \ldots, 1, 2, 1,\ldots, 1}$ do not bring anything new.
The relations $\mathcal{R}^0_2$, $\mathcal{R}^0_{1,2}$ (or equivalently $\mathcal{R}^0_{2,1}$), and $\mathcal{R}^0_{1,2, 1,\ldots, 1}$ (or equivalently $\mathcal{R}^0_{1, \ldots, 1, 2, 1,\ldots, 1}$) give respectively that $b_n^{\Delta_{n-2}}=0$, for $n\ge 2$, that is $\Delta_l$ is a differential operator of noncommutative order at most $l+1$.
The relations $\mathcal{R}^l_1$ are precisely $\sum\limits_{i+j=l}\Delta_i\Delta_j=0$.
Using Relation~\eqref{commutator-Koszul}, it is easy to see that relations $\mathcal{R}^l_{1, \ldots, 1}$ formally follow from the relations $\mathcal{R}^l_{1,2, 1,\ldots, 1}$ and $\mathcal{R}^l_1$ (if we write the latter in the equivalent form $\sum\limits_{i+j=l}[\Delta_i,\Delta_j]=0$).

The relations $\mathcal{R}^l_{2, 1,\ldots, 1}$ and $\mathcal{R}^l_{1,\ldots, 1,2}$, where $l\ge1$ and the subscripts add to $n\ge 4$, give, respectively,
 \[
b_n^{\Delta_{n+l-2}}=b_{n-1}^{\Delta_{n+l-2}}\circ_1 m -m \circ_2  b_{n-1}^{\Delta_{n+l-2}} \quad \text{ and }\quad
b_n^{\Delta_{n+l-2}}=b_{n-1}^{\Delta_{n+l-2}}\circ_{n-1} m -m \circ_1  b_{n-1}^{\Delta_{n+l-2}} , 
 \]
which follow immediately from Formulas~\eqref{eq:recur-first} and \eqref{eq:recur-last}.
Finally, the relations $\mathcal{R}^0_{2, 1,\ldots, 1}$ and $\mathcal{R}^0_{1,\ldots, 1,2}$, where the subscripts add to $n\ge 4$, give, respectively,
 \[
b_{n-1}^{\Delta_{n-2}}\circ_1 m= m\circ_2  b_{n-1}^{\Delta_{n-2}} \quad \text{ and } \quad 
b_{n-1}^{\Delta_{n-2}}\circ_{n-1} m =m \circ_1  b_{n-1}^{\Delta_{n-2}},
 \]
which immediately follow from Relations \eqref{eq:recur-first} and \eqref{eq:recur-last} since the operator $\Delta_{n-2}$ has noncommutative order at most~$n-1$.
\end{proof}


Let us denote by $\ncBV_\infty^{\mathrm{\, assoc}}$ the dg ns operad encoding associative $\ncBV_\infty$-algebras. As we just established, dg ns operad is obtained from the ns operad of associative algebras by adjoining generators $\Delta_l$ of degree $2l-1$, for $l\ge 1$, satisfying the noncommutative order at most $l+1$ condition $b_{l+2}^{\Delta_l}=0$; the differential $\partial=[\Delta_0,\cdot]$ of this operad satisfies
\begin{gather*}
\partial(m)=0 ,\\
\partial(\Delta_l)=-\sum_{\substack{i+j=l, \\ i,j \ge 1}}  \Delta_i \Delta_j .
\end{gather*} 
Let us establish a noncommutative analogue of \cite[Th.~5.3.1]{CMV2014} and show that the dg ns operad $\ncBV_\infty^{\mathrm{\, assoc}}$ is a (non-cofibrant) resolution of the ns operad $\ncBV$. 

\begin{theorem}
The canonical projection 
 \[
\rho\colon \ncBV_\infty^{\mathrm{\, assoc}} \rightarrow \ncBV\ ,  
 \]
which sends the generator $\Delta_1$ to $\Delta$ and the generators $\Delta_l$ to $0$, for $l \ge 2$, is a quasi-isomorphism of dg ns operads. 
\end{theorem}

\begin{proof}
We follow here the strategy of the proof that is similar to the one for the commutative case outlined in \cite[Appendix~C]{CMV2014}; the below Gr\"obner basis argument is a noncommutative analogue of the argument in \cite[Appendix~A]{DSV2013a}. 

Let us determine a basis of the underlying ns collection of $\ncBV_\infty^{\mathrm{\, assoc}}$. We first note that since all the operators $\Delta_i$ are of odd degrees, we have
 \[
-\sum_{\substack{i+j=l, \\ i,j \ge 1}} \Delta_i \Delta_j = -\frac12\sum_{\substack{i+j=l, \\ i,j \ge 1}} [\Delta_i, \Delta_j]
 \]
and hence by Corollary \ref{cor:LieAdm} the element 
 \[
\partial(\Delta_l)=-\sum_{\substack{i+j=l, \\ i,j \ge 1}} \Delta_i \Delta_j 
 \]
is a differential operator of noncommutative order at most $i+1+j+1-1=l+1$. It follows that the ideal of relations of the operad $\ncBV_\infty^{\mathrm{\, assoc}}$ is a differential ideal, and hence there are no new relations arising as boundaries of existing ones. Thus, for the purpose of computing a basis, we may forget about the differential $\partial$, and consider the operad which is a quotient of the free operad $\calT(m,\Delta_1, \Delta_2, \ldots)$ by the ideal generated by the associativity relation for the operation $m$ and the noncommutative order $l+1$ relation for $\Delta_l$, for each $l\ge 1$. 

Next, let us compute the reduced Gr\"obner basis of relations for the path degree-lexicographic ordering \cite{BD} corresponding to the ordering of generators
 \[ 
m<\Delta_1<\Delta_2<\cdots
 \] 
Note that the leading term of the associativity relation is $m\circ_1m$, and the leading term of the noncommutative order $l+1$ relation is $\Delta_l\circ_1 m^{l+1}$. Thus, the only overlaps of these leading terms are the overlap of the associativity relation with itself and the $l+1$ overlaps of the noncommutative order $l+1$ relation with the associativity relation, corresponding to the common multiples $\Delta_l\circ_1 m^{l+1}\circ_im$ for $1\le i\le l+1$, for each $l\ge 1$. The first overlap leads to an S-polynomial that can be trivially reduced to zero. The common multiple $\Delta_l\circ_1 m^{l+1}\circ_i m$ lead to S-polynomials that can be reduced to zero using proposition \ref{prop:Borjeson-recursion}.  Hence, the defining relations of our operad form a reduced Gr\"obner basis. 

Finally, we observe that the set of normal monomials with respect to the leading terms of the Gr\"obner basis we found are in one-to-one correspondence with the set of all planar trees whose  internal edges are labelled by arbitrary noncommutative monomials in generators~$\Delta_l$, $l\ge 1$, where each vertex above an operator $\Delta_l$ can have at most $l+1$ leaves. These normal monomials form a basis of the underlying ns collection of $\ncBV_\infty^{\mathrm{\, assoc}}$.

\smallskip 

Let us denote by $\As_1^\Delta$ the ns sub-operad of $\ncBV$ generated by $b$ and $\Delta$. Its relations define a distributive law $\As_1^\Delta\cong \As_1\circ \D$ between $\As_1$ and $\D$, see Propostion~\ref{prop:qncBVDistLaw}. Therefore, it is Koszul with Koszul dual ns cooperad is isomorphic to 
$(\As_1^\Delta)^{\ac}\cong \D^{\ac}\circ \As_1^{\ac}\cong T^c(\delta)\otimes \As^*$, the Hadamard product of the cooperads $T^c(\delta)$ and $\As^*$. We denote by 
$(\As_1^\Delta)_\infty$ its Koszul  resolution. Let us denote by $\mu^l_n$ the generating element of $\k\, \delta^l\otimes \As^*(n)$ and by 
$\beta_n^{\Delta_l}$ the element
 \[
\Delta_l\circ_1m^{(n-1)}-m\circ_1 (\Delta_l\circ_1m^{(n-2)}) - m\circ_2 (\Delta_l\circ_1m^{(n-2)})+
(m \circ_1 m)\circ_2 (\Delta_l\circ_1m^{(n-3)})
 \] 
in $\ncBV_\infty^{\mathrm{\, assoc}}$, the right hand side of Formula~\eqref{eq:BorjesonExpl}. From Proposition~\ref{prop:LieAlgHomomorphism}, it easily follows that the morphism of ns operads 
 \[
\chi : (\As_1^\Delta)_\infty \to \ncBV_\infty^{\mathrm{\, assoc}}
 \] 
defined on the free generators by setting $\chi(\mu^l_n)=\beta_n^{\Delta_{l+n-1}}$ is a morphism of dg ns operads, in particular a morphism of dg ns collections. It follows that the composite 
 \[
\As\circ(\As_1^\Delta)_\infty\xrightarrow{\id\circ \chi} \As\circ  \ncBV_\infty^{\mathrm{\, assoc}} \hookrightarrow \ncBV_\infty^{\mathrm{\, assoc}}\circ  \ncBV_\infty^{\mathrm{\, assoc}} \to \ncBV_\infty^{\mathrm{\, assoc}} ,
 \]
where the last arrow is the composition map of the ns operad $\ncBV_\infty^{\mathrm{\, assoc}}$, is also a morphism of dg ns collections. A direct inspection shows that the images of the natural basis elements of $\As\circ(\As_1^\Delta)_\infty$ have pairwise distinct leading terms, and among those leading terms each normal monomial determined above occurs exactly once. It follows that our map is actually an isomorphism. This isomorphism induces a homology isomorphism 
 \[
H_\bullet(\ncBV_\infty^{\mathrm{\, assoc}}) \cong H_\bullet(\As\circ(\As_1^\Delta)_\infty)\cong \As \circ \As^\Delta_1 \cong \qncBV\cong \ncBV .
 \]
It remains to note that the diagram 
 \[\xymatrix@C=45pt@R=20pt{ 
\As\circ(\As_1^\Delta)_\infty  \ar[r]_{\gamma(\As\circ \chi)}^\cong \ar[d]^{\sim} & \ncBV_\infty^{\mathrm{\, assoc}} 
\ar[d]^\rho\\
\As \circ \As^\Delta_1 \ar[r]^\cong&  \ncBV}
 \]
commutes, and hence map $\rho$ is a quasi-isomorphism. 
\end{proof}

\subsubsection{Differential operators on the noncommutative algebras and the Givental action}
In this section, we combine results and definitions of the previous sections, obtaining noncommutative analogues of the genus $0$ results of \cite{DSV2013a}.

The first result we prove relates B\"orjeson products to intersection theory on brick manifolds. 
  
\begin{theorem}\label{th:BorjesonIntersection}
Let $A$ be a graded associative algebra, which we regard as $\ncHyperCom$-algebra where all the 
structure operations $\nu_k$ of higher arities $k>2$ vanish, and let  $\textstyle r(z)=\sum_{l\ge 0} r_lz^l$ be an element of the Lie algebra $\End(A)[[z]]$ of certain degree $d$. The Givental action \eqref{Givental-action-genus-0} of $r(z)$ on the corresponding tree level ncCohFT preserves it if and only if each operator $r_l$ is a differential operator on $A$ of noncommutative order at most $l+1$.
\end{theorem}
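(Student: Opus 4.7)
Because an $A$-valued tree level ncCohFT is the same datum as an $\ncHyperCom$-algebra, and the latter is determined by its generating operations $\nu_n := \alpha_n|_{[\calB_{\mathbb{C}}(\underline{n})]}$, the Givental action preserves the ncCohFT if and only if the infinitesimal deformation of each $\nu_n$ it induces vanishes. Since the Lie algebra $\End(A)[[z]]$ is the direct sum of the subspaces $\End(A)\cdot z^k$ for $k\ge 0$, it is enough to treat each $r_l z^l$ separately. My plan is therefore to show, for each fixed $k\ge 0$ and each operator $r\in\End(A)$, that
\[
\tau^{(k)}_n(r) := (-1)^{k-1}(\widehat{rz^k}.\alpha)_n\big|_{[\calB_{\mathbb{C}}(\underline{n})]}\in\End_A(n)
\]
vanishes for every $n\ge 2$ if and only if $b_{k+2}^{r}=0$, which is the defining condition for $r$ to be a noncommutative differential operator of order at most $k+1$.

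To compute the initial datum $\tau^{(0)}_n$, I would inspect formula~\eqref{Givental-action-genus-0} at $k=0$: the third sum is empty, while the other two produce only classes of cohomological degree zero. On the point $\calB_{\mathbb{C}}(\underline{2})$ the direct evaluation gives $\tau^{(0)}_2(r)=b_2^{r}$, whereas for $n>2$ the integral of a degree-$0$ class over the positive-dimensional variety $\calB_{\mathbb{C}}(\underline{n})$ vanishes, so $\tau^{(0)}_n(r)=b_2^{r}\cdot\delta_{n,2}$. Then I would specialise the recurrence of Proposition~\ref{prop:recurrence-Givental}: since $\nu_s=0$ for $s\neq 2$ and $\nu_2=m$, only the cardinalities $|I|=2$ (in the first sum) and $|I|=n-1$ (in the second) contribute, yielding the simplified form
\[
(n-1)\,\tau^{(l+1)}_n(r)=\sum_{i=1}^{n-1}\tau^{(l)}_{n-1}(r)\circ_i m - m\circ_1\tau^{(l)}_{n-1}(r) - m\circ_2\tau^{(l)}_{n-1}(r).
\]

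The crux of the argument, and the hardest step, will be the combinatorial identity
\[
\sum_{i=1}^{n-1} b_{n-1}^{r}\circ_i m - m\circ_1 b_{n-1}^{r} - m\circ_2 b_{n-1}^{r} = (n-1)\,b_n^{r}, \qquad n\geq 3,
\]
which I plan to establish by a careful expansion using formula~\eqref{eq:Borj-n}. One must distinguish the extreme substitutions $i=1$ and $i=n-1$, which produce additional ``boundary'' monomials such as $a_1 a_2\, r(\cdots)$ and $r(\cdots)\,a_{n-1}a_n$, from the $n-3$ middle substitutions $2\le i\le n-2$, each of which reproduces a full copy of $b_n^{r}$. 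A methodical bookkeeping of the species of monomials that arise will show that the extremal contributions on the left-hand side cancel exactly against those of $m\circ_1 b_{n-1}^{r}$ and $m\circ_2 b_{n-1}^{r}$, leaving precisely $(n-1)$ copies of $b_n^{r}$.

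Granted this identity, an immediate induction on $l\ge 0$ yields $\tau^{(l)}_n(r)=b_{l+2}^{r}\cdot\delta_{n,\,l+2}$ for every $n\ge 2$: by the recurrence, $\tau^{(l+1)}_n$ vanishes whenever $\tau^{(l)}_{n-1}=0$, while the only case where $\tau^{(l)}_{n-1}\neq 0$ is $n-1=l+2$, where the identity turns the right-hand side into $(n-1)\,b_n^{r}$ and thus gives $\tau^{(l+1)}_{l+3}=b_{l+3}^{r}$. Specialising $l=k$ and $r=r_k$, the infinitesimal action of $r_k z^k$ preserves the ncCohFT if and only if $b_{k+2}^{r_k}=0$, completing the proof.
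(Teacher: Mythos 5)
Your proposal is correct, and it takes a genuinely different route from what the paper indicates. The paper's proof is a terse reference to the commutative analogue in [DSV2013a, Th.~1], proceeding by decomposing the vanishing condition by (co)homological degree: the top-degree conditions are identified with the identities of Proposition~\ref{prop:identities-diff}, and the remaining conditions are shown to follow from these via the topological recursion relations in correlator form (Proposition~\ref{prop:abstract-correlators}). You instead work directly with the operator-valued quantities $\tau^{(k)}_n$ and the Givental recurrence of Proposition~\ref{prop:recurrence-Givental}, establishing $\tau^{(k)}_n(r) = b_{k+2}^{r}\,\delta_{n,k+2}$ by induction from the initial condition $\tau^{(0)}_n=b_2^r\,\delta_{n,2}$. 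Your route is more explicit and self-contained, and it has the pleasant feature that the separation of variables in $l$ (which you invoke at the outset a little too cavalierly: linearity of the Givental action alone does not justify treating the $r_l z^l$ independently) drops out automatically, since $\tau^{(k)}_n(r_k)$ is supported on the single arity $n=k+2$. I have checked the crucial combinatorial identity
\[
\sum_{i=1}^{n-1} b_{n-1}^{r}\circ_i m \;-\; m\circ_1 b_{n-1}^{r} \;-\; m\circ_2 b_{n-1}^{r} \;=\; (n-1)\,b_n^{r},\qquad n\ge 3,
\]
using the explicit formula \eqref{eq:Borj-n}: each of the $n-3$ middle substitutions $2\le i\le n-2$ gives $b_n^r$ on the nose (it is a form of the rule $b_n^D=b_{n-1}^D\circ_i m$), while $(i{=}1)$ combines with $-m\circ_2 b_{n-1}^r$, and $(i{=}n{-}1)$ combines with $-m\circ_1 b_{n-1}^r$, each pair also yielding $b_n^r$ after the boundary monomials cancel; so the count $(n-3)+1+1=n-1$ is exactly right. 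Two small points worth tightening: first, your opening reduction to the $\nu_n$'s should explicitly invoke the fact (the proposition preceding Proposition~\ref{prop:recurrence-Givental}) that the Givental formulas define a Lie algebra action on ncCohFTs, so that the infinitesimal deformation of the $\ncHyperCom$-algebra structure is indeed determined by its values on the generators $\nu_n$; second, the phrase ``cancel exactly against'' should be ``combine with'' (they produce $b_n^r$, not zero), though your final count is correct.
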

 
\begin{proof}
The proof is analogous to \cite[Th.~1]{DSV2013a}. Basically, the main idea is that conditions in the top homological degree are precisely identities of Proposition~\ref{prop:identities-diff}, and the other conditions follow from them via the recursion relations of Proposition~\ref{prop:abstract-correlators}.	
\end{proof} 
 
The next result is a conceptual formulation of a noncommutative analogue of \cite[Cor.~1]{DSV2013a}; it relates associative $\ncBV_\infty$-algebras to the homotopy  theory of $\ncBV$-algebras.

\begin{theorem}\label{th:as-bv-infty}
Consider a graded associative algebra structure on a graded vector space $A$, which we view as a
Maurer--Cartan element of the dg Lie subalgebra $\mathfrak{g}_{\ncHyperCom}\subset\mathfrak{l}_{\ncBV}$, and let  $r(z)=\sum_{l\ge0}r_lz^l$ be a Maurer--Cartan element of the dg Lie subalgebra $\End(A)[[z]]\subset\mathfrak{l}_{\ncBV}$. The sum  $r(z)+\alpha$ is a Maurer--Cartan element of the $L_\infty$-algebra $\mathfrak{l}_{\ncBV}$ if and only if the associative algebra on $A$ equipped with the operators $\Delta_i:=r_{i-1}$ is an associative $\ncBV_\infty$-algebra.
\end{theorem}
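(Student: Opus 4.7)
The plan is to pass through the $L_\infty$-quasi-isomorphism supplied by the deformation retract~\eqref{eq:LInftyDefRetract} and reduce the claim to an explicit identification of a $\ncBV_\infty$-algebra structure.

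First, the homotopy transfer theorem for $L_\infty$-algebras extends the chain map $i\colon \mathfrak{l}_{\ncBV}\to\mathfrak{g}_{\ncBV}$ of~\eqref{eq:LInftyDefRetract} to an $L_\infty$-quasi-isomorphism $i_\infty$ whose higher components are the usual tree sums built from the bracket of $\mathfrak{g}_{\ncBV}$ and the homotopy $h=(\delta\otimes H)^*$. Since $L_\infty$-morphisms send Maurer--Cartan elements to Maurer--Cartan elements, $r(z)+\alpha$ is MC in $\mathfrak{l}_{\ncBV}$ if and only if the image $i_\infty(r(z)+\alpha) = \sum_{n\geq 1}\tfrac{1}{n!}\,i_n(r(z)+\alpha,\ldots,r(z)+\alpha)$ is MC in $\mathfrak{g}_{\ncBV}$, that is, defines a $\ncBV_\infty$-algebra structure on $A$. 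The proof thus reduces to identifying this structure explicitly.

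Next, using the tensor decomposition $\qncBV^{\ac}\cong T^c(\delta)\otimes \ncGerst^{\ac}$, I would compute the value of $i_\infty(r(z)+\alpha)$ on each generator $\mu^l_{i_1,\ldots,i_k}$ of $\ncBV^{\ac}$. The linear part $i_1=i$ contributes $\Delta_n=r_{n-1}$ on $\delta^n$ and $m$ on the binary generator $\mu$, while vanishing on every other generator. The higher $i_n$, governed by the homotopy $h$ and by the pre-Lie identity of Proposition~\ref{prop:LieAlgHomomorphism} that expresses iterated B\"orjeson products as nested brackets of $\Delta$ with the product in the convolution pre-Lie algebra, recursively reproduce the product $b^{\Delta_l}_n$ on each $\mu^l_{1,\ldots,1}$ and vanish on every $\mu^l_{i_1,\ldots,i_k}$ with some $i_j\geq 2$. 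This is precisely the shape of an associative $\ncBV_\infty$-algebra in the sense of Definition~\ref{nc-homotopy-bv}, and by Proposition~\ref{prop:asBV-ala-Kra} its defining identities are exactly the list $\sum_{i+j=l}\Delta_i\Delta_j=0$ together with the noncommutative differential operator conditions $b^{\Delta_l}_{l+2}=0$ on each $\Delta_l$.

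The main obstacle will be the combinatorial verification of the shape of $i_\infty(r(z)+\alpha)$ described above. The argument proceeds by induction on the number of internal vertices of the trees contributing to $i_n$: the arity-one nature of $\delta^n$ together with the restricted support of $\alpha$ on the binary generator $\mu$ force the only surviving tree patterns to be the ones producing the B\"orjeson cascade~\eqref{FormulaMn}, while all other contributions cancel out via the Jacobi relation and the Maurer--Cartan hypotheses on $r(z)$ and $\alpha$. This combinatorial step is entirely parallel to the computation of \cite[Th.~5]{DSV2013b} in the commutative case. A useful sanity check is that the slice of the Maurer--Cartan expansion linear in $r(z)$ equals, by Theorem~\ref{thm:Giv=LinftyAction}, the Givental action $\widehat{r(z)}.\alpha$, whose vanishing has already been identified in Theorem~\ref{th:BorjesonIntersection} with the differential operator part of the associative $\ncBV_\infty$-relations.
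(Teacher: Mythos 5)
Your route differs genuinely from the paper's. The paper stays entirely on the small side $\mathfrak{l}_{\ncBV}$: it expands the Maurer--Cartan obstruction of $r(z)+\alpha$, groups terms by the number $p$ of occurrences of $r(z)$, shows the $p=0$ part vanishes because $\alpha$ is MC, shows the $p\ge 2$ terms vanish individually by the weight-grading argument on $\qncBV^{\ac}$, and identifies the $p=1$ part with the Givental action $\widehat{r(z)}.\alpha$, so that Theorem~\ref{th:BorjesonIntersection}, together with the MC equation for $r(z)$ (which is $\sum_{i+j=l}\Delta_i\Delta_j=0$) and Proposition~\ref{prop:asBV-ala-Kra}, finishes the argument. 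You instead push $r(z)+\alpha$ forward along $i_\infty$ to the big side $\mathfrak{g}_{\ncBV}$ and aim to recognise the image as the explicit associative $\ncBV_\infty$-structure. That strategy is viable and would yield a pleasant byproduct (an explicit description of the transferred $\ncBV_\infty$-structure), but it is the heavier route and two of your steps need firmer justification. First, ``$i_\infty$ reflects MC elements'' is not the same as ``$L_\infty$-morphisms preserve MC elements'' (which gives only one direction); it does hold here because $i_1=i$ is split by $p$ in the deformation retract~\eqref{eq:LInftyDefRetract}, so the curvature is transported injectively, but this should be stated. Second, and more seriously, the claimed shape of $i_\infty(r(z)+\alpha)$ --- vanishing away from $\mu^0_2$ and the $\mu^l_{1,\ldots,1}$, and equal to B\"orjeson products on the latter --- is a substantial tree computation. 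It is not a formal consequence of Proposition~\ref{prop:LieAlgHomomorphism}, and it is a strictly larger job than~\cite[Th.~5]{DSV2013b}, which controls only the piece linear in the gauge parameter, i.e.\ exactly your ``sanity check'' term. You would still need the paper's weight-grading argument to kill the contributions with two or more $r(z)$ leaves, and then a careful analysis of the surviving left-comb trees; as written, that step remains a sketch rather than a proof.
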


\begin{proof}
The condition of $r(z)+\alpha$ to be a Maurer--Cartan element  amounts to 
 \[
\sum_{n\ge 1}\sum_{p=0}^n\frac{1}{p!(n-p)!}\ell_{n}(\underbrace{r(z),\ldots,r(z)}_{p \text{ times}},\underbrace{\alpha,\ldots,\alpha}_{n-p \text{ times}})=0\ .
 \]
The terms $\frac{1}{n!}\ell_{n}(\underbrace{\alpha,\ldots,\alpha}_{n \text{ times}})$, once grouped together, vanish because $\alpha$ is a Maurer--Cartan element. Let us demonstrate that the terms \[\ell_{n}(\underbrace{r(z),\ldots,r(z)}_{p \text{ times}},\underbrace{\alpha,\ldots,\alpha}_{n-p \text{ times}}) \text{ with } p\ge 2\] vanish individually. Since $\alpha$ and $r(z)$ are Maurer--Cartan elements of the corresponding dg Lie subalgebras of $\mathfrak{l}_{\ncBV}$, any tree in the homotopy transfer formulas which has two leaves with the same parent both labelled by~$i(\alpha)$ or both labelled by $i(r(z))$ gives the zero contribution. Therefore the only potentially non-zero terms in the homotopy transfer formulas along the deformation retract~\eqref{eq:LInftyDefRetract} are made up of trees which have, at each leaf, either $i(r(z))$ or $i(\alpha(z))$, or left combs 
 \[
 \xymatrix@=1em{i(?) \ar@{-}[dr] & &i(?)\ar@{-}[dl] & && \\
	&\ar@{-}[dr]^h [ \; , \, ]&&i(?)  \ar@{-}[dl]& &\\
	&&\ar@{..}[dr][ \; , \, ]&& &\\
	&&&\ar@{-}[dr]^h& &i(?)  \ar@{-}[dl]\\
	&&&&[ \; , \, ]\ar@{-}[d]& \\
	&&&&h &   } 
 \]
where each question mark is either $r(z)$ or $\alpha$. Similarly to how it is done in \cite[Th.~5]{DSV2013b}, one may utilise the weight grading of the cooperad $\qncBV^{\ac}$ given by the number of vertices labelled by $\mu$, and the fact that $\delta\otimes H$ increases this weight grading by one. Since $i(\alpha)$ vanishes on elements of weight grading greater than one, we conclude that all the trees with at least two leaves labelled $i(r(z))$ vanish. 

Let us now examine the terms where $r(z)$ occurs exactly once. Similarly to Theorem~\ref{thm:Giv=LinftyAction}, one can show that the contribution of all these terms together is equal to the result of applying the Givental--Lee formulas to $r(z)$ and $\alpha$ (that theorem is proved for $r(z)$ of degree $0$, and we are applying it to $r(z)$ of degree $-1$, so it really is an analogous statement rather than the same one). It remains to apply Theorem~\ref{th:BorjesonIntersection} to complete the proof. 
\end{proof}

\begin{remark}
Theorem \ref{th:as-bv-infty} shows that Definition \ref{nc-homotopy-bv} of associative $\ncBV_\infty$-algebras can be equivalently given from the minimal model viewpoint: an associative $\ncBV_\infty$-algebra is an algebra over the minimal model of $\ncBV$ for which the only operations that do not vanish are the higher homotopies for $\Delta^2=0$ and the binary product. 
\end{remark}

\subsection{Associative \texorpdfstring{$\ncBV_\infty$}{ncBVinfty}-algebras and \texorpdfstring{$\IIB_\infty$}{IIB}-bialgebras}

The purpose of this section is to relate in a precise way the two notions of associative $\ncBV_\infty$-algebras and bialgebras over the properad $\IIB_\infty$, the cobar construction of the Koszul dual of the properad of infinitesimal involutive bialgebras. That properad may be viewed as a noncommutative analogue of the properad $\IBL_\infty$ of homotopy involutive Lie bialgebras, which plays a crucial role in string topology, symplectic field theory, and Lagrangian Floer theory, see, for instance, \cite{EGH00,CFL15}. 

In the first part of this section, we compute the differential Lie algebra which controls the deformation theory of bialgebras over $\IIB_\infty$. This can be seen as the noncommutative analogue of the result that describes homotopy involutive Lie bialgebras as Maurer--Cartan elements in the completion of the Weyl algebra of differential operators; see \cite{MV1} for the general theory and \cite[Th.~4.10]{DCTT2010} for this particular case.

In the second part of this section, we define the bar construction of a $\IIB_\infty$-bialgebra, and establish that this construction provides us with an equivalence of categories between $\IIB_\infty$-bialgebras  and associative $\ncBV_\infty$-algebras with free underlying associative algebra. In the commutative case, such an equivalence between $\IBL_\infty$-bialgebras and commutative $\BV_\infty$-algebras whose underlying commutative associative algebra is free is known; see, for example,  \cite[Prop.~5.4.1]{CMV2014} and \cite[Ex.~10]{MV}.

Note that while, in the classical  case,  the properad $\IBL$ of involutive Lie bialgebras is Koszul~\cite{CMV2014}, the ns properad $\IIB$ of involutive infinitesimal bialgebras is not~\cite{MV1}; for our purposes, this merely means that we always work with the properadic cobar complexes, and do not infer results on the homotopy category of infinitesimal unimodular bialgebras from our results on algebras over the cobar complexes. 

\subsubsection{Involutive infinitesimal bialgebras and Frobenius bialgebras} In this section, we discuss the properads of involutive infinitesimal bialgebras and Frobenius bialgebras, and their basic properties.  

The following definition goes back to Joni and Rota \cite{JR79} (where of course the language of bialgebras, not the language of properads, is used).

\begin{definition}\label{def:InfUni}
The properad $\IIB$ of \emph{involutive infinitesimal bialgebra} is the ns properad with two generators $m\in\IIB_0(\underline{2},\underline{1})$ and $\delta\in\IIB_0(\underline{1},\underline{2})$ which satisfy the following relations:
\begin{gather}
m\circ (m\otimes\id)=m\circ (\id\otimes m) ,\label{eq:assoc}\\
(\delta\otimes\id)\circ\delta=(\id\otimes\delta)\circ\delta ,\label{eq:coassoc}\\
m\circ\delta=0 , \label{eq:unimodular}\\
\delta\circ m=(m\otimes\id)\circ(\id\otimes\delta)+(\id\otimes m)\circ(\delta\otimes\id) . \label{eq:infHopf}
\end{gather} 
\end{definition}
Relations \eqref{eq:assoc} and \eqref{eq:coassoc} express, respectively, associativity of $m$ and coassociativity of $\delta$.  Relation~\eqref{eq:infHopf} is an infinitesimal version of the compatiblity relation in a bialgebra, see \cite{Ag01}. The property expressed  by Relation~\eqref{eq:unimodular} is usually termed ``involutive''.  

\smallskip

With a re-grading of one of the operations, it is possible to construct $\ncBV$-algebras from involutive infinitesimal bialgebras; this is a noncommutative analogue of the construction of \cite[Sec.~2.10]{Ginz06}.

\begin{example}\label{ex:IIB-BV} 
Suppose that $(A,m,\delta)$ be an involutive infinitesimal  bialgebra, which is the same as Definition~\ref{def:InfUni} in which the coassociative coproduct $\delta$ has degree~$2$. In this case, the $\ncGerst$-algebra structure of the bar construction $\big(\overline{T}(s^{-1}A),m\big)$ of Example \ref{ex:bar-ncGerst} may be extended to a $\ncBV$-algebra given by  the formula
  \[
\Delta=
\sum_{1\leqslant i<n}\id^{\otimes(i-1)}\otimes(s^{-1}m\circ(s\otimes s))\otimes\id^{\otimes(n-1-i)}+\sum_{1\leqslant i\leqslant n}\id^{\otimes(i-1)}\otimes((s^{-1}\otimes s^{-1})\delta s)\otimes\id^{\otimes(n-i)} \ .
 \]
\end{example}

\begin{definition}
The properad $\ncFrob$ of \emph{noncommutative Frobenius bialgebras} is the ns properad with two generators $m\in\ncFrob_0(\underline{2},\underline{1})$ and $\delta\in\ncFrob_0(\underline{1},\underline{2})$ which satisfy the following relations: 
\begin{gather*}
m\circ (m\otimes\id)=m\circ (\id\otimes m) ,\\
\delta\circ m=(\id\otimes m)\circ(\delta\otimes\id),\\
\delta\circ m=(m\otimes\id)\circ(\id\otimes\delta),\\
(\delta\otimes\id)\circ\delta=(\id\otimes\delta)\circ\delta .
\end{gather*}
\end{definition}

\begin{proposition}\label{prop:ncFrob}\leavevmode
\begin{itemize}
\item[(i)] For each $m,n\ge 1$, and each $g\ge 0$,  the genus $g$ component with $m$ inputs and $n$ outputs $\ncFrob_g(m,n)$ of the ns properad $\ncFrob$ is one-dimensional. 
\item[(ii)] The ns properads $\ncFrob$ and $\IIB$ are Koszul dual to each other.
\item[(iii)] The ns properad $\ncFrob$ is not Koszul. 
\item[(iv)] The Koszul dual coproperad $\IIB^{\ac}$ of the properad $\IIB$ has as its underlying space the collection $(\As^{\ac})^{\mathrm{op}} \otimes T^c(z)\otimes \As^{\ac}$, where ``$\mathrm{op}$'' refers to the ``opposite'' cooperad obtained by reversing inputs and outputs, and $z$ is a formal variable of degree $2$;  its powers  account for the genus in the coproperad. 
\end{itemize}
\end{proposition}

\begin{proof}
Statements (i) and (ii), and (iv) are proved by a direct computation, statement (iii) is established in \cite[Lemma~46]{MV1}.  
\end{proof}

\begin{definition}
We shall refer to algebras over the cobar construction $\Omega(\IIB^{\ac})$ as \emph{$\IIB_\infty$-algebras}.
\end{definition}

\begin{remark}
A suitable homotopy coherent notion of an involutive infinitesimal bialgebra should involve strictly more generating operations (homotopies), corresponding to more syzygies in the cofibrant properadic resolution. However, even though the ns properad $\IIB$ is not Koszul, bialgebras over the cobar construction $\Omega (\IIB^{\ac})$ share nice homotopy properties, since this latter properad is cofibrant. This gives at least one reason to consider this algebraic structure.
\end{remark}

\subsubsection{Noncommutative Weyl algebras}

Let $V$ be a graded vector space, which we consider to be either finite-dimensional or bounded from below with finite-dimensional graded components. In the commutative case, the Weyl algebra of a vector space $V$ is the algebra of differential operators on $V$. A coordinate-free definition of (a completion of) this algebra is presented in~\cite{DCTT2010,CFL15}; we mimic the latter definition here.

\begin{definition}
Let $f\in\Hom(V^{\otimes p},V^{\otimes q})$, $g\in\Hom(V^{\otimes r},V^{\otimes s})$, and let $k\ge 1$ be an integer. The \emph{partial $k$-composition $f\circ^{(k)}g$} is defined by the formula
	\[
f\circ^{(k)}g := 
\begin{cases}
0, \text{ if } k>\min(p,s),\\
\sum\limits_{i+j=s-p}(\id^{\otimes i}\otimes f\otimes\id^{\otimes j})\circ g, \text{ if } k=p<s, \\
\sum\limits_{i+j=p-s}f\circ (\id^{\otimes i}\otimes g\otimes\id^{\otimes j}), \text{ if } k=s<p, \\
(f\otimes\id^{\otimes(s-k)})\circ(\id^{\otimes(p-k)}\otimes g)+ 
(\id^{\otimes(s-k)}\otimes f)\circ(g\otimes\id^{\otimes(p-k)}), \text{ if } k<\min(p,s) . 
\end{cases} 
	\]
\end{definition}

Besides the non-unital algebra of noncommutative polynomials
 \[
\overline{T}(V):=\bigoplus_{n\ge 1}V^{\otimes n}\ ,
 \]  
 we shall also use non-unital formal noncommutative power series 
\[
\widehat{{{\overline{T}}(V)}}:=\prod_{n\ge 1}V^{\otimes n} \ .
\]  
It is clear that the partial $k$-composition extends to a well defined map
 \[
\circ^{(k)}\colon \Hom(\overline{T}(V),\widehat{\overline{T}(V)})\otimes \Hom(\overline{T}(V),\widehat{\overline{T}(V)})\to \Hom(\overline{T}(V),\widehat{\overline{T}(V)}).
 \]

\begin{definition}\label{def:ncWeyl}
Let $\hbar$ be a formal variable. The \emph{noncommutative Weyl algebra} $(\ncW(V),\star)$ of a graded vector space $V$ is the $\k[[\hbar]]$-module 
 \[
\ncW(V):=\Hom\left(\overline{T}(V),\widehat{\overline{T}(\hbar V)}\right)\otimes \tfrac1{\hbar}\k[[\hbar]]\ ,
 \]
endowed with  the $\k[[\hbar]]$-linear binary operation $\star$ defined by the formula
 \[
f\star g:=\sum_{k\ge 1}\big(f\circ^{(k)}g\big)\, \hbar^{k-1}\ , 
 \]
 for $f,g\in \ncW(V)$.
\end{definition} 

\begin{theorem}\label{th:IIBandMC}
Let $V$ be a chain complex bounded from below. 
\begin{enumerate}
\item If we assign to the formal variable $\hbar$ the homological degree $-2$, the noncommutative Weyl algebra $\ncW(sV)$, with the differential induced from that of $V$, becomes a differential graded Lie-admissible algebra. 
\item 
There is a one-to-one correspondence between $\IIB_\infty$-bialgebra structures  on $V$ and Maurer--Cartan elements of the differential graded Lie algebra associated to $\ncW(sV)$. 
\end{enumerate}
\end{theorem}

\begin{proof}
We prove both assertions at the same time using the formalism of \cite{MV1} as follows. Let $\End_V$ denote the endomorphism properad of $V$.
The convolution algebra encoding $\IIB_\infty$-bialgebras is isomorphic to 
 \[
\Hom(\IIB^{\ac}, \End_V)\cong \Hom(T^c(z)\otimes\As^{\ac}(V),\widehat{(\As^{\ac})^*(V)})\cong s^2\Hom(\overline{T}(sV),\widehat{\overline{T}(s^{-1}V)})[[\hbar]] .
 \]
Here $\hbar=z^*$, either of these formal variables essentially counts the genus of operations in the properad. The coproperad structure on $\IIB^{\ac}$ induces a Lie-admissible product on this convolution algebra. 
Notice that 
 \[
s^2\Hom(\overline{T}(sV),\widehat{\overline{T}(s^{-1}V)})[[\hbar]] \cong \Hom\left(\overline{T}(sV),\widehat{\overline{T}(\hbar sV)}\right)\otimes \tfrac1{\hbar}\k[[\hbar]]=\ncW(sV) ,
 \]
and moreover, in the present case, it is straightforward to check that the Lie-admissible product of the convolution algebra is equal to the product $\star$ under this isomorphism. (Essentially, it boils down to the fact that the elements appearing in partial $k$-composition are the only compositions in the ns properad $\ncFrob$.) Both  statements immediately follow. We invite the reader to compare this with \cite{CFL15,DCTT2010}.
\end{proof}

\begin{corollary}\label{cor:MCWeylinfinvBiinf}
An $\IIB_\infty$-bialgebra structure structure on a graded vector space $V$ amounts to a collection of maps 
$$\mu_{n,m}^g\ : \ V^{\otimes n} \to V^{\otimes m}$$
of degree $|\mu_{n,m}^g|=n+m+2g-3$, for any $n, m\ge 1$ and $g\ge 0$, satisfying 
$$\mu \star \mu=0\ , $$
where $\mu=\displaystyle\sum_{\substack{n,m\ge 1,\\ g\ge 0}} \mu_{n,m}^g$.
In particular, the square-zero map $\mu^0_{1,1}$ is the underlying differential.
\end{corollary}

\subsubsection{Bar construction of $\IIB_\infty$-bialgebras}\label{subsubsec:BarHoInvInfBi}

In this section, we extend the bar construction of $A_\infty$-algebras to $\IIB_\infty$-bialgebras. 

Recall that the bar construction of an $A_\infty$-algebra $\lbrace m_n\rbrace_{n\ge 1}$ on $V$ is the (non-counital) cofree conilpotent coassociative coalgebra $\overline{T}^c(sV)$ equipped with the unique square-zero degree $-1$ coderivation $d=d_{m_1}+d_{m_2}+\cdots$ extending the operations $m_n : V^{\otimes n} \to V$. In \cite{TTW11}, it is proved that if one considers the underlying vector space $\overline{T}^c(sV)$ as a commutative associative algebra with respect to the shuffle product, then each map $d_{m_n}$ is a differential operator of order at most $n$.  In \cite{Bor13}, the cohomological version of the bar complex $\overline{T}^c(s^{-1}V)$ was used; this is necessary if one wants to match the degree convention of $\ncBV_\infty$-algebras. The same phenomenon will be observed below. 

\smallskip 

In general, for any Koszul operad $\calP$, the bar construction of a $\calP_\infty:=\Omega(\calP^{\ac})$-algebra $(A, \mu)$ is the cofree $\calP^{\ac}$-coalgebra $\calP^{\ac}(A)$ equipped with the unique coderivation $d_\mu$ extending the structure map~$\mu$. This holds true thanks to the bijection 
 \[
\mathrm{MC}\left(
\Hom_{\mathbb{S}}(\calP^{\ac}, \End_A)\right)\cong \mathrm{Codiff}\left(
\calP^{\ac}(A)
\right) 
 \]
between Maurer--Cartan elements in the corresponding convolution algebra and degree $-1$ square-zero coderivations, called \emph{codifferentials}, on cofree $\calP^{\ac}$-coalgebras. 
Such a relationship cannot be true in general on the level of properads since the notion of a (co)free bialgebra over a (co)properad is not available. However, in particular cases like $\IIB_\infty$-bialgebras, one can fix this as follows. The key point relies in the form of the coproperad $\IIB^{\ac}$ that we described in Proposition~\ref{prop:ncFrob}; as we already saw in the proof of Theorem~\ref{th:IIBandMC}, because of that form of the coproperad $\IIB^{\ac}$ we have 
 \[
\Hom(\IIB^{\ac}, \End_V)\cong \Hom\left(\overline{T}(sV),\widehat{\overline{T}(\hbar sV)}\right)\otimes \tfrac1{\hbar}\k[[\hbar]]=\ncW(sV)  .
 \]
As a consequence, we will define a bar construction for $\IIB_\infty$-bialgebras on $V$ utilising the tensor module $\overline{T}(sV)$. The concatenation product on that module allows us to use differential operators on noncommutative algebras, and obtain a certain equivalence between $\IIB_\infty$-algebras and $\ncBV_\infty^{\mathrm{\, assoc}}$-structures on free algebras.

\smallskip 

Let us consider the graded vector space $T^c(z)\otimes\As^{\ac}(V)=\overline{T}(sV)\otimes\k[z]$ which already appeared in the proof of Theorem~\ref{th:IIBandMC}.
We equip it with the product induced by the concatenation product of $\overline{T}(sV)$:
 \[
m(sv_1\otimes \cdots\otimes sv_k\,z^{g'}, sv_{k+1}\otimes\cdots \otimes sv_n\, z^{g''})=sv_1\otimes \cdots\otimes sv_n\,z^{g'+g''}
 \]
The statement of Proposition~\ref{prop:NCDiffOpTensor} extends \emph{mutatis mutandis} to $\overline{T}(sV)[z]$, if we adjust the definition of the mapping  $f\mapsto D_f$ appropriately. For $f \colon (sV)^{\otimes l}\otimes z^g \to \overline{T}(sV)$, we consider the map $D_f\colon \overline{T}(sV)[z] \to  \overline{T}(sV)[z]$ defined by
 \[
D_f (sv_1\otimes \cdots\otimes sv_n\otimes z^k):= 
\left\lbrace
\begin{array}{cl}
\displaystyle \sum_{i+j+l=n}(\id_V^{\otimes i}\otimes f\otimes\id_V^{\otimes j})(sv_1\otimes \cdots\otimes sv_n\otimes z^k)\cdot z^{k-g}\ , & \text{ for } n\ge l,\, k\ge g , \\
0\ , &\text{ otherwise} .
\end{array}
\right.
 \]

\begin{proposition}\label{prop:LieWeyl}
The mapping $f \mapsto D_f$,
 \[
\Hom(\overline{T}(sV),{\overline{T}(sV)})[[\hbar]]\cong  
\Hom(\overline{T}(sV)[z],{\overline{T}(sV)}\big) \to \End\big(\overline{T}(sV)[z]\big)
 \]
is a morphism of differential graded Lie algebras. 
\end{proposition}

\begin{proof}
A direct computation shows that any pair of maps $\phi, \psi \colon \overline{T}(sV)[z] \to  \overline{T}(sV)$  satisfy the following relation in $\End(\overline{T}(sV)[z])$:
 \[
[D_\phi, D_\psi]=D_{\lbrack \phi, \psi\rbrack_\star}\ . 
 \]
\end{proof}

\begin{definition}\leavevmode
\begin{itemize}
\item An $\IIB_\infty$-bialgebra $\big(V, \big\lbrace\mu_{n,m}^g\big\rbrace\big)$ is called \emph{locally nilpotent} if, for any $n\ge 1$ and any $v_1, \ldots, v_n \in V$, only a finite number of $\mu^g_{n,m}(v_1, \ldots, v_n)$, for $g\ge 0$ and $m\ge 1$, is non-trivial.
\item The \emph{bar construction} of a locally nilpotent $\IIB_\infty$-bialgebra $\big(V, \big\lbrace\mu_{n,m}^g\big\rbrace\big)$ is the $\k[z]$-extension of the cofree coassociative coalgebra on $sV$, that is $\overline{T}(sV)[z]$ equipped with the operators $\Delta_l$  defined by 
 \[
\Delta_l:=\sum_{\substack{g\ge 0, \\ m\ge 1}} D_{f^g_{l+1,m}}\ , 
 \]
where $f^g_{l+1,m}:= s^{\otimes m}z^{-g}\,\mu^g_{l+1,m} (s^{-1})^{\otimes(l+1)}\colon (sV)^{\otimes (l+1)}\, z^g\to (sV)^{\otimes m}$.
\end{itemize}
\end{definition}
 
To state the following result, we shall consider the cohomological version of the bar complex where we replace $sV$ by $s^{-1}V$ to match the degree convention of $\ncBV_\infty$-algebras. We denote by $\Diff^{\,(l)}\big(\overline{T}(s^{-1}V)[z]\big)$ the image of $\Hom((s^{-1}V)^{\otimes l}[z],\overline{T}(s^{-1}V))$ under the mapping $f\mapsto D_f$.

\begin{theorem}\leavevmode
\begin{enumerate}
\item The $\Delta_l$'s operators of the the cohomological bar construction of a locally nilpotent $\IIB_\infty$-bialgebra endows it with a structure of  an associative $\ncBV_\infty$-algebra in which each operator $\Delta_l$ vanishes on $(s^{-1}V)^{\otimes n}[z]$ for $n\le l$, i.e. $\Delta_l\in \Diff^{\,(l+1)}\big(\overline{T}(s^{-1}V)[z]\big)$.
\item The datum of an associative $\ncBV_\infty$-algebra structure on the associative algebra $T(s^{-1}V)[z]$ in which each operator $\Delta_l$ vanishes on $(s^{-1}V)^{\otimes n}[z]$ for $n\le l$, is equivalent to the datum of a locally nilpotent $\IIB_\infty$-bialgebra structure on $V$.
\end{enumerate}
\end{theorem}

\begin{proof}\leavevmode
The first statement is a direct consequence of Corollary~\ref{cor:MCWeylinfinvBiinf} and Proposition~\ref{prop:LieWeyl}.

To establish the second statement, let $\big(\overline{T}(s^{-1}V)[z], \lbrace \Delta_l\rbrace\big)$ be an associative $\ncBV_\infty$-algebra structure such that $\Delta_l\in \Diff^{\,(l+1)}\allowbreak\big(\overline{T}(s^{-1}V)[z]\big)$. The $\k[z]$-extension of  Proposition~\ref{prop:NCDiffOpTensor} shows the data of $\Delta_l$ is equivalent to the data of maps $\mu_{l+1, m}^g : V^{\otimes l+1} \to V^{\otimes m}$ of degree $l+1+m+2g-3$, for any $g\ge 0$ and $m\ge1$, satisfying 
 \[
\Delta_l=\sum_{\substack{\g\ge 0, \\ m\ge 1}} D_{(s^{-1})^{\otimes m}z^{-g}\,\mu^g_{l+1,m} s^{\otimes(l+1)}} .
 \]
Since the mapping $f\mapsto D_f$ is injective, Proposition~\ref{prop:LieWeyl} and Corollary~\ref{cor:MCWeylinfinvBiinf} show that the maps $\mu^g_{n,m}$ satisfy the relations of an $\IIB_\infty$-bialgebra. This latter bialgebra is locally nilpotent by construction. 
\end{proof}

\begin{remark}\leavevmode
\begin{itemize}
\item In the commutative case, such a result plays a crucial role in the formulation of  symplectic field theories, see \cite{EGH00}.
\item This result is the full generalization of construction from Example~\ref{ex:IIB-BV}.
\end{itemize}
\end{remark}

\subsection{The operad $\ncHyperCom$ as a homotopy quotient of $\ncBV$}

From results of Section \ref{sec:minimal-model-ncBV}, it can already be deduced, in the same way it is done in \cite{DCV2013}, that the operad $\ncHyperCom$ is a homotopy quotient of the operad $\ncBV$ by the operation $\Delta$. However, using the Givental action, it is possible to construct an explicit quasi-isomorphism from $\ncHyperCom$ to a dg operad representing this homotopy quotient. In this section, we accomplish that. We mainly follow the approach of~\cite{KMS2013}. However, since we work with nonsymmetric operads, we prefer to write down all the formulas in such a way that they do not contain unnecessary denominators.  A geometric version of the statement on the homotopy quotient is discussed in the next section.

\subsubsection{Homotopy trivialization of the circle action} Following~\cite{DSV2013b}, we view trivialised circle actions as follows. For a chain complex $A$, we consider the associative convolution dg algebra
 \[
\overline{\mathfrak{a}}:=\left(\Hom\left(\overline{T}^c(\delta), \End(A)\right), \star\right)\ ,
 \] 
which gives rise to a dg Lie algebra controlling homotopy circle actions on $A$. We also consider the extended convolution algebra
 \[
\mathfrak{a}:=\left(\Hom\left({T}^c(\delta), \End(A)\right), \star\right)\ .
 \]  
In the latter convolution algebra, we denote by $1$ and by $\partial$ the maps defined respectively by 
 \[
1 \ : \ 1 \mapsto \id_A,  \quad     \partial \ : \ 1 \mapsto \partial_A,\quad  \text{and}\quad \delta^k\mapsto 0, \ \ \text{for}\ \ k\ge 1\ . 
 \]  
The data of a trivialisation of a homotopy circle action $\phi \in \MC(\overline{\mathfrak{a}})$ on $A$ amounts to a degree $0$ element 
$f\in \overline{\mathfrak{a}}$ satisfying the following equation in the algebra $\mathfrak{a}$:
 \[
(1+f)\star(\partial+\phi)=\partial \star (1+f)\ . 
 \]
This data is equivalent to  a module structure over the quasi-free dg algebra 
 \[
\mathsf{T}:=\left(T\left(s^{-1} \overline{T}^c(\delta) \oplus  \overline{T}^c(\delta) \right), d_1+d'_2+d_2'' \right)\ ,
 \]
where the differential $d_1$ is the unique derivation which extends the desuspension map on the space of generators 
$s^{-1}\colon  \overline{T}^c(\delta) \to s^{-1}\overline{T}^c(\delta)$,  
the differential $d'_2$ is the unique derivation which extends the comultiplication map followed by the desuspension map  
 \[
\overline{T}^c(\delta) \to \overline{T}^c(\delta)\otimes \overline{T}^c(\delta)
 \xrightarrow{s^{-1}}  s^{-1}\overline{T}^c(\delta)\otimes \overline{T}^c(\delta)\ , 
 \]
and the differential $d_2''$ is the differential coming form the Koszul resolution
$$\Omega \, H^\bullet(S^1)^{\ac}=\left(T\left(s^{-1} \overline{T}^c(\delta) \right), d_2 \right)\ .$$

\begin{proposition}
The homotopy quotient of the ns operad $\ncBV$ by the action of $\Delta$ is represented by the coproduct  
 \[
\ncBV/_h\,\Delta:=\ncBV\vee_{\Omega \, H^\bullet(S^1)^{\ac}}\mathsf{T}\ . 
 \]
\end{proposition}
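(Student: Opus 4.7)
The plan is to verify the proposition by unpacking what ``represents the homotopy quotient'' means on the level of algebras, and checking that the coproduct produces exactly the right category. I would begin by analysing the dg algebra $\mathsf{T}$ separately. Its generators in degree $s^{-1}\overline{T}^c(\delta)$ and $\overline{T}^c(\delta)$ together with the three pieces of the differential $d_1 + d_2' + d_2''$ are chosen precisely so that a $\mathsf{T}$-module structure on a chain complex $(A,\partial_A)$ is the same datum as a pair $(\phi, f)$ where $\phi \in \MC(\overline{\mathfrak{a}})$ is a homotopy circle action and $f \in \overline{\mathfrak{a}}$ is a degree zero trivialisation satisfying $(1+f)\star(\partial + \phi) = \partial \star (1+f)$ in $\mathfrak{a}$: the generators in $\overline{T}^c(\delta)$ encode the higher components of $\phi$, those in $s^{-1}\overline{T}^c(\delta)$ encode the components of $f$, and the differentials $d_2''$ and $d_2'$ respectively encode the Maurer--Cartan equation for $\phi$ and the trivialisation relation above. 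This is a direct verification that one can do by matching the cooperad decomposition coefficients to the $\star$-products in Definition-like fashion.

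Next, I would identify algebras over the coproduct. By the universal property of the coproduct over $\Omega\,H^\bullet(S^1)^{\ac}$ in the category of dg operads, an algebra over $\ncBV \vee_{\Omega H^\bullet(S^1)^{\ac}} \mathsf{T}$ is the same thing as a pair consisting of a $\ncBV$-algebra structure on $A$ and a $\mathsf{T}$-module structure on $A$ whose underlying $\Omega\,H^\bullet(S^1)^{\ac}$-action agrees with the one coming from restriction of $\ncBV$ to its unary suboperad. The $\Omega\,H^\bullet(S^1)^{\ac}$ action, in turn, is exactly a homotopy action of $\Delta$: in a strict $\ncBV$-algebra it reduces to iterations of the operator $\Delta$. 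Combining this with the first step, the algebras over the coproduct are therefore $\ncBV$-algebras equipped with a trivialisation $f$ of the homotopy circle action induced by $\Delta$ — which is the textbook description of the homotopy quotient $\ncBV/_h\,\Delta$.

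Finally, to justify calling this object the homotopy quotient in the strict operadic model-category sense, I would verify that the inclusion $\Omega\,H^\bullet(S^1)^{\ac} \hookrightarrow \mathsf{T}$ is a cofibration with acyclic cokernel (with respect to the additional generators it introduces). Because the differential $d_1 : \overline{T}^c(\delta) \to s^{-1}\overline{T}^c(\delta)$ is an isomorphism of graded spaces, the summand of the generators it contributes forms an acyclic contractible chain complex once the internal differentials are taken into account; thus $\mathsf{T}$ is a cofibrant replacement of the operad $\k$ (concentrated in arity $1$) in the model category of $\Omega\,H^\bullet(S^1)^{\ac}$-algebras. The coproduct $\ncBV \vee_{\Omega H^\bullet(S^1)^{\ac}} \mathsf{T}$ is then the homotopy pushout of the diagram $\ncBV \longleftarrow \Omega\,H^\bullet(S^1)^{\ac} \longrightarrow \k$ in dg operads, which is a standard model for the homotopy quotient of $\ncBV$ by the $\Delta$-action.

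The main obstacle will be making the second step rigorous: I need to be careful that the amalgamation takes place over the full subalgebra $\Omega\,H^\bullet(S^1)^{\ac}$ (and not merely over $\k[\Delta]$), so that the trivialising homotopies $f$ really trivialise every iterate of the infinitesimal action and not just $\Delta$ itself. This matches with the fact that $\Omega\,H^\bullet(S^1)^{\ac}$ is the cobar construction encoding not just $\Delta$ but all of its $A_\infty$-style higher compatibilities, which is necessary to match the definition of a trivialisation of a homotopy, as opposed to a strict, circle action.
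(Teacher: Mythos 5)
Your proposal is following the same route the paper itself takes: the paper's own proof is a one-paragraph argument that checks both $\mathsf{T}$ and $\ncBV$ receive maps from $\Omega\,H^\bullet(S^1)^{\ac}$ and then cites the analogous argument of Khoroshkin--Markarian--Shadrin (their Proposition~2.2), and what you write is essentially an unpacking of that reference --- identify $\mathsf{T}$-modules with trivialised homotopy circle actions, use the universal property of the coproduct of operads, and check that $\Omega\,H^\bullet(S^1)^{\ac}\hookrightarrow\mathsf{T}$ is a cofibration with $\mathsf{T}$ acyclic so that the pushout models the homotopy pushout of $\ncBV\leftarrow\Omega\,H^\bullet(S^1)^{\ac}\rightarrow\k$.

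One bookkeeping slip should be corrected before this can stand as a proof. You write that the generators in $\overline{T}^c(\delta)$ encode the components of the homotopy circle action $\phi$ and that those in $s^{-1}\overline{T}^c(\delta)$ encode the trivialisation $f$; it is the other way round. The sub-dg-algebra $\Omega\,H^\bullet(S^1)^{\ac}=\bigl(T(s^{-1}\overline{T}^c(\delta)),d_2\bigr)$, whose action gives the homotopy circle action $\phi$, is generated by $s^{-1}\overline{T}^c(\delta)$; the freshly adjoined generators $\overline{T}^c(\delta)$, with differential components $d_1$ and $d_2'$, are the ones that encode the degree-zero trivialising element $f\in\overline{\mathfrak{a}}$ and the relation $(1+f)\star(\partial+\phi)=\partial\star(1+f)$. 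Your subsequent sentence attributing the Maurer--Cartan equation for $\phi$ to $d_2''$ is consistent with the correct labelling, so this appears to be a local transposition rather than a conceptual misunderstanding, but it must be fixed so that the passage to the universal property in your second step is well founded. Your final remark --- that amalgamation must take place over all of $\Omega\,H^\bullet(S^1)^{\ac}$ and not merely over $\k[\Delta]/(\Delta^2)$, so as to trivialise all the higher homotopies of the circle action --- is exactly the right point of caution and matches the paper's emphasis.
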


\begin{proof}
Both the algebra $\mathsf{T}$ and the operad $\ncBV$ admit a map from the  
the algebra $\Omega \, H^\bullet(S^1)^{\ac}$. In the case of $\mathsf{T}$, it is tautological, while in the case of $\ncBV$, it sends $s^{-1}\delta^k$ to $\Delta$ for $k=1$, and to $0$ for $k>1$. The result then follows from the arguments of \cite[Prop.~2.2]{KMS2013}. 
\end{proof}

In plain words, an algebra over the ns operad $\ncBV/_h\,\Delta$ amounts to  a $\ncBV$-algebra $A$ equipped with extra unary operators $a_1,a_2,\ldots$ satisfying the equation
 \[
a(z)\circ(\partial_A+\Delta z)=\partial_A \circ a(z) ,
 \]
where $a(z)=1+a_1z+a_2z^2+\cdots$. 

\subsubsection{The ns operad $\ncHyperCom$ as representative of the homotopy quotient}

\begin{theorem}\label{th:ncHyperComQI}
The two dg ns operads  $\ncHyperCom$ and $\ncBV/\Delta$ are quasi-isomorphic.
\end{theorem}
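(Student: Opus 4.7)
The plan is to construct an explicit quasi-isomorphism $\Phi \colon \ncHyperCom \to \ncBV/_h\,\Delta$ following the strategy of \cite{KMS2013} in the commutative setting. The key observation is that an algebra over $\ncBV/_h\,\Delta$ is the data of a $\ncBV$-algebra $A$ together with a degree $0$ element $a(z)=1+a_1z+a_2z^2+\cdots$ in $\End(A)[[z]]$ satisfying the trivialisation equation $(1+a(z))\star(\partial+\Delta z)=\partial\star(1+a(z))$ in $\mathfrak{a}$. Given such a package, the associative product $m$ of the underlying $\ncBV$-algebra defines a trivial tree level ncCohFT on $A$ (all $\nu_k$ with $k\ge 3$ being zero), and the noncommutative Givental action of Section~\ref{sec:Givental} on this trivial ncCohFT by the degree $0$ power series $a(z)$ produces a genuine $\ncHyperCom$-algebra structure on $A$.

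More concretely, I would define $\Phi(\nu_n)$ as the $n$-ary operation on $A$ obtained by applying the recurrence of Proposition~\ref{prop:recurrence-Givental} starting from $\nu_2=m$ and using the coefficients $r_k=a_k$ at each step; this produces an explicit element of $\ncBV/_h\,\Delta$ expressed as a sum of composites involving $m$, $\Delta$ and the generators $a_k$. The fact that the collection $\{\Phi(\nu_n)\}_{n\ge 2}$ satisfies the $\ncHyperCom$ relations \eqref{eq:ncWDVV} in $\ncBV/_h\,\Delta$ is established by combining two ingredients: first, the Givental action preserves the class of (tree level) ncCohFTs by construction, so the deformed operations do satisfy \eqref{eq:ncWDVV} inside any $\ncBV/_h\,\Delta$-algebra; second, the trivialisation equation above is used to rewrite the $b$'s generated by the recurrence in terms of $\Delta$ and $m$, so that the resulting identity makes sense and holds inside the universal $\ncBV/_h\,\Delta$-algebra. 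Compatibility with the differentials amounts to the same trivialisation equation together with $\Delta^2=0$.

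To verify that $\Phi$ is a quasi-isomorphism, I would compare both dg ns operads with the minimal model of $\ncBV$ obtained in Section~\ref{sec:minimal-model-ncBV}. By Theorem~\ref{th:BVDefRetract} and its corollary, the space of generators of the minimal model splits as $\overline{T}^c(\delta) \oplus (\overline{\calS^2\ncGrav})^*$; the trivialising data $a_k$ precisely accounts for the $\overline{T}^c(\delta)$ summand, so in $\ncBV/_h\,\Delta$ the only surviving homotopy content comes from $(\overline{\calS^2\ncGrav})^*$. Koszul duality for the quadratic presentation of $\calS\ncGrav$ given in Theorem~\ref{th:GravPresentation} identifies this remaining piece with the Koszul resolution of $\ncHyperCom$. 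Alternatively, one may filter $\ncBV/_h\,\Delta$ by the total number of $a_k$'s and instances of $\Delta$ and, on the $E_1$ page, match the associated graded with $\ncHyperCom$ using the dimension count of Theorem~\ref{th:HyperCom-dim}.

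The main obstacle will be producing a clean closed-form expression for $\Phi(\nu_n)$ and controlling the sign bookkeeping when iterating the Givental recurrence in the nonsymmetric setting. The noncommutative Givental formula is in some sense more rigid than its symmetric counterpart (the recurrence of Proposition~\ref{prop:recurrence-Givental} sums over \emph{intervals} rather than arbitrary subsets), which should simplify the combinatorics but makes it essential to keep track of operadic positions carefully. Once $\Phi$ is written down and checked to be a morphism of dg ns operads, the quasi-isomorphism statement reduces to a standard spectral sequence argument based on the minimal model of Section~\ref{sec:minimal-model-ncBV}, parallel to~\cite[Th.~1.1]{KMS2013}.
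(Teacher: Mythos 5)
Your plan matches the paper's: construct $\Theta\colon\ncHyperCom\to\ncBV/_h\,\Delta$ by applying the exponentiated Givental action of the universal trivialisation $a(z)$ to the trivial ncCohFT given by the associative product $m$, then deduce the quasi-isomorphism from the minimal model computation for $\ncBV$, analogously to~\cite[Th.~5.8]{KMS2013}. The one imprecision worth flagging is your proposal to define $\Phi(\nu_n)$ ``by applying the recurrence of Proposition~\ref{prop:recurrence-Givental}'': that recurrence computes the \emph{infinitesimal} (Lie-algebra) action of $r_kz^k$, not its group exponential, and iterating it does not by itself produce the exponentiated action with the correct combinatorial weights. The paper supplies the needed closed form as a sum over planar rooted trees whose vertices carry iterated products, whose leaves and root carry $a^{\pm 1}(\pm\psi)$, and whose internal edges carry the propagator $\bigl(a^{-1}(-\psi'')\circ a(\psi')-1\bigr)/(\psi'+\psi'')$, with all $\psi$-classes integrated out via Proposition~\ref{prop:nc-correlators}; this decorated-tree sum \emph{is} the group exponential and is what should be checked to land in $\ncBV/_h\,\Delta$.
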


\begin{proof}
Let us  define a map $\Theta$ of dg ns operads from  $\ncHyperCom$ to $\ncBV/_h\, \Delta$. On generators, $\Theta(\nu_n)$ is equal to a certain sum of terms indexed by all planar rooted trees $t$ with $n$ leaves, taken with appropriate combinatorial coefficients. To describe that sum, let us consider the ns operad $\ncBV$ over the ring of formal power series in variables $\lambda_e$ indexing all the half-edges of the tree $t$ (each input and each output of each internal vertex of $t$). We decorate the tree $t$ as follows:
\begin{itemize}
\item each internal vertex of $t$ with $p$ inputs is decorated by the $p$-ary operation of $\ncBV$ which is the $k$-fold product,
\item each half-edge $e$ of $t$ which is a leaf is decorated by the unary operation $a^{-1}(-\lambda_e)$, 
\item the half-edge $e$ which is the output of the root of $t$ is decorated by the unary operation $a(\lambda_e)$,
\item each internal edge of $t$ made of an output half-edge $e'$ and an input half-edge $e''$ is decorated by the unary operation  
 \[
\frac{a^{-1}(-\lambda_{e''})\circ a(\lambda_{e'})-1}{\lambda_{e'}+\lambda_{e''}} \ .
 \]
\end{itemize}
Such a decorated tree $t$ produces an element of the operad $\ncBV$ in the following way. We first expand this decorated tree as an element of $\ncBV$ with coefficients being formal power series in variables $\lambda_e$, and evaluate these power series in the commutative ring 
 \[
\bigotimes_{v \text{ an internal vertex of } t}H^*(\calB_{\mathbb{C}}(I_v)) \ , 
 \]
where $I_v$ is the ordered set of inputs of $v$, substituting for $\lambda_e$ the $\psi$-class $\psi_e$ of the corresponding half-edge. After that, we integrate each product of $\psi$-classes over the corresponding manifold $\calB_{\mathbb{C}}(I_v)$ using formulas of Proposition~\ref{prop:nc-correlators}. Finally, the element $\Theta(\nu_n)$ is equal to the sum of all thus obtained elements of the operad $\ncBV$ . We extend $\Theta$ to a map $\ncHyperCom\to\ncBV/\Delta$ as a morphism of operads. (Note that the actual formulas are much simpler than the commutative case, since, for example, Proposition~\ref{prop:nc-correlators} ensures that $\lambda_e^2$ evaluates as zero for each input half-edge). The result is then proved analogously to~\cite[Th.~5.8]{KMS2013}.
\end{proof}

\subsection{The real oriented blow-up construction}\label{sec:KSV}

In this section, we consider the operadic structure on the principal $(S^1)^{n+1}$-bundle over the real oriented blow-up of the boundary divisor that mimics the constructions for the usual moduli spaces of curves due to Kimura--Stasheff--Voronov and Zwiebach, see~\cite{KSV,LS,LSS,Zwi93}.

\subsubsection{Basic definitions}

\begin{definition}
Let $n\ge 1$. We denote by $\calB^{\,\mathrm{or}}(\underline{n})\to\calB_\mathbb{C}(\underline{n})$ the real oriented blow-up of $\calB_\mathbb{C}(\underline{n})$ along the boundary divisor. Pointwise, we just choose along each divisor $\overline{\calB_\mathbb{C}(\underline{n},T_{l,r})}$ a unit vector in its normal bundle 
 \[
\left(V_{l,r}/G(l,r)\right)\otimes \left(G(l,r)/V_{l+1,r-1}\right)^*.
 \]
We denote by $\calZ(\underline{n})$ the total space of the  $(S^1)^{n+1}$-bundle over $\calB^{\,\mathrm{or}}(\underline{n})$ corresponding to additionally choosing unit vectors in each of the spaces
 \[
V_{1,1},\dots,V_{n,n}, \text{ and } \left(G(\underline{n})/V_{2,n-1}\right)^*.
 \]
\end{definition}

\begin{proposition}
The collection of topological spaces $\{\calZ(\underline{n})\}$ is naturally equipped with a structure of a ns operad.
\end{proposition}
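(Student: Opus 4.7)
The plan is to lift the ns operad structure on $\calB_\mathbb{C}$ given by Proposition~\ref{proposition:operadic-composition} first to the real oriented blow-up $\calB^{\,\mathrm{or}}$ and then to the circle-bundle total space $\calZ$, mimicking the construction of Kimura--Stasheff--Voronov and Zwiebach in the classical case~\cite{KSV,Zwi93}. The additional unit-vector data carried by a point of $\calZ$ is precisely what is needed to supply, canonically, the extra normal vector along the boundary divisor created by a composition and, simultaneously, the marking vectors of the composed configuration.

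To define the map $\circ^J_{I,i}\colon \calZ(I)\times\calZ(J)\to\calZ(I\sqcup_i J)$, recall from Corollary~\ref{cor:closure-of-divisor} that the underlying $\calB_\mathbb{C}$-composition lands in the closure of the stratum $\calB_\mathbb{C}(I\sqcup_i J,T_{\min(J),\max(J)})$. Combining the local description of the normal bundle from Section~\ref{sec:local-structure} with the explicit formulas of Definition~\ref{def:operadic-composition}, one obtains a canonical identification of the normal bundle $N$ of this divisor at the composition point with
\[
V^1_{i,i}\otimes\bigl(G(J)/V^2_{s(\min(J)),p(\max(J))}\bigr)^{*}.
\]
Given inputs $(\{V^1_{a,b}\},u^1_0,u^1_1,\dots,u^1_{|I|})$ and $(\{V^2_{a,b}\},u^2_0,u^2_1,\dots,u^2_{|J|})$, we declare the unit normal to this new divisor at the composed point to be $u^1_i\otimes u^2_0\in N$, while the unit normals along any other divisor through the image are obtained from the normal vectors carried by the inputs, via the natural embedding of the corresponding input normal bundles into the normal bundles of divisors in $\calB^{\,\mathrm{or}}(I\sqcup_i J)$. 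The remaining $|I|+|J|$ unit vectors needed in $\calZ(I\sqcup_i J)$ are supplied as follows: for $k\in I\setminus\{i\}$, the marking at position $k$ is $u^1_k$ transported by $f^J_{I,i}$; for $k\in J$, the marking at position $k$ is $u^2_k$ transported by $f^J_{I,i}$ (when $|J|=1$ the single new marking is instead inherited from $u^1_i$, reflecting the degenerate case in which no new divisor is created); and the output vector is $u^1_0$ under the canonical identification $G(I\sqcup_iJ)/V^{\mathrm{comp}}_{2,n-1}\cong G(I)/V^1_{2,|I|-1}$ that follows immediately from Definition~\ref{def:operadic-composition}.

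Since the parallel and sequential axioms have already been verified for the underlying complex operad in Proposition~\ref{proposition:operadic-composition}, it remains only to check that both sides of each axiom assign the same unit vectors to the same slots of the iterated composition; continuity of the composition maps is manifest from the explicit formulas. Each marking and output vector is tracked through a nested composition by the naturality of the identifications above, guided by the gap bijection of Proposition~\ref{prop:operad-gap}. The main obstacle is the bookkeeping for triple compositions, where the image lies on two divisors arising from different stages: one must verify that the resulting pair of normal vectors, each constructed as a tensor $u^?_i\otimes u^?_0$, is independent of the order in which the two binary compositions are performed. This follows from the compatibility of the normal bundle identification displayed above with successive blow-ups of nested boundary strata, and thus reduces associativity for $\calZ$ to associativity for $\calB_\mathbb{C}$ together with the formal properties of tensor products of unit vectors under the canonical factorisations of normal bundles.
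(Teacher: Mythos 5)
Your construction is correct and matches the paper's proof essentially verbatim: both define the composition by using the underlying $\calB_\mathbb{C}$-composition, supply the new normal-bundle unit vector along the created boundary divisor as the tensor $u^1_i\otimes u^2_0$ (a unit circle in $V^1_{i,i}\otimes(G(J)/V^2_{s(\min(J)),p(\max(J))})^*$, consistent with the normal-bundle identification of Section~\ref{sec:local-structure}), and canonically transport all remaining markings and normal vectors, leaving the verification of the operad axioms to a routine bookkeeping argument. Your write-up simply spells out more of the details that the paper's proof labels ``straightforward''.
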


\begin{proof}
In order to define the operadic composition $\circ^J_{I,i}\colon \calZ(I)\times \calZ(J)\to \calZ(I\sqcup_i J)$, we use the operadic composition on $\calB_\mathbb{C}$ from Definition~\ref{def:operadic-composition} and the tensor product of the unit circles in $V_{i,i}^1$ and $(G(J)/V_{s(\min(J)),p(\max(J))})^*$ in order to get a unit vector in the normal bundle of the corresponding divisor in the target space $\calB_\mathbb{C}(I\sqcup_i J)$. The other circles in the fibres of $\calZ(I)\to\calB^{\,\mathrm{or}}(I)$ and  $\calZ(J)\to\calB^{\,\mathrm{or}}(J)$ are canonically identified with the corresponding circles in the fibre of $\calZ(I\sqcup_i J)\to\calB^{\,\mathrm{or}}(I\sqcup_i J)$. The ns operad axioms are straightforward.
\end{proof}

\subsubsection{Homotopy equivalence}  As a topological space, the real oriented blow-up of $\calB_\mathbb{C}(\underline{n})$ is homotopically equivalent to  $\calB_\mathbb{C}(\underline{n})$ with (a tubular neighbourhood of) the boundary divisor removed.
In other words, we can consider  $\calB_\mathbb{C}(\underline{n},T_n)\cong(\mathbb{C}^\times)^{n-2}$. To obtain a real manifold with boundary, we should compactify each copy of $\mathbb{C}^\times$ (they corresponds to the choices, for  $i=2,\dots,n-1$, of subspaces $V_{i,i}\subset G(i-1,i+1)$ different from coordinate axes) by two circles, obtaining cylinders that we denote by $C_i$, for $i=2,\dots,n-2$. 

Thus, the space $\calZ(\underline{n})$ is the total space of a $(S^1)^{n+1}$-bundle over $C_2\times\cdots\times C_{n-2}$. More precisely, $(S^1)^{n+1}=S^1_0\times S^1_1\times\cdots\times S^1_n$, where $S_1^1$ and $S_n^1$ are fibred trivially (they are the circles in $V_{1,1}$ and $V_{n,n}$, respectively), and where $S^1_i$, for $i=2,\dots,n-1$, are the unit circles in the fibres of $\calO(-1)|_{C_i}$, and $S^1_0$ is the unit circle in the line $(G(\underline{n})/\oplus_{i=2}^{n-1}V_{i,i})^*$. Still, homotopically the space $\calZ(\underline{n})$ is the product of $2n-1$ circles. Indeed, each cylinder $C_i$ can be contracted to a circle. Then we get the total space of a nontrivial principal torus bundle over a torus, which is the standard torus (with a different choice of the standard basis in the homology than the one suggested naturally by the fibre bundle).

\begin{proposition}
The ns topological operads $\calZ$ and $\As_{S^1}\rtimes S^1$ are homotopy equivalent.
\end{proposition}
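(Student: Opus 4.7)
The plan is to mimic the classical construction of Kimura--Stasheff--Voronov and Zwiebach \cite{KSV,Zwi93}, where the operad of real oriented blow-ups with framings is identified up to homotopy with the framed little disks operad. First, I would make the homotopy equivalence $\calZ(\underline{n})\simeq(S^1)^{2n-1}$ outlined in the preceding paragraph fully explicit by describing a concrete deformation retract compatible with the iterated fibration structure of $\calZ(\underline{n})\to\calB^{\,\mathrm{or}}(\underline{n})\to (C_2\times\cdots\times C_{n-1})$. On the level of individual spaces we know that $(\As_{S^1}\rtimes S^1)(n)=(S^1)^{n-1}\times (S^1)^n$ is also homotopy equivalent to $(S^1)^{2n-1}$; the real content of the statement is thus that these equivalences can be arranged to intertwine the ns operadic structures.

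The heart of the proof is an explicit identification of the $2n-1$ circles appearing in $\calZ(\underline{n})$ with the circles in $(\As_{S^1}\rtimes S^1)(n)$. The $n$ input framings $S^1_1,\dots,S^1_n$ in the fiber of $\calZ(\underline{n})\to\calB^{\,\mathrm{or}}(\underline{n})$ correspond to the $(S^1)^n$ factor of the semidirect product, which records the $S^1$-action at each input. The remaining $n-1$ circles, namely $S^1_0$ (coming from the framing of $(G(\underline{n})/V_{2,n-1})^*$) together with the circles $S^1_2,\dots,S^1_{n-1}$ obtained by contracting the cylinders $C_2,\dots,C_{n-1}$, match the factor $\As_{S^1}(n)=(S^1)^{\Gap(\underline{n})}$. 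The indexing by $\Gap(\underline{n})$ is natural: the cylinder $C_i$ parametrises the position of $V_{i,i}$ inside $G(i-1,i+1)$, with its two boundary circles corresponding to the two coordinate axes, and the remaining circle $S^1_0$ can be attached to the unique gap not accounted for by the internal $V_{i,i}$.

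Next, I would verify compatibility with operadic composition. Under $\circ^J_{I,i}\colon\calZ(I)\times\calZ(J)\to\calZ(I\sqcup_i J)$, the input framings transfer unchanged except at slot~$i$, where the framing $S^1_i$ of $I$ gets tensored with the output framing $S^1_0$ of $J$; this is precisely the semidirect product twist $g_i\underline{h}^i$ in the formula for $\gamma_{\calA\rtimes G}$ of Section~\ref{sec:g-action-operadic-composition}. On the gap circles, the natural bijection $\Gap(I\sqcup_i J)\cong\Gap(I)\sqcup\Gap(J)$ of Proposition~\ref{prop:operad-gap} corresponds geometrically to the way the cylinders of $\calZ(I)$ and $\calZ(J)$ assemble into the cylinders of $\calZ(I\sqcup_i J)$ after insertion at slot~$i$, as read off from Definition~\ref{def:operadic-composition}: the two gaps adjacent to $i$ in $I$ get replaced by gaps at the extremal positions of $J$, which is exactly how the new cylinders appear in the composite brick manifold.

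The main obstacle will be making this identification canonical and compatible with operadic composition on the nose rather than merely on fundamental groups. Following the KSV strategy, this requires a careful choice of representatives of the homotopy classes in each arity and an inductive argument that arranges the operadic composition of chosen representatives to agree with the composition in $\As_{S^1}\rtimes S^1$ up to explicit homotopies. Because all the spaces in sight are aspherical (tori) and the operadic composition maps are, by inspection, homotopic to the semidirect product maps on fundamental groups, standard obstruction theory will then ensure the existence of such coherent homotopies, yielding a zigzag of homotopy equivalences of ns topological operads between $\calZ$ and $\As_{S^1}\rtimes S^1$.
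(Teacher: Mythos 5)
Your overall strategy is the right one, but the heart of the argument — the explicit identification of cycles — is where your proposal goes astray, and the error would actually make the operadic compositions fail to match.

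The issue is in the sentence where you claim that the input framings $S^1_1,\dots,S^1_n$ account for the $(S^1)^n$ factor and that ``$S^1_0$ together with the circles obtained by contracting the cylinders $C_2,\dots,C_{n-1}$ match the factor $\As_{S^1}(n)=(S^1)^{\Gap(\underline{n})}$.'' This ignores the crucial fact that the $(S^1)^{n+1}$-bundle over $C_2\times\cdots\times C_{n-1}$ is \emph{not} trivial: the Chern class computation shows that over each cylinder $C_j$ the circles $S^1_0$ and $S^1_j$ are the fibers of $\calO(-1)$, which forces the homology relations $a_0+a_j=b_j+c_j$ (writing $a_i$ for the classes of the framing circles and $b_j,c_j$ for the two boundary circles of $C_j$). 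There is no canonical ``contracted cylinder circle'' independent of the framings; the lifts $b_j$ and $c_j$ differ in $H_1$ precisely because of this twisting, and one cannot simply split off $n-1$ circles for $\Gap(\underline{n})$ while keeping the $n$ framings as a complementary factor. You also appear to reuse the symbols $S^1_2,\dots,S^1_{n-1}$ for contracted cylinder circles, whereas the paper already reserves those for the framing circles in $V_{j,j}$, which suggests a conflation of the two roles.

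What actually makes the argument go through is a specific solution of these relations: writing generators $x_{1,2},\dots,x_{n-1,n}$, $y_1,\dots,y_n$ for the standard $(2n-1)$-torus, one sets $a_j=y_j$ but, critically, $a_0=\sum_i x_{i,i+1}+\sum_i y_i$ — a full diagonal, not a single gap circle — and correspondingly $b_j$ and $c_j$ become certain partial diagonals. This is not a cosmetic choice: it is precisely this form of $a_0$ that makes the framing circle $a_\star$ of the outer factor act as the \emph{diagonal} of the inner torus under the operadic composition $\circ^J_{I,i}$, which is exactly the twist in the semidirect product formula $\gamma_{\calA\rtimes G}\big((a,\underline{g}),(b_i,\underline{h}^i)\big)$ (where $g_i$ acts diagonally on both the gap circles and the framings of the $i$-th argument). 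With your naive identification $a_0\leftrightarrow x_{k,k+1}$ for some single gap, the circle fed in at the slot would not become the diagonal, and the comparison with the semidirect product would break down. So the gap is not merely an omitted verification — the explicit basis change is the content of the proof, and the ``standard obstruction theory for aspherical spaces'' you appeal to can only deliver a homotopy equivalence of operads once this $H_1$-level matching, compatible with all compositions, is in hand.
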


\begin{proof}
We denote by $a_0,a_1,\dots,a_n$ the elements of $H_1(\calZ(\underline{n}))$ that correspond, respectively, to the circles $S^1_0, S^1_1, \dots, S^1_n$. We also denote by $b_i,c_i\in H_1(\calZ(\underline{n}))$ the two boundary circles of the cylinder $C_i$,for  $i=2,\dots,n-1$, where $b_i$ (respectively $c_i$) is the circle in $G(i-1,i)$ (respectively $G(i,i+1)$). The cycles  
 \[
a_0,\dots,a_n, b_2,c_2,\dots,b_{n-1},c_{n-1}  
 \]
generate $H_1(\calZ(\underline{n}))$, but they are not linearly independent. 
	
It is easy to see by direct computation of the Chern class that all circles $S^1_i$, for $i=0,\dots,n$ are fibred trivially over $C_j$ except for $S^1_0$ and $S^1_j$, which are the circles in the fibres of $\calO(-1)$ on $\mathbb{P}(G(j-1,j+1))$, for $j=2,\dots,n-1$. This implies the following relations between the cycles:
\begin{equation}\label{eq:relations-H1-blow-up}
a_0+a_j=b_j+c_j,\qquad j=2,\dots,n-1,
\end{equation}
and there are no further relations. Note that this way we recover the equality $\dim H_1 = 2n-1$, as we expect, so $\calZ(\underline{n})$ is homotopically equivalent to the $2n-1$-dimensional torus.
	
In order to establish this equivalence explicitly, we consider a torus that is a product of $(n-1)+n$ circles, whose generators in $H_1$ are denoted by $x_{1,2},\dots,x_{n-1,n}$ and $y_1,\dots,y_n$ (we choose this notation in order to make it closer to the structure of $\As_{S^1}\rtimes S^1$). Then we observe that
\begin{gather*}
a_0  =\sum_{i=1}^{n-1} x_{i,i+1} + \sum_{i=1}^n y_i,\\ 
a_j  = y_j, \quad  j=1,\dots,n, \\ 
b_j  = \sum_{i=1}^{j-1} x_{i,i+1} + \sum_{i=1}^j y_i, \quad j=2,\dots,n-1, \\ 	
c_j  = \sum_{i=j}^{n-1} x_{i,i+1} + \sum_{i=j}^n y_i, \quad j=2,\dots,n-1.
\end{gather*}
satisfy the relations~\eqref{eq:relations-H1-blow-up}. This establishes the explicit homotopy equivalence of $\As_{S^1}\rtimes S^1$ and $\calZ(\underline{n})$.
	
Let us now how the operad compositions match. Consider the operadic product in $\As_{S^1}\rtimes S^1$ corresponding to the tree $T_{l,r}$. On the level of the corresponding topological spaces, it is the product of the tori $T_1$ and $T_2$ over an identification of some circles. The torus $T_1$ is the product of the circles corresponding to the cycles $x_{1,2},\dots,x_{l-1,l},x_{r,r+1},\dots,x_{n-1,n}$ and  $y_1,\dots,y_{l-1},y_{\star},y_{r+1},\dots,y_n$.	
The torus $T_2$ is the product of the circles corresponding to the cycles
$x_{l,l+1},\dots,x_{r-1,r} $ and $y_{l},\dots,y_{r}$.  
The product of $T_1$ and $T_2$ is taken over the identification of $y_\star$ with the diagonal circle in the torus~$T_2$.
	
In order to compare this operadic product to the one in $\calZ(\underline{n})$, we have to recall the local description of the stratum $\calB(\underline{n},T_{l,r})$ in Section~\ref{sec:local-structure} (which implies the description of its real oriented blow-up). On the level of cycles $a_\bullet$, $b_\bullet,c_\bullet$, the interpretation of the local structure of
	\[ 
	\circ^{\{l,l+1,\dots,r\}}_{\{1,\dots,l-1,\star,r+1,\dots,n\},\star}(\calZ(\{1,\dots,l-1,\star,r+1,\dots,n\})\times \calZ(\{l,\dots,r\}))
	\]
inside $\calZ(\underline{n})$ is the following. 
	
Assume, for convenience, that $l\not=1$, $r\not=n-1$ (in the special cases when $l=1$ or $r=n-1$ the description is completely analogous). The first space $\calZ(\{1,\dots,l-1,\star,r+1,\dots,n\})$ is formed by the cycles $a_0,a_1,\dots,a_{l-1},a_{r+1},\dots,a_n$, the cycles $b_2,c_2$, \dots, $b_{l-1},c_{l-1}$, $b_\star:=c_l$, $c_\star:=b_r$, $b_{r+1},c_{r+1}$, \dots, $b_{n-1},c_{n-1}$, and an extra cycle $a_\star$ (corresponding to the circle in the normal bundle), satisfying the equation $b_\star+c_\star=a_0+a_\star$. Observe that this relation is satisfied for 
	\[
	a_\star=\sum_{i=l}^{r-1} x_{i,i+1}+\sum_{i=l}^r y_i.
	\]
	
The second space $\calZ(\{l,\dots,r\})$ is formed  by the cycles 
	 \[
a_\star,a_l,\dots,a_r, \text{ and  } b'_{l+1},c'_{l+1},\dots,b'_{r-1},c'_{r-1}	 
	 \]
 satisfying the relations
	\[ 
	b_j'+c_j'=a_\star+a_j,\qquad j=l,\dots,r.
	\]
Observe that this relation is satisfied for 
\begin{gather*}
b_j' =\sum_{i=l}^{j-1} x_{i,i+1} + \sum_{i=1}^j y_i, \qquad j=l+1,\dots,r-1,\\
c_j' =\sum_{i=j}^{r-1} x_{i,i+1} + \sum_{i=j}^r y_i, \qquad j=l+1,\dots,r-1.
\end{gather*}
	
Thus we see that the circle $a_\star$ acts diagonally on the torus corresponding to $\calZ(\{l,\dots,r\})$, which is exactly the definition of the operadic composition in Section~\ref{sec:g-action-operadic-composition}.
\end{proof}

\subsubsection{Conjecture on the homotopy quotient}

The analogous statement for the moduli space of curves is that the ns operad formed by the principal $S^{n+1}$-bundles over the real oriented blow-ups of the spaces $\overline{\calM}_{0,n+1}(\mathbb{C})$, for $n=2,\dots$, is homotopy equivalent to the framed little disks operad~\cite{KSV}. It is proved in~\cite{DC2014} that the homotopy quotient of the framed little disks operad modulo the circle action is equivalent to the operad of the moduli spaces $\overline{\calM}_{0,n+1}(\mathbb{C})$. We conjecture that the same holds in the noncommutative case. 

\begin{conjecture}
The homotopy quotient of the ns operad  $\As_{S^1}\rtimes S^1$ by the circle action is equivalent to the complex brick operad $\calB_\mathbb{C}$. 	
\end{conjecture}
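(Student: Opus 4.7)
The plan is to follow the strategy of Drummond-Cole \cite{DC2014}, who proved the classical counterpart: the homotopy quotient of the framed little disks operad by the circle action is equivalent to the Deligne--Mumford operad. The ns operad $\calZ$ constructed in this section plays a role analogous to the Kimura--Stasheff--Voronov model in the commutative case, and its very construction is designed to interpolate between the algebraic operad $\As_{S^1} \rtimes S^1$ and the geometric brick operad $\calB_\mathbb{C}$.

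Since we have just shown $\calZ \simeq \As_{S^1} \rtimes S^1$ as ns topological operads, the conjecture reduces to proving that the homotopy quotient of $\calZ$ by the outer circle action is equivalent to $\calB_\mathbb{C}$. First, I would trace through the explicit homotopy equivalence established above to identify the outer $S^1$-action on $\calZ(\underline{n})$ with rotation along the distinguished circle factor $S^1_0$ (the unit circle in $(G(\underline{n})/V_{2,n-1})^*$). Since this action is free, the homotopy quotient coincides with the strict quotient, which is a principal $(S^1)^n$-bundle over the real oriented blow-up $\calB^{\mathrm{or}}(\underline{n})$.

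The heart of the argument is to exhibit a zig-zag of ns operad weak equivalences between this principal $(S^1)^n$-bundle over $\calB^{\mathrm{or}}(\underline{n})$ and the complex brick operad $\calB_\mathbb{C}$. On the open stratum $\calB_\mathbb{C}(\underline{n},T_n) \cong (\mathbb{C}^\times)^{n-2}$, the bundle is trivial and the statement follows from standard deformation retractions. Over each codimension-one boundary stratum corresponding to a tree $T_{l,r}$, the real oriented blow-up contributes an extra circle in the normal bundle $(V_{l,r}/G(l,r))\otimes (G(l,r)/V_{l+1,r-1})^*$ described in Section \ref{sec:local-structure}, and one should check that this extra boundary circle, together with the $(S^1)^n$ fibre, collapses to something weakly equivalent to the corresponding local piece of $\calB_\mathbb{C}$. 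Inductively applying this analysis along the stratification of Proposition \ref{proposition:stratification} should yield the desired weak equivalence as topological spaces.

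The main obstacle will be to make this collapse rigorous and, crucially, compatible with the operadic composition: one must show that the identifications of circles along boundary divisors encoded by the relations $a_0 + a_j = b_j + c_j$ in the homotopy equivalence $\calZ \simeq \As_{S^1}\rtimes S^1$ interlock precisely with the infinitesimal compositions $\circ^J_{I,i}$ of $\calB_\mathbb{C}$ prescribed by Definition \ref{def:operadic-composition} and Corollary \ref{cor:closure-of-divisor}. A feasible route would be to adapt Drummond-Cole's cofibrant replacement construction to the nonsymmetric setting, using the cellular structure induced by the tree stratification of Proposition \ref{proposition:stratification} to control the combinatorics of the comparison; as independent corroboration, the algebraic shadow of the conjecture, namely the identification of homology operads, follows from Theorem \ref{th:ncHyperComQI} combined with formality, providing a useful consistency check throughout the argument.
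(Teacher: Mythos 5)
The statement you were asked to prove is presented in the paper as a \emph{conjecture}: the authors give no proof, merely motivate it by analogy with the classical theorem of Drummond-Cole and by the fact that an algebraic shadow of it (the identification of $\ncHyperCom$ with the homotopy quotient of $\ncBV$ by $\Delta$) does hold, via Theorems~\ref{th:BVDefRetract} and~\ref{th:ncHyperComQI}. So there is no paper proof to compare against; what I can do is evaluate the soundness of your proposed strategy.

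Your plan has a fundamental misconception that would make the argument collapse at step two. You interpret ``the homotopy quotient of the ns operad $\As_{S^1}\rtimes S^1$ by the circle action'' as a component-wise space-level quotient: you argue that because the $S^1_0$-action on $\calZ(\underline{n})$ is free, the homotopy quotient coincides with the strict quotient, a principal $(S^1)^n$-bundle over $\calB^{\mathrm{or}}(\underline{n})$, and you then propose a zig-zag of weak equivalences from this bundle to $\calB_\mathbb{C}(\underline{n})$. No such zig-zag can exist. Since $\calB^{\mathrm{or}}(\underline{n})$ retracts onto the open stratum $(\mathbb{C}^\times)^{n-2}\simeq (S^1)^{n-2}$, the quotient $\calZ(\underline{n})/S^1_0$ is homotopy equivalent to a $(2n-2)$-dimensional torus, hence aspherical with $\pi_1\cong\mathbb{Z}^{2n-2}$. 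By contrast $\calB_\mathbb{C}(\underline{n})$ is a smooth complete complex toric variety, which is simply connected and has nontrivial homology in even positive degrees (equal to $\ncHyperCom(n)$ by Theorem~\ref{th:homology-ncHyperCom}). These spaces are not weakly equivalent, so the ``heart of the argument'' you describe cannot be carried out.

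The underlying problem is that the operadic homotopy quotient by a circle action, in the sense of Drummond-Cole~\cite{DC2014} (and in the algebraic incarnation of Section~5.3 of the paper, $\ncBV/_h\Delta$), is not a component-wise Borel construction or quotient of spaces: it is a coproduct-type construction freely adjoining, component by component, the data of a homotopy trivialisation of the circle action, producing a much larger cofibrant object whose components bear no direct resemblance to $\calZ(\underline{n})/S^1_0$. An algebra over the resulting operad is a $\As_{S^1}\rtimes S^1$-algebra equipped with higher homotopies trivialising the $\Delta$-action, exactly as in Proposition on $\ncBV/_h\Delta$ and the $L_\infty$-framework around equation~\eqref{eq:LInftyDefRetract}. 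Any serious attack on the conjecture would have to build this operadic homotopy quotient explicitly (a nonsymmetric adaptation of Drummond-Cole's cofibrant replacement), identify the resulting components with suitable models, and only then compare with $\calB_\mathbb{C}$; the already-proved algebraic quasi-isomorphism $\ncHyperCom\simeq\ncBV/_h\Delta$ (Theorem~\ref{th:ncHyperComQI}) would serve, as you correctly note, as a consistency check at the level of homology, but it does not by itself imply the topological statement.
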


This conjecture is a geometric version of the result that holds true on the algebraic level, see Theorems \ref{th:BVDefRetract} and \ref{th:ncHyperComQI}. 
	
\section{Formality theorems}\label{sec:Formal}

In this section, we prove that the noncommutative analogues of the operads of little $2$-disks, framed little $2$-disks, and Deligne--Mumford spaces are formal. 

\subsection{Formality of the operad \texorpdfstring{$\As_{S^1}$}{AsS1}}\label{sec:ncLD}

In this section, we shall demonstrate that the operad $\As_{S^1}$ is formal over integers. We shall first establish a general formality result for the operads~$\As_M$ over rationals, and then indicate how the proof needs to be altered to prove formality of $\As_{S^1}$ over integers.

\begin{proposition}
For every topological space $M$, the topological operad $\As_M$ is formal over $\mathbb{Q}$. In fact, there exists a quasi-isomorphism of dg ns operads over $\mathbb{Q}$: 
\[C^{\mathrm{sing}}_\bullet(\As_M,\mathbb{Q})  \ \stackrel{\sim}{\longleftarrow} H^{\mathrm{sing}}_\bullet(\As_M,\mathbb{Q})\ ,\]
where $C^{\mathrm{sing}}$ stands for the singular chain functor. 
\end{proposition}

\begin{proof}
For brevity, we denote by $C_\bullet$ and $H_\bullet$ singular chains and singular homology over $\mathbb{Q}$.
Since we work over a field, we can make, once and for all, some choices of representatives for  the homology classes of $M$, obtaining a quasi-isomorphism of chain complexes $\varphi_1 : H_\bullet(M) \to C_\bullet(M)$. Recall from \cite[Th~$5.2$]{EilenbergMacLane53} the existence of the cross product map of chain complexes 
$\nabla:=\nabla_{M,M} : C_\bullet(M)^{\otimes 2} \to C_\bullet(M^2)$, which is symmetric and associative. Let us denote by 
$\nabla^{n-2} : C_\bullet(M)^{\otimes (n-1)} \to C_\bullet(M^{n-1})$ the iteration  of the cross product. Also recall that the crossed product induces a symmetric and associative isomorphism on the level of homology 
$H(\nabla) : H_\bullet(M)^{\otimes 2} \stackrel{\cong}{\longrightarrow} H_\bullet(M^2)$; its iteration produces isomorphisms $H(\nabla)^{n-2} : H_\bullet(M)^{\otimes (n-1)} \stackrel{\cong}{\longrightarrow} H_\bullet(M^{n-1})$.

Let us now define the   quasi-isomorphisms 
$\varphi_n \ : \ H_\bullet(\As_M(n)) \qi C_\bullet(\As_M(n))$ of chain complexes by following diagram
\[\xymatrix@R=30pt@C=50pt{C_\bullet(\As_M(n))=C_\bullet(M^{n-1})   &  \ar[l]_(0.42){\nabla^{n-2}} C_\bullet(M)^{\otimes (n-1)}\ \  \\ 
H_\bullet(\As_M(n))=N_\bullet(M^{n-1}) \ar@{..>}[u]^{\varphi_n}  \ar[r]^(0.58){\left(H(\nabla)^{n-2}\right)^{-1}}  &   H_\bullet(M)^{\otimes (n-1)} \ar[u]_(0.55){\varphi_1^{\otimes (n-1)}} \ .}\]

These maps assemble to form a quasi-isomorphism of dg ns operads; for this,  we only have to check the commutativity of the following diagram  \[\xymatrix@R=30pt@C=60pt{
C_\bullet(M^{n_1-1})\otimes C_\bullet(M^{n_2-1})  \ar[r]^(0.52){\nabla_{M^{n_1-1}, \, M^{n_2-1}}} &C_\bullet(M^{n_1-1}\times M^{n_2-1}) 
\ar[r]^(0.53){C_\bullet(\circ_i)} & C_\bullet(M^{n_1+n_2-2})  \ \  \\
H_\bullet(M^{n_1-1})\otimes H_\bullet(M^{n_2-1})  \ar[r]^(0.52){H\left(\nabla_{M^{n_1-1}, \, M^{n_2-1}}\right)} 
\ar[u]^{\varphi_{n_1}\otimes \varphi_{n_2}} &H_\bullet(M^{n_1-1}\times M^{n_2-1}) 
\ar[r]^(0.53){H_\bullet(\circ_i)} & H_\bullet(M^{n_1+n_2-2})  \ar[u]_{\varphi_{n_1+n_2-2}}}  \ .
\]
This is done by first noticing that the operadic composition map $\circ_i$ on the ns operad $\As_M$ only amounts to permuting elements and then, by applying the symmetry property of the cross products~$\nabla$.  
\end{proof}

\begin{corollary}\label{cor:FormAsM}
The topological ns operad $\As_{S^1}$ is formal over $\mathbb{Z}$: there exists a  quasi-isomorphism of dg ns operads over $\mathbb{Z}$: 
\[C^{\mathrm{sing}}_\bullet(\As_{S^1}, \mathbb{Z})  \ \stackrel{\sim}{\longleftarrow}\  H^{\mathrm{sing}}_\bullet(\As_{S^1}, \mathbb{Z})\ .\]
\end{corollary}

\begin{proof}
All the above arguments hold over the rings of integers, especially the ones relative to the cross product map. It remains to consider a suitable quasi-isomorphism $\varphi : H^{\mathrm{sing}}_\bullet(S^1, \mathbb{Z}) \to C^{\mathrm{sing}}_\bullet(S^1, \mathbb{Z})$. There are only two non-trivial homology groups here, one concentrated in degree $0$ and one concentrated in degree $1$. We can represent them by the following simplices:  the $0$-simplex $\1\colon \Delta^0 \to S^1$ with value $1$ 
and  the $1$-simplex $\E\colon \Delta^1 \to S^1$ given by 
$$\theta \in [0,1] \mapsto e^{2\pi i \theta}\in S^1\ .$$
\end{proof}

\subsection{Formality of the operad \texorpdfstring{$\As_{S^1}\rtimes S^1$}{SDAsS1}}\label{sec:ncFLD}

By contrast with the previous section, the structure maps of the operad $\As_{S^1}\ltimes S^1$ mix algebra and topology in a nontrivial way via the group actions, so proving formality for this operad is a much more intricate task. Using mixed Hodge structures, Joana Cirici and Geoffroy Horel proved in \cite{CH17} that the topological ns operad $\As_{S^1}\rtimes S^1$ is formal over $\mathbb{Q}$. In this section, we offer an elementary proof of formality over $\mathbb{R}$ which furnishes a direct quasi-isomorphism (as opposed to a zig-zag of quasi-isomorphisms of~\cite{CH17}).

\begin{theorem}\label{th:ncBVFormality}
The topological ns operad $\As_{S^1}\rtimes S^1$ is formal over the field $\mathbb{R}$ of real numbers; more precisely, there is a  quasi-isomorphism of dg ns operads over $\mathbb{R}$: 
\[C^{\mathrm{sing}}_\bullet\left(\As_{S^1}\rtimes S^1, \mathbb{R} \right)  \stackrel{\sim}{\longrightarrow}  H^{\mathrm{sing}}_\bullet\left(\As_{S^1}\rtimes S^1, \mathbb{R}\right)\ ,\]
where $C^{\mathrm{sing}}$ stands for the singular chain functor. 
\end{theorem}

\begin{proof}
For brevity, we denote by $C_\bullet$ and $H_\bullet$ singular chains and singular homology over $\mathbb{R}$. We shall write $\calO$ for the operad $\As_{S^1}\rtimes S^1$, so that $\calO(n)=(S^1)^{2n-1}$. We keep the notation of the proof of Corollary~\ref{cor:FormAsM}. By a slight abuse of notation, we use the same notation for the homology classes $\1\in H_0(S^1)$ and $\E\in H_1(S^1)$. We shall also consider the chain map  $\psi : C_\bullet(S^1) \to H_\bullet(S^1)$ defined as follows. For any continuous map $f : [0,1] \to S^1$, there exists a continuous map $\theta : [0,1]\to \mathbb{R}$ such that $f(t)=e^{i\theta(t)}$. Now the global variation $\theta(1)-\theta(0)$ of the angle does not depend of the choice of $\theta$ and is intrinsic to the map $f$. We denote such a variation by 
$$\V_{[0,1]} f:=\frac{1}{2\pi}\big(\theta(1)-\theta(0)\big)\ $$
and we extend it by linearity to produce a map from $\C_1(S^1)$ to $\mathbb{R}$. 
The map $\psi : C_\bullet(S^1) \to H_\bullet(S^1)$
is 
defined respectively by 
\begin{eqnarray*}
f\in C_0(S^1) &\mapsto& \1 \in H_0(S^1)\ ,\\
g\in C_1(S^1) &\mapsto& \left(\V_{[0,1]} g\right) \E   \in H_1(S^1)\ ,\\
h\in C_k(S^1) &\mapsto& 0  \in H_k(S^1), \ \text{for} \ k\geqslant 2\ .
\end{eqnarray*}
The only non-trivial point to check in order to prove that this is a chain map is the vanishing of the global angle variation of a $2$-cycle. The image of a $2$-simplex  $f : \Delta^2 \to S^1$ under the boundary map is equal to $f\delta_0+f\delta_2-f\delta_1$. Then,  the image of the first two terms under $\psi$ is 
$\V_{[0,1]} \left(f\delta_0+f\delta_2\right)= \V_{[v_0,v_1]}f+\V_{[v_1,v_2]} f$, where $v_0, v_1, v_2$ denote the vertices of the standard geometric $2$-simplex. The  restriction of the map $f$ to the union 
$[v_0,v_1]\cup [v_1,v_2]$ is homotopic relatively to the boundary to the restriction  of the map $f$ to $[v_0,v_2]$. Therefore $\V_{[v_0,v_1]}f+\V_{[v_1,v_2]} f=\V_{[v_0,v_2]}f=\V_{[0,1]} f\delta_1$, which concludes this point of the proof.
One can automatically notice that $\psi$ is left inverse to $\varphi$, i.e. $\psi \circ\varphi =\id_{H_\bullet(S^1)}$, which implies that chain map $\psi$ is a quasi-isomorphism, for instance. 

From now on, we will identify $H_\bullet((S^1)^n)$ and $H_\bullet(S^1)^{\otimes n}$ using the iteration of the cross product map. We still consider the following types of quasi-isomorphisms $\varphi_n$ : 
\[\xymatrix@R=30pt@C=60pt{C_\bullet\left((S^1)^n\right)   &  \ar[l]_{\nabla^{n-1}} C_\bullet\left(S^1\right)^{\otimes n }\\
H_\bullet\left((S^1)^n\right) \ar@{..>}[u]^{\varphi_n}  \ar[r]_{\left(H(\nabla)^{n-1}\right)^{-1}}^\cong  &   H_\bullet\left(S^1\right)^{\otimes n} \ar[u]_(0.55){\varphi^{\otimes n}} \ .}\]
In the other way round, to define a left inverse to the map $\varphi_n$, we will consider the following symmetrized version of the Alexander--Whitney map. For an $n$-tuple $(i_1, \ldots, i_n)$ of integers such that $i_1+\cdots +i_n=k$, we consider the following maps in the simplex category: 
\begin{eqnarray*}
\iota_j \ : \ [i_j] &\to & [k]\\
l &\mapsto & i_1 +\cdots + i_{j-1} +l\ ,
\end{eqnarray*}
for $1\leqslant j \leqslant n$.
The symmetric ``Alexander--Whitney'' maps, that we need, are defined by 
\begin{eqnarray*}
\mathrm{AW}_n\ : \ C_\bullet(X_1\times \cdots \times X_n) &\to& C_\bullet(X_1)\otimes \cdots \otimes C_\bullet(X_n)\\
(x_1, \ldots, x_n) &\mapsto&
\frac{1}{n!} \sum_{\substack{i_1+\cdots+i_n=k\\ \sigma \in \Sy_n}} (-1)^{\sigma(i_1, \ldots, i_n)} \,
X_1\left(\iota_{\sigma(1)}^\mathrm{op}\right)(x_1)\otimes \cdots \otimes X_n\left(\iota_{\sigma(n)}^\mathrm{op}\right)(x_n)
\ ,
\end{eqnarray*}
where $X_1, \ldots, X_n$ are simplicial sets, where $(x_1, \ldots, x_n)$ is a $k$-simplex of the product $X_1\times \cdots \times X_n$, where $C_\bullet$ stands for the Moore chain complex of simplicial sets,  and where the sign is the one obtained by permuting $n$ elements of respective degrees $i_1,\ldots, i_n$ according to the permutation $\sigma$. 
This map $\mathrm{AW}_n$ can be obtained directly by the classical Alexander--Whitney map $\mathrm{AW}$, see \cite[Section~VIII.8]{MacLane95}, as the following composite 
\[\xymatrix@R=10pt@C=45pt{
C_\bullet(X_1\times \cdots\times X_n) \ar[r]^(0.42){\frac{1}{n!} \sum_{\sigma \in \Sy_n}C_\bullet(\sigma)}
&
\displaystyle \bigoplus_{\sigma\in \Sy_n}C_\bullet\left(X_{\sigma(1)}\times \cdots\times X_{\sigma(n)}\right)
\ar[r]^(0.48){\mathrm{AW}^{n-1}}
&
\displaystyle \bigoplus_{\sigma\in \Sy_n}C_\bullet\left(X_{\sigma(1)}\right) \otimes
 \cdots
 \otimes C_\bullet\left(X_{\sigma(n)}\right)
  \\
\qquad \qquad \qquad \ar[r]^(0.4){\bigoplus_{\sigma_\in\Sy_n}\sigma^{-1}}&\displaystyle \bigoplus_{\sigma\in \Sy_n}C_\bullet\left(X_{1}\right) \otimes
 \cdots
 \otimes C_\bullet\left(X_n\right)
 \ar@{->>}[r]&\displaystyle C_\bullet\left(X_{1}\right) \otimes
 \cdots
 \otimes C_\bullet\left(X_n\right)\ ,\qquad 
&&
}\]
which proves that it is a chain map that factors through the normalised chain complexes. 
(Notice that the classical Alexander--Whitney map is associative but not symmetric, on the opposite to the cross product map. The present ``symmetrized Alexander--Whitney map'' is indeed symmetric but fails to be associative; this will not be an issue  for the present proof since we do not use the coalgebra structure on singular chains or singular homology.) 

We consider the following chain maps $\psi_n:=\psi^{\otimes n}\circ \mathrm{AW}_n$:
\[\xymatrix@R=30pt@C=60pt{C_\bullet((S^1)^n)   \ar[r]^{\mathrm{AW}_n} \ar@{..>}[d]_{\psi_n} &   C_\bullet(S^1)^{\otimes n } \ar[d]^(0.45){\psi^{\otimes n}} \\
H_\bullet((S^1)^n)     & \ar[l]^{H(\nabla)^{n-1}}_\cong H_\bullet\left(S^1\right)^{\otimes n}  \ .}\]
We claim that the chain maps $\psi_{2n-1} : C_\bullet\left(\calO(n)\right) \to H_\bullet\left(\calO(n)\right)$ make up into a quasi-isomorphism of dg ns operads. 

Since the cross product map is symmetric and since the composite with the Alexander--Whitney map is equal to the identity on the normalised chain complex, 
$\mathrm{AW}\circ \nabla=\id_{N_\bullet(S^1)^{\otimes 2}}$\ , the same relation holds true for the present maps, that is 
$$\mathrm{AW}_n\circ \nabla^{n-1}=\id_{N_\bullet(S^1)^{\otimes n}}$$ on the level of the normalised chain complex. Since $\varphi$ and $\psi$ are both quasi-isomorphisms, this proves first  that the maps $\psi_{2n-1}$ are quasi-isomorphisms. Moreover with the relation $\psi \circ\varphi =\id_{H_\bullet(S^1)}$, this  implies also the relation $\psi_n \circ\varphi_n =\id_{H_\bullet((S^1)^n)}$.

Let us now prove that the maps $\psi_{2n-1} : C_\bullet\left(\calO(n)\right) \to H_\bullet\left(\calO(n)\right)$ preserve the  operadic partial compositions, that is, for any $1\leqslant i \leqslant n$: 
\begin{equation}\label{eq:Compo}
\xymatrix@R=25pt@C=35pt{
C_k\left(\calO(n)\right) \otimes C_l\left(\calO(m)\right) \ar[r]^(0.55){\nabla} 
\ar[d]^{\psi_{2n-1}\otimes \psi_{2m-1}}& 
C_{k+l}\left(\calO(n)\times \calO(m)\right)
\ar[r]^{C_\bullet(\circ_i)} & 
C_{k+l}\left(\calO(n+m-1)\right) \ar[d]^{\psi_{2(n+m-1)-1}} \\
H_k\left(\calO(n)\right) \otimes H_l\left(\calO(m)\right) \ar[r]^(0.53){H(\nabla)} & 
H_{k+l}\left(\calO(n)\times \calO(m)\right)
\ar[r]^{H_\bullet(\circ_i)} & 
H_{k+l}\left(\calO(n+m-1)\right)\ .
}\tag{*}
\end{equation}
We check the commutativity of this diagram on an element $(f^1, \ldots, f^{2n-1}):  \Delta^k \to (S^1)^{2n-1}$ of $C_k\left(\calO(n)\right)$ and on  
an element $(g^1, \ldots, g^{2m-1}):  \Delta^l \to (S^1)^{2m-1}$ of $C_l\left(\calO(m)\right)$.

For $k>2n-1$, the image of the former element under the map $\psi_{2n-1}$ is equal to $0$ and, similarly, 
for $l>2m-1$, the image of the latter element under the map $\psi_{2m-1}$ is equal to $0$. To describe the other images, we will use the following notations. We denote by 
$$I_1^n:=[v_0,v_1], \ I_2^n:=[v_1,v_2],\  \ldots, \ I_n^n:=[v_{n-1},v_n] $$
the principal edges of the standard geometric $n$-simplex, where $v_i$ stands for the $i^{\mathrm{th}}$ vertex. 
For any continuous map $f : \Delta^n \to S^1$ and for any principal edge $I=[v_i, v_{i+1}]$ of $\Delta^n$, 
we denote simply by $\V_I f$ the angle variation of the restriction $f_{|I}$ viewed as a map from $[0,1]$ to $S^1$, so that
the image of  ${f}_{|I}$ under the map $\psi$ is equal to $\psi\left({f}_{|I}\right)=\left(\V_I f\right) \E$. 
To any subset $\Phi=\left\{\Phi_1 < \cdots < \Phi_k\right\}$ of $\underline{2n-1}$, we associate the element 
$$\E^\Phi:=\1\otimes \cdots \otimes \1\otimes \underbrace{\E}_{\text{position}\ \Phi_1} \otimes \1\otimes \cdots \otimes \1 \otimes 
\underbrace{\E}_{\text{position}\  \Phi_k}
\otimes \1 \otimes \cdots \otimes \1  \ \in  \left(H_\bullet(S^1)^{\otimes 2n-1}\right)_k\ .$$
We consider the assignment  which send a subset $\Phi=\left\{\Phi_1 < \cdots < \Phi_k\right\}\subset \underline{2n-1}$ to the $(2n-1)$-tuple $(i_1, \ldots, i_{2n-1})$ defined by $i_j:=0$, if $j\notin \Phi$, and by $i_j:=1$, if $j\in \Phi$. This defines a bijection, denoted by  $\mathrm{S}(\Phi):=(i_1, \ldots, i_{2n-1})$, between the subset of $\underline{2n-1}$ with $k$ elements and $(2n-1)$-tuples made up of $k$ times $1$ and $2n-k-1$ times $0$. (Since the indices $k$ and $n$ will be obvious and might change, we will use the generic notation $\mathrm{S}$.) 
We denote the signature of a permutation $\sigma$ by $(-1)^\sigma$. 

For $k\leqslant 2n-1$, since the map $\psi$ vanishes outside $0$-simplices and $1$-simplices, only the terms labelled by $(i_1, \ldots, i_{2n-1})$ such that $i_1+\cdots+i_{2n-1}=k$ and $i_j=0$ or $i_j=1$, might contribute to the image of $(f^1, \ldots, f^{2n-1})$ under the map $\psi_{2n-1}=\psi^{\otimes 2n-1}\circ \mathrm{AW}_{2n-1}$.
Since the map $\psi$ is constant, equal to $\1$, on $0$-simplices, we actually get the following formulae for the lower-left corner of \eqref{eq:Compo}:
$$ 
\psi_{2n-1}\left(f^1, \ldots, f^{2n-1}\right)=\frac{1}{k!}\sum_{\substack{\Phi\subset \underline{2n-1} \\ |\Phi|=k}} 
\left(\sum_{\sigma\in \Sy_k}
(-1)^\sigma \V_{I^k_{\sigma(1)}} f^{\Phi_1}\ \cdots\  \V_{I^k_{\sigma(k)}} f^{\Phi_k}\right)
 \E^\Phi
$$
and similarly
$$ 
\psi_{2m-1}\left(g^1, \ldots, g^{2m-1}\right)=\frac{1}{l!}\sum_{\substack{\Psi\subset \underline{2m-1} \\ |\Psi|=l}} 
\left(\sum_{\tau\in \Sy_l}
(-1)^\tau \V_{I^l_{\tau(1)}} g^{\Psi_1}\ \cdots\  \V_{I^l_{\tau(l)}} g^{\Psi_l}\right)
 \E^\Psi \ ,
$$
for $l\leqslant 2m-1$.

Given $\Phi\subset \underline{2n-1}$ such that $|\Phi|=k$ and $\Psi\subset \underline{2m-1}$ such that $|\Psi|=l$, we denote their respective image under $\mathrm{S}$ by $(h_1, \ldots, h_{2n-1}):=\mathrm{S}(\Phi)$ and 
$(j_1, \ldots, j_{2m-1}):=\mathrm{S}(\Psi)$. When ${n+i-1}\notin \Phi$, we use the following notation: 
\[
\mathrm{S}(\Phi)\circ_i \mathrm{S}(\Psi):=\left(
h_1, \ldots, h_{i-1}, j_1, \ldots, j_{m-1},h_i, \ldots, h_{n-1}, h_n, \ldots, h_{n+i-2}, j_m, \ldots, j_{2m-1}, h_{n+i}, \ldots, h_{2n-1}
\right)\ .
\]

For the lower-right corner of \eqref{eq:Compo}, we need to distinguish three cases:  
any subset $\Upsilon \subset \underline{2n+2m-3}$ of cardinal $|\Upsilon|=k+l$ is of the following form:
\begin{enumerate}
\item[(i)] $\Upsilon = \mathrm{S}^{-1}\left(\mathrm{S}(\Phi)\circ_i\mathrm{S}(\Psi)\right)$\ , when ${n+i-1}\notin \Phi$\ ,
\item[(ii)] $\Upsilon = \mathrm{S}^{-1}\left(\mathrm{S}(\Phi\setminus{\{n+i-1\}})\circ_i\mathrm{S}(\Psi\sqcup\{j\})\right)$\ , for $j\in \underline{2m-1}$ and $j\notin \Psi$\ ,\ , when ${n+i-1}\in \Phi$\ , 
\item[(iii)] not of these types.
\end{enumerate}
In Case~(i), the coefficient of $\E^\Upsilon$ is equal to 
\begin{equation}\label{eq:i}\tag{i}
(-1)^{(k''+k''')l'+k'''l''}\frac{1}{k!l!}
\left(\sum_{\sigma\in \Sy_k}
(-1)^\sigma \V_{I^k_{\sigma(1)}} f^{\Phi_1}\ \cdots\  \V_{I^k_{\sigma(k)}} f^{\Phi_k}\right)
\left(\sum_{\tau\in \Sy_l}
(-1)^\tau \V_{I^l_{\tau(1)}} g^{\Psi_1}\ \cdots\  \V_{I^l_{\tau(l)}} g^{\Psi_l}\right)\ ,
\end{equation}
for $k=k'+k''+k'''$, 
where $k'$ is the number of elements of $\Phi$ strictly  less than $i$, 
where $k'$ is the number of elements of $\Phi$   between $i$ and $n+i-2$, and
where $k'''$ is the number of elements of $\Phi$ strictly  greater than $n+i-1$, 
and for $l=l'+l''$, 
where $l'$ is the number of elements of $\Psi$ strictly  less than $m$, and
where $l''$ is the number of elements of $\Psi$  greater than $m$.
 
In Case~(ii), there are two sub-cases. In sub-Case~(a), the element $j$ added to $\Psi$ lives in 
$1\leqslant j\leqslant m-1$ and 
the coefficient of $\E^\Upsilon$ is equal to 
\begin{equation}\label{eq:iia}\tag{iia}
(-1)^{k'''+|\Psi<j|+(k''+k''')(l'+1)+k'''l''}\frac{1}{k!l!}
\left(\sum_{\sigma\in \Sy_k}
(-1)^\sigma \V_{I^k_{\sigma(1)}} f^{\Phi_1}\ \cdots\  \V_{I^k_{\sigma(k)}} f^{\Phi_k}\right)
\left(\sum_{\tau\in \Sy_l}
(-1)^\tau \V_{I^l_{\tau(1)}} g^{\Psi_1}\ \cdots\  \V_{I^l_{\tau(l)}} g^{\Psi_l}\right)\ ,
\end{equation}
where the same kind of decomposition as above $k-1=k'+k''+k'''$ and $l=l'+l''$ and where 
$|\Psi<j|$ denotes the number of elements of $\Psi$ less than $j$. In sub-Case~(b), the element $j$ added to $\Psi$ lives in 
$m\leqslant j\leqslant 2m-1$ and 
the coefficient of $\E^\Upsilon$ is equal to 
\begin{equation}\label{eq:iib}\tag{iib}
(-1)^{k'''+|\Psi<j|+(k''+k''')l'+k'''(l''+1)}\frac{1}{k!l!}
\left(\sum_{\sigma\in \Sy_k}
(-1)^\sigma \V_{I^k_{\sigma(1)}} f^{\Phi_1}\ \cdots\  \V_{I^k_{\sigma(k)}} f^{\Phi_k}\right)
\left(\sum_{\tau\in \Sy_l}
(-1)^\tau \V_{I^l_{\tau(1)}} g^{\Psi_1}\ \cdots\  \V_{I^l_{\tau(l)}} g^{\Psi_l}\right)\ ,
\end{equation}

Finally, in Case~(iii), there is no element of the form $\E^\Upsilon$.

For the element situated in the upper-right corner of \eqref{eq:Compo}, we need to recall the formula for the cross product map \cite[Theorem~VIII.8.8]{MacLane95}. To any subset $S=\{S_1, \ldots, S_k\}\subset \underline{k+l}$, one associates the $(k,l)$-shuffle $\shuffle(S)$ which is the permutation of $\Sy_{k+l}$ sending $j\in \underline{k}$ to $S_j$ and   $j\in \underline{k+l}\setminus\underline{k}$ to the remaining elements in an order-preserving way. The simplicial map of simplicial complexes $\sigma_S : \Delta^{k+l} \to \Delta^l$ is the one which contracts  all the principal edges $I^{k+l}_s$, for $s\in S$. Similarly, we consider the map of simplicial complexes $\sigma_{S^c} : \Delta^{k+l} \to \Delta^k$ which contracts the remaining principal edges. Under this notation, the  element situated in the upper-right corner of \eqref{eq:Compo} is equal to 
\begin{eqnarray*}
&&\sum_{\substack{S\subset \underline{k+l}\\ |S|=k}}
(-1)^{\shuffle(S)}
\big(
f^1\sigma_{S^c}, \ldots, f^{i-1}\sigma_{S^c}, 
\big(f^{n+i-1}\sigma_{S^c}\big)\big(g^1\sigma_S\big), \ldots, 
\big(f^{n+i-1}\sigma_{S^c}\big)\big(g^{m-1}\sigma_S\big), \\
&&
f^{i}\sigma_{S^c}, \ldots, f^{n+i-2}\sigma_{S^c}, 
\big(f^{n+i-1}\sigma_{S^c}\big)\big(g^m\sigma_S\big), \ldots, 
\big(f^{n+i-1}\sigma_{S^c}\big)\big(g^{2m-1}\sigma_S\big), 
f^{n+i}\sigma_{S^c}, \ldots, f^{2n-1}\sigma_{S^c}
\big)\ .
\end{eqnarray*}

If we denote simply by $h^1_S, \ldots, h^{2n+2m-3}_S : \Delta^{k+l} \to S^1$ the above list of maps, we get the following formula for the lower-right term of \eqref{eq:Compo}:
$$
\frac{1}{(k+l)!}
\sum_{\substack{\Upsilon\subset \underline{2n+2m-3} \\ |\Upsilon|=k+l}} \left(
\sum_{\substack{S\subset \underline{k+l}\\ |S|=k}}
\sum_{\omega\in \Sy_{k+l}}
(-1)^{\shuffle(S)\omega} \V_{I^{k+l}_{\omega(1)}} h_S^{\Upsilon_1}\ \cdots\  \V_{I^{k+l}_{\omega(k+l)}} h_S^{\Upsilon_{k+l}}\right)
 \E^\Upsilon\ .
 $$
It remains to show that in the above three cases, the coefficients agree. Let us first notice the following facts. First of all, we have $\V_I fg =\V_I f +\V_I g$: we write $f(t)=e^{i\theta(t)}$ and $g(t)=e^{i\rho(t)}$, so that $(fg)(t)=e^{i(\theta(t)+\rho(t))}$ and thus
\[\V_I fg =
\frac{1}{2\pi}\big(\theta(1)+\rho(1)-\theta(0)-\rho(0)\big)
=\V_I f +\V_I g\ .\]
Secondly, when $i\in S$, the variation $\V_{I^{k+l}_i} f\sigma_S=0$ vanishes, since the restriction $(f\sigma_S)_{|I^{k+l}_i}$ is constant, and when $i \notin S$, the variation is equal to 
$$\V_{I^{k+l}_i} f\sigma_S=\V_{I^l_{i-|S<i|}} f\ ,$$
where $|S<i|$ is the number of elements of $S$ strictly less than $i$.

In Case~(i), the coefficient of $\E^\Upsilon$ is equal to 
\begin{eqnarray*}
&&\frac{1}{(k+l)!}
\sum_{\substack{S\subset \underline{k+l}\\ |S|=k}}
\sum_{\omega\in \Sy_{k+l}}
(-1)^{\shuffle(S)\omega} 
\V_{I^{k+l}_{\omega(1)}} f^{\Phi_1}\sigma_{S^c} \ldots \V_{I^{k+l}_{\omega(k')}} f^{\Phi_{k'}}\sigma_{S^c}
\V_{I^{k+l}_{\omega(k'+1)}}\left(f^{n+i-1}\sigma_{S^c}\right)\left(g^{\Psi_1}\sigma_S\right) \ldots \\&&
\V_{I^{k+l}_{\omega(k'+l')}}\left(f^{n+i-1}\sigma_{S^c}\right)\left(g^{\Psi_{l'}}\sigma_S\right)
\V_{I^{k+l}_{\omega(k'+l'+1)}} f^{\Phi_{k'+1}}\sigma_{S^c} \ldots \V_{I^{k+l}_{\omega(k'+l'+k'')}} f^{\Phi_{k'+k''}}\sigma_{S^c} \\&&
\V_{I^{k+l}_{\omega(k'+l'+k''+1)}}\left(f^{n+i-1}\sigma_{S^c}\right)\left(g^{\Psi_{l'+1}}\sigma_S\right)\ldots
\V_{I^{k+l}_{\omega(k'+l'+k''+l'')}}\left(f^{n+i-1}\sigma_{S^c}\right)\left(g^{\Psi_{l'+l''}}\sigma_S\right)
\V_{I^{k+l}_{\omega(k'+l'+k''+l''+1)}} f^{\Phi_{k'+k''+1}}\sigma_{S^c} \\
&& \ldots\V_{I^{k+l}_{\omega(k'+l'+k''+l''+k''')}} f^{\Phi_{k'+k''+k'''}}\sigma_{S^c}\ .
\end{eqnarray*}
For any given $S\subset \underline{k+l}$, the only non-vanishing terms come when the permutation $\omega$ sends the $\{1, \ldots, k', k'+l'+1, \ldots, k'+l'+k'', k'+l'+k''+l''+1, \ldots, k'+l'+k''+l''+k'''\}$ to the $k$  elements of $S$. This happens if and only if the permutation $\omega$ is equal to the following type of composite:  
$\omega=\shuffle(S)(\sigma\times\tau)\beta$ where $\beta$ is the block-permutation which sends 
$\{1, \ldots, k', k'+l'+1, \ldots, k'+l'+k'', k'+l'+k''+l''+1, \ldots, k'+l'+k''+l''+k'''\}$ to 
$\{1, \ldots, k\}$ and 
$\{k'+1, \ldots, k'+l', k'+l'+k'', \ldots,k'+l'+k''+l'' \}$
to $\{k+1, \ldots, k+l\}$, 
 and where $\sigma\in \Sy_k$, $\tau\in \Sy_l$. 
Since there are $\binom{k+l}{k}$ ways to choose the subsets $S\subset \underline{k+l}$ and since the signature of the permutation $\beta$ is $(-1)^{(k''+k''')l'+k'''l''}$ and that of $\sigma\times\tau$ is equal to $(-1)^\sigma(-1)^\tau$, this term is equal to 
\begin{equation*}
(-1)^{(k''+k''')l'+k'''l''}\frac{1}{k!l!}
\sum_{\substack{\sigma\in \Sy_k\\ \tau\in \Sy_l}}
(-1)^\sigma(-1)^\tau \V_{I^k_{\sigma(1)}} f^{\Phi_1}\ \cdots\  \V_{I^k_{\sigma(k)}} f^{\Phi_k}
 \V_{I^l_{\tau(1)}} g^{\Psi_1}\ \cdots\  \V_{I^l_{\tau(l)}} g^{\Psi_l}\ ,
\end{equation*}
that is term~\eqref{eq:i}.

In Case~(ii) and sub-Case~(a), the coefficient of $\E^\Upsilon$ is equal to
\begin{eqnarray*}
&&\frac{1}{(k+l)!}
\sum_{\substack{S\subset \underline{k+l}\\ |S|=k}}
\sum_{\omega\in \Sy_{k+l}}
(-1)^{\shuffle(S)\omega} 
\V_{I^{k+l}_{\omega(1)}} f^{\Phi_1}\sigma_{S^c} \ldots \V_{I^{k+l}_{\omega(k')}} f^{\Phi_{k'}}\sigma_{S^c}
\V_{I^{k+l}_{\omega(k'+1)}}\left(f^{n+i-1}\sigma_{S^c}\right)\left(g^{\Psi_1}\sigma_S\right) \ldots \\&&
\ldots\V_{I^{k+l}_{\omega(k'+|\Psi<j|+1)}}\left(f^{n+i-1}\sigma_{S^c}\right)\left(g^{j}\sigma_S\right) \ldots  
\V_{I^{k+l}_{\omega(k'+l'+1)}}\left(f^{n+i-1}\sigma_{S^c}\right)\left(g^{\Psi_{l'}}\sigma_S\right)
\V_{I^{k+l}_{\omega(k'+l'+2)}} f^{\Phi_{k'+1}}\sigma_{S^c} \ldots \\&&
\V_{I^{k+l}_{\omega(k'+l'+k''+1)}} f^{\Phi_{k'+k''}}\sigma_{S^c} 
\V_{I^{k+l}_{\omega(k'+l'+k''+2)}}\left(f^{n+i-1}\sigma_{S^c}\right)\left(g^{\Psi_{l'+1}}\sigma_S\right)\ldots\\&&
\V_{I^{k+l}_{\omega(k'+l'+k''+l''+1)}}\left(f^{n+i-1}\sigma_{S^c}\right)\left(g^{\Psi_{l'+l''}}\sigma_S\right)
\V_{I^{k+l}_{\omega(k'+l'+k''+l''+2)}} f^{\Phi_{k'+k''+1}}\sigma_{S^c}  \ldots \\&&\V_{I^{k+l}_{\omega(k'+l'+k''+l''+k'''+1)}} f^{\Phi_{k'+k''+k'''}}\sigma_{S^c}\ .
\end{eqnarray*}
For any given $S\subset \underline{k+l}$, the only non-vanishing terms come when the permutation $\omega$  is equal to the following type of composite:  
$\omega=\shuffle(S)(\sigma\times\tau)\gamma\beta$, where $\beta$ is the same kind of block-permutation as above but associated to the decomposition $k+l=k'+(l'+1)+k''+l''+k'''$ and where $\gamma$ is the permutation 
 which puts $k+|\Psi<j|+1$ at place $k'+k''+1$ and leaves the other elements in the order.
Since the signature of this latter  permutation  is equal to  $(-1)^{k'''+|\Psi<j|}$, this term is equal to 
\begin{equation*}
(-1)^{k'''+|\Psi<j|+(k''+k''')(l'+1)+k'''l''}\frac{1}{k!l!}
\sum_{\substack{\sigma\in \Sy_k\\ \tau\in \Sy_l}}
(-1)^\sigma(-1)^\tau \V_{I^k_{\sigma(1)}} f^{\Phi_1}\ \cdots\  \V_{I^k_{\sigma(k)}} f^{\Phi_k}
 \V_{I^l_{\tau(1)}} g^{\Psi_1}\ \cdots\  \V_{I^l_{\tau(l)}} g^{\Psi_l}\ ,
\end{equation*}
that is term~\eqref{eq:iia}.
In sub-Case~(b), the coefficient of $\E^\Upsilon$ is equal to
\begin{eqnarray*}
&&\frac{1}{(k+l)!}
\sum_{\substack{S\subset \underline{k+l}\\ |S|=k}}
\sum_{\omega\in \Sy_{k+l}}
(-1)^{\shuffle(S)\omega} 
\V_{I^{k+l}_{\omega(1)}} f^{\Phi_1}\sigma_{S^c} \ldots \V_{I^{k+l}_{\omega(k')}} f^{\Phi_{k'}}\sigma_{S^c}
\V_{I^{k+l}_{\omega(k'+1)}}\left(f^{n+i-1}\sigma_{S^c}\right)\left(g^{\Psi_1}\sigma_S\right) \ldots \\&&  
\V_{I^{k+l}_{\omega(k'+l')}}\left(f^{n+i-1}\sigma_{S^c}\right)\left(g^{\Psi_{l'}}\sigma_S\right)
\V_{I^{k+l}_{\omega(k'+l'+1)}} f^{\Phi_{k'+1}}\sigma_{S^c} \ldots \V_{I^{k+l}_{\omega(k'+l'+k'')}} f^{\Phi_{k'+k''}}\sigma_{S^c}\\&&
\V_{I^{k+l}_{\omega(k'+l'+k''+1)}}\left(f^{n+i-1}\sigma_{S^c}\right)\left(g^{\Psi_{l'+1}}\sigma_S\right)
\ldots\V_{I^{k+l}_{\omega(k'+k''+|\Psi<j|+1)}}\left(f^{n+i-1}\sigma_{S^c}\right)\left(g^{j}\sigma_S\right) \ldots\\&&
\V_{I^{k+l}_{\omega(k'+l'+k''+l''+1)}}\left(f^{n+i-1}\sigma_{S^c}\right)\left(g^{\Psi_{l'+l''}}\sigma_S\right)
\V_{I^{k+l}_{\omega(k'+l'+k''+l''+2)}} f^{\Phi_{k'+k''+1}}\sigma_{S^c}  \ldots\V_{I^{k+l}_{\omega(k'+l'+k''+l''+k'''+1)}} f^{\Phi_{k'+k''+k'''}}\sigma_{S^c}\ .
\end{eqnarray*}
For any given $S\subset \underline{k+l}$, the only non-vanishing terms come when the permutation $\omega$  is equal to the following type of composite:  
$\omega=\shuffle(S)(\sigma\times\tau)\gamma\beta$, where $\beta$ is the same kind of block-permutation as above but associated to the decomposition $k+l=k'+l'+k''+(l''+1)+k'''$ and where $\gamma$ is the same permutation as above.
Therefore, this term is equal to 
\begin{equation*}
(-1)^{k'''+|\Psi<j|+(k''+k''')l'+k'''(l''+1)}\frac{1}{k!l!}
\sum_{\substack{\sigma\in \Sy_k\\ \tau\in \Sy_l}}
(-1)^\sigma(-1)^\tau \V_{I^k_{\sigma(1)}} f^{\Phi_1}\ \cdots\  \V_{I^k_{\sigma(k)}} f^{\Phi_k}
 \V_{I^l_{\tau(1)}} g^{\Psi_1}\ \cdots\  \V_{I^l_{\tau(l)}} g^{\Psi_l}\ ,
\end{equation*}
that is term~\eqref{eq:iib}.

In Case~(iii), there are two subcases. First, when 
\[|\Upsilon\cap\{1, \ldots, i-1, i+m-1,\ldots, i+m+n-3, i+2m+n-2, \ldots, 2n+2m-3\}|\geqslant k+1\ ,\]
 there is at least one variation $\V_{I^{k+l}_b} f^a\sigma_{S^c}$ involved which is equal to $0$ for any $S\subset\underline{k+l}$, since there is at least an element $b\in \Upsilon$ such that $b\notin S$. Then, in the other case, we have 
 \[|\Upsilon\cap\{i, \ldots, i+m-2, i+m+n-2, \ldots, i+2m+n-3\}|=l+N\ ,\] with $2\leqslant N\leqslant k$. In this case, the coefficient of the term $\E^\Upsilon$ factors through 
$$ 
\sum_{\substack{T\subset \underline{k} \\ |T|=N}} \sum_{\theta\in \Sy_N} 
(-1)^\theta
\V_{I^k_{T_{\theta(1)}}} f^{n+i-1}\cdots \V_{I^k_{T_{\theta(N)}}} f^{n+i-1}=
\left(\sum_{\theta\in \Sy_N} 
(-1)^\theta\right)\sum_{\substack{T\subset \underline{k} \\ |T|=N}} 
\V_{I^k_{T_{1}}} f^{n+i-1}\cdots \V_{I^k_{T_{N}}} f^{n+i-1}=0
\ ,$$
which concludes the proof.
\end{proof}

\subsection{Formality of the brick operad \texorpdfstring{$\calB_{\mathbb{C}}$}{BComp}}

In the same way as the classical operad  $\{\overline{\calM}_{0,n+1}\}$ of Deligne--Mumford compactifications can be shown to be formal~\cite{GSNPR05}, the ns brick operad is formal.

\begin{proposition}
The ns brick operad $\calB_{\mathbb{C}}$ is formal over the rational numbers, that is there is a zigzag of quasi-isomorphisms of dg ns operads over $\mathbb{Q}$: 
$$C_\bullet(\calB_{\mathbb{C}})  \ \stackrel{\sim}{\longleftarrow} \ \cdots \ \stackrel{\sim}{\longrightarrow}\  H_\bullet(\calB_{\mathbb{C}}) \cong \ncHyperCom\ .$$ 
\end{proposition}

\begin{proof}
First, since the brick manifolds are the toric varieties associated to Loday realisations of the associahedra, and since the Loday polytopes are Delzant, \cite[Theorem~2.5]{Guillemin94} implies that brick manifolds are K\"ahler manifolds. Then, one can apply the general result \cite[Corollary~6.3.1]{GSNPR05} to conclude that the ns brick operad is formal over $\mathbb{Q}$. 
\end{proof}

\section{Noncommutative \texorpdfstring{$\overline{\calM}_{0,n+1}(\mathbb{R})$}{M0nR}}\label{sec:RealCase}

Let us note that the ns operad structure of the brick operad (Section~\ref{subsec:brick}) is defined over any field. In particular, it makes sense to consider it over the field $\mathbb{R}$, in which case they form an ns operad in the category of smooth real algebraic varieties. If one recalls that the brick manifold $\calB(\underline{n})$ is isomorphic to the toric varieties of the Loday polytope $L_n$, one can use known results on rational Betti numbers of the toric varieties $X_{\mathbb{R}}(L_n)$ to derive the following result.

\begin{proposition}
The rational Betti numbers of the real brick manifolds are given by the formula
	\[
b_i\big(\calB_{\mathbb{R}}(\underline{n})\big)=
	\begin{cases}
	1, \text{ if } \ i=0,\\
	\binom{n-1}{i}-\binom{n-1}{i-1}, \text{ if } \ 1\le i\le \lfloor (n-1)/2\rfloor,\\
	0, \text{ if } \ i>\lfloor (n-1)/2\rfloor.
	\end{cases} 
	\]  
The Euler characteristics of  $\calB_{\mathbb{R}}(\underline{n})$ is equal to $0$ for odd~$n$, and to $(-1)^{\frac{n}{2}-1}C_{\frac{n}{2}-1}$ for even~$n$. Here $C_k$ is the $k^\text{th}$ Catalan number $C_k=\frac{1}{k+1}\binom{2k}{k}$.
\end{proposition}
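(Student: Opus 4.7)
The plan is to exploit the isomorphism $\calB(\underline{n})\cong X(L_n)$ established in Theorem~\ref{thm:LodayPolytope} in order to reduce the statement to a question about rational cohomology of real toric varieties associated to Delzant polytopes, for which a well-developed combinatorial machinery is available.

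First, I would invoke the general formula for rational Betti numbers of a smooth compact real toric variety (due to Suciu--Trevisan and Bahri--Bendersky--Cohen--Gitler) that expresses $\dim_{\mathbb{Q}}H^k(X_{\mathbb{R}}(P);\mathbb{Q})$ as a sum, over elements $\omega$ of the quotient of the characteristic lattice modulo $2$, of reduced Betti numbers of specific full subcomplexes $K_\omega$ of the dual simplicial complex of $P$. For the Loday polytope $L_n$ the relevant combinatorics becomes transparent: the facets of $L_n$ are indexed by proper intervals $I\subsetneq\underline{n}$ (equivalently, by the subspace arrangement of Section~\ref{sec:WonderfulModel}), and the Minkowski decomposition of $L_n$ as a sum of simplices indexed by intervals of $\Gap(\underline{n})$ (Proposition~\ref{prop:Minkowski}) provides an explicit characteristic function whose $\mathbb{Z}/2$-reduction can be tracked by hand.

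The main step is then a combinatorial identification of the resulting sum with the ballot numbers $\binom{n-1}{i}-\binom{n-1}{i-1}$. I would encode each contribution $\widetilde{H}^{k-1}(K_\omega;\mathbb{Q})$ as a count of nested chains of intervals in $\underline{n}$ satisfying appropriate parity constraints, and then reinterpret these as ballot-type lattice paths in an $(n-1)\times 1$ rectangle; the classical reflection principle delivers the desired formula, and at the same time accounts for the vanishing $b_i=0$ for $i>\lfloor(n-1)/2\rfloor$, since no lattice path violating the staying-below-the-diagonal constraint exists in that range.

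Finally, the Euler characteristic assertion provides a useful independent check. Using the tree stratification from Proposition~\ref{proposition:stratification}, each open stratum $\calB_{\mathbb{R}}(\underline{n},T)\cong(\mathbb{R}^\times)^{n-2-n_e(T)}$ has compactly supported Euler characteristic $(-2)^{n-2-n_e(T)}$, and additivity yields
\[
\chi(\calB_{\mathbb{R}}(\underline{n}))=\sum_{T\in\calT(n)}(-2)^{n-2-n_e(T)}.
\]
A standard generating-function argument based on the functional equation $P=x+yP^2/(1-P)$ for planar rooted trees with $y$ weighting internal vertices then reduces this expression to $(-1)^{n/2-1}C_{n/2-1}$ for even $n$ and to $0$ for odd $n$, matching the alternating sum of the ballot numbers. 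The principal obstacle is the middle step, namely matching the abstract Suciu--Trevisan sum against a concrete ballot count; this requires a careful bookkeeping bijection but is made tractable by the transparent interval combinatorics that underlies both the Loday polytope and the noncommutative braid arrangement of Section~\ref{sec:WonderfulModel}.
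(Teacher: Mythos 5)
Your route is genuinely different from the paper's, which is a one-line appeal to the published result: since $\calB_{\mathbb{R}}(\underline{n})\cong X_{\mathbb{R}}(L_n)$ and $L_n$ is the graph associahedron of a path, the Betti-number formula is exactly \cite[Cor.~1.4]{ChoiPark15} and the proof ends there. What you propose is, in effect, to re-derive Choi--Park's corollary from scratch: you would feed the Loday polytope into the Suciu--Trevisan/BBCG decomposition $H^i(X_{\mathbb{R}}(P);\mathbb{Q})\cong\bigoplus_{\omega}\widetilde{H}^{i-1}(K_\omega;\mathbb{Q})$ and then recognize the resulting count as the ballot numbers $\binom{n-1}{i}-\binom{n-1}{i-1}$. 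This is sound, but notice that the "careful bookkeeping bijection" you flag as the principal obstacle is precisely the content of the cited paper (it is where the $a$-number machinery of Choi--Park does its work); so your plan, while correct, trades a citation for a nontrivial combinatorial computation and is not visibly shorter. What your version buys is a self-contained, verifiable argument plus an independent consistency check: the Euler-characteristic calculation via the stratification of Proposition~\ref{proposition:stratification} is correct as stated (the generating function $Q(x)=1+x-\sqrt{1+x^2}$ gives $[x^{2k}]Q=(-1)^{k}C_{k-1}/2^{2k-1}$, hence $\chi=(-2)^{n-1}[x^n]Q=(-1)^{n/2-1}C_{n/2-1}$ for even $n$ and $0$ for odd $n$), and it is a nice piece of evidence that the paper does not record.
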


\begin{proof}
Since $\calB_{\mathbb{R}}(\underline{n})\cong X_\mathbb{R}(L_n)$, this follows directly from \cite[Cor.~1.4]{ChoiPark15}.
\end{proof}

In this section, we proceed much further than computing the Betti numbers, and describe the ns operad structure on the rational homology of the real brick operad, which provides us with a ``noncommutative counterpart'' of the $2$-Gerstenhaber operad~\cite{EHKR10}. We remark that the cohomological results of \cite{EHKR10} do not admit a proper noncommutative analogue: similarly to the statement  for real toric varieties of permutahedra in~\cite{Hen12}, it can be shown that the cohomology algebras of real toric varieties of Loday polytopes are not generated by elements of degree one, in contrast to the case of $\overline{\calM}_{0,n+1}(\mathbb{R})$.

\subsection{Some relations in the rational homology of the real brick operad}

In this section, we explain geometrically some relations in the rational homology of the real brick operad. Let $m\in H_0\big(\calB_{\mathbb{R}}(\underline{2})\big)$ be the class of the point $pt=\calB_{\mathbb{R}}(\underline{2})$, and $c\in H_1\big(\calB_{\mathbb{R}}(\underline{3})\big)$ be the fundamental class of the circle $S^1=\calB_{\mathbb{R}}(\underline{3})$. 

\begin{proposition}\label{prop:rel-real-brick}
We have 
\begin{gather*}
m\circ_1m-m\circ_2m=0,\\
c\circ_1m=m\circ_2c,  \quad c\circ_2m=0,\quad c\circ_3m=m\circ_1c, \\
c\circ_1 c + c\circ_2 c + c\circ_3 c = 0 .
\end{gather*}
\end{proposition}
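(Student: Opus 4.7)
The first relation is immediate: since $\calB_{\mathbb{R}}(\underline{3})\cong S^1$ is connected, $m\circ_1 m$ and $m\circ_2 m$ both represent the class of a point in $H_0(\calB_{\mathbb{R}}(\underline{3});\mathbb{Q})=\mathbb{Q}$.

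For the remaining relations I would first identify each composition with the fundamental class of a divisor via Corollary~\ref{cor:closure-of-divisor}, writing $c\circ_i m=[\overline{\calB_{\mathbb{R}}(\underline{4},T_{i,i+1})}]$ and $m\circ_j c=[\overline{\calB_{\mathbb{R}}(\underline{4},T_{j,j+2})}]$ in $H_1(\calB_{\mathbb{R}}(\underline{4});\mathbb{Q})$, and $c\circ_i c=[\overline{\calB_{\mathbb{R}}(\underline{5},T_{i,i+2})}]$ in $H_2(\calB_{\mathbb{R}}(\underline{5});\mathbb{Q})$. The proof then reduces to verifying the claimed linear identities among these divisor classes, for which I would combine two ingredients. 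First, the real analogue of the topological recursion relations of Proposition~\ref{prop:TRR}: the natural maps $f_j\colon\calB_{\mathbb{R}}(\underline{n})\to\mathbb{P}(G(j-1,j+1))\cong\mathbb{RP}^1$ given by $V_{j,j}$ have rationally homologous regular preimages, and computing the preimages of the two coordinate axes yields, in arity~$4$, the identities $[T_{1,2}]=[T_{2,3}]+[T_{2,4}]$ (from $f_2$) and $[T_{1,3}]+[T_{2,3}]=[T_{3,4}]$ (from $f_3$), together with analogous identities in arity~$5$. Second, an explicit rational vanishing for certain divisors, obtained by combining the normal-bundle formula of Section~\ref{sec:local-structure} with the iterated blow-up description of $\calB_{\mathbb{R}}(\underline{n})$ from Remark~\ref{rem:IteratedBlowup}: concretely, $\overline{\calB_{\mathbb{R}}(\underline{4},T_{2,3})}$ is the proper transform of the unique real line through both blow-up centres, and its integral $H_1$-class lies in the $2$-torsion subgroup $\mathbb{Z}/2\subset H_1(\calB_{\mathbb{R}}(\underline{4});\mathbb{Z})$, hence vanishes rationally. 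Combining these two ingredients gives the three stated arity-$4$ identities, and the same strategy applied to $f_j$ for $j=2,3,4$ together with vanishings of the relevant arity-$5$ divisors yields the identity $c\circ_1 c+c\circ_2 c+c\circ_3 c=0$.

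The main technical obstacle is justifying the pullback argument \emph{rationally}, and not merely modulo~$2$, in the presence of non-orientable ambient manifolds; since the cobordism $f_j^{-1}(\gamma)$ between two regular preimages can itself fail to be orientable, extra care is required. The cleanest way to handle this is to lift to the orientation double cover of $\calB_{\mathbb{R}}(\underline{n})$ and track the induced $\mathbb{Z}/2$-equivariant classes; combined with the explicit $2$-torsion identification of the relevant divisors through the iterated blow-up description, this completes the proof.
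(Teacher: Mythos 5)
Your first relation and the general strategy of expressing the compositions as divisor classes are fine, and your identification of $\overline{\calB_{\mathbb{R}}(\underline{4},T_{2,3})}$ as a $2$-torsion class is correct (it is the class called $\beta$ in the paper's proof, detected via the Möbius strip glued into the torus when $\mathbb{P}^1\times\mathbb{P}^1$ is blown up). However, your route diverges from the paper's, and it has two genuine gaps. The first is the one you flag yourself: the real analogue of Proposition~\ref{prop:TRR} is not a theorem you can invoke, and making the pullback argument work rationally on a non-orientable ambient manifold is a substantive problem, not a technicality. It is worse than you suggest: in arity $5$ the divisors $\overline{\calB_{\mathbb{R}}(\underline{5},T_{l,r})}$ with $r-l\in\{1,3\}$ are isomorphic to $\calB_{\mathbb{R}}(\underline{4})$, a non-orientable surface, so they do not even carry rational fundamental classes; yet these are exactly the terms appearing on both sides of any naive $f_j$-pullback relation. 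So the TRR relations you would need are ill-posed over $\mathbb{Q}$ before you even reach the double-cover subtleties. The paper avoids all this by directly computing with explicit cycles in the surface $\calB_{\mathbb{R}}(\underline{4})\cong\mathbb{T}^2\#\mathbb{RP}^2$ rather than invoking a pullback relation.

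The second gap is that your arity-$5$ argument is not worked out, and I do not believe the TRR-plus-vanishing strategy yields it. Even granting the arity-$5$ analogues of the $f_j$-relations (say $[T_{1,3}]+[T_{2,3}]=[T_{3,4}]+[T_{3,5}]$ from $f_3$) and assuming the torsion vanishing of the ``interior'' binary divisors $[T_{2,3}]$, $[T_{3,4}]$, one obtains only $[T_{1,3}]=[T_{3,5}]$, i.e.\ $c\circ_1 c=c\circ_3 c$; nothing of the form $c\circ_1 c + c\circ_2 c + c\circ_3 c = 0$ falls out. The paper proves the arity-$5$ relation by a fundamentally different argument: it passes to the iterated blow-up description of $\calB_{\mathbb{R}}(\underline{5})$ as $\widehat{Y}_{ncA_4,\min}(\mathbb{R})$ built from $\mathbb{RP}^3$, considers the boundary of the open cell $\mathsf{C}$ obtained by removing the three lines $\mathbb{P}(p(H_I^\bot))$ with $|I|=3$, and observes by an orientation count that the exceptional divisors over the two blown-up points contribute with coefficient $0$ while those over the three blown-up lines contribute with coefficient $2$, so that $2(c\circ_1 c + c\circ_2 c + c\circ_3 c)$ is a boundary. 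You would need to supply an argument of this kind, or resolve the rational TRR issues in a way that produces the full relation, to close this gap.
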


\begin{proof} The relation of arity $3$ is obvious: the compositions $m\circ_1m$ and $m\circ_2m$ both represent the only zero-dimensional class, so we have $m\circ_1m=m\circ_2m$ in homology.
	
Let us establish the relations of arity $4$. For that, it is convenient to view the brick manifold $\calB(\underline{4})$ as the blow-up of $\mathbb{P}^1\times\mathbb{P}^1$ at a point. For the set of real points $\calB_{\mathbb{R}}(\underline{4})$, this means that this manifold is obtained from the torus $\mathbb{T}^2$ by cutting out a small $2$-disk and gluing in the M\"obius strip $\mathbb{M}^2$ along its boundary. On the level of homology, the elements $\alpha_1$ and $\alpha_2$ arising from the generators of the first integral homology group of the torus correspond to the choices of $V_{2,2}\subset G(1,3)$ and $V_{3,3}\subset G(2,4)$, and the $2$-torsion element $\beta$ arising from the generator of the first integral homology group of the M\"obius strip corresponds to the choice of $V_{2,3}/G(2,3)\subset\myspan(e_{1,2},e_{3,4})$ when $V_{2,2}=V_{3,3}=G(2,3)$. Next, we notice that our definition of the operadic composition in the brick operad immediately implies that the element $c\circ_2m$ is represented by the circle
	\[
\{V_{22}=G(2,3), V_{33}=G(2,3), V_{23}=\myspan(\sin(t)e_{1,2}+\cos(t)e_{3,4},e_{2,3}) \mid t\in[0,2\pi) \} 
	\]
and hence corresponds to the class $\beta$, so $c\circ_2m$ vanishes in homology with real or rational coefficients. Furthermore, the elements $m\circ_1c$ and $c\circ_3m$
are represented by the circles
	\[
\{V_{22}=\myspan(\sin(t)e_{1,2}+\cos(t)e_{2,3}), V_{33}=G(2,3), V_{23}=G(1,3) \mid t\in[0,2\pi)\} ,  
	\]
and 
	\[
\{V_{22}=\myspan(\sin(t)e_{1,2}+\cos(t)e_{2,3}), V_{33}=G(3,4), V_{23}=V_{22}\oplus V_{33} \mid t\in[0,2\pi) \} ,
	\]
so in the rational homology $m\circ_1c=\alpha_1=c\circ_3m$, and 
the elements $c\circ_1m$ and $m\circ_2c$ are represented by the circles
	\[
\{V_{22}=G(1,2), V_{33}=\myspan(\sin(t)e_{2,3}+\cos(t)e_{3,4}), V_{23}=V_{22}\oplus V_{33} \mid t\in[0,2\pi) \} , 
	\]
and
	\[
\{V_{22}=G(2,3), V_{33}=\myspan(\sin(t)e_{2,3}+\cos(t)e_{3,4}), V_{23}=G(2,4) \mid t\in[0,2\pi)\} .  
	\]
so in the rational homology $c\circ_1m=\alpha_2=m\circ_2c$.

Finally, let us consider the relation of arity $5$. For that, it is more convenient to switch to the language of wonderful models, and use the construction of $\calB_{\mathbb{R}}(\underline{n})\cong\widehat{Y}_{ncA_{n-1},\min}(\mathbb{R})$ via iterated blow-ups of $\mathbb{P}(V)$, as in Remark~\ref{rem:IteratedBlowup}. For $n=5$, the space $\mathbb{P}(V)$ is isomorphic to $\mathbb{RP}^3$; it contains a configuration of three projective lines $\mathbb{P}(p(H_{\{1,2,3\}}^\bot))$, $\mathbb{P}(p(H_{\{2,3,4\}}^\bot))$, and $\mathbb{P}(p(H_{\{3,4,5\}}^\bot))$. 
	
Our construction suggests that we have to blow up $\mathbb{RP}^3$ first at two points, $\mathbb{P}(p(H_{\{1,2,3,4\}}))=\mathbb{P}(p(H_{\{1,2,3\}}^\bot))\cap
\mathbb{P}(p(H_{\{2,3,4\}}^\bot))$ and $\mathbb{P}(p(H_{\{2,3,4,5\}}))=\mathbb{P}(p(H_{\{2,3,4\}}^\bot))\cap \mathbb{P}(p(H_{\{3,4,5\}}^\bot))$  
and then along the proper transforms of the three projective lines $\mathbb{P}(p(H_{\{1,2,3\}}^\bot))$, $\mathbb{P}(p(H_{\{2,3,4\}}^\bot))$, and $\mathbb{P}(p(H_{\{3,4,5\}}^\bot))$. The fundamental classes of the three exceptional divisors of the second blow-up represent $c\circ_1 c$, $c\circ_2 c$, and $c\circ_3 c$. 
	
Consider the ``open cell'' 
 \[
 \mathsf{C}:=\mathbb{RP}^3\setminus\left(\mathbb{P}(p(H_{\{1,2,3\}}^\bot))\cup
 \mathbb{P}(p(H_{\{2,3,4\}}^\bot))\cup\mathbb{P}(p(H_{\{3,4,5\}}^\bot))\right)
 \]
(note that this open cell, after all blow-ups, is not the same as the open part of the brick manifold). After the blow-ups its boundary is the union of all exceptional divisors. The exceptional divisors over the two points enter the boundary with the coefficient $0$ because of the orientation. For the same reason, the exceptional divisors over the three $\mathbb{RP}^1$  enter the boundary with the coefficient $2$. Thus the boundary of $\mathsf{C}$ is twice the sum of the three divisors representing $c\circ_1 c$, $c\circ_2 c$, and $c\circ_3 c$. Therefore, the sum $c\circ_1 c+c\circ_2 c+c\circ_3 c$ is zero in the rational homology.
\end{proof}

\subsection{Nonsymmetric operads and partition posets}

In~\cite{Vallette07}, a framework relating poset homology with operadic homological algebra was proposed. Let us briefly outline the version of this formalism for ns operads. 

Let $\calP$ be a ns set operad. An \emph{nonsymmetric (ns) $\calP$-partition} of a totally ordered set $I$ is a partition of $I$ into an order sum of intervals $I=I_1+\cdots+I_k$ together with a choice, for each $j=1,\ldots,k$, of an element $b_j\in\calP(I_j)$. (In the language of ns collections of sets, the collection of ns $\calP$-partitions of $I$ is the collection $\As\circ\calP(I)$). 

Let us define a partial order on ns $\calP$-partitions of the given set $I$ as follows. Suppose that $\alpha=(a_1,\ldots,a_k)$, $a_r\in\calP(I_r)$, and $\beta=(b_1,\ldots,b_l)$, $b_s\in\calP(J_s)$, are two ns $\calP$-partitions of~$I$. We say that $\alpha\le\beta$ if there exist positive integers $n_1,\ldots,n_k$ with $n_1+\cdots+n_k=l$ and elements $c_i\in\calP(\underline{n_i})$ such that for each $i=1,\ldots,k$ we have
\begin{gather*}
I_i=J_{n_1+\cdots+n_{i-1}+1}+\cdots+J_{n_1+\cdots+n_{i-1}+n_i},\\
a_i=c_i(b_{n_1+\cdots+n_{i-1}+1},\ldots,b_{n_1+\cdots+n_i}) .
\end{gather*}
The poset of ns $\calP$-partitions of $I$ is denoted by $\Pi_\calP(I)$. Its only maximal element $\hat{1}$ corresponds to the partition into singletons, and the (only available) choice of the identity operation for each singleton, and its minimal elements are indexed by $\calP(I)$.  

A ns set operad $\calP$ is called a \emph{basic-set operad} if for each $(b_1,
\ldots,b_s)\in\calP(I_1)\times\cdots\times \calP(I_s)$ the map 
\begin{gather*}
\gamma\colon\calP(\underline{s})\to\calP(I_1+\cdots+I_s), \\
c\mapsto c(b_1,\ldots,b_s),
\end{gather*}
is injective. 

\begin{proposition}
Suppose that $\calP$ is a basic-set quadratic operad generated by $\calP(t)$ for some integer $t\ge 2$. The linear span of $\calP$ in the category of vector spaces is a Koszul operad if and only if, for all $I$, each maximal interval $[a,\hat{1}]\subset\Pi_\calP(I)$ is Cohen--Macaulay. Here $a\in\min(\Pi_\calP(I))\cong\calP(I)$.  Moreover, in this case, we have
 \[
\calP^{\ac}(I)\cong \bigoplus_{a\in\min(\Pi_\calP(I))}H_\bullet([a,\hat{1}]) .
 \] 
\end{proposition}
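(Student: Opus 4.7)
The plan is to adapt the symmetric-operad argument of Vallette \cite{Vallette07} to the ns setting. The non-symmetric framework is in fact somewhat cleaner because there are no non-trivial automorphisms of ordinals to track, so no representation-theoretic or sign subtleties coming from symmetric group actions intervene.

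The first step is to identify the normalised bar construction $\overline{B}\calP$ of the linearised operad with the chain complex computing the reduced order homology of the intervals $[a,\hat 1]$ of the partition posets $\Pi_\calP(I)$. Because $\calP$ is a basic-set operad generated in one arity~$t$, any composition of $k$ generators produces an element of $\calP(I)$ with $\#I=k(t-1)+1$, and the basic-set hypothesis ensures that distinct iterated compositions give rise to distinct chains in $\Pi_\calP$. Fixing $a\in\min(\Pi_\calP(I))\cong\calP(I)$, the summand of $\overline{B}\calP(I)$ indexed by chains of $\calP$-partitions refining $a$ has, up to an overall degree shift, a basis indexed by the flags of the open interval $(a,\hat 1)$. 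In other words, the part of $\overline{B}\calP(I)$ over $a$ is, as a chain complex, the augmented order complex of $(a,\hat 1)$.

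The second step is to combine this with the two well-known characterisations. On the operadic side, $\calP$ is Koszul precisely when $\overline{B}\calP$ has its homology concentrated in the top possible weight, i.e.\ when only the deepest iterated compositions contribute to $H_\bullet(\overline{B}\calP(I))$. On the poset side, an interval $[a,\hat 1]$ is Cohen--Macaulay precisely when the reduced simplicial homology of the order complex $\Delta(a,\hat 1)$ is concentrated in top degree. The identification of the previous paragraph translates the operadic concentration condition into the poset concentration condition arity by arity and minimum by minimum; this gives the claimed ``if and only if''. When the conditions hold, reading off the top homology yields
\[
\calP^{\ac}(I)\cong\bigoplus_{a\in\min(\Pi_\calP(I))}\widetilde{H}_{\mathrm{top}}(\Delta(a,\hat 1))\cong\bigoplus_{a\in\min(\Pi_\calP(I))}H_\bullet([a,\hat 1]),
\]
where the first isomorphism is by construction and the second is the standard convention for poset homology.

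The only delicate point, and the main obstacle to a completely formal argument, is the matching of the differential on the bar side with the simplicial boundary on the poset side. Concretely, one must verify that the Koszul signs arising from the operadic desuspension in $\overline{B}\calP$ agree, under the bijection between iterated partial compositions and chains in $\Pi_\calP$, with the alternating signs of the simplicial boundary. In the ns setting this is lighter than in Vallette's symmetric case: no signs from permutation actions appear, and the only ingredients are the Koszul signs from shifting each generator by one together with the natural total order on the vertices of a chain induced by their height in the poset. Once this sign-matching is pinned down, the equivalence of Koszulness with the Cohen--Macaulay property, together with the explicit formula for~$\calP^{\ac}$, follows by the same argument as in the symmetric case.
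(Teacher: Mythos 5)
Your proposal follows the same route as the paper, which simply cites \cite[Th.~9]{Vallette07} as ``completely analogous'': identify each $a$-indexed summand of the normalised bar construction of the linearised operad with the order complex of the open interval $(a,\hat 1)$ in $\Pi_\calP(I)$, then match the operadic and poset-theoretic concentration-in-top-degree criteria. Your sketch is correct in outline (and helpfully makes explicit the sign-matching step that must be checked), with one small imprecision worth flagging: Cohen--Macaulayness of $[a,\hat 1]$ requires top-degree concentration for \emph{every} subinterval, not just the full one; this is harmless here because subintervals decompose as products of smaller partition-poset intervals, so the universal quantifier over $I$ absorbs the extra condition, but a fully careful write-up should say so.
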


\begin{proof}
Completely analogous to \cite[Th.~9]{Vallette07}. 
\end{proof}

To formulate our next result, let us recall the definitions of two different types of associativity for $k$-ary algebras~\cite{LV}.

\begin{definition}\leavevmode
\begin{enumerate}
\item The operad $\tAs_k$ of \emph{totally associative $k$-ary algebras} is the ns operad with one generator $\alpha$ of arity $k$ and degree $0$, $|\alpha|=0$, satisfying the relations $\alpha\circ_i\alpha=\alpha\circ_k\alpha$ for $i=1,\ldots,k-1$. 	
\item The operad $\pAs_k$ of \emph{partially associative $k$-ary algebras} is the ns operad with one generator $\alpha$ of arity $k$ and degree $k-2$, $|\alpha|=k-2$, satisfying the only relation 
 \[
\sum_{i=1}^k(-1)^{(k-1)(i-1)}\alpha\circ_i\alpha=0 \ .  
 \]	
\end{enumerate}
\end{definition} 

\begin{proposition}[\cite{MR}]\label{prop:tAs-Koszul}
The operad $\tAs_k$ and $\pAs_k$ are Koszul dual to each other. Each of these operads is Koszul. 
\end{proposition}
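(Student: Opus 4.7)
My plan is to establish both statements of the proposition by first verifying Koszul duality directly from the presentations, and then proving Koszulness of $\tAs_k$ via a quadratic Gr\"obner basis, from which Koszulness of $\pAs_k$ follows immediately from the duality.

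For the duality $\tAs_k^! \cong \pAs_k$, I would work in the free ns operad $\calT(V)$ on a single generator $\alpha$ of arity $k$. The quadratic component $\calT(V)^{(2)}$ has arity $2k-1$ and dimension $k$, spanned by the monomials $\alpha \circ_i \alpha$ for $i = 1, \ldots, k$. The defining relations of $\tAs_k$ span a $(k-1)$-dimensional subspace, while the single relation of $\pAs_k$ spans a $1$-dimensional one, matching the numerical condition for Koszul duality. It then remains to verify orthogonality under the natural pairing twisted by the Koszul sign rule arising from the operadic suspension. This reduces to checking that pairing the $\pAs_k$-relation $\sum_{i=1}^k (-1)^{(k-1)(i-1)} \alpha \circ_i \alpha$ with each $\tAs_k$-relation $\alpha \circ_j \alpha - \alpha \circ_k \alpha$ yields zero: this amounts to comparing the signs $(-1)^{(k-1)(j-1)}$ and $(-1)^{(k-1)(k-1)}$, which coincide precisely thanks to the degree $k-2$ of the generator of $\pAs_k$ and the appropriate shift from the operadic suspension $\calS$ as spelled out in \cite[Section~7.2]{LV}.

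For Koszulness of $\tAs_k$, I would use operadic Gr\"obner bases in the sense of \cite{DK,DV2013}. Choose the path-lex ordering on planar tree monomials in which $\alpha \circ_1 \alpha > \alpha \circ_2 \alpha > \cdots > \alpha \circ_k \alpha$, so that the leading terms of the $k-1$ defining relations $\alpha \circ_i \alpha - \alpha \circ_k \alpha$ are the monomials $\alpha \circ_i \alpha$ for $i = 1, \ldots, k-1$. The normal monomials with respect to these leading terms are precisely the right combs built from $\alpha$, producing exactly one monomial in each arity $1 + l(k-1)$ for $l \geq 0$. A finite case check of the overlaps of the leading terms shows that every S-polynomial reduces to zero modulo the defining relations, so they form a quadratic Gr\"obner basis, making $\tAs_k$ Koszul. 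By the Koszul duality established above, $\pAs_k$ is then Koszul as well.

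The main obstacle is the sign bookkeeping in the first step: while conceptually transparent, one must carefully track the signs introduced by the operadic suspension and by the Koszul pairing convention to confirm that the sign pattern $(-1)^{(k-1)(i-1)}$ in the $\pAs_k$-relation is precisely the one dictated by orthogonality. The Gr\"obner basis step, by contrast, is essentially automatic once the ordering is fixed, since there is only one generator and the overlaps are very constrained.
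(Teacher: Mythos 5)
The paper states this proposition without proof, simply citing \cite{Hof10,MR}; you are supplying a proof where the text merely refers to the literature. Your strategy — verify the duality by a dimension count plus orthogonality, then establish Koszulness of $\tAs_k$ via a quadratic Gr\"obner basis and deduce Koszulness of $\pAs_k$ by duality — is sound and is essentially the argument underlying the PBW/Gr\"obner criterion of \cite{Hof10} that the paper invokes. A few points are worth making explicit. In the duality step, the naive pairing $\langle\alpha\circ_i\alpha,\alpha^*\circ_j\alpha^*\rangle=\delta_{ij}$ does \emph{not} give orthogonality (for even $k$ the signs $(-1)^{(k-1)(j-1)}$ and $(-1)^{(k-1)(k-1)}$ genuinely differ), so the sign $(-1)^{(k-1)(i-1)}$ coming from the operadic suspension on the $i$-th slot is not a nicety but the entire point; it is exactly what makes the $\pAs_k$-relation the orthogonal complement, and is also what explains why the generator of $\pAs_k$ must sit in degree $k-2$ (a choice that \cite{MR} shows is essential — the degree-$0$ variant can fail to be Koszul). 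In the Gr\"obner step, the only nontrivial critical pair is the nested overlap $(\alpha\circ_p\alpha)\circ_{p-1+q}\alpha$ with $p,q<k$; both reduction orders terminate at the right comb $\alpha\circ_k(\alpha\circ_k\alpha)$, so the S-polynomial indeed resolves. You could also shortcut the duality computation: once $\tAs_k$ has a quadratic Gr\"obner basis, the Koszul dual automatically has a quadratic Gr\"obner basis for the opposite monomial order (the paper uses precisely this fact in the proof of Theorem~\ref{th:HyperCom-dim}), so Koszulness of $\pAs_k$ comes for free without a separate argument.
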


This proposition immediately implies the following result which is a noncommutative analogue of the result of a celebrated result of Hanlon and Wachs~\cite{HanlonWachs}.

\begin{theorem}
Fix a positive integer $m$. Let $I$ be a finite ordinal, and let $\mathop{\mathrm{part}}^m(I)$ denote the poset whose elements are partitions $I=I_1+\cdots+I_k$, where $|I_j|\equiv 1\pmod{m}$ for all $j=1,\ldots,k$, and the partial order is defined by refinement, as above. Then the poset  $\mathop{\mathrm{part}}^m(I)$ is Cohen--Macaulay.  Moreover, we have
 \[
H_k(\mathop{\mathrm{part}}\nolimits^m(I))\cong
\begin{cases}
\tAs_{m+1}^{\ac}(I), \quad |I|=km+1, \\
\quad 0, \qquad \text{ otherwise}.
\end{cases}
 \] 
\end{theorem}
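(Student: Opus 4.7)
The plan is to identify the poset $\mathop{\mathrm{part}}^m(I)$ with the nonsymmetric partition poset $\Pi_{\tAs_{m+1}}(I)$ associated with the nonsymmetric operad $\tAs_{m+1}$, and then to invoke the Koszulness of $\tAs_{m+1}$ (Proposition~\ref{prop:tAs-Koszul}) through the operad--partition-poset dictionary recalled above. The identification itself is routine: the ns operad $\tAs_{m+1}$ is one-dimensional in each arity of the form $1+jm$, $j\ge 0$, and zero otherwise, so an ns $\tAs_{m+1}$-partition of $I$ is precisely a splitting $I=I_1+\cdots+I_k$ into intervals with $|I_j|\equiv 1\pmod m$, together with the unique non-zero decoration on each block; the refinement order matches on both sides by inspection of the defining condition $\alpha\le\beta$ for ns partitions.

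I would then verify that $\tAs_{m+1}$ satisfies the hypotheses of the operad--partition proposition. It is quadratic and generated in the single arity $t=m+1$ by construction, and it is a basic-set operad because every component $\tAs_{m+1}(J)$ is at most one-dimensional, so each structure map sends the distinguished set-basis element to the distinguished set-basis element (or to zero) and is in particular injective on the set level. Combining this with the Koszulness provided by Proposition~\ref{prop:tAs-Koszul}, the proposition gives the Cohen--Macaulay property of every maximal interval $[a,\hat{1}]\subset\mathop{\mathrm{part}}^m(I)$ together with the identification
\[
\tAs_{m+1}^{\ac}(I)\;\cong\;\bigoplus_{a\in\min(\mathop{\mathrm{part}}^m(I))}H_\bullet\big([a,\hat{1}]\big)\ .
\]
Since every closed interval of $\mathop{\mathrm{part}}^m(I)$ sits inside such a maximal one, Cohen--Macaulayness extends to the full poset as well.

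The two cases of the homology statement then follow directly. When $|I|=km+1$, the single-block partition $\{I\}$ equipped with its unique decoration is the only minimal element of $\mathop{\mathrm{part}}^m(I)$, so the whole poset coincides with the interval $[\hat{0},\hat{1}]$; the displayed sum reduces to $H_\bullet(\mathop{\mathrm{part}}^m(I))$, and the non-trivial part is concentrated in the degree $k$ corresponding to the rank of the poset. When $|I|\not\equiv 1\pmod m$, the Koszul dual cooperad $\tAs_{m+1}^{\ac}(I)$ vanishes, because the non-zero arities of $\pAs_{m+1}$ are of the form $1+jm$, and hence every summand in the displayed identity vanishes, giving the claimed vanishing. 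The main point of the proof is the identification with $\Pi_{\tAs_{m+1}}(I)$ together with the verification of the basic-set condition; after that, everything is a direct application of the operad--partition-poset machinery, the only bookkeeping subtlety being the convention for homological degrees (in which the degree counts the number of operadic composition steps rather than the dimension of a simplex).
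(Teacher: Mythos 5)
Your proposal is correct and takes essentially the same route as the paper: the paper's entire proof is to observe the isomorphism $\mathop{\mathrm{part}}^m(I)\cong\Pi_{\tAs_{m+1}}(I)$ and then apply the Koszulness of $\tAs_{m+1}$ (Proposition~\ref{prop:tAs-Koszul}) via the ns operad--partition-poset proposition. You fill in the details (the basic-set check, the case analysis on $|I|\bmod m$) that the paper leaves implicit; the only spot that is stated a bit loosely is the passage from Cohen--Macaulayness of each maximal interval $[a,\hat{1}]$ to that of the whole poset, but the paper elides this as well.
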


\begin{proof}
This immediately follows from Proposition~\ref{prop:tAs-Koszul} combined with the fact that the partition poset $\mathop{\mathrm{part}}^m(I)$ is isomorphic to the poset $\Pi_{\tAs_{m+1}}(I)$.
\end{proof}

\subsection{Noncommutative 2-Gerstenhaber algebras}

In \cite{EHKR10}, it was proved that the homology operad of the operad of real compactified moduli spaces $\overline{\calM}_{0,n+1}(\mathbb{R})$ is the operad of $2$-Gerstenhaber algebras. We shall now introduce the nonsymmetric counterpart of the latter operad which is relevant in our case. Since the operad of $2$-Gerstenhaber algebras is obtained from the operad of commutative algebras and the operad of Lie $2$-algebras~\cite{HanlonWachs} by a distributive law, the following definition should not be too surprising in the view of the previous sections.

\begin{definition}
The operad $\twoncGerst$ of \emph{noncommutative $2$-Gerstenhaber algebras} is the ns operad with a binary generator $m$ and a ternary generator $c$, $|m|=0$, $|c|=1$, satisfying the relations
	\begin{gather*}
	m\circ_1m-m\circ_2m=0,\\ 
	c\circ_1m=m\circ_2c,\\ 
	c\circ_2m=0,\\ 
	c\circ_3m=m\circ_1c,\\ 
	c\circ_1c+c\circ_2c+c\circ_3c=0 
	\end{gather*} 
of Proposition \ref{prop:rel-real-brick}.
\end{definition}

\begin{remark}
The arity $4$ Stasheff relation in a minimal $A_\infty$-algebra is~\cite{LV}
  \[
 m_3\circ_1 m_2- m_3\circ_2 m_2+m_3\circ_3m_2=m_2\circ_1 m_3+m_2\circ_2m_3.
  \]
It follows that the assignment $m_2\mapsto m$, $m_3\mapsto c$,  $m_r\mapsto 0$ for $r>3$, extends to a well defined homomorphism from the operad governing minimal $A_\infty$-algebras to the operad $\twoncGerst$. In particular, every noncommutative $2$-Gerstenhaber algebra can be regarded as a minimal $A_\infty$-algebra. This is a feature of the noncommutative situation that is not present in the commutative case.  
\end{remark}

\begin{theorem}
The homology operad of the real brick operad is isomorphic to the operad of noncommutative 2-Gerstenhaber algebras:
	\[
	H_\bullet(\calB_{\mathbb{R}})\cong \twoncGerst . 
	\]
\end{theorem}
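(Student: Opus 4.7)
The strategy is to produce a surjection $\pi\colon \twoncGerst \twoheadrightarrow H_\bullet(\calB_{\mathbb{R}})$ and then to match graded dimensions arity by arity, forcing $\pi$ to be an isomorphism.

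The universal property of the presentation of $\twoncGerst$, combined with Proposition~\ref{prop:rel-real-brick}, defines a morphism of ns operads sending $m$ and $c$ to the distinguished classes in $\calB_{\mathbb{R}}(\underline{2})$ and $\calB_{\mathbb{R}}(\underline{3})$. I would establish the dimensions of $\twoncGerst$ by showing that its underlying ns collection decomposes as $\As \circ \pAs_3$ via a distributive law. The mixed relations $c \circ_1 m = m \circ_2 c$, $c \circ_2 m = 0$, $c \circ_3 m = m \circ_1 c$ constitute a rewriting rule that commutes every $m$ past every $c$; one verifies confluence on the critical pairs involving $\As$-associativity of $m$ and the partial associativity of $c$, so that \cite[Sec.~$8.6.2$]{LV} yields the distributive law. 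The same rewriting gives a spanning set of $\twoncGerst(n)$, namely right combs in $m$ whose leaves are decorated by normal forms in $\pAs_3$; under $\pi$ these map to fundamental classes of the stratum closures of Proposition~\ref{proposition:stratification}, which additively span $H_\bullet(\calB_{\mathbb{R}})$, so $\pi$ is surjective.

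To compute $\dim \pAs_3(2k+1)$, I would use the Koszul duality of Proposition~\ref{prop:tAs-Koszul}: the dual ns operad $\tAs_3$ is basic-set and quadratic, and its nonsymmetric partition poset coincides with the poset of partitions of a totally ordered set into odd-sized subintervals, which is Cohen--Macaulay. The ns analogue of \cite[Th.~9]{Vallette07} (formulated in the preceding subsection) then identifies $\pAs_3(2k+1)$ with the top reduced homology of each maximal interval, yielding $\dim \pAs_3(2k+1) = C_k$, concentrated in homological degree $k$ (since $\lvert c\rvert = 1$).

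Combining these computations, the generating function identity
\[
[x^i]\, C(x)^{n-2i} \;=\; \binom{n-1}{i} - \binom{n-1}{i-1}, \qquad C(x) = \sum_{k \ge 0} C_k x^k,
\]
which is a direct consequence of the Raney formula $[x^i]\, C(x)^m = \tfrac{m}{2i+m}\binom{2i+m}{i}$, shows that $\dim \twoncGerst(n)_i$ equals the Betti number $b_i(\calB_{\mathbb{R}}(\underline{n}))$ computed at the beginning of the section. Together with the surjectivity of $\pi$, this forces $\pi$ to be an isomorphism. The main expected obstacle is confluence of the rewriting rule: the odd degree of $c$ produces nontrivial signs under commutations, and one must track these carefully against the partial-associativity relation to check that all critical pairs resolve compatibly; once this is in place, the Koszul-duality step and the final combinatorial identity are routine.
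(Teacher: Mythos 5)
Your combinatorial core matches the paper's: the decomposition $\twoncGerst\cong\As\circ\pAs_3$ via a distributive law, the dimension $\dim\pAs_3(2k+1)=C_k$ (Koszul duality with $\tAs_3$), and the identity
\[
[x^i]\,C(x)^{n-2i}=\tfrac{(n-2i)}{n}\binom{n}{i}=\binom{n-1}{i}-\binom{n-1}{i-1}
\]
matching these against the Betti numbers stated at the start of the section are all correct, and the map $\pi$ you define from Proposition~\ref{prop:rel-real-brick} is the right one.

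The gap is in the surjectivity step. The claim that the images of your normal forms are ``fundamental classes of the stratum closures of Proposition~\ref{proposition:stratification}, which additively span $H_\bullet(\calB_{\mathbb{R}})$'' is not valid for real toric varieties. For smooth projective complex toric varieties, closures of torus orbits do span because there is a stratification into affine cells of even real dimension, so all boundary maps vanish; over $\mathbb{R}$ this fails badly. The very Betti number formula you invoke shows that the homology of $\calB_{\mathbb{R}}(\underline{n})$ is far smaller than the number of strata, and already the top stratum closure $\calB_{\mathbb{R}}(\underline{4})\cong\mathbb{T}^2\#\mathbb{RP}^2$ is nonorientable with vanishing rational fundamental class; the same happens for many higher strata. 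Thus a morphism $\pi$ together with an equality of graded dimensions is not by itself enough to conclude an isomorphism: one needs either injectivity or surjectivity, and the spanning claim as stated does not supply either.

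The paper closes exactly this gap by invoking Rains's theorem on the rational homology of real projective wonderful models, which identifies $H_\bullet(\widehat{Y}_{\calG}(\mathbb{R}))$ with the shifted cohomologies of intervals in the poset $\Pi^{(2)}_{\calG}$ and, crucially, asserts that this identification is compatible with the operad-like maps between wonderful models. Identifying $\Pi^{(2)}_{ncA_{n-1}}$ with $\mathop{\mathrm{part}}^2(\underline{n})$ and applying the partition-poset theorem of the preceding subsection yields $H_\bullet(\calB_{\mathbb{R}})\cong\As\circ\pAs_3$ in a way that makes the operad $H_\bullet(\calB_{\mathbb{R}})$ visibly generated by the classes $m$ and $c$; that generation statement is what gives surjectivity of $\pi$, and then the dimension count finishes. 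Without Rains's structural result or an equivalent substitute, the argument does not close. Finally, you identify confluence of the rewriting system as the main expected obstacle, but that check is routine (the paper dispatches it in one line); the genuinely nontrivial input is the surjectivity, i.e., the operadic generation of the real homology.
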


\begin{proof}
Recall \cite[Th.~3.7]{Rains10} that for every real building set $\calG$, there is an isomorphism of rational homology
 \[
H_\bullet(\widehat{Y}_{\calG}(\mathbb{R}))\cong\bigoplus_{A\in\Pi^{(2)}_{\calG}} s^{\dim(A)}H^{\dim(A)-\bullet}([0,A]).  
 \]	
Here $\Pi^{(2)}_{\calG}$ is a subposet of $L_\calG$ consisting of the elements that can be written as direct sums of elements $G\in\calG$ with even $\dim(G)$, and $[0,A]$ denotes the corresponding interval in the poset $\Pi^{(2)}_{\calG}$. For every building set, this isomorphism is ``operadic'', that is compatible with maps of the corresponding vector spaces that are defined for various operations on building sets. 

Let us apply this result to the collection of arrangements $ncA_{n-1}$ for various $n$. In each of these cases, the building set $\calG$ is the collection of all subspaces
\[
H_I=\myspan(x_{p(i)}-x_i\mid \min(I)\ne i\in I) 
\]
for \emph{intervals} $I\subset\{1,2,\ldots,n\}$ with $|I|\ge 2$, and it is easy to see that the poset $\Pi^{(2)}_{\calG}$ is naturally identified with the poset $\mathop{\mathrm{part}}^2(\underline{n})$. In particular, this poset has just one maximal interval,  the homology of that interval is trivial for even $n$, and is $\tAs_3^{\ac}(\underline{n})$ concentrated in the homological degree $k$ for odd $n=2k+1$. 
In particular, this implies that on the level of nonsymmetric collections, we have
 \[
\left\{H_\bullet(\calB_\mathbb{R}(\underline{n}))\right\}\cong\left\{H_\bullet(\widehat{Y}_{ncA_{n-1}}(\mathbb{R}))\right\}\cong\As\circ\calS^{-1}(\tAs_3^{\ac})^*(\underline{n}) \}\cong\As\circ\pAs_3\ , 
 \]
and once restricted to the subcollection $\pAs_3=\id\circ\pAs_3\subset\As\circ\pAs_3$ and the top degree homology of $\calB_\mathbb{R}$, that is cohomology of maximal intervals, this is an operad isomorphism $\left\{H_{\mathrm{top}}(\calB_\mathbb{R})\right\}\cong\pAs_3$.
By direct inspection, we also have an isomorphism of nonsymmetric collections
 \[
\twoncGerst \cong \As\circ\pAs_3\ , 
 \]
since the relations of the operad $\twoncGerst$ can be easily check to give a distributive law between the operads $\As$ and $\pAs_3$. Thus, the operads $\{H_\bullet(\calB_\mathbb{R}(\underline{n}))\}$ and $\twoncGerst$ are generated by the same set of generators, have the same dimensions of components, and by Proposition~\ref{prop:rel-real-brick} all relations of the operad $\twoncGerst$ hold in the operad $\{H_\bullet(\calB_\mathbb{R}(\underline{n}))\}$. Therefore, these operads are isomorphic. 
\end{proof}

\appendix

\section{Brick manifolds and Loday polytopes}\label{sec:appendix-toric}

The goal of this section is to show that under the general correspondence between the toric varieties, fans, and polytopes, the polytope associated to the brick manifold $\calB(\underline{n})$ is the Loday polytope $L_n$ realising the Stasheff polytope. Some of the arguments below are similar to those of Ma'u~\cite[Sec.~3.5]{Mau2008}, who studied the non-negative part of the real toric variety associated to the Loday polytope and identified it with the moduli space of semistable nodal disks with marked points on the boundary.

\subsection{Torus action on the brick manifolds}

There is a natural action of the torus $(\mathbb{G}_m)^{n-1}$ on the brick manifold $\calB(\underline{n})$ arising from the action
\begin{equation}\label{eq:torus-action}
(\mathbb{G}_m)^{n-1}\ni (\lambda_{1},\dots,\lambda_{n-1})\colon e_{j,j+1}\mapsto \lambda_{j} e_{j,j+1},\qquad  j=1,\dots,n-1,
\end{equation} on $G(\underline{n})$. It turns out that the fixed points and the one-dimensional orbits of this action can be described in terms of our stratification of brick manifolds.

\begin{proposition}\leavevmode 

\begin{enumerate}
\item The fixed points of the torus action on the brick manifold $\calB(\underline{n})$ are given by the strata $\calB(\underline{n},T)$, where $T$ is a planar binary rooted tree. 
\item The one-dimensional orbits of the torus action on the brick manifold $\calB(\underline{n})$ are given by the strata $\calB(\underline{n}, T)$, where $T$ is a planar rooted tree all of whose vertices except for one are binary, and the remaining vertex is ternary.
\end{enumerate}
\end{proposition}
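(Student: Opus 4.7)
The plan is to deduce both statements from the stratification of Proposition~\ref{proposition:stratification} combined with a direct analysis of the torus action on each stratum. The key observations are (a) the torus preserves each stratum, (b) each stratum is a single torus orbit, and (c) a planar rooted tree $T$ with $n$ leaves has $n-2$ internal edges precisely when $T$ is binary, and $n-3$ internal edges precisely when exactly one internal vertex is ternary and all others are binary. Since $\dim\calB(\underline{n},T)=n-2-n_e(T)$, this identifies the zero-dimensional orbits with binary trees and the one-dimensional orbits with the trees described in part (2).

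For (a), the torus acts with pairwise distinct characters on the chosen basis of $G(\underline{n})$, so the torus-stable subspaces are exactly the coordinate subspaces. The two kinds of conditions that cut out the stratum $\calB(\underline{n},T)$ from $\calB(\underline{n})$, namely that some $V_{l,r}$ equals a prescribed coordinate subspace $G(l,r+1)$ or $G(l-1,r)$, and that some $V_{l,l}$ is \emph{not} a coordinate line, are both invariant under the torus action.

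For (b), I first treat the open stratum $\calB(\underline{n},T_n)$ of Example~\ref{ex:open-stratum}. Coordinatising $V_{j,j}=\myspan(e_{j-1,j}+t_j\,e_{j,j+1})$ with $t_j\in\mathbb{G}_m$, for $j=2,\dots,n-1$, the torus action is $t_j\mapsto(\lambda_j/\lambda_{j-1})\,t_j$, given by the surjective homomorphism $(\lambda_1,\dots,\lambda_{n-1})\mapsto(\lambda_2/\lambda_1,\dots,\lambda_{n-1}/\lambda_{n-2})$ from $(\mathbb{G}_m)^{n-1}$ onto $(\mathbb{G}_m)^{n-2}$. Hence the open stratum is a single orbit. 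For an arbitrary $T$, the natural bijection $\Gap(I\sqcup_i J)\cong\Gap(I)\sqcup\Gap(J)$ of Proposition~\ref{prop:operad-gap} induces an isomorphism of tori $(\mathbb{G}_m)^{|I\sqcup_i J|-1}\cong(\mathbb{G}_m)^{|I|-1}\times(\mathbb{G}_m)^{|J|-1}$ with respect to which the map $f_{I,i}^J$ of Definition~\ref{def:operadic-composition}, and hence the operadic composition, is equivariant. So Proposition~\ref{prop:InfCompoStrat} and induction on the number of internal edges of $T$ give that every stratum is a single orbit of the torus action.

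For (c), let $T$ have internal vertices of in-valencies $d_1,\dots,d_v$ with each $d_i\ge 2$. Counting incoming edges of internal vertices gives $\sum d_i=n+n_e$; counting that every internal vertex has a unique outgoing edge (internal or the root) gives $v=n_e+1$. Consequently
\[
n-2-n_e=\sum_{i=1}^v(d_i-2)\ge 0,
\]
with equality iff every $d_i=2$, and equal to one iff exactly one $d_i=3$ and all others equal $2$, as claimed.

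The only genuinely delicate step is (b): one must check carefully that the ``gluing'' clauses of Definition~\ref{def:operadic-composition}, e.g.\ $f_{I,i}^J(V^1_{a,i}\oplus G(J))$, are compatible with the torus splitting — that is, that scaling basis elements of $G(I)$ and $G(J)$ separately produces the correct scaling of these direct-sum subspaces in $G(I\sqcup_i J)$. Once this equivariance is in place, the inductive argument runs smoothly, and the rest is bookkeeping.
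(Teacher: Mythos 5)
Your proposal is correct and fleshes out what the paper dismisses as ``direct inspection.'' The three-part decomposition into (a) torus-stability of strata, (b) each stratum being a single orbit, and (c) the tree-combinatorial dimension count $n-2-n_e(T)=\sum_i(d_i-2)$ is exactly the content that the paper's one-line proof refers to, so this is the same approach, just written out. The counting argument in (c) is clean and correct. The one step you flag as delicate — equivariance of the operadic composition with respect to the splitting of tori coming from $\Gap(I\sqcup_iJ)\cong\Gap(I)\sqcup\Gap(J)$ — is indeed the point requiring care, but it goes through precisely because every coordinate subspace such as $G(\min(J),s(b))$ or $G(J)$ appearing in the gluing clauses of Definition~\ref{def:operadic-composition} is fixed under the $(\mathbb{G}_m)^{\Gap(J)}$-action, and $f_{I,i}^J$ sends torus eigenvectors to torus eigenvectors by construction; your sketch of how to verify this is correct. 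Two minor comments: first, (a) is strictly a consequence of (b), but proving (a) separately is a sensible warm-up and makes the inductive step in (b) more transparent; second, you could alternatively invoke the standard orbit-cone correspondence for toric varieties once Theorem~\ref{thm:LodayPolytope} is known, but since in the paper this proposition precedes and is used to prepare that theorem, your direct argument from the stratification is the right logical order.
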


\begin{proof}
This follows by a direct inspection from the definition of brick manifolds, construction of their stratification in Section~\ref{sec:stratification} and the definition of the torus action.	
\end{proof}

\begin{remark}
	In other words, the set of fixed points (respectively, one-dimensional orbits) of the torus action coincides with the set of the dimension zero (respectively, one) strata of stratification given in Section~\ref{sec:stratification}.
\end{remark}

\subsection{Fan and dual polytope}

In this Section, we prove that the fan dual to the Loday polytope $L_n$ is the fan of the brick manifold $\calB(\underline{n})$.

Let us start with a few simple observations. First, the main diagonal of the torus $(\mathbb{G}_m)^{n-1}$ acts trivially on $\calB(\underline{n})$, so the fan is invariant under the translation 
\[
(y_1,\dots,y_{n-1})\mapsto (y_1+1,\dots,y_{n-1}+1)\ , 
\]
where $y_1,\dots,y_{n-1}$ are the coordinates on $\mathbb{Z}^{n-1}$ dual to the coordinates $x_1,\dots,x_{n-1}$. This means that the dual polytope must lie in an affine hyperplane $\sum_{i=1}^{n-1} x_i=\mathrm{constant}$. Second, the cones of  top dimension are indexed by the fixed points of the torus action, so the same must be true for the vertices of the dual polytope. Note that the Loday polytope satisfy both properties.

\begin{theorem}\label{thm:LodayPolytope}
	The dual fan of the Loday polytope $L_n$ is the fan of the toric variety $\calB(\underline{n})$. 
\end{theorem}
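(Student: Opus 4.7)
The plan is to identify the fan $\Phi$ of the toric variety $\calB(\underline{n})$, extracted from the torus action and the stratification of Proposition~\ref{proposition:stratification}, with the normal fan of the Loday polytope $L_n$, obtained from its Minkowski decomposition of Proposition~\ref{prop:Minkowski}. Both fans live in $N_{\mathbb{R}} = \mathbb{R}^{n-1}/\mathbb{R}(1,\ldots,1)$.

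First, I would enumerate the cones of $\Phi$ combinatorially. By the standard correspondence between cones of the fan of a toric variety and its torus-invariant subvarieties, together with Proposition~\ref{proposition:stratification}, the cones of $\Phi$ are in bijection with planar rooted trees $T$ having $n$ labelled leaves, with $\dim \sigma_T = n_e(T)$. In particular, the maximal cones correspond to planar binary trees, and the $\binom{n}{2}-1$ rays $\rho_{l,r}$ correspond to the divisors $D_{l,r} = \overline{\calB(\underline{n}, T_{l,r})}$ indexed by proper intervals $[l,r] \subsetneq \underline{n}$ with $l < r$. The incidence relations from the third item of Proposition~\ref{proposition:stratification} translate into the statement that $\rho_{l,r} \subset \sigma_{T_{\mathrm{bin}}}$ if and only if $\{l, l+1, \ldots, r\}$ is the leaf set of some non-root internal subtree of $T_{\mathrm{bin}}$, which is exactly the nested-set combinatorics of the face lattice of the associahedron.

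Second, I would compute the primitive generator of each ray $\rho_{l,r}$ using the torus action~\eqref{eq:torus-action}. At a fixed point $x \in D_{l,r}$, the tangent direction normal to $D_{l,r}$ is identified via the normal line $(V_{l+1,r}/V_{l+1,r-1})^* \otimes (V_{l,r}/V_{l,r-1})$ from Section~\ref{sec:local-structure}. Its torus weight, computed directly from the action $e_{j,j+1}\mapsto \lambda_j e_{j,j+1}$, can be read off from which basis vectors of $G(\underline{n})$ generate the two sides of the tensor product at the fixed point in question; by the standard duality between tangent weights and primitive rays at a smooth fixed point, this identifies the primitive generator of $\rho_{l,r}$ (up to an overall sign that depends on the normal-fan convention) with the class of $e_l + e_{l+1} + \cdots + e_{r-1}$ in $N = \mathbb{Z}^{n-1}/\mathbb{Z}(1,\ldots,1)$.

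Third, I would compare this with the normal fan of $L_n$. By Proposition~\ref{prop:Minkowski}, $L_n$ is the Minkowski sum of the standard simplices $\Delta_{[i,j]}$ for intervals $[i,j] \subseteq \underline{n-1}$, hence its normal fan is the common refinement of the normal fans of those simplices. A standard calculation (of the type available for any graphical nestohedron on a tree) identifies the rays of this refinement with the classes of $\pm(e_i + e_{i+1} + \cdots + e_j)$ modulo the diagonal, one for each interval, and the maximal cones with planar binary trees via the nested-set correspondence. Under the identification $[l,r] \leftrightarrow [l,r-1]$ between intervals of $\underline{n}$ (with $l<r$) and of $\underline{n-1}$, the rays and cones computed in the previous step match those of the normal fan of $L_n$.

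The main technical obstacle is the weight computation in the second step, which requires careful tracking of the torus action on the intricate system of subspaces $\{V_{i,j}\}$ near a torus-fixed point, together with correct bookkeeping of the orientation convention for normal fans. A conceptually cleaner alternative is to exploit the iterated-blowup description from Remark~\ref{rem:IteratedBlowup}: each blow-up of the proper transform of $\mathbb{P}(p(H_I^\bot))$ introduces a new ray in the fan whose primitive generator is the indicator vector of $I$, which is precisely $e_{\min(I)} + \cdots + e_{\max(I)-1}$, thereby sidestepping the need to work with local charts on $\calB(\underline{n})$.
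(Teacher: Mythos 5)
Your approach is genuinely different from the paper's, and it can be made to work, but there is a real imprecision at the crucial step. The paper proceeds by matching \emph{walls} (codimension-one cones) of the fan with \emph{edges} of $L_n$: for each tree with a single ternary vertex, it shows the corresponding wall lies in a hyperplane $\{y_{j-1}=y_k\}$ and that the matching edge direction $p_{T_1}-p_{T_2}$ spans the annihilator of that hyperplane, then invokes convexity to conclude. You instead propose to identify \emph{rays} and their incidence with maximal cones. That is a perfectly valid strategy for a simplicial fan, and your combinatorics (rays $\rho_{l,r}$ indexed by proper intervals, $\rho_{l,r}\subset\sigma_{T_{\mathrm{bin}}}$ iff $[l,r]$ is a leaf set of a non-root subtree) is correct. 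Your stated formula for the ray generators, $\rho_{l,r}=\mathbb{R}_{\ge 0}\cdot\bigl(-(e_l+\cdots+e_{r-1})\bigr)$ modulo the diagonal, is also correct.

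However, the justification you give for that formula does not hold as stated. The torus weight of the normal line $(V_{l+1,r}/V_{l+1,r-1})^*\otimes(V_{l,r}/V_{l,r-1})$ at a fixed point of $D_{l,r}$ is \emph{not} $\pm(e_l+\cdots+e_{r-1})$, and it is not even independent of the choice of fixed point. For instance, for $n=4$ and $D_{2,3}$, at the fixed point with $V_{2,3}=G(1,3)$ the normal weight is $e_1-e_2$, while at the fixed point with $V_{2,3}=G(2,4)$ it is $e_3-e_2$; neither equals $\pm e_2$ modulo the diagonal. The ``standard duality'' you invoke is that the \emph{entire collection} of $n-2$ tangent weights at a smooth fixed point is the dual basis to the $n-2$ rays of the corresponding maximal cone; a single normal weight does not, on its own, determine a single ray. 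To extract the ray $\rho_{l,r}$ you must either compute all tangent weights at one fixed point of $D_{l,r}$ and invert, or — more cleanly — characterise $\rho_{l,r}$ as the set of one-parameter subgroups $(t^{a_1},\ldots,t^{a_{n-1}})$ whose limit sends a generic point of the open torus into the open stratum $\calB(\underline{n},T_{l,r})$. The latter gives exactly $a_1=\cdots=a_{l-1}>a_l=\cdots=a_{r-1}<a_r=\cdots=a_{n-1}$, hence (primitive, modulo the diagonal) $-(e_l+\cdots+e_{r-1})$. With this correction your proof goes through. Your alternative via the iterated blow-up description of Remark~\ref{rem:IteratedBlowup} is also sound and is close in spirit to the remark following the paper's proof, which carries out the wonderful-model/Minkowski-sum version of the argument via \cite[Proposition~8.1.4]{GKZ94}.
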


\begin{proof}
Observe that the fan of the brick manifold $\calB(\underline{n})$ is the intersection of the fans of the projective spaces formed by the choices of $V_{l,r}\subset G(l,r+1)$, for $l,r=2,\dots,n-1$, $r\geq l$. Indeed, the $n-1$-dimensional cones of the fan correspond to the fixed points of the torus action, and it implies that each $V_{l,r}$ should be fixed by the torus.
	
This means that the $n-2$ dimensional cones that bound a particular $n-1$ dimensional cone in the fan of $\calB(\underline{n})$ are the $n-1$ dimensional cones of some of these projective spaces. They also should correspond to the one-dimensional orbits of the torus in $\calB(\underline{n})$ and also in some of these projective spaces.
	
Consider a planar rooted tree with one ternary vertex, and all other vertices being binary. Denote by $D^l$ (respectively, $D^c$, $D^r$) the set of leaves that are the left (respectively, central, right) descendants of this special vertex. Assume that $D^l=\{{l}_{i},\dots, {l}_{j-1}\}$, $D^c=\{{l}_{j},\dots, {l}_{k}\}$, $D^r=\{{l}_{k+1},\dots, {l}_{m}\}$. The construction of the stratification in Section~\ref{sec:stratification} implies that, in this case, all spaces $V_{l,r}$ are spanned by the basis vectors (and, therefore, invariant under the torus action) except for the spaces with $l=i+1,\dots,j$ and $r=k,\dots,m-1$. In this range of $l$ and $r$, these spaces must be equal to the direct sum of some invariant vector space of dimension $r-l$ spanned by the coordinate vectors and an arbitrary  one-dimensional space $W\subset \myspan(e_{j-1,j},e_{k,k+1})$ (which must be the same for all these spaces). Thus the hyperspace that contains the corresponding $n-2$ dimensional cone is given by the equation $y_{j-1}-y_{k}=0$. 
	
On the other hand, the corresponding one-dimensional face of the Loday polytope $L_n$ has the direction equal to the difference of the two vertices that correspond to the two possible ways to split the ternary vertex  of the corresponding planar rooted tree into two binary vertices. Consider these two trees, $T_1$ and $T_2$. By the definition of the Loday polytope, all coordinates of $p_{T_1}$ and $p_{T_2}$ coincide except for the $x_{j-1}$ and $x_{k}$. 
Furthermore, 
	\begin{align*}
	x_{j-1}(p_{T_1})& =(j-i)(k-j+1) & x_{k}(p_{T_1}) & =(k-i+1)(m-k) \\ 
	x_{j-1}(p_{T_2})& =(j-i)(m-j+1) & x_{k}(p_{T_2}) & =(k-j+1)(m-k),
	\end{align*}
	so the vector along the corresponding one-dimensional face of $L_n$ is equal to 
	\[
	p_{T_1}-p_{T_2}=(0,\dots,0,-(j-i)(m-k),0,\dots,0,(j-i)(m-k),0,\dots,0)\ ,
	\]
which clearly generated the subspace $y_{j-1}-y_{k}=0$ in the dual space. 
	
Since the Loday polytopes and the fans of brick manifolds are strictly convex, it is enough to show the duality of faces of dimension $\leq 1$ and the cones of codimension $\leq 1$ in order to prove the the fan dual to the Loday polytope is the fan of the brick manifold.
\end{proof}

\begin{remark}
Note that the correspondence between planar rooted trees, faces of the Loday polytopes, and the strata of $\calB(\underline{n})$ extends further. Namely, for each planar rooted tree $T$ we have a canonically associated face of the Loday polytope, that is associated to a torus invariant subvariety of the same dimension, which is precisely the stratum $\calB(\underline{n},T)$. 
\end{remark}

\begin{remark}
A linearly dual version of the same proof re-told in the language of wonderful models goes as follows. Clearly, there is an action of the torus $(\mathbb{G}_m)^{n-1}$ on $\widehat{Y}_{ncA_{\underline{n}}}$ arising from the action on the product 
	\[
	\mathbb{P}(V)\times \prod_{I \text{ an interval of } \underline{n},\, |I|\ge 2}\mathbb{P}(V/p(H_I^\bot)) \cong  \mathbb{P}(V)\times \prod_{I \text{ an interval of } \underline{n},\, |I|\ge 2}\mathbb{P}(G(I)^*)
	\]
coming from the action~\eqref{eq:torus-action} on $G(\underline{n})$. (Note that in our case $V=G(\underline{n})^*$, so in fact the factor $\mathbb{P}(V)$ is redundant). Of course, the diagonal torus acts trivially, so we have the action of a torus $(\mathbb{G}_m)^{n-1}/\mathbb{G}_m\cong(\mathbb{G}_m)^{n-2}$. Moreover, from the proof of Theorem~\ref{th:wonderful-brick}, it is immediate that for each interval $I\subset\underline{n}$, the image of $\widehat{M}_{ncA_{\underline{n}}}\subset\mathbb{P}(V)$ in $\mathbb{P}(V/H_I^\bot)\cong\mathbb{P}(G(I))$ consists of all points whose homogeneous coordinates are all distinct from zero. In particular, $\widehat{M}_{ncA_{\underline{n}}}\subset\mathbb{P}(V)\cong\mathbb{P}(G(\underline{n})^*)$ is identified with the torus $(\mathbb{G}_m)^{n-2}$. This implies that the wonderful model is a toric variety. Moreover, for each individual factor $\mathbb{P}(V/p(H_I^\bot))$, the closure of the corresponding torus orbit is the whole $\mathbb{P}(V/p(H_I^\bot))$. Hence by \cite[Proposition~$8.1.4$]{GKZ94}, the polytope associated to $\widehat{Y}_{ncA_{\underline{n}}}$ is the Minkowski sum of simplices corresponding to intervals of $\underline{n-1}\cong\Gap(\underline{n})$. By Proposition~\ref{prop:Minkowski}, the former polytope coincides with the Loday polytope~$L_n$. 	
\end{remark}

\subsection{Combinatorial definition of the Loday polytope revisited}

In this section, we give, for completeness, a way  to construct the Loday polytope $L_n$ directly from the fixed points of the torus action on $\calB(\underline{n})$. This construction could be used for an alternative proof of Theorem~\ref{thm:LodayPolytope}.

Consider a fixed point  of the torus action on $\calB(\underline{n})$ associated to a planar binary rooted tree $T$. The vectors spaces $V_{l,r}$ of this fixed point are the vector spaces spanned by the basis vectors $e_{1,2},\dots,e_{n-1,n}$, moreover, we know that $V_{l,r}\subset G(l-1,r+1)$, for $l,r=2,\dots, n-1$, $l\leq r$.
Consider the lattice $M:=\mathbb{Z}\langle e_{1,2},\dots,e_{n-1,n}\rangle$. We denote by $v_{l,r}\in M$ the ``missing basis vector'' in $V_{l,r}$, that is, $v_{l,r}\in \{e_{l-1,l},\dots,e_{r,r+1}\}$ such that $v_{l,r}\oplus V_{l,r}=G(l-1,r+1)$.

\begin{proposition} The vertex $p_T$ of the Loday polytope $L_n$ is given by $e_{1,2}+\cdots+e_{n-1,n}+\sum_{l=2}^{n-1}\sum_{r=l}^{n-1} v_{l,r}$. That is, up to a shift by the vector $e_{1,2}+\cdots+e_{n-1,n}$ the vector $p_T$ is the sum of ``missing basis vectors'' in $\calB(\underline{n},T)$. 
\end{proposition}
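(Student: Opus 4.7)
The plan is to compare both sides coordinate by coordinate. Identifying $M=\mathbb{Z}\langle e_{1,2},\dots,e_{n-1,n}\rangle$ with $\mathbb{Z}^{n-1}$ via the given basis, the $j$-th coordinate of the right-hand side equals
\[
1 + \#\bigl\{(l,r) : 2 \le l \le r \le n-1,\ v_{l,r} = e_{j,j+1}\bigr\},
\]
because the shift $e_{1,2}+\cdots+e_{n-1,n}$ contributes $1$ in every coordinate and each $v_{l,r}$ is a single basis vector lying in $\{e_{l-1,l},\dots,e_{r,r+1}\}$. On the other hand, the $j$-th coordinate of $p_T$ is $|D^l(w_j)|\cdot |D^r(w_j)|$ by Section~\ref{sec:recollections}, where I write $w_j$ for the internal vertex separating leaves $j$ and $j+1$ (denoted $v_j$ in the paper; I rename it to avoid clashing with $v_{l,r}$).

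The technical heart of the proof is the combinatorial identification
\[
v_{l,r} = e_{i,i+1}, \qquad \text{where } w_i = \operatorname{LCA}_T(l-1,\,r+1).
\]
To prove this, I would introduce, for each $(l,r)$ with $2\le l\le r\le n-1$, the candidate subspace
\[
W_{l,r}^T := \myspan\bigl(e_{j,j+1} : j\in[l-1,r]\setminus\{i\}\bigr),
\]
supplemented by the boundary prescriptions $W_{1,r}^T = G(1,r+1)$ and $W_{l,n}^T = G(l-1,n)$, and verify that this configuration satisfies all the axioms of Definition~\ref{def:brick-manifold-collection} together with the stratum conditions for $T$ recalled in Section~\ref{sec:stratification}. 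The dimension and boundary axioms are immediate. The containment $W_{l,r}^T\subset W_{l-1,r}^T$ reduces to the observation that if the ancestors $w_i=\operatorname{LCA}_T(l-1,r+1)$ and $w_{i'}=\operatorname{LCA}_T(l-2,r+1)$ are distinct, then $w_{i'}$ is a strict ancestor of $w_i$ with $w_i$ lying in its right subtree, forcing $l-1\in D^r(w_{i'})$ and hence $i'\le l-2$, so that $i'\notin[l-1,r]$. The stratum condition $V_{a,b-1}=V_{a+1,b}=G(a,b)$ for an edge $e$ of $T$ with $L_e=\{a,\dots,b\}$ translates into the two claims $\operatorname{LCA}_T(a-1,b)=w_{a-1}$ and $\operatorname{LCA}_T(a,b+1)=w_b$, each of which follows at once by inspecting whether the lower endpoint of $e$ is a left or right child of its parent in $T$. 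Since the stratum $\calB(\underline{n},T)$ is zero-dimensional for binary $T$ by Proposition~\ref{proposition:stratification}, it consists of a single point, and uniqueness of the configuration forces $V_{l,r}=W_{l,r}^T$ at the fixed point indexed by $T$.

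With the identification in hand, the counting is immediate. The pairs $(l,r)$ with $v_{l,r}=e_{j,j+1}$ are in bijection with the pairs $(l-1,r+1)\in D^l(w_j)\times D^r(w_j)$ that additionally satisfy $l\le r$. The full product has $|D^l(w_j)|\cdot|D^r(w_j)|$ elements, and the constraint $l\le r$ excludes exactly one pair, namely $(l-1,r+1)=(j,j+1)$, corresponding to $l=j+1>j=r$. Hence the count equals $|D^l(w_j)|\cdot|D^r(w_j)|-1$, and adding the $+1$ coming from the shift recovers $|D^l(w_j)|\cdot|D^r(w_j)|$, matching the $j$-th coordinate of $p_T$. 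The main obstacle is the verification step in the middle paragraph, and in particular the stratum conditions for the candidate configuration, which requires a careful but elementary case analysis of the planar tree structure at each internal edge of $T$.
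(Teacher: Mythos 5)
Your proof is correct and takes a genuinely different route from the paper's. The paper argues inductively: it identifies $V_{i,i}$, $V_{i+1,i+1}$ at the root vertex $w_i$ directly from the stratum conditions, concludes that $e_{i,i+1}$ is the missing vector exactly for the $i(n-i)-1$ spaces $V_{l,r}$ with $l\le i+1$, $r\ge i$, verifies the root coordinate, and then recurses into the two subtrees (implicitly using the operadic compatibility of the stratification). You instead prove a uniform closed-form lemma --- $v_{l,r}=e_{i,i+1}$ with $w_i=\operatorname{LCA}_T(l-1,r+1)$ --- by exhibiting the full candidate configuration $\{W_{l,r}^T\}$, verifying the brick-manifold axioms and stratum conditions, and invoking the zero-dimensionality of $\calB(\underline{n},T)$ from Proposition~\ref{proposition:stratification} to force equality; the count at each coordinate $j$ is then the same $|D^l(w_j)|\cdot|D^r(w_j)|-1$ plus the $+1$ shift. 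The lemma is in fact strictly more information than the paper records: it makes the entire torus-fixed point explicit at once rather than one layer at a time, at the cost of a longer verification. Two small remarks on that verification: you should also record the symmetric inclusion $W_{l,r}^T\subset W_{l,r+1}^T$ (same argument, mirrored), and the claim $\operatorname{LCA}_T(a-1,b)=w_{a-1}$ doesn't really need a left-child/right-child case split --- it follows at once because $w_{a-1}$ is an ancestor of $a$ not contained in the subtree $\{a,\ldots,b\}$, hence is a strict ancestor of that subtree, and $D^r(w_{a-1})$ is the interval $\{a,\ldots,\max\}$ of its span, so it contains $b$. Neither point is a gap, just a place where the write-up can be tightened.
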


\begin{proof} Consider the tree $T$. Let ${v}_i$ be root vertex of $T$. It is sufficient to prove that the $i$-th coordinate of $p_T$ coincides with the $i$th coordinate of the sum   $e_{1,2}+\cdots+e_{n-1,2}+\sum_{l=2}^{n-1}\sum_{r=l}^{n-1} v_{l,r}$, 	
since then the same argument applies to every other coordinate and the subtree where the corresponding vertex is the root vertex. 
	
Since ${v}_i$ is the root vertex, we have $V_{i,i}=G(i-1,i)$ and $V_{i+1,i+1}=G(i+1,i+2)$, with a natural adjustment for $i=1$ and $n-1$. This implies that the vector $e_{i,i+1}$ is the missing basis vector in all spaces $V_{l,r}$, where $l=2,\dots, i+1$, and $r=i,\dots,n-1$, There are $i(n-i)-1$ such spaces (there is no space $V_{l,r}$ for $l={i+1}$, $r=i$), so the coefficient of $e_{i,i+1}$ in the sum $e_{1,2}+\cdots+e_{n-1,n}+\sum_{l=2}^{n-1}\sum_{r=l}^{n-1} v_{l,r}$ is equal to $i(n-i)$. Since ${v}_i$ is the root vertex, this latter number is precisely $|D^l({v}_i)|\cdot |D^r({v}_i)|$.
\end{proof}

\bibliographystyle{amsplain}
\providecommand{\bysame}{\leavevmode\hbox to3em{\hrulefill}\thinspace}

\end{document}